\theoremstyle{plain}
\newtheorem{thm}{Theorem}[section]
\newtheorem{lem}[thm]{Lemma}
\newtheorem{prop}[thm]{Proposition}
\newtheorem{conj}[thm]{Conjecture}
\newtheorem{coro}[thm]{Corollary}
\theoremstyle{definition}
\theoremstyle{remark}
\newtheorem{rk}[thm]{Remark}
\newtheorem{df}[thm]{Definition}
\def\ui{{\mathbf{i}}}
\def\uj{{\mathbf{j}}}
\def\bbC{\mathbb{C}}
\def\bbN{\mathbb{N}}
\def\bbQ{\mathbb{Q}}
\def\bbV{\mathbb{V}}
\def\bbZ{\mathbb{Z}}
\def\scrF{\mathscr{F}}
\def\scrR{\mathscr{R}}
\def\frakS{\mathfrak{S}}
\def\calA{\mathcal{A}}
\def\calC{\mathcal{C}}
\def\calK{\mathcal{K}}
\def\calO{\mathcal{O}}
\def\calP{\mathcal{P}}
\def\calX{\mathcal{X}}
\def\frakg{\mathfrak{g}}
\def\frakh{\mathfrak{h}}
\def\frakp{\mathfrak{p}}
\def\bfa{\mathbf{a}}
\def\bfg{\mathbf{g}}
\def\bfh{\mathbf{h}}
\def\bfk{\mathbf{k}}
\def\bfp{\mathbf{p}}
\def\bfA{\mathbf{A}}
\newcommand\restr[2]{{
  \left.\kern-\nulldelimiterspace 
  #1 
  \vphantom{\big|} 
  \right|_{#2} 
  }}
\def\homo{\operatorname{\it \mathscr{H}\kern-.25em om}}
\def\ext{\operatorname{\it \mathscr{E}\kern-.25em xt}}
\def\edo{\operatorname{\it \mathscr{E}\kern-.25em nd}}
\def\der{\operatorname{\it \mathscr{D}\kern-.25em er}}
\def\mod{\mathrm{mod}}
\def\proj{\mathrm{proj}}
\def\grmod{\mathrm{grmod}}
\def\bmod{\mathrm{bmod}}
\def\Hom{\mathrm{Hom}}
\def\RHom{\mathrm{RHom}}
\def\End{\mathrm{End}}
\def\Ext{\mathrm{Ext}}
\def\Ker{\mathrm{Ker}}
\def\Im{\mathrm{Im}}
\def\Id{\mathrm{Id}}
\def\KZ{\mathrm{KZ}}
\def\inc{\mathrm{inc}}
\def\tr{\mathrm{tr}} 
\def\Zuc{\mathrm{Zuc}}
\def\Com{\mathrm{Com}}
\def\Res{\mathrm{Res}}
\def\Ind{\mathrm{Ind}}
\def\bfwt{\mathrm{\mathbf{wt}}}
\title{\Huge Affine category O, Koszul duality and Zuckerman functors}
\author{Ruslan Maksimau  \\\\
Institut Montpelli\'erain Alexander Grothendieck (CNRS: UMR 5149),\\
Universit\'e Montpellier 2,\\
Case Courrier 051,\\
Place Eug\`ene Bataillon,\\
34095 MONTPELLIER Cedex,\\
FRANCE\\
ruslmax@gmail.com, ruslan.maksimau@umontpellier.fr.
}
\date{}
\begin{document}

\maketitle
\setcounter{tocdepth}{2}

\begin{abstract}
The parabolic category $\calO$ for affine ${\mathfrak{gl}}_N$ at level $-N-e$ admits a structure of a categorical representation of $\widetilde{\mathfrak{sl}}_e$ with respect to some endofunctors $E$ and $F$. This category contains a smaller category $\bfA$ that categorifies the higher level Fock space. We prove that the functors $E$ and $F$ in the category $\bfA$ are Koszul dual to Zuckerman functors. 

The key point of the proof is to show that the functor $F$ for the category $\bfA$ at level $-N-e$ can be decomposed in terms of the components of the functor $F$ for the category $\bfA$ at level $-N-e-1$. To prove this, we use the following fact from \cite{Mak-categ}: a category with an action of $\widetilde{\mathfrak sl}_{e+1}$ contains a (canonically defined) subcategory with an action of $\widetilde{\mathfrak sl}_{e}$.

We also prove a general statement that says that in some general situation a functor that satisfies a list of axioms is automatically Koszul dual to some sort of Zuckerman functor.

\end{abstract}

\tableofcontents

\section{Introduction}
Let $O^\nu_{-e}$ be the parabolic category $\calO$ with parabolic type $\nu$ of the affine version of the Lie algebra $\mathfrak{gl}_N$ at level $-N-e$. In \cite{RSVV}, a categorical representation of the affine Kac-Moody algebra $\widetilde{\mathfrak{sl}}_e$ in $O^\nu_{-e}$ is considered. In particular, this means that there are exact biadjoint functors $E_i,F_i\colon O^\nu_{-e}\to O^\nu_{-e}$ for $i\in[0,e-1]$ which induce a representation of the Lie algebra $\widetilde{\mathfrak{sl}}_e$ on the Grothendieck group $[O^\nu_{-e}]$ of $O^\nu_{-e}$.
The precise definition of a categorical representation is given in Section \ref{ch3:subs_categ-action}.
The category $O^\nu_{-e}$ admits a decomposition
$$
O^\nu_{-e}=\bigoplus_{\mu\in\bbZ^e}O^\nu_{\mu}
$$
that lifts the decomposition of the $\widetilde{\mathfrak{sl}}_e$-module $[O^\nu_{-e}]$ in a direct sum of weight spaces.

The category $O_\mu^\nu$ is Koszul by \cite{SVV}. Its Koszul dual category is the category $O_{\nu,+}^\mu$ defined similarly to $O_\nu^\mu$ at a positive level. In particular, the Koszul duality exchanges the parameter $\nu$ (\emph{the parabolic type}) with the parameter $\mu$ (\emph{the singular type}). The Koszul duality yields an equivalence of bounded derived categories $D^b(O^\nu_\mu)\simeq D^b({O}_{\nu,+}^\mu)$. More details about the Koszul duality can be found in \cite{BGS}.

Let $\alpha_0,\cdots,\alpha_{e-1}$ be the simple roots of $\widetilde{\mathfrak{sl}}_e$. We have
$$
E_i(O^\nu_\mu)\subset O^\nu_{\mu+\alpha_i}, \quad F_i(O^\nu_\mu)\subset O^\nu_{\mu-\alpha_i}.
$$
The aim of this paper is to prove that Koszul dual functors
$$
D^b(O^\mu_{\nu,+})\to D^b(O^{\mu+\alpha_i}_{\nu,+}),\quad D^b(O^\mu_{\nu,+})\to D^b(O^{\mu-\alpha_i}_{\nu,+})
$$
to the functors
$$
E_i\colon D^b(O_\mu^\nu)\to D^b(O_{\mu+\alpha_i}^\nu),\quad F_i\colon D^b(O_\mu^\nu)\to D^b(O_{\mu-\alpha_i}^\nu).
$$
are the Zuckerman functors.

Unfortunately, we cannot solve this problem for the full category $O$. But we are able to do this for a subcategory $\bfA$ of $O$. 



By definition, the Zuckerman functor is a composition of a parabolic inclusion functor with a parabolic truncation functor. Thus it is natural to try to decompose the functors $E_i$ and $F_i$ in two "smaller" functors. 
More precisely, we want to show that the functor $F_i\colon O^\nu_{\mu}\to O^\nu_{\mu'}$ (and similarly for $E_i$) between the blocks\footnote{We abuse the terminology using the word "block" here. We don't claim that $O^\nu_{\mu}$ is indecomposable. The word "block" is used here by analogy with the non-parabolic finite type category $\calO$, where similar subcategories are indeed blocks.} $O^\nu_{\mu_1}$ and $O^\nu_{\mu_2}$ can be decomposed as $O^\nu_{\mu_1}\to O^\nu_{\mu_{12}}\to O^\nu_{\mu_2}$, where the block $O^\nu_{\mu_{12}}$ is "more regular" than $O^\nu_{\mu_{1}}$ and than $O^\nu_{\mu_{2}}$. The additional difficulty is that there is no good candidate for such a block at the level $-N-e$, but there is one at another level: $-N-(e+1)$.


Now we describe more precisely our strategy of finding such a decomposition.
Let $\overline O^\nu_{\mu}$, $\overline E_i$, $\overline F_i$ be defined in the same way as $O^\nu_{\mu}$, $E_i$, $F_i$ with $e$ replaced by $e+1$. Let $\overline\alpha_0,\cdots,\overline\alpha_e$ be the simple roots of $\widetilde{\mathfrak{sl}}_{e+1}$. Fix $k\in[0,e-1]$. For an $e$-tuple $\mu=(\mu_1,\cdots,\mu_e)$ we set

$$
\overline\mu=
\left\{\begin{array}{ll}
(\mu_1,\cdots,\mu_k,0,\mu_{k+1},\cdots,\mu_e) & \mbox{ if }k\ne 0,\\
(0,\mu_1,\cdots,\mu_e) & \mbox{ if }k=0.
\end{array}\\
\right.
$$
Note that we have $\overline{(\mu-\alpha_k)}=\overline\mu-\overline\alpha_k-\overline\alpha_{k+1}$.

By \cite{Fie-str}, there is an equivalence of categories $\theta\colon O^\nu_{\mu}\to \overline O^\nu_{\overline\mu}$. The direct sum of such equivalences identifies the category $O_{-e}^\nu$ with a direct factor of the category $\overline O_{-(e+1)}^\nu$. We want to compare the $\widetilde{\mathfrak{sl}}_{e}$-action on $O_{-e}^\nu$ with the $\widetilde{\mathfrak{sl}}_{e+1}$-action on $\overline O_{-(e+1)}^\nu$. More precisely, we want to prove the following conjecture.

\smallskip
\begin{conj}
\label{ch3:conj-intro-art}
The following diagram of functors is commutative.
\begin{equation}
\label{ch3:diag-conj-intro}
\begin{diagram}
\node{\overline O^\nu_{\overline\mu}} \arrow{e,t}{\overline F_k}
\node{\overline O^\nu_{\overline\mu-\overline\alpha_k}} \arrow{e,t}{\overline F_{k+1}} 
\node{\overline O^\nu_{\overline\mu-\overline\alpha_k-\overline\alpha_{k+1}}} \arrow{s,r}{\theta^{-1}} \\
\node{O^\nu_{\mu}} \arrow{n,l}{\theta} 
\arrow[2]{e,b}{F_k} \node[2]{O^\nu_{\mu-\alpha_k}}
\end{diagram}
\end{equation}
\end{conj}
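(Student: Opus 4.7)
The plan is to establish diagram~\eqref{ch3:diag-conj-intro} by combining Fiebig's level-raising equivalence $\theta$ with the structural result of~\cite{Mak-categ}, which produces, inside any category carrying an $\widetilde{\mathfrak{sl}}_{e+1}$-action, a canonical subcategory carrying an $\widetilde{\mathfrak{sl}}_{e}$-action.

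First, I would recall how $F_k$ and $\overline F_j$ are built out of a ``total'' endofunctor $F$, resp.~$\overline F$, coming from tensoring with the natural module (translation through a wall in the Kazhdan--Lusztig picture). The summand $F_k$ is cut out as the generalised eigenspace of the standard KLR endomorphism $X$ with eigenvalue labelled by $k\bmod e$, and $\overline F_j$ is cut out analogously with eigenvalue labelled by $j\bmod(e+1)$. The combinatorial input is the recipe defining $\overline\mu$: inserting a zero at position $k$ shifts every later coordinate by one in the residue grading, so that an $\alpha_k$-arrow of $\widetilde{\mathfrak{sl}}_e$ corresponds to the concatenation of an $\overline\alpha_k$-arrow followed by an $\overline\alpha_{k+1}$-arrow of $\widetilde{\mathfrak{sl}}_{e+1}$, exactly as the identity $\overline{(\mu-\alpha_k)}=\overline\mu-\overline\alpha_k-\overline\alpha_{k+1}$ predicts.

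Next, I would invoke the theorem of~\cite{Mak-categ}: the full subcategory of $\overline O^\nu_{-(e+1)}$ consisting of objects whose weight has a zero at the distinguished slot is precisely the canonical $\widetilde{\mathfrak{sl}}_e$-subcategory, and on it the canonical Chevalley generator $F_k^{\mathrm{can}}$ is the restriction of the composite $\overline F_{k+1}\circ \overline F_k$. Fiebig's theorem puts $\bigoplus_\mu \theta(O^\nu_\mu)$ inside exactly this subcategory, so the top row of the diagram is genuinely a categorical $\widetilde{\mathfrak{sl}}_e$-lowering operator. It then remains to show that the transported action $\theta^{-1}\,\overline F_{k+1}\,\overline F_k\,\theta$ coincides with the original $F_k$ on $O^\nu_\mu$. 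I would do this in two steps: first, show that $\theta$ intertwines the total functors in the sense that the residue-$k$ summand of $F$ at level $-N-e$ matches the composite $\overline F_{k+1}\circ \overline F_k$ at level $-N-(e+1)$, by tracing the extra addable/removable boxes produced by inserting a zero coordinate; second, check this matching on a generating set, e.g.\ on parabolic Verma modules, where both sides are explicitly computable in terms of the combinatorics of Fock-space-like bases.

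The principal obstacle is the $2$-categorical matching: Fiebig's equivalence is defined from combinatorial deformation data and does not obviously transport the KLR-type $2$-morphisms of the categorical action. To circumvent this, I would appeal to the rigidity of a minimal categorical $\widetilde{\mathfrak{sl}}_e$-action on a highest-weight category, in the spirit of Rouquier: once the two candidate actions on $O^\nu_{-e}$ are shown to agree on the Grothendieck group and on a highest-weight generator, minimality should force them to be isomorphic as $2$-representations, yielding the commutativity of~\eqref{ch3:diag-conj-intro}.
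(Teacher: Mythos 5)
The statement you are proving is a \emph{conjecture} in the paper, not a theorem: the paper explicitly says that it cannot establish the commutativity of~\eqref{ch3:diag-conj-intro} for the full category $O$, and instead proves only the weaker Theorem~\ref{ch3:thm_intro-main-decomp-functors}, restricted to the subcategory $\bfA^\nu[\alpha]$. Your proposal aims at the stronger statement and has a gap precisely where the paper itself gets stuck. The first two steps of your plan are sound and match the paper's setup: one reduces to comparing two candidate $\widetilde{\mathfrak{sl}}_e$-categorical actions on $O^\nu_{-e}$ --- the original one and the one induced, via $\theta$ and the doubled-quiver restriction of~\cite{Mak-categ}, from the $\widetilde{\mathfrak{sl}}_{e+1}$-action on $\overline O^\nu_{-(e+1)}$. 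Your third step, matching on parabolic Verma modules, only gives equality of the two actions on the Grothendieck group; the paper already records this in Section~\ref{ch3:subs_comm-Groth} and observes that, unlike in the finite-type case where the Bernstein--Gelfand theory of projective functors would let one promote a $K$-theoretic equality to an isomorphism of functors, no such tool exists in the affine setting.

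The crux is your fourth step, where you invoke ``rigidity of a minimal categorical $\widetilde{\mathfrak{sl}}_e$-action on a highest-weight category, in the spirit of Rouquier.'' Rouquier-type unicity theorems require the categorified module to be simple (or a highest-weight integrable module), and $[O^\nu_{-e}]\simeq \wedge^\nu U_e$ is \emph{not} simple as an $\widetilde{\mathfrak{sl}}_e$-module. The paper is explicit about this: ``Unfortunately we cannot apply the uniqueness theorem for categorical representations because the $\widetilde{\mathfrak{sl}}_e$-module categorified by $O^\nu_{-e}$ is not simple.'' This is exactly why the paper retreats to the Fock-space subcategory $\bfA^\nu[\alpha]$ and, instead of a unicity argument, compares both candidate categorifications with the Cherednik category $\calO$ through the $\KZ$-functor and the uniqueness-of-highest-weight-covers machinery of~\cite{RSVV}. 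Without a substitute for your rigidity step that does not rely on simplicity (e.g.\ a theory of projective functors for affine $\calO$, which the paper notes is unavailable), your argument at best establishes the restriction to $\bfA$, not Conjecture~\ref{ch3:conj-intro-art}.
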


\smallskip
Now, if the conjecture is true, then it implies a decomposition that we expected. After that we could use an argument similar to \cite{MOS} to show that the functor $\overline F_k$ is Koszul dual to the parabolic inclusion functor and the functor $\overline F_{k+1}$ is Koszul dual to the parabolic truncation functor. Then we can deduce that $F_k$ is Koszul dual to the Zuckerman functor (which is the composition of the parabolic inclusion functor and the parabolic truncation functor). Thus the problem is reduced to the proof of this conjecture.

It is not hard to see that the diagram from Conjecture \ref{ch3:conj-intro-art} is commutative at the level of Grothendieck groups. In the case of the category $\calO$ of ${\mathfrak{gl}}_N$ (instead of affine ${\mathfrak{gl}}_N$) this is already enough to prove the analogue of Conjecture \ref{ch3:conj-intro-art}, using the theory of projective functors. Indeed, \cite[Thm.~3.4]{BG} implies that two projective functors are isomorphic if their actions on the Grothendieck group coincide. Unfortunately, there is no satisfactory theory of projective functors for the affine case (an attempt to develop such a theory was given in \cite{FrMal}).

We choose another strategy to prove this conjecture. It is based on the main result of \cite{Mak-categ}, relating the notion of a categorical representaion of $\widetilde{\mathfrak{sl}}_e$ with the notion of a categorical representation of $\widetilde{\mathfrak{sl}}_{e+1}$. 
Let us fix the following inclusion of Lie algebras $\widetilde{\mathfrak{sl}}_e\subset\widetilde{\mathfrak{sl}}_{e+1}$ 
$$
e_r\mapsto
\left\{\begin{array}{rl}
e_r &\mbox{ if }r\in[0,k-1],\\
{[e_k,e_{k+1}]} &\mbox{ if }r=k,\\
e_{r+1} &\mbox{ if }r\in[k+1,e-1],
\end{array}\right.
$$
$$
f_r\mapsto
\left\{\begin{array}{rl}
f_r &\mbox{ if }r\in[0,k-1],\\
{[f_{k+1},f_k]} &\mbox{ if }r=k,\\
f_{r+1} &\mbox{ if }r\in[k+1,e-1].
\end{array}\right.
$$

The Lie algebra $\widetilde{\mathfrak{sl}}_{e}$ has a categorical representation in the category $O_{-e}^\nu$ while the Lie algebra $\widetilde{\mathfrak{sl}}_{e+1}$ has a categorical representation in the category $\overline O_{-(e+1)}^\nu$. By \cite[Thm.~3.5]{Mak-categ}, each category $\overline\calC$ with a categorical action of $\widetilde{\mathfrak{sl}}_{e+1}$ contains (under some assumptions) a (canonically defined) subcategory $\calC\subset \overline\calC$ that inherits a categorical action of $\widetilde{\mathfrak{sl}}_{e}$ from the categorical action of $\widetilde{\mathfrak{sl}}_{e+1}$ on $\overline\calC$. In particular, if we take $\overline\calC=\overline O_{-(e+1)}^\nu$, then the subcategory $\calC$ can be easily identified with $\overline O_{-e}^\nu$ using equivalences $\theta$ like in (\ref{ch3:diag-conj-intro}).

We get two categorical representations of $\widetilde{\mathfrak{sl}}_e$ in $O^{\nu}_{-e}$:
\begin{itemize}
  \item the original one,
  \item the $\widetilde{\mathfrak{sl}}_e$-categorical representation structure induced from the $\widetilde{\mathfrak{sl}}_{e+1}$-categorical representation structure in $\overline O^{\nu}_{-(e+1)}$.
\end{itemize}
To prove Conjecture \ref{ch3:conj-intro-art}, it is enough to prove that these two categorical representation structures are the same.

\smallskip

Unfortunately we cannot apply the uniqueness theorem for categorical representations because the $\widetilde{\mathfrak{sl}}_e$-module categorified by $O^\nu_{-e}$ is not simple. However, we can obtain a weaker version of Conjecture \ref{ch3:conj-intro-art}. The category $O^\nu_{-e}$ contains subcategories $\bfA^\nu[\alpha]$, parameterized by $\alpha\in Q^+_e$, where $Q^+_e$ is the positive part of the root lattice of $\widetilde{\mathfrak{sl}}_{e}$. The direct sum of such categories categorifies the Fock space.\footnote{To get a categorification of the Fock space, we sum by $\alpha$, but the values of $N$ and $\nu$ are not fixed. For each $\alpha$ we need $N$ and $\nu$ such that $\nu_r\geqslant |\alpha|$. See \cite[Sec.~7.4]{RSVV} for more details.} In this case we can use the technique similar to one used in \cite{RSVV}.
This technique allows to prove in some cases that two categorical representations that categorify the Fock space are the same. We get the following.

For $i\in[0,e-1]$ we have $F_i(\bfA^\nu[\alpha])\subset \bfA^\nu[\alpha+\alpha_i]$. Let $\overline\bfA^\nu[\overline\alpha]$ be defined in the same way as $\bfA^\nu[\alpha]$ with respect to the parameter $e+1$ instead of $e$.
Let $|\alpha|$ be the height of $\alpha$. For each $\alpha\in Q^+_e$, we construct $\overline\alpha=\phi(\alpha)\in Q^+_{e+1}$, where the map $\phi\colon Q_e\to Q_{e+1}$ is defined in Section \ref{ch3:subs_not-quiv-I-Ibar} (see also Section \ref{ch3:subs_not-e-e+1}).  

The main result of Section \ref{ch3:sec_catO} is the following theorem.

\smallskip
\begin{thm}
\label{ch3:thm_intro-main-decomp-functors}
Assume that $e>2$ and $\nu=(\nu_1,\cdots,\nu_l)$ satisfies $\nu_r>|\alpha|$ for each $r\in[1,l]$. There exists $\beta\in Q^+_{e+1}$ such that for each $\alpha\in Q^+_{e}$ as above there are equivalences of categories $\theta'_{\alpha}\colon \bfA^\nu[\alpha]\to \overline\bfA^\nu[\beta+\overline\alpha]$ and $\theta'_{\alpha+\alpha_k}\colon \bfA^\nu[\alpha+\alpha_k]\to \overline\bfA^\nu[\beta+\overline\alpha+\overline\alpha_k+\overline\alpha_{k+1}]$ such that the following diagram is commutative
$$
\begin{CD}
\overline\bfA^\nu[\beta+\overline\alpha]@>{\overline F_{k+1}\overline F_k}>> \overline\bfA^\nu[\beta+\overline\alpha+\overline\alpha_k+\overline\alpha_{k+1}]\\
@A{\theta'_\alpha}AA                 @A{\theta'_{\alpha+\alpha_k}}AA\\
\bfA^\nu[\alpha]@>{F_k}>> \bfA^\nu[\beta+\alpha+\alpha_k].
\end{CD}
$$
\qed
\end{thm}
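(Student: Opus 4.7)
The plan is to reduce the commutativity of the diagram to the coincidence of two categorical $\widetilde{\mathfrak{sl}}_e$-actions on the subcategory $\bfA^\nu=\bigoplus_\alpha \bfA^\nu[\alpha]$. First, I would construct the candidate equivalences $\theta'_\alpha$ by assembling, block by block, the Fiebig equivalences $\theta\colon O^\nu_\mu \simeq \overline O^\nu_{\overline\mu}$ of \cite{Fie-str} across the weights $\mu$ that occur in $\bfA^\nu[\alpha]$, shifted by a fixed $\beta\in Q^+_{e+1}$ chosen so that the image lands inside $\overline\bfA^\nu$. The hypothesis $\nu_r>|\alpha|$ is designed to ensure that $\bfA^\nu[\alpha]$ behaves like a generic piece of $O^\nu_{-e}$ and that $\overline\bfA^\nu[\beta+\overline\alpha]$ is the correct target block for $\theta$.

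Next, I would invoke \cite[Thm.~3.5]{Mak-categ} applied to the categorical $\widetilde{\mathfrak{sl}}_{e+1}$-action on $\overline O^\nu_{-(e+1)}$: it produces a canonical subcategory carrying an induced categorical $\widetilde{\mathfrak{sl}}_e$-action, for the embedding $\widetilde{\mathfrak{sl}}_e\hookrightarrow\widetilde{\mathfrak{sl}}_{e+1}$ prescribed in the introduction (so that $f_k\mapsto[f_{k+1},f_k]$). Under $\theta$, this subcategory is identified with $O^\nu_{-e}$, and the induced functor categorifying $f_k$ is built from the composition $\overline F_{k+1}\overline F_k$. Consequently, the diagram of Theorem \ref{ch3:thm_intro-main-decomp-functors} reduces to the assertion that, after restriction to $\bfA^\nu$, this induced categorical $\widetilde{\mathfrak{sl}}_e$-action coincides with the original one on $O^\nu_{-e}$.

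To prove this coincidence I would avoid the general uniqueness theorem for categorical $\widetilde{\mathfrak{sl}}_e$-actions, since as the authors note the $\widetilde{\mathfrak{sl}}_e$-module categorified by $O^\nu_{-e}$ is not simple. Instead I would appeal to a uniqueness result for categorifications of the higher-level Fock space in the style of \cite{RSVV}. Summed over $\alpha\in Q^+_e$ (with $N$ varying so that $\nu_r>|\alpha|$ always holds), $\bfA^\nu$ categorifies the level-$l$ Fock space of $\widetilde{\mathfrak{sl}}_e$, and such a categorification is essentially unique among highest weight categorifications whose standard objects match the standard basis of the Fock space. I would verify this matching for both the original categorical structure and for the one induced from $\widetilde{\mathfrak{sl}}_{e+1}$, and deduce that they agree.

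The main obstacle will be this last step: showing that the induced $\widetilde{\mathfrak{sl}}_e$-structure of \cite{Mak-categ}, viewed through the Fiebig equivalences, preserves $\bfA^\nu$, is a highest weight categorification in the required sense, and categorifies the same Fock-space module as the original action. This requires a careful analysis of how the canonical subcategory singled out by \cite[Thm.~3.5]{Mak-categ} intersects $\overline\bfA^\nu$, and of the behaviour of the parabolic standard modules under $\overline F_{k+1}\overline F_k$. The hypothesis $e>2$ is probably needed here to avoid low-rank degeneracies in $\widetilde{\mathfrak{sl}}_e$ that would break the Fock-space uniqueness argument.
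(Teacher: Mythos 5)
Your high-level strategy matches the paper's: use the Fiebig change-of-level equivalence to identify $\bfA^\nu$ with a subcategory of $\overline O^\nu_{-(e+1)}$, invoke \cite[Thm.~3.5]{Mak-categ} to obtain an induced categorical $\widetilde{\mathfrak{sl}}_e$-action, and then appeal to the \cite{RSVV}-style uniqueness of higher-level Fock space categorifications, implemented via comparison with the Cherednik-algebra category $\calO$ through the $\KZ$ and $\Psi$ functors. This is precisely the route the paper follows.

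However, there is a concrete gap in your first paragraph: you propose to take $\theta'_\alpha$ to be the Fiebig equivalence $\theta_\alpha$ itself (assembled block by block), but the paper goes out of its way to show that this cannot be expected to work. The Fiebig equivalence $\theta_d\colon\bfA^\nu_R[d]\to\calA^\nu_R[d]$ does intertwine the Grothendieck-group actions, and even sends $T_{d,R}$ to $\overline T_{d,R}$, but the induced automorphism $u$ of $H^\nu_{d,R}(q_e)$ obtained by comparing $\psi_{d,R}$ with $\overline\psi_{d,R}$ via $\theta_d$ need not be the identity; hence $\Psi$ need not equal $\overline\Psi\circ\theta_d$, and there is no reason for $\theta_d$ to intertwine $F_k$ with $\overline F_{k+1}\overline F_k$. (This is recorded explicitly in the Remark following Lemma \ref{ch3:lem_isom-R}.) The uniqueness machinery you invoke in your third paragraph only produces \emph{some} equivalence with the required property, not the Fiebig one; the correct way to realize it is the one the paper uses: apply \cite[Prop.~2.20]{RSVV} to obtain two equivalences $\gamma_d\colon{^*\calO}^\nu_R[d]\simeq\bfA^\nu_R[d]$ and $\overline\gamma_d\colon{^*\calO}^\nu_R[d]\simeq\calA^\nu_R[d]$ compatible with the highest weight covers $\Psi^\nu_d$, $\overline\Psi^\nu_d$, and the $\KZ$ functor, and then define $\theta'_d=\overline\gamma_d\circ\gamma_d^{-1}$. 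Once $\theta'_d$ is built this way (so that $\overline\Psi^\nu_d\circ\theta'_d\simeq\Psi^\nu_d$ by Corollary \ref{ch3:coro_constr-of-equiv-A-A}), the compatibility of both $\Psi$ and $\overline\Psi$ with $F$ and $\Ind$ (Lemmas \ref{ch3:lem_com-F-calA-Hecke} and \ref{ch3:lem_F-comm-KZ-e}) together with the argument of \cite[Lem.~2.4]{Shan-Fock} yields the commutativity you want. Your proposal would stall at the point where you try to verify the diagram with your chosen $\theta'_\alpha$; the fix is to let the double-highest-weight-cover comparison produce $\theta'_\alpha$ rather than fixing it in advance.
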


\smallskip

The paper has the following structure. 

In Section \ref{ch3:sec_catO}, we use the categorical representations to decompose the functor $F$ in the category $\bfA$ (Theorem \ref{ch3:thm_intro-main-decomp-functors}). We do this in the following way. We consider a category $\calA$ that is equivalent to $\bfA$ as a category, but the functors $E_i$ and $F_i$ on $\calA$ are defined with respect to another categorical action that comes from level $-N-(e+1)$. We want to find a (probably different) equivalence between $\bfA$ and $\calA$ that identifies the functors. The paper \cite{RSVV} compares $\bfA$ (together with $E_i$ and $F_i$) with the category $\calO$ for the rational Cherednik algebra. We use a similar argument to compare $\calA$ with the same category $\calO$. As a consequence, we manage to compare $\bfA$ with $\calA$ (together with the functors $E_i$ and $F_i$).

In Section \ref{ch3:sec_gr-lifts} we prove that in some cases the functors $E$ and $F$ for the category $O$ admit graded lifts. For this we use Soergel's functor $\bbV$. 

In Section \ref{ch3:sec-Koszul} we prove that the functors $E$ and $F$ for the category $\bfA$ are Koszul dual to Zuckerman functors. In fact, in Section \ref{ch3:subs_key-lem} we prove a more general and more abstract statement that says that in some general situation a functor that satisfies a list of axioms is automatically Koszul dual to some sort of Zuckerman functor. The proof of this statement uses the approach of \cite{MOS}. 


The technique of restriction of categorical representations in \cite{Mak-categ} is developed for solving the problem in the present paper. However, this technique has an independent interest: another application is given in \cite[Sec.~7]{RW}.

It is important to emphasize the relation between the present paper and the preprint \cite{Mak-Zuck}. The preprint \cite{Mak-Zuck} is expected to be published as two different papers: the first of them is \cite{Mak-categ}, the second one is the present paper. The first part contains general results about KLR algebras and categorical representations. The second part is an application of the first part. The present paper is rewritten (compared to \cite{Mak-Zuck}) in a way that we never use KLR algebras explicitly. This makes the paper more independent from \cite{Mak-categ}.

\section{The category $O$}
\label{ch3:sec_catO}

For a noetherian ring $A$ we denote by $\mod(A)$ the abelian category of left finitely generated $A$-modules. We denote by $\bbN$ the set of non-negative integers. By a commutative diagram of functors we always mean a diagram that commutes up to an isomorphism of functors.

\subsection{Kac-Moody algebras associated with a quiver}
\label{ch3:subs_KM-quiv}
Let $\Gamma=(I,H)$ be a quiver without $1$-loops with the set of vertices $I$ and the set of arrows $H$. For $i,j\in I$ let $h_{i,j}$ be the number of arrows from $i$ to $j$ and set also $a_{i,j}=2\delta_{i,j}-h_{i,j}-h_{j,i}$. Let $\frakg_I$ be the Kac-Moody algebra over $\bbC$ associated with the matrix $(a_{i,j})$. More precisely, the algebra $\frakg_I$ is generated by elements $e_i$, $f_i$, $h_i$ for $i\in I$ satisfying the following relations:
$$
\begin{array}{rcll}
[e_i,f_j]&=&\delta_{ij}h_i,\\
{[h_i,e_j]}&=&a_{i,j}e_j,\\
{[h_i,f_j]}&=&-a_{i,j}f_j,\\
{[h_i,h_j]}&=&0,\\
{\rm ad}(e_i)^{1-a_{ij}}(e_j)&=&0 &\mbox{ if }i\ne j,\\
{\rm ad}(f_i)^{1-a_{ij}}(f_j)&=&0 &\mbox{ if }i\ne j.\\
\end{array}
$$

For each $i\in I$ let $\alpha_i$ be the simple root corresponding to $e_i$.
Set
$$
Q_I=\bigoplus_{i\in I}\bbZ\alpha_i,\quad Q^+_I=\bigoplus_{i\in I}\bbN\alpha_i. 
$$
Let $X_I$ be the free abelian group with basis $\{\varepsilon_i;~i\in I\}$. Set also
$$
X^+_I=\bigoplus_{i\in I}\bbN\varepsilon_i.
$$

For $\alpha\in Q^+_I$ denote by $|\alpha|$ its height, i.e., for $\alpha=\sum_{i\in I}d_i\alpha_i$, $d_i\in \bbN$, we have $|\alpha|=\sum_{i\in I}d_i$.  Set $I^\alpha=\{\ui=(i_1,\cdots,i_{|\alpha|})\in I^{|\alpha|};~ \sum_{r=1}^{|\alpha|}\alpha_{i_r}=\alpha\}$. 

Let $\Gamma_\infty=(I_\infty,H_\infty)$ be the quiver with the set of vertices $I_\infty=\bbZ$ and the set of arrows $H_\infty=\{i\to i+1;~i\in I_\infty\}$. In this case we will simply write ${\mathfrak{sl}}_\infty$ for the Lie algebra $\frakg_{I_\infty}$. We stress that here we get a "two-sided" ${\mathfrak{sl}}_\infty$, the generators of ${\mathfrak{sl}}_\infty$ are parameterized by $\bbZ$ and not by $\bbN$.

 Assume that $e>1$ is an integer. Let $\Gamma_e=(I_e,H_e)$ be the quiver with the set of vertices $I_e=\bbZ/e\bbZ$ and the set of arrows $H_e=\{i\to i+1;~i\in I_e\}$. Then $\frakg_{I_e}$ is the Lie algebra $\widetilde{\mathfrak{sl}}_e=\mathfrak{sl}_e\otimes\bbC[t,t^{-1}]\oplus\bbC \bm{1}$.

Assume that $\Gamma=(I,H)$ is a quiver whose connected components are of the form $\Gamma_e$, with $e\in\bbN$, $e>1$ or $e=\infty$. For $i\in I$ denote by $i+1$ and $i-1$ the (unique) vertices in $I$ such that there are arrows $i\to i+1$ and $i-1\to i$.

Let us also consider the following additive map
$$
\iota\colon Q_I\to X_I,\quad\alpha_i\mapsto \varepsilon_i-\varepsilon_{i+1}.
$$

Fix a formal variable $\chi$ and set $X_I^\chi=X_I\oplus\bbZ\chi$.
We can lift the $\bbZ$-linear map $\iota$ to a $\bbZ$-linear map
$$
\iota^\chi\colon Q_I\to X_I^\chi,\quad \alpha_i\mapsto \varepsilon_i-\varepsilon_{i+1}-\chi.
$$

Note that the map $\iota^\chi\colon Q_I\to X^\chi_I$ is injective (while $\iota$ is not injective).
We may omit the symbols $\iota$, $\iota^\chi$ and write $\alpha$ instead of $\iota(\alpha)$ or $\iota^\chi(\alpha)$.

We will also abbreviate
$$
Q_e=Q_{I_e},\quad X_e=X_{I_e},\quad X^\chi_e=X^\chi_{I_e}. 
$$

\subsection{Doubled quiver}
\label{ch3:subs_not-quiv-I-Ibar}

Now we recall the notion of a doubled quiver introduced in \cite[Sec.~2B]{Mak-categ}.

Let $\Gamma=(I,H)$ be a quiver without $1$-loops. Fix a decomposition $I=I_0\sqcup I_1$ such that there are no arrows between the vertices in $I_1$.  In this section we define a \emph{doubled quiver} $\overline\Gamma=(\overline I,\overline H)$ associated with $(\Gamma,I_0,I_1)$. The motivation of this definition is that there is a relation between categorical representations of $\frakg_I$ and of $\frakg_{\overline I}$, see \cite{Mak-categ} for more details. Later, we will apply this relation to the categorical representation in the category $\calO$.

 The idea of how we get $\overline\Gamma$ from $\Gamma$ is to "double" each vertex in the set $I_1$ (we do not touch the vertices from $I_0$). We replace each vertex $i\in I_1$ by a couple of vertices $i^1$ and $i^2$ with an arrow $i^1\to i^2$. Each arrow entering to $i$ should be replaced by an arrow entering to $i^1$, each arrow coming from $i$ should be replaced by an arrow coming from $i^2$. See  \cite[Sec.~2B]{Mak-categ} for a more formal definition of the quiver $\overline\Gamma$. This construction will be used in the present paper for two special types of quivers mentioned in Section \ref{ch3:subs_not-e-e+1}.


Set $I^\infty=\coprod_{d\in \bbN} I^d$, $\overline I^\infty=\coprod_{d\in \bbN} \overline I^d$, where $I^d$, $\overline I^d$ are the cartesian products.
The concatenation yields a monoid structure on $I^\infty$ and $\overline I^\infty$.
Let $\phi\colon I^\infty\to \overline I^\infty$ be the unique morphism of monoids such that for $i\in I\subset I^\infty$ we have
$$
\phi(i)=
\left\{\begin{array}{ll}
i^0 &\mbox{ if }i\in I_0,\\
(i^1,i^2) &\mbox{ if } i\in I_1.
\end{array}\right.
$$

There is a unique $\bbZ$-linear map $\phi\colon Q_I\to Q_{\overline I}$ such that $\phi(I^\alpha)\subset I^{\phi(\alpha)}$ for each $\alpha\in Q^+_I$. It is given by
$$
\phi(\alpha_{i})=
\left\{\begin{array}{ll}
\alpha_{i^0} &\mbox{ if }i\in I_0,\\
\alpha_{i^1}+\alpha_{i^2}&\mbox{ if }i\in I_1.
\end{array}\right.
$$
Let $\phi$ denote also the unique additive embedding
\begin{equation}
\label{ch3:eq_phi(mu)}
\phi\colon X_I\to X_{\overline I}, \quad\varepsilon_i\mapsto \varepsilon_{i'},
\end{equation}
where
\begin{equation}
\label{ch3:eq_i'}
i'=
\left\{\begin{array}{ll}
i^0 &\mbox{ if }i\in I_0,\\
i^1&\mbox{ if }i\in I_1.
\end{array}\right.
\end{equation}

\subsection{Special quivers}
\label{ch3:subs_not-e-e+1}
We will use the construction from the previous section only for two special types of quivers.

\medskip
First, consider the quiver $\Gamma=\Gamma_{e}$, for $e>1$.  We have $I=I_e=\bbZ/e\bbZ$. Fix $k\in [0,e-1]$ and set $I_1=\{k\}$, $I_0=I\backslash\{k\}$. In this case the quiver $\overline\Gamma$ is isomorphic to $\Gamma_{e+1}$. The isomorphism $\overline{\Gamma_e}\simeq \Gamma_{e+1}$ at the level of vertices is 
$$
\begin{array}{rcll}
i^0&\mapsto & i &\mbox{ if } i\in [0;k-1],\\
k^1&\mapsto & k,&\\
k^2&\mapsto & k+1,&\\
i^0&\mapsto & i+1 &\mbox{ if } i\in [k+1,e-1].
\end{array}
$$

To avoid confusion, for $i\in \overline I=I_{e+1}$ we will write $\overline\alpha_i$ and $\overline\varepsilon_i$ instead of $\alpha_i$ and $\varepsilon_i$ respectively.


\smallskip
Let $\Upsilon\colon\bbZ\to\bbZ$ be the map given for $a\in\bbZ,b\in[0,e-1]$ by
\begin{equation}
\label{ch3:eq_upsilon}
\Upsilon(ae+b)=
\left\{\begin{array}{ll}
a(e+1)+b &\mbox{ if }b\in[0,k],\\
a(e+1)+b+1 &\mbox{ if }b\in[k+1,e-1].
\end{array}\right.
\end{equation}
Note that we have the following commutative diagram
$$
\begin{CD}
\bbZ @>{\Upsilon}>> \bbZ\\
@VVV           @VVV\\
I_e @>>>     I_{e+1},
\end{CD}
$$
where the bottom map is $i\mapsto i'$, see (\ref{ch3:eq_i'}).

\medskip
Now we describe the second quiver that we are interested in. Let $\widetilde\Gamma=(\widetilde I,\widetilde H)$ be the disjoint union of $l$ copies of $\Gamma_\infty$. 
 
 Write $\widetilde\alpha_{i}$ and $\widetilde\varepsilon_{i}$ instead of $\alpha_{i}$ and $\varepsilon_{i}$ respectively for each $i\in \widetilde I$.
We identify an element of $\widetilde I$ with an element $(a,b)\in I_\infty\times [1,l]$ in the obvious way.
Consider the decomposition $\widetilde I=\widetilde I_0\sqcup \widetilde I_1$ such that $(a,b)\in \widetilde I_1$ if and only if $a\equiv k~\mod~e$.
In this case the quiver $\overline{\widetilde\Gamma}$ is isomorphic to $\widetilde\Gamma$. More precisely, in this case we have
$$
\begin{array}{ll}
(a,b)^0=(\Upsilon(a),b),\\
(a,b)^1=(\Upsilon(a),b),\\
(a,b)^2=(\Upsilon(a)+1,b).\\
\end{array}
$$
To avoid confusion, we will always write $\widetilde\phi$ for any of the maps $\widetilde\phi\colon\widetilde I^\infty\to\widetilde I^\infty$, $Q_{\widetilde I}\to Q_{\widetilde I}$, $X_{\widetilde I}\to X_{\widetilde I}$ in Section \ref{ch3:subs_not-quiv-I-Ibar}. 



\medskip
Consider the quiver homomorphism $\pi_e\colon \widetilde \Gamma\to \Gamma_e$ such that
$$
\pi_e\colon\widetilde I\to I,~(a,b)\mapsto a~\mod~e.
$$
Similarly,  we have a quiver homomorphism $\pi_{e+1}\colon\widetilde\Gamma\to\Gamma_{e+1}$.
They yield $\bbZ$-linear maps
$$
\pi_e\colon Q_{\widetilde I}\to Q_{e}, \quad \pi_e\colon X_{\widetilde I}\to X_{e},\quad \pi_{e+1}\colon Q_{\widetilde I}\to Q_{e+1}, \quad \pi_{e+1}\colon X_{\widetilde I}\to X_{e+1}.
$$

The following diagrams are commutative for $\alpha\in Q^+_e$, $\widetilde\alpha\in Q^+_{\widetilde I}$ such that $\pi_e(\widetilde\alpha)=\alpha$,
$$
\begin{CD}
Q_{\widetilde I} @>{\widetilde\phi}>> Q_{\widetilde I}\\
@V{\pi_e}VV                                  @V{\pi_{e+1}}VV\\
Q_e              @>{\phi}>>           Q_{e+1}\\
\end{CD}
\qquad\qquad
\begin{CD}
X_{\widetilde I} @>{\widetilde\phi}>> X_{\widetilde I}\\
@V{\pi_e}VV                                  @V{\pi_{e+1}}VV\\
X_e              @>{\phi}>>           X_{e+1}\\
\end{CD}
\qquad\qquad
\begin{CD}
\widetilde I^{\widetilde\alpha} @>{\widetilde\phi}>> \widetilde I^{\widetilde\phi(\widetilde\alpha)}\\
@V{\pi_e}VV                                  @V{\pi_{e+1}}VV\\
I_e^{\alpha}         @>{\phi}>>           I_{e+1}^{\phi(\alpha)}\\
\end{CD}
$$

\subsection{Deformation rings}
\label{ch3:subs_def-ring}

In this section we introduce some general definitions from \cite{RSVV} for a later use. Let us fix an integer $e>1$.

We call \emph{deformation ring} $(R,\kappa,\tau_1,\cdots,\tau_l)$ a regular commutative noetherian $\bbC$-algebra $R$ with $1$ equipped with a homomorphism $\bbC[\kappa^{\pm 1},\tau_1,\cdots,\tau_l]\to R$. Let $\kappa,\tau_1,\cdots,\tau_l$ denote also the images of $\kappa,\tau_1,\cdots,\tau_l$ in $R$.
A deformation ring is \emph{in general position} if any two elements of the set
$$
\{\tau_u-\tau_v+a\kappa+b,\kappa-c;~a,b\in\bbZ,c\in \bbQ,u\ne v\}
$$
have no common non-trivial divisors.
A \emph{local deformation ring} is a deformation ring which is a local ring and such that $\tau_1,\cdots,\tau_l, \kappa-e$ belong to the maximal ideal of $R$. 
Note that each $\bbC$-algebra that is a field has a \emph{trivial} local deformation ring structure, i.e., such that $\tau_1=\cdots=\tau_l=0$ and $\kappa=e$. We always consider $\bbC$ as a local deformation ring with a trivial deformation ring structure.
A $\bbC$-algebra $R$ is called \emph{analytic} if it is a localization of the ring of germs of holomorphic functions on some compact polydisc $D\subset \bbC^d$ for some $d\geqslant 1$.

We will write $\overline\kappa=\kappa(e+1)/e$, $\overline\tau_r=\tau_r(e+1)/e$.
We will abbreviate $R$ for $(R,\kappa,\tau_1,\cdots,\tau_l)$ and $\overline R$ for $(R,\overline\kappa,\overline\tau_1,\cdots,\overline\tau_l)$.

\medskip
Let us give names to two special assumptions on $R$ that will be often used in the paper.


\medskip
\textbf{Assumption 1.} $R$ is a local analytic deformation ring in general
position of dimension $\leqslant 2$.

\medskip
We denote by $\bfk$ the residue field and by $K$ the field of fractions of $R$. Assumption 1 is necessary to be able to study categorical representations in the affine category $\calO$, see \cite{RSVV}.


Now, we introduce a modified version of this assumption. This is necessary to be able to state results that are true over $R$, over $K$ and over $\bfk$ without speaking about $K$ and $\bfk$ separately, see for example Proposition \ref{ch3:prop_functors-on-O-gen}.

\medskip
\textbf{Assumption 2.} The ring $R$ is either as in Assumption 1 or is the fraction field or the residue field of a ring satisfying Assumption 1.

\medskip
If $R$ satisfies the second part of Assumption 2, we simply mean $\bfk=K=R$.

\medskip
Let $R$ be as in Assumption 1.
Consider the element $q_{e}=\exp(2\pi \sqrt{-1}/\kappa)$ 
in $R$. This element specializes to $\zeta_{e}=\exp(2\pi \sqrt{-1}/e)$ 
in $\bfk$. 
If $R$ satisfies Assumption 2, then it has a deformation ring structure. The element $q_e$ 
still makes sense in $R$. If $R$ is a resudue field of a ring satisfying Assumption 1, then we mean that $q_e$ is $\zeta_e$. 


\subsection{The Lie algebra $\widehat{\mathfrak{gl}}_N$}
\label{ch3:subs_aff-Lie}

Fix positive integers $N$, $l$ and $e$ such that $e>1$. Let $R$ be a deformation ring, see Section \ref{ch3:subs_def-ring}. Set
$$
\bfg_R=\mathfrak{gl}_N(R),\quad \widehat\bfg_R=\widehat{\mathfrak{gl}}_N(R)=\mathfrak{gl}_N(R)[t,t^{-1}]\oplus R\bm{1}\oplus R\partial.
$$
For $i,j\in[1,N]$ let $e_{i,j}\in\bfg_R$ denote the matrix unit. Let $\bfh_R\subset\bfg_R$ be the Cartan subalgebra generated by the $e_{i,i}$'s, and $\epsilon_1,\cdots,\epsilon_N$ be the basis of $\bfh_R^*$ dual to $e_{1,1},\cdots,e_{N,N}$. Let $P=\bbZ\epsilon_1\oplus\cdots\oplus\bbZ\epsilon_N$ be the weight lattice of $\bfg_R$. We identify $P$ with $\bbZ^N$.
Let $\Pi,~\widehat \Pi\subset \bfh_R^*$ be the sets of simple roots of $\bfg_R$ and $\widehat \bfg_R$. Let $W=\frakS_N$ be the Weyl group of $\bfg_R$ and $\widetilde W=W\ltimes \bbZ\Pi$, $\widehat W=W\ltimes P$ be the affine and the extended affine Weyl groups.

Then we define the element $\bfwt_e(\lambda)\in X_e$ given by
$$
\bfwt_e(\lambda)=\sum_{s=1}^N\varepsilon_{\lambda_r},
$$
where we write $\varepsilon_{\lambda_r}$ for $\varepsilon_{(\lambda_r~\mod~e)}$.

We will abbreviate
\begin{equation}
\label{ch3:eq_def-P[mu]}
P[\mu]=\{\lambda\in P ; ~\bfwt_e(\lambda)=\mu\}.
\end{equation}

Similarly, we consider the weight
$$
\bfwt^\chi_e(\lambda)=\sum_{r=1}^N\varepsilon_{\lambda_r}+(\sum_{r=1}^N\lambda_r)\chi\in X^\chi_e.
$$

Finally, let $X_e[N]\subset X_e$ be the subset given by
$$
X_e[N]=\{\mu=\sum_{r=1}^e\mu_r\varepsilon_r\in X_e;~ \mu_r\geqslant 0,~ \sum_{r=1}^e\mu_r=N\}.
$$
We may identify $\mu$ with the tuple $(\mu_1,\cdots,\mu_e)$ if no confusion is possible.


Now, consider the Cartan subalgebra $\widehat\bfh_R=\bfh_R\oplus R\bm{1}\oplus R\partial$ of $\widehat\bfg_R$. Let $\alpha_0, \alpha_1,\cdots,\alpha_{N-1}\subset \widehat\bfh_R^*$ and $\check\alpha_0, \check\alpha_1,\cdots,\check\alpha_{N-1}\subset \widehat\bfh_R$ be the simple roots and coroots of $\widehat\bfg_R$ respectively. Let $\Lambda_0$ and $\delta$ be the elements of $\widehat\bfh_R^*$ defined  by
$$
\delta(\partial)=\Lambda_0(\bm{1})=1,\quad \delta(\bfh_R\oplus R\bm{1})=\Lambda_0(\bfh_R\oplus R\partial)=0.
$$
Let $(\bullet,\bullet)\colon \widehat\bfh_R^*\times \widehat\bfh_R^*\to R$ be the bilinear form such that
$$
\lambda(\check\alpha_r)=(\lambda,\alpha_r),\quad \lambda(\partial)=(\lambda,\Lambda_0),\qquad \forall\lambda\in \widehat\bfh^*_R.
$$

Set $P_R=P\otimes_{\bbZ}R$. Given an $l$-tuple of positive integers $\nu=(\nu_1,\cdots,\nu_l)$ such that $\sum_{r=1}^l \nu_r=N$, we define
$$
\begin{array}{lll}
\rho&=&(0,-1,\cdots,-N+1),\\
\rho_\nu&=&(\nu_1,\nu_1-1\cdots,1,\nu_2,\cdots,1,\cdots,\nu_l,\cdots,1),\\
\tau&=&(\tau_1^{\nu_1},\cdots,\tau_l^{\nu_l}),
\end{array}
$$
where $\tau_r^{\nu_r}$ means $\nu_r$ copies of $\tau_r$.
Set also
\begin{equation}
\label{ch3:eq_lambda-tilde}
\widehat\rho=\rho+N\Lambda_0, \quad
\widetilde\lambda=\lambda+\tau+z_\lambda\delta-(N+\kappa)\Lambda_0,
\end{equation}
where $z_\lambda=(\lambda,2\rho+\lambda)/2{\kappa}$. Denote by $\widehat{\bfp}_{R,\nu}$ the parabolic subalgebra of $\widehat{\bfg}_R$ of parabolic type $\nu$.
For a $\nu$-dominant weight $\lambda\in P$ let $\Delta(\lambda)_R$ be the parabolic Verma module with highest weight $\widetilde\lambda$ and $\Delta_R^{\lambda}=\Delta(\lambda-\rho)_R$. 
We will also skip the subscript $R$ when $R=\bbC$.

\subsection{Affine Weyl groups}
\label{ch3:subs_ext-aff}
Assume that $R=\bbC$.
In this section we discuss some combinatorial aspects of the $\widehat W$-action on $\widehat\bfh^*$.

The group $\widehat W$ is generated by $\{\pi,s_i;~i\in\bbZ/N\bbZ\}$ modulo the relations
$$
\begin{array}{ccccc}
s_i^2&=&1,\\
s_is_j&=&s_js_i \quad&\forall i\ne j\pm 1,\\
s_is_{i+1}s_i&=&s_{i+1}s_is_{i+1},\\
\pi s_{i+1}&=&s_{i}\pi.
\end{array}
$$

Let $\widetilde W$ be the subgroup of $\widehat W$ generated by $\{s_i;~i\in \bbZ/N\bbZ\}$.
The group $\widehat W$ acts on $P$ in the following way:
\begin{itemize}
    \item[\textbullet] $s_r$ switches of the $r$th and $(r+1)$th components of $\lambda$ if $r\ne 0$,
    \item[\textbullet] $s_0(\lambda_1,\cdots,\lambda_N)=(\lambda_N-e,\lambda_2,\cdots,\lambda_{N-1},\lambda_1+e)$,
    \item[\textbullet]
$\pi(\lambda_1,\cdots,\lambda_N)=(\lambda_2,\cdots,\lambda_N,\lambda_1+e)$.
\end{itemize}
We will call this action of $\widehat W$ on $P$ the negative $e$-\emph{action}. We will always consider only negative actions of $\widehat W$ on $P$ up to Section \ref{ch3:subs_dual-funct-in-O}. So we can skip the word "negative". We may write $P^{(e)}=P$ to stress that we consider the $e$-action of $\widehat W$ on $P$. The map
$$
P^{(e)}\to \widehat\bfh^*,\quad \lambda\mapsto\widetilde{\lambda-\rho}+\widehat\rho
$$
is $\widehat W$-invariant. This means that the weights $\lambda_1,\lambda_2\in P$ are in the same $\widehat W$-orbit if and only if the highest weights of the Verma modules $\Delta^{\lambda_1}$ and $\Delta^{\lambda_2}$ are linked with respect to the Weyl group $\widehat W$, see \cite[Sec.~3.2]{SVV} and \cite[Sec.~2.3]{Fie-str} for more details about linkage.
Note that $P=\coprod_{\mu\in X_e[N]} P[\mu]$ is the decomposition of $P$ into $\widehat W$-orbits with respect to the $e$-action. An element $\lambda\in P$ is $e$-\emph{anti-dominant} if $\lambda_1\leqslant \lambda_2\leqslant\cdots\leqslant\lambda_N\leqslant \lambda_1+e$.

Recall the map $\Upsilon\colon\bbZ\to\bbZ$ from (\ref{ch3:eq_upsilon}). Applying $\Upsilon$ coordinate by coordinate to the elements of $P$ we get a map $\Upsilon\colon P^{(e)}\to P^{(e+1)}$.

\smallskip
\begin{lem}
The map $\Upsilon\colon P^{(e)}\to P^{(e+1)}$ is $\widehat W$-invariant and takes $e$-anti-dominant weights to $(e+1)$-anti-dominant weights.
\qed
\end{lem}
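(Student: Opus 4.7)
The plan is to verify both claims by a direct computation, leveraging two elementary properties of $\Upsilon$ extracted from its definition (\ref{ch3:eq_upsilon}): it is weakly monotone on $\bbZ$, and it satisfies the shift identity $\Upsilon(n+e)=\Upsilon(n)+(e+1)$ for every $n\in\bbZ$ (hence also $\Upsilon(n-e)=\Upsilon(n)-(e+1)$). Both facts follow by writing $n=ae+b$ with $b\in[0,e-1]$ and checking the two cases $b\in[0,k]$ and $b\in[k+1,e-1]$; the monotonicity reduces to observing that on the "digit" part the map $b\mapsto b$ (for $b\le k$) and $b\mapsto b+1$ (for $b>k$) is strictly increasing from $[0,e-1]$ to $[0,e]\setminus\{k+1\}$.

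For $\widehat W$-invariance, I would check the action of each generator $s_i$ ($i\in\bbZ/N\bbZ$) and of $\pi$ separately. For $s_i$ with $i\ne 0$, the action simply permutes two adjacent coordinates, so it trivially commutes with applying $\Upsilon$ coordinatewise. For $s_0$ (acting in the $e$-action on the left, and the $(e+1)$-action on the right) and for $\pi$, only the first and last coordinates are nontrivially affected: the transformations involve the shifts $\lambda_1\mapsto \lambda_1+e$ and $\lambda_N\mapsto \lambda_N-e$ on the $e$-side, and the analogous shifts by $e+1$ on the $(e+1)$-side. The shift identity $\Upsilon(n\pm e)=\Upsilon(n)\pm(e+1)$ is exactly what is needed to see that $\Upsilon\circ s_0 = s_0\circ\Upsilon$ and $\Upsilon\circ\pi=\pi\circ\Upsilon$.

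For the second claim, suppose $\lambda\in P$ is $e$-anti-dominant, i.e., $\lambda_1\leqslant\lambda_2\leqslant\cdots\leqslant\lambda_N\leqslant\lambda_1+e$. Monotonicity of $\Upsilon$ gives $\Upsilon(\lambda_1)\leqslant\Upsilon(\lambda_2)\leqslant\cdots\leqslant\Upsilon(\lambda_N)$, and the shift identity gives
$$
\Upsilon(\lambda_N)\leqslant \Upsilon(\lambda_1+e)=\Upsilon(\lambda_1)+(e+1),
$$
which is exactly $(e+1)$-anti-dominance of $\Upsilon(\lambda)$.

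There is no real obstacle here; the only thing to keep track of carefully is the shift by $e$ appearing inside $s_0$ and $\pi$ versus the shift by $e+1$ appearing in the same generators on the $P^{(e+1)}$ side, and that the shift identity for $\Upsilon$ is precisely what matches these two up.
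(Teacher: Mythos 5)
Your proof is correct. The paper states this lemma with no proof (just \qed, treating it as immediate from the definition of $\Upsilon$), and your argument — isolating the two elementary properties of $\Upsilon$ (strict monotonicity and the shift identity $\Upsilon(n\pm e)=\Upsilon(n)\pm(e+1)$), checking equivariance generator by generator, and then using both properties to transport the anti-dominance inequalities — is exactly the direct verification the authors have in mind.
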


\subsection{Hecke algebras}
\label{ch3:subs_Hecke}

Let $R$ be a  commutative ring with $1$. 
Fix an invertible element $q\in R$. 

\smallskip
\begin{df}
The \emph{affine Hecke algebra} $H_{d,R}(q)$ is the $R$-algebra generated by $T_1,\cdots,T_{d-1}$ and the invertible elements $X_1,\cdots,X_d$ modulo the following defining relations
$$
\begin{array}{lllll}
X_rX_s=X_sX_r, &T_rX_r=X_rT_r ~\mbox{ if }|r-s|>1,\\
T_{r}T_{s}=T_{s}T_{r} \mbox{ if }|r-s|>1,& T_{r}T_{r+1}T_{r}=T_{r+1}T_{r}T_{r+1},\\
 T_rX_{r+1}=X_rT_r+(q-1)X_{r+1}, &T_rX_{r}=X_{r+1}T_r-(q-1)X_{r+1},\\
(T_r-q)(T_r+1)=0.\\
\end{array}
$$
\end{df}

Fix an $l$-tuple  $Q=(Q_1,\cdots,Q_l)\in R^l$.

\smallskip
\begin{df}
The \emph{cyclotomic Hecke algebra} $H^Q_{d,R}(q)$ is the quotient of $H_{d,R}(q)$ by the two-sided ideal generated by $(X_1-Q_1)\cdots(X_1-Q_l)$.
\end{df}

\subsection{Categorical representations}
\label{ch3:subs_categ-action}

Let $R$ be a $\bbC$-algebra. Fix an invertible element $q\in R$, $q\ne 1$. Let $\calC$ be an exact $R$-linear category.

\smallskip
\begin{df}
A \emph{representation datum} in $\calC$ is a tuple $(E,F,X,T)$ where $(E,F)$ is a pair of exact functors $\calC\to\calC$ and $X\in\End(F)^{\rm op}$, $T\in\End(F^2)^{\rm op}$ are endomorphisms of functors such that
for each $d\in\bbN$, there is an $R$-algebra homomorphism $\psi_d\colon H_{d,R}(q)\to \End(F^d)^{\rm op}$ given by
$$
\begin{array}{ll}
X_r\mapsto F^{d-r}XF^{r-1} &\forall r\in[1,d],\\
T_r\mapsto F^{d-r-1}TF^{r-1} &\forall r\in[1,d-1].
\end{array}
$$
 \end{df}
\smallskip
Now, assume that $R=\bfk$ is a field. Assume that $\calC$ is a $\Hom$-finite abelian category.

\smallskip
\begin{rk}
\label{ch3:rk_df-repl-F-by-E}
Assume that we have a representation datum in a $\bfk$-linear category $\calC$ such that the functors $E$ and $F$ are biadjoint. Then by adjointness we have an algebra isomorphism $\End(E^d)\simeq \End(F^d)^{\rm op}$. In particular we get an algebra homomorphism $H_{d,\bfk}(q)\to \End(E^d)$.
\end{rk}

Let $\scrF$ be a subset of $\bfk^\times$. We view $\scrF$ as the vertex set of a quiver $\Gamma_\scrF$ with an arrow $i \to j$ if and only if $j = qi$.

\smallskip
\begin{df}
\label{ch3:def-categ_action-Hecke}
An $\mathfrak{g}_{\scrF}$-categorical representation in $\calC$ is the datum of a representation datum $(E,F,X,T)$ and a decomposition $\calC=\bigoplus_{\mu\in X_\scrF}\calC_\mu$ satisfying the conditions $(a)$ and $(b)$ below. For $i\in\scrF$ let $E_i$, $F_i$ be endofunctors of $\calC$ such that for each $M\in\calC$ the objects $E_i(M)$, $F_i(M)$ are the generalized $i$-eigenspaces of $X$ acting on $E(M)$ and $F(M)$ respectively, see also Remark \ref{ch3:rk_df-repl-F-by-E}.
We assume that
\begin{itemize}
    \item[$(a)$] $F=\bigoplus_{i\in\scrF}F_i$ and $E=\bigoplus_{i\in\scrF}E_i$,
    \item[$(b)$] $E_i(\calC_\mu)\subset \calC_{\mu+\alpha_i}$ and  $F_i(\calC_\mu)\subset \calC_{\mu-\alpha_i}$.
\end{itemize}
\end{df}


For exemple, in the case when the quiver $\Gamma_\scrF$ is isomorphic to $\Gamma_e$, then we have $\frakg_\scrF= \widetilde{\mathfrak{sl}}_e$. So we will say "an $\widetilde{\mathfrak{sl}}_e$-categorical representation" instead of "a $\mathfrak{g}_{\scrF}$-categorical representation". However, we should remember that this definition depends on the choice of the set $\scrF$.

\subsection{The category $O$}
\label{ch3:subs_cat-O}

Let $R$ be a deformation ring. Fix an $l$-tuple of positive integers $\nu=(\nu_1,\cdots,\nu_l)$ such that $\sum_{r=1}^l \nu_r=N$.
First we define an $R$-deformed version of the parabolic category $\calO$ for $\widehat{\mathfrak{gl}}_N$.
Recall that we identify the weight lattice $P$ with $\bbZ^N$. We say that $\lambda\in P$ is $\nu$-\emph{dominant} if $\lambda_r>\lambda_{r+1}$ for each $r\in[1,N-1]\backslash\{\nu_1,\nu_1+\nu_2,\cdots,\nu_1+\cdots+\nu_l\}$.
Let $P^\nu$ be the set of $\nu$-dominant weights of $P$. Set also $P^\nu[\mu]=P^\nu\cap P[\mu]$, where $P[\mu]$ is as in (\ref{ch3:eq_def-P[mu]}). Let $O^\nu_R$ be the $R$-linear abelian category of finitely generated $\widehat\bfg_R$-modules $M$ which are weight $\widehat\bfh_R$-modules, and such that the $\widehat\bfp_{R,\nu}$-action on $M$ is locally finite over $R$, and the highest weight of any subquotient of $M$ is of the form $\widetilde\lambda$ with $\lambda\in P^\nu$, where $\widetilde\lambda$ is defined in (\ref{ch3:eq_lambda-tilde}).
Let $O_{\mu,R}^\nu$ be the Serre subcategory of $O^\nu_R$ generated by the modules $\Delta_R^\lambda$ for all $\lambda\in P^\nu[\mu]$. Let $O^{\nu,\Delta}_{\mu,R}\subset O^\nu_{\mu,R}$ be the full subcategory of $\Delta$-filtered modules. 

We will omit the upper index $\nu$ if $\nu=(1,1,\cdots,1)$. Assume $\lambda\in P$. In the case if $R=\bfk$ is a field we denote by $L(\lambda)_\bfk$ the simple quotient of $\Delta(\lambda)_\bfk$. 
In the case if $R$ is local with residue fields $\bfk$, the simple module $L(\lambda)_\bfk\in O_\bfk$ has a simple lift $L(\lambda)_R\in O_R$ such that $L(\lambda)_\bfk=\bfk\otimes_RL(\lambda)_R$ (see \cite[Sec.~2.2]{Fie-cen}). Set also $L^\lambda_R=L(\lambda-\rho)_R$.

\subsection{The choice of $\scrF$}
\label{ch3:subs_param-Hecke}
In this section we define some sets $\scrF$ and $\scrF_\bfk$. 
We will see later that these sets are related with the categorical representations in the categories $O^\nu_{-e,K}$ and $O^\nu_{-e,\bfk}$. 

We assume that $R$ is as in Assumption 1.
As above, we fix an $l$-tuple $\nu=(\nu_1,\cdots,\nu_l)$ of positive integers. Put 
\begin{equation}
\label{ch3:eq_Qr}
Q_r=\exp(2\pi \sqrt{-1}(\nu_r+\tau_r)/\kappa),\qquad r\in[1,l].
\end{equation} 
The canonical homomorphism $R\to \bfk$ maps $q_e$ to $\zeta_e$ and $Q_r$ to
$\zeta_e^{\nu_r}$, where $q_e$ and $\zeta_e$ are as in Section \ref{ch3:subs_def-ring}.

Now, consider the subset $\scrF$ of $R$ given by
$$
\scrF=\bigcup_{r\in \bbZ,t\in[1,l]}\{q_e^rQ_t\}.
$$
Denote by $\scrF_\bfk$ the image of $\scrF$ in $\bfk$ with respect to the surjection $R\to \bfk$. More precisely, we have $\scrF_\bfk=\{\zeta_e^r;~r\in\bbZ\}$ (i.e., the set $\scrF_\bfk$ is the set of $e$th roots of unity in $\bfk$).
Recall from Section \ref{ch3:subs_categ-action} that we consider $\scrF$ (and $\scrF_\bfk)$ as a vertex set of a quiver. The set $\scrF$ is a vertex set of a quiver that is a disjoint union of $l$ infinite linear quivers. The set $\scrF_\bfk$ is a vertex set of a cyclic quiver of length $e$. 


We fix the following identifications
$$
\begin{array}{rcll}
I_e&\simeq& \scrF_\bfk,\quad &i\mapsto \zeta_e^i,\\
\widetilde I&\simeq& \scrF,\quad &(a,b)\mapsto \exp(2\pi \sqrt{-1} (a+\tau_b)/\kappa).\\
\end{array}
$$
In particular, we identify the quivers $\Gamma_e$ and $\widetilde\Gamma$ with the quivers $\Gamma_{\scrF_\bfk}$ and $\Gamma_{\scrF}$ respectively.
We have $\frakg_{\scrF_\bfk}=\widetilde{\mathfrak{sl}}_e$ 
On the other hand, the Lie algebras $\frakg_{\scrF}$ 
is isomorphic to $({\mathfrak{sl}}_\infty)^{\oplus l}$.

Now we consider some special cyclotomic Hecke algebras. Set $H^\nu_{d,R}(q_e)=H^Q_{d,R}(q_e)$, where $Q=(Q_1,\ldots,Q_l)$ is the $l$-tuple defined by (\ref{ch3:eq_Qr}). Similarly, we define the algebras $H^\nu_{d,K}(q_e)$ and $H^\nu_{d,\bfk}(\zeta_e)$ (in the last case, we replace $Q$ by its image in $\bfk$).



It is useful to think of the algebras $H^\nu_{d,K}(q_e)$ and $H^\nu_{d,\bfk}(\zeta_e)$ as cyclotomic KLR algebras defined with respect to the quivers $\Gamma_\scrF$ and $\Gamma_{\scrF_\bfk}$  respectively, see \cite[Cor.~2.18]{Mak-Zuck}. However, we don't use this point of view explicitly in the present paper.

\subsection{The standard representation of $\widetilde{\mathfrak{sl}}_e$}
\label{ch3:subs_stand-rep-aff}

Let $e_i$, $f_i$, $h_i$, be the generators of the complex Lie algebra $\widetilde{\mathfrak{sl}}_e=\mathfrak{sl}_e\otimes\bbC[t,t^{-1}]\oplus\bbC \bm{1}$, here $i\in I_e$.
Let $V_e$ be a $\bbC$-vector spaces with canonical basis $\{v_1,\cdots,v_e\}$ and set $U_e=V_e\otimes \bbC[z,z^{-1}]$. The vector space $U_e$ has a basis $\{u_r;~r\in\bbZ\}$ where $u_{a+eb}=v_a\otimes z^{-b}$ for $a\in[1,e]$, $b\in\bbZ$. It has a structure of an $\widetilde{\mathfrak{sl}}_e$-module such that
$$
f_i(u_r)=\delta_{i\equiv r}u_{r+1},\quad e_i(u_r)=\delta_{i\equiv r-1}u_{r-1}.
$$
Let $\{v'_1,\cdots,v'_{e+1}\}$, $\{u'_r;r\in\bbZ\}$ denote the bases of $V_{e+1}$ and $U_{e+1}$.

Fix an integer $0\leqslant k<e$. Consider the following inclusion of vector spaces
$$
V_e\subset V_{e+1}, ~v_r\mapsto
\left\{\begin{array}{ll}
v'_r &\mbox{ if }r\leqslant k,\\
v'_{r+1} &\mbox{ if }r>k.
\end{array}\right.
$$
It yields an inclusion $\mathfrak{sl}_e\subset\mathfrak{sl}_{e+1}$ such that
$$
e_r\mapsto
\left\{\begin{array}{rl}
e_r &\mbox{ if }r\in[1,k-1],\\
{[e_k,e_{k+1}]} &\mbox{ if }r=k,\\
e_{r+1} &\mbox{ if }r\in[k+1,e-1],
\end{array}\right.
$$
$$
f_r\mapsto
\left\{\begin{array}{rl}
f_r &\mbox{ if }r\in[1,k-1],\\
{[f_{k+1},f_k]} &\mbox{ if }r=k,\\
f_{r+1} &\mbox{ if }r\in[k+1,e-1],
\end{array}\right.
$$
$$
h_r\mapsto
\left\{\begin{array}{rl}
h_r &\mbox{ if }r\in[1,k-1],\\
h_k+h_{k+1} &\mbox{ if }r=k,\\
h_{r+1} &\mbox{ if }r\in[k+1,e-1].
\end{array}\right.
$$

This inclusion lifts uniquely to an inclusion  $\widetilde{\mathfrak{sl}}_e\subset\widetilde{\mathfrak{sl}}_{e+1}$ such that
$$
e_0\mapsto
\left\{\begin{array}{rl}
e_0 &\mbox{ if }k\ne 0,\\
{[e_0,e_1]} &\mbox{ else},\\
\end{array}\right.
$$
$$
f_0\mapsto
\left\{\begin{array}{rl}
f_0 &\mbox{ if }k\ne 0,\\
{[f_1,f_0]} &\mbox{ else},\\
\end{array}\right.
$$
$$
h_0\mapsto
\left\{\begin{array}{rl}
h_0 &\mbox{ if }k\ne 0,\\
{h_0+h_1} &\mbox{ else}.\\
\end{array}\right.
$$

Consider the inclusion $U_e\subset U_{e+1}$ such that $u_{r}\mapsto u'_{\Upsilon(r)}$.

\smallskip
\begin{lem}
The embeddings $V_e\subset V_{e+1}$ and $U_e\subset U_{e+1}$ are compatible with the actions of $\mathfrak{sl}_e\subset \mathfrak{sl}_{e+1}$ and $\widetilde{\mathfrak{sl}}_e\subset \widetilde{\mathfrak{sl}}_{e+1}$ respectively.
\qed
\end{lem}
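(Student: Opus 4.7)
This is a direct calculation on basis vectors. I would first verify compatibility of $U_e \hookrightarrow U_{e+1}$ with the $\widetilde{\mathfrak{sl}}_e$-action; the compatibility of $V_e \hookrightarrow V_{e+1}$ with the $\mathfrak{sl}_e$-action then follows by restriction, since $v_r = u_r$ for $r \in [1,e]$ and, for such $r$, the prescription $v_r \mapsto v'_r$ (if $r \leqslant k$), $v_r \mapsto v'_{r+1}$ (if $r > k$) coincides with $u_r \mapsto u'_{\Upsilon(r)}$, as one reads off from~(\ref{ch3:eq_upsilon}).

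For the affine case, I would check, for each Chevalley generator $x \in \{e_i, f_i, h_i\}$ of $\widetilde{\mathfrak{sl}}_e$ and each basis vector $u_r$ of $U_e$, that the action of the image of $x$ in $\widetilde{\mathfrak{sl}}_{e+1}$ on $u'_{\Upsilon(r)}$ coincides with the image of $x(u_r)$ under the map $u_s \mapsto u'_{\Upsilon(s)}$. For generators indexed by $i \in I_e \setminus \{k\}$ (and, when $k = 0$, $i \ne 0$), the image is again a simple Chevalley generator, namely $f_{i'}$ with $i' = i$ if $i < k$ and $i' = i + 1$ if $i > k$. Here the verification rests on two elementary observations read off from~(\ref{ch3:eq_upsilon}): one has $\Upsilon(r + 1) = \Upsilon(r) + 1$ whenever $r \not\equiv k \bmod e$, and $r \equiv i \bmod e$ is equivalent to $\Upsilon(r) \equiv i' \bmod (e+1)$.

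The only case requiring a genuine computation is $i = k$ (together with $i = 0$ when $k = 0$), where the image of $f_k$ is the bracket $[f_{k+1}, f_k]$. For $r = ae + k$ one has $\Upsilon(r) = a(e+1) + k$ and $\Upsilon(r+1) = a(e+1) + k + 2$, so
$$
f_{k+1} f_k(u'_{\Upsilon(r)}) = f_{k+1}(u'_{a(e+1)+k+1}) = u'_{a(e+1)+k+2} = u'_{\Upsilon(r+1)},
$$
while $f_k f_{k+1}(u'_{\Upsilon(r)}) = 0$ since $\Upsilon(r) \not\equiv k+1 \bmod (e+1)$. Hence $[f_{k+1}, f_k](u'_{\Upsilon(r)}) = u'_{\Upsilon(r+1)}$, matching $f_k(u_r) = u_{r+1}$. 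For $r \not\equiv k \bmod e$ one verifies from~(\ref{ch3:eq_upsilon}) that $\Upsilon(r)$ is congruent to neither $k$ nor $k+1$ modulo $e+1$, so both summands of the bracket annihilate $u'_{\Upsilon(r)}$, matching $f_k(u_r) = 0$. The parallel computations for the image of $e_k$, which is $[e_k, e_{k+1}]$, and of $h_k$, which is $h_k + h_{k+1}$, are identical in spirit. I expect no serious obstacle: the only subtlety is keeping straight the two parametrizations of the integer $r$ used to define $u_r$ and $\Upsilon$ respectively, but once reconciled the verification is entirely mechanical, which is no doubt why the authors leave the lemma without proof.
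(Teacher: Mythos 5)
Your proof is correct and is precisely the direct basis-vector verification the authors evidently had in mind when they marked the lemma \qed without argument. The two observations about $\Upsilon$ that you isolate (that $\Upsilon(r+1)=\Upsilon(r)+1$ when $r\not\equiv k\bmod e$, and that $\Upsilon$ intertwines residues via $i\mapsto i'$ while never hitting $k+1\bmod (e+1)$) are exactly the facts that make the bracket cases work, and your computation for $[f_{k+1},f_k]$ is right; the $e_k$ and $h_k$ cases go through identically as you say. The reduction of the $V_e$ statement to the $U_e$ statement via $v_r=u_r$ for $r\in[1,e]$ is also valid.
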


Set $\wedge^\nu U_e=\wedge^{\nu_1}U_e\otimes\cdots\otimes \wedge^{\nu_l} U_e$.
For each
$\lambda\in P^\nu$ define the following element in $\wedge^\nu U_e$:
$$
\wedge_\lambda^\nu=(u_{\lambda_1}\wedge\cdots\wedge u_{\lambda_{\nu_1}})\otimes\cdots\otimes(u_{\lambda_1+\cdots+\lambda_{l-1}+1}\wedge\cdots\wedge u_{\lambda_{\nu_1}+\cdots+\lambda_{\nu_l}}).
$$

The obvious $\widetilde{\mathfrak{sl}}_e$-action on $U_e$ yields an   $\widetilde{\mathfrak{sl}}_e$-action on $\wedge^\nu U_e$. We identify the abelian group $X_e/\bbZ(\varepsilon_1+\cdots+\varepsilon_e)$ with the weight lattice of $\mathfrak{sl}_e$. In particular each element $\mu\in X_e$ yields a weight of $\mathfrak{sl}_e$. For each $\mu\in X_e[N]$ let $(\wedge^\nu U_e)_\mu$ be the weight space in $\wedge^\nu U_e$ corresponding to $\mu$.

\subsection{Categorical representation in the category $O$}
\label{ch3:sec-categ-rep-O}

Set $O^\nu_{-e,R}=\bigoplus_{\mu\in X_e[N]}O^\nu_{\mu,R}$ and similarly for $O^{\nu,\Delta}_{-e,R}$. Now we define a representation datum in the category $O^{\nu,\Delta}_{-e,R}$. See \cite[Sec.~5.4]{RSVV} for more details. In Sections \ref{ch3:sec-categ-rep-O}-\ref{ch3:subs-cat-calA}  we assume that $R$ is as in Assumption 2.
For an exact category $\calC$ denote by $[\calC]$ its complexified Grothendieck group. The following proposition holds, see \cite{RSVV}.
\begin{prop}
\label{ch3:prop_functors-on-O-gen}
There is a pair of exact endofunctors $E$, $F$ of $O^{\nu,\Delta}_{-e,R}$ such that the following properties hold.
\begin{itemize}
    \item[$(a)$] The functors $E$, $F$
commute with the base changes $K\otimes_R\bullet$, $\bfk\otimes_R\bullet$.
    \item[$(b)$] $(O^{\nu,\Delta}_{-e,R},E,F)$ admits a representation datum structure (with respect to $q=q_e$).
    \item[$(c)$] The pair of functors $(E,F)$ is biadjoint. It extends to a pair of biadjoint functors $O^{\nu}_{-e,R}\to O^{\nu}_{-e,R}$ if $R$ is a field.
    \item[$(d)$] There are decompositions $E=\bigoplus_{i\in I_e}E_i$, $F=\bigoplus_{i\in I_e}F_i$ such that
$$
E_i(O_{\mu,R}^{\nu,\Delta})\subset O_{\mu+\alpha_i,R}^{\nu,\Delta},\qquad F_i(O_{\mu,R}^{\nu,\Delta})\subset O_{\mu-\alpha_i,R}^{\nu,\Delta}.
$$
    \item[$(e)$] There is a vector space isomorphism $[O^{\nu,\Delta}_{\mu,R}]\simeq (\wedge^\nu U_e)_\mu$ such that the functors $F_i$, $E_i$ act on $[O^{\nu,\Delta}_{-e,R}]=\bigoplus_{\mu\in X_e[N]}[O^{\nu,\Delta}_{\mu,R}]$ as the standard generators $e_i$, $f_i$ of $\widetilde{\mathfrak{sl}}_e$.
    \item[$(f)$] If $R=\bfk$ with the trivial deformation ring structure, then $E_i$, $F_i$ yield a categorical representation of $\widetilde{\mathfrak{sl}}_e$ in $O^\nu_{-e,\bfk}$ (with respect to the set $\scrF$ as in Section \ref{ch3:subs_param-Hecke}).
\qed
\end{itemize}

\end{prop}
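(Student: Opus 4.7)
The strategy is to import the construction from \cite{RSVV}. Define $F(M) = \pr(M \otimes V_R)$ where $V_R$ is the natural $\widehat{\mathfrak{gl}}_N$-representation and $\pr$ projects onto $O^{\nu,\Delta}_{-e,R}$; define $E$ dually using $V_R^*$. Exactness and preservation of $\Delta$-filtered modules follow from the tensor identity, which exhibits a filtration of $\Delta^\lambda_R \otimes V_R$ by parabolic Vermas $\Delta^{\lambda+\epsilon_j}_R$ for $j \in [1,N]$. Since the construction is defined uniformly in $R$, the base change property $(a)$ is automatic.

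Next, I would endow $F$ with an endomorphism $X$ coming from a Casimir-type operator acting on the $V_R$ tensor factor, and $F^2$ with an endomorphism $T$ coming from the affine R-matrix acting on $V_R \otimes V_R$. Verifying that these assignments define an $R$-algebra homomorphism $H_{d,R}(q_e) \to \End(F^d)^{\mathrm{op}}$ is a standard R-matrix computation: braid relations for $T$ follow from Yang-Baxter, the mixed relations between $T$ and $X$ from the Casimir commuting past the R-matrix, and the quadratic relation $(T-q_e)(T+1)=0$ from specializing the spectral R-matrix appropriately. This establishes $(b)$. The decomposition $F = \bigoplus_{i \in I_e} F_i$ is then the generalized eigenspace decomposition of $X$, with eigenvalues grouped via the identification $\scrF \twoheadrightarrow I_e$ of Section \ref{ch3:subs_param-Hecke}; a direct weight count (tensoring with a weight vector of weight $\epsilon_j$ shifts the $X_e$-weight by $-\alpha_i$ for the corresponding $i \in I_e$) gives $(d)$.

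For $(e)$, I would identify $[\Delta^\lambda_R]$ with $\wedge^\nu_\lambda$ and verify directly that $F_i$ acts on basis elements as $f_i$ does on the Fock space $\wedge^\nu U_e$; this matches the weight shift computed for $(d)$. Biadjointness $(c)$ over a field follows from the self-duality of $V_R$ up to an appropriate shift together with Frobenius reciprocity for the truncation functor $\pr$; the extension to $O^\nu_{-e,R}$ when $R$ is a field uses that $\Delta$-filtered modules admit enough tilting objects. Finally, once $(b)$ and $(e)$ hold over $\bfk$, property $(f)$ follows from Definition \ref{ch3:def-categ_action-Hecke}: the decomposition along $X_e[N]$ combined with the verified Grothendieck group action satisfies the axioms of an $\widetilde{\mathfrak{sl}}_e$-categorical representation.

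The principal technical obstacle is verifying the quadratic relation for $T$, which depends sensitively on the precise value $q_e = \exp(2\pi\sqrt{-1}/\kappa)$ and on the normalization of the affine R-matrix; this is the point where elementary finite-type arguments break down and one must invoke the affine R-matrix technology developed in \cite{RSVV}. A secondary nuisance is making the construction compatible with the three different avatars of $R$ permitted by Assumption 2 (local analytic, its fraction field, its residue field): here one essentially checks that the R-matrix and Casimir, being rational in the parameters $\kappa, \tau_*$, descend to each specialization.
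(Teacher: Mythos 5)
Your proposal takes the same route as the paper, which simply defers the proof to \cite{RSVV}; your sketch is a reasonable reconstruction of the construction there (translation functors via tensoring with the natural representation, the Casimir/R-matrix endomorphisms $X$, $T$, the eigenspace decomposition indexed by $\scrF$ mapping to $I_e$, and the match with the Fock space action on the Grothendieck group). You also correctly identify the quadratic Hecke relation and the compatibility across the three avatars of $R$ as the delicate points.

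One step, however, is too quick as written. In the affine setting $V_R = R^N\otimes R[t,t^{-1}]$ is infinite-dimensional, so the adjunction isomorphism $\Hom(M\otimes V_R, N)\cong\Hom(M, N\otimes V_R^{*})$ simply does not hold for arbitrary modules; the "self-duality of $V_R$ plus Frobenius reciprocity" argument you invoke is the finite-type argument and cannot be transplanted directly. In \cite{RSVV} (building on Kashiwara--Miwa--Stern and Varagnolo--Vasserot) the biadjointness is obtained only after decomposing $E$ and $F$ into their components $E_i$, $F_i$, working with the appropriate completed/restricted duals so that each $E_i$, $F_i$ is well-defined on category $O$, and then exploiting the highest weight structure: one direction of the adjunction is proved on $\Delta$-filtered objects, and the passage to the full abelian category over a field is an additional highest weight argument, not merely "enough tilting objects." This explains why statement $(c)$ of the proposition deliberately separates the two cases; your sketch would need to replace the naive duality step by this eigenspace-by-eigenspace construction before it is rigorous.
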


\smallskip
Fix $k\in [0,e-1]$. Recall the map $\Upsilon\colon P\to P$ from Section \ref{ch3:subs_ext-aff} and the map $\phi\colon X_e\to X_{e+1}$ from (\ref{ch3:eq_phi(mu)}). 
Set $\mu'=\mu-\alpha_k$ and $\overline\mu=\phi(\mu)$.
Set, $\overline\mu^0=\overline\mu-\overline\alpha_{k}$ and $\overline\mu'=\overline\mu-\overline\alpha_{k}-\overline\alpha_{k+1}$. Note that $\Upsilon(P[\mu])\subset P[\overline\mu]$.
For $k\ne 0$ we have
$$
\begin{array}{llllll}
\mu&=&(\mu_1,\cdots,\mu_k,&&\mu_{k+1},&\cdots,\mu_e),\\
\mu '&=&(\mu_1,\cdots,\mu_k-1,&&\mu_{k+1}+1,&\cdots,\mu_e),\\
\overline\mu&=&(\mu_1,\cdots,\mu_k,&0,&\mu_{k+1},&\cdots,\mu_e),\\
\overline\mu^0&=&(\mu_1,\cdots,\mu_k-1,&1,&\mu_{k+1},&\cdots,\mu_e),\\
\overline\mu '&=&(\mu_1,\cdots,\mu_k-1,&0,&\mu_{k+1}+1,&\cdots,\mu_e).\\
\end{array}
$$
For an $e$-tuple $\bfa=(a_1,\cdots,a_e)$ of non-negative integers we set $1_\bfa=(1^{a_1},\cdots,e^{a_e})$. Note that we have
\begin{equation}
\label{ch3:eq_Upsilon(1)}
\Upsilon(1_{\mu})=1_{\overline\mu},\qquad \Upsilon(1_{\mu'})=1_{\overline\mu'}.
\end{equation}

\smallskip
\begin{rk}
\label{ch3:rk_choice-of-1_mu}
The set $P[\bfa]$ is a $\widehat W$-orbit in $P^{(e)}$. It is a union of $\widetilde W$-orbits and each of them contains a unique $e$-anti-dominant weight. By definition, the weight $1_\bfa\in P[\bfa]$ is $e$-anti-dominant. However, there is no canonical way to choose an anti-dominant element in $P[\bfa]$.
In the case $k=0$ we need to change our convention and to set $1_\bfa=(0^{a_e},1^{a_e},\cdots,(e-1)^{a_{e-1}})$. This change is necessary to have (\ref{ch3:eq_Upsilon(1)}).
\end{rk}

First, assume that $l=N$ and $\nu=(1,1,\cdots,1)$.

\begin{lem}
There is an equivalence of categories $\theta_{\mu}^{\overline\mu}\colon O_{\mu,R}\to O_{\overline\mu,\overline R}$ such that $\theta_\mu^{\overline\mu}(\Delta_{R}^{\lambda})\simeq\Delta_{\overline R}^{\Upsilon(\lambda)}$. It restricts to an equivalence of categories $\theta_{\mu}^{\overline\mu}\colon O^{\Delta}_{\mu,R}\to O^{\Delta}_{\overline\mu,\overline R}$.
\end{lem}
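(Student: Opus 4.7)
The plan is to deduce the lemma from Fiebig's structure theorem for deformed affine category $\calO$~\cite{Fie-str}. In the form relevant here, that theorem asserts that two deformed blocks of affine category $\calO$ are equivalent as $R$-linear categories once one produces a bijection between their indexing sets of highest weights which is compatible with the linkage action of the affine Weyl group and with the deformation data; moreover, the equivalence so produced sends Verma modules to Verma modules in the manner prescribed by the bijection. Given this input, the work is entirely on the combinatorial side: one must exhibit the appropriate bijection and verify that Verma module parameters match across the two levels.

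The first step is to produce the required bijection from the preceding lemma. Since $\Upsilon\colon P^{(e)}\to P^{(e+1)}$ is $\widehat W$-equivariant and takes $e$-antidominant weights to $(e+1)$-antidominant weights, it restricts to a bijection between the $\widehat W$-orbit $P[\mu]\subset P^{(e)}$ and its image $\Upsilon(P[\mu])\subset P[\overline\mu]\subset P^{(e+1)}$, preserving orbits under $\widetilde W$ and stabilisers. The second step is to check that the normalisation of Verma module highest weights is respected. Using the formula (\ref{ch3:eq_lambda-tilde}), the rescaling $\overline\kappa=\kappa(e+1)/e$ and $\overline\tau_r=\tau_r(e+1)/e$ is chosen precisely so that $\widetilde\lambda$ at level $-N-e$ with parameters $(\kappa,\tau)$ corresponds to $\widetilde{\Upsilon(\lambda)}$ at level $-N-(e+1)$ with parameters $(\overline\kappa,\overline\tau)$ up to shifts by $\delta$ and $\Lambda_0$ that do not affect the block. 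Fiebig's theorem then supplies the equivalence $\theta_\mu^{\overline\mu}\colon O_{\mu,R}\to O_{\overline\mu,\overline R}$ sending $\Delta_R^\lambda$ to $\Delta_{\overline R}^{\Upsilon(\lambda)}$. The restriction to $\Delta$-filtered subcategories is automatic: any exact equivalence of abelian categories carrying standards to standards preserves the property of admitting a $\Delta$-filtration in both directions.

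The main technical point requiring care is the verification that the particular renormalisation $\overline\kappa=\kappa(e+1)/e$, $\overline\tau_r=\tau_r(e+1)/e$ really does absorb the arithmetic shift induced by $\Upsilon$ on the weight lattice; this is a direct but slightly delicate computation with the $z_\lambda=(\lambda,2\rho+\lambda)/2\kappa$ term in $\widetilde\lambda$, which is the only place where $\kappa$ enters in a nonlinear fashion. Once this is checked, everything else is a routine translation of the affine linkage combinatorics from level $-N-e$ to level $-N-(e+1)$ via $\Upsilon$.
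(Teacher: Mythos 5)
Your proposal takes essentially the same route as the paper: reduce to Fiebig's combinatorial structure theorem [Fie-str] by producing from $\Upsilon$ a stabiliser-preserving bijection of the indexing sets, then read off the equivalence and its effect on Verma modules. Two points that the paper makes explicit and that your sketch glosses over deserve attention. First, Fiebig's theorem is a statement about \emph{blocks}, i.e.\ $\widetilde W$-linkage classes, not about entire $\widehat W$-orbits; the paper therefore first decomposes $O_{\mu,R}=\bigoplus_{n\in\bbZ}\calO_{\pi^n(1_\mu),R}$ into the individual $\widetilde W$-orbit summands around the antidominant weights $\pi^n(1_\mu)$, applies Fiebig's theorem to each (using that $\pi^n(1_\mu)$ and $\pi^n(1_{\overline\mu})$ share the same $\widetilde W$-stabiliser), and only then takes the direct sum to obtain $\theta_\mu^{\overline\mu}$. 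Your remark that $\Upsilon$ ``preserves orbits under $\widetilde W$ and stabilisers'' has the right content but you then invoke the theorem directly at the level of $O_{\mu,R}$, skipping the block-by-block step. Second, the paper adds a remark that [Fie-str] produces an equivalence over the residue field, not over $R$; one must lift it to the deformed category by comparing endomorphism rings of projective generators, and then separately deduce the equivalence on $\Delta$-filtered subcategories by matching the highest weight structures. Your ``automatic'' restriction to $\Delta$-filtered objects is fine once the abelian $R$-linear equivalence is in hand, but the step from the field to $R$ is not free. On the plus side, your second step — explaining that $\overline\kappa=\kappa(e+1)/e$, $\overline\tau_r=\tau_r(e+1)/e$ is chosen precisely to absorb the effect of $\Upsilon$ on the $z_\lambda$-correction in $\widetilde\lambda$ — supplies motivation for the definition of $\overline R$ that the paper leaves implicit.
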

\begin{proof}[Proof]
For each $n\in \bbZ$ the weight $\pi^n(1_\mu)$ is $e$-anti-dominant.
Let $\calO_{\pi^n(1_\mu),R}\subset O_{\mu,R}$ be the Serre subcategory generated by the Verma modules of the form $\Delta_{R}^{w\pi^n(1_\mu)}$ with $w\in\widetilde W$. 
We have
\begin{equation}
\label{ch3:eq_dec-O-What-Wtilde}
O_{\mu,R}=\bigoplus_{n\in\bbZ}\calO_{\pi^n(1_\mu),R}.
\end{equation}
The weights $\pi^n(1_\mu)\in P^{(e)}$ and $\pi^n(1_{\overline\mu})\in P^{(e+1)}$ have the same stabilizers in $\widetilde W$. Thus by \cite[Thm.~11]{Fie-str} (see also \cite[Prop.~5.24]{RSVV}) we have an equivalence of categories
$$
\calO_{\pi^n(1_\mu),R}\simeq \calO_{\pi^n(1_{\overline\mu}),\overline R},\qquad \Delta_{R}^{w\pi^n(1_\mu)}\mapsto \Delta_{\overline R}^{w\pi^n(1_{\overline\mu})} ~~\forall w\in\widetilde W.
$$
Taking the sum by all $n\in\bbZ$ we get an equivalence of categories
$$
\theta_{\mu}^{\overline\mu}\colon O_{\mu,R}\simeq O_{\overline\mu,\overline R},\qquad \Delta_{R}^{w(1_\mu)}\mapsto \Delta_{\overline R}^{w(1_{\overline\mu})} ~~\forall w\in\widehat W.
$$
Recall that we have $\Upsilon(1_\mu)=1_{\overline\mu}$.
Thus by $\widehat W$-invariance of $\Upsilon$ we get
$$
\theta_\mu^{\overline\mu}(\Delta_{R}^{\lambda})\simeq\Delta_{\overline R}^{\Upsilon(\lambda)}~~\forall \lambda\in P[\mu].
$$
\end{proof}

\begin{rk}
Notice that \cite{Fie-str} yields an equivalence of categories over a field. It is explained in \cite{RSVV} how to get from it an equivalence of categories $O^\Delta_R$. First, comparing the endomorphisms of projective generators one gets an equivalence of the abelian categories $O_R$. Then, comparing the highest weight structure in both sides, we deduce an equivalence of additive categories $O^\Delta_R$.

\end{rk}

\smallskip
The equivalence $\theta_{\mu}^{\overline\mu}$ restricts to equivalences $O^\nu_{\mu,R}\simeq O^\nu_{\overline\mu,\overline R}$ and $O^{\nu,\Delta}_{\mu,R}\simeq O^{\nu,\Delta}_{\overline\mu,\overline R}$ for each parabolic type $\nu$, see \cite[Sec.~5.7.2]{RSVV}. We will also call this equivalence $\theta_{\mu}^{\overline\mu}$. We obtain equivalences of categories $\theta_{\overline\mu'}^{\mu'}\colon O^\nu_{\overline\mu',\overline R}\simeq O^\nu_{\mu',R}$ and $\theta_{\overline\mu'}^{\mu'}\colon O^{\nu,\Delta}_{\overline\mu',\overline R}\simeq O^{\nu,\Delta}_{\mu',R}$ in a similar way.

\smallskip
\begin{conj}
\label{ch3:conj_F_k-decomp}
There are the following commutative diagrams
$$
\begin{diagram}
\node{O^{\nu,\Delta}_{\overline\mu,\overline R}} \arrow{e,t}{\overline F_k}
\node{O^{\nu,\Delta}_{\overline\mu^0,\overline R}} \arrow{e,t}{\overline F_{k+1}} 
\node{\overline O^{\nu,\Delta}_{\mu',\overline R}} \arrow{s,r}{\theta_{\overline\mu'}^{\mu'}} \\
\node{O^{\nu,\Delta}_{\mu,R}} \arrow{n,l}{\theta_\mu^{\overline\mu}} 
\arrow[2]{e,b}{F_k} \node[2]{O^{\nu,\Delta}_{\mu',R}}
\end{diagram}
$$
and
$$
\begin{diagram}
\node{O^{\nu,\Delta}_{\overline\mu,\overline R}} \arrow{s,l}{\theta^\mu_{\overline\mu}}
\node{O^{\nu,\Delta}_{\overline\mu^0,\overline R}} \arrow{w,t}{\overline E_k}   
\node{O^{\nu,\Delta}_{\overline\mu',\overline R}} \arrow{w,t}{\overline E_{k+1}}  \\
\node{O^{\nu,\Delta}_{\mu,R}}     \node[2]{O^{\nu,\Delta}_{\mu',R}} \arrow{n,r}{\theta^{\overline\mu'}_{\mu'}} \arrow[2]{w,b}{E_k} 
\end{diagram}.
$$
\end{conj}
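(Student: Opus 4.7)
The plan is to interpret both compositions in the diagram as arising from a single categorical object, namely the $\widetilde{\mathfrak{sl}}_{e+1}$-categorical representation on $\overline O^{\nu,\Delta}_{-(e+1),\overline R}$ supplied by Proposition \ref{ch3:prop_functors-on-O-gen}. Using the Lie algebra embedding $\widetilde{\mathfrak{sl}}_e\hookrightarrow\widetilde{\mathfrak{sl}}_{e+1}$ fixed in Section \ref{ch3:subs_stand-rep-aff} (so in particular $f_k\mapsto[f_{k+1},f_k]$), I would apply \cite[Thm.~3.5]{Mak-categ} to restrict this action to a canonically defined subcategory, obtaining an induced $\widetilde{\mathfrak{sl}}_e$-categorical representation. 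Under this restriction the generator $f_k$ of $\widetilde{\mathfrak{sl}}_e$ acts, up to a canonical isomorphism, as the composition $\overline F_{k+1}\overline F_k$, which is precisely the top row of the first diagram.

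\textbf{Step 1.} I would identify the canonical subcategory produced by \cite[Thm.~3.5]{Mak-categ} with $\bigoplus_{\mu\in X_e[N]} O^{\nu,\Delta}_{\overline\mu,\overline R}$. This should follow from the characterisation of the canonical subcategory in \cite{Mak-categ} applied to the ``doubled'' quiver of Section \ref{ch3:subs_not-quiv-I-Ibar}, together with the combinatorial fact that $\overline\mu_{k+1}=0$ for every $\mu\in X_e[N]$, which is exactly the condition isolating that subcategory. Transporting through the direct sum of equivalences $\theta_\mu^{\overline\mu}$ then yields a second $\widetilde{\mathfrak{sl}}_e$-categorical structure on $O^{\nu,\Delta}_{-e,R}$.

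\textbf{Step 2.} I would next show that this induced structure agrees with the original one from Proposition \ref{ch3:prop_functors-on-O-gen}. On the Grothendieck group this is immediate: the embedding $U_e\hookrightarrow U_{e+1}$, $u_r\mapsto u'_{\Upsilon(r)}$, is compatible with $\widetilde{\mathfrak{sl}}_e\hookrightarrow\widetilde{\mathfrak{sl}}_{e+1}$, and the relation $\theta_\mu^{\overline\mu}(\Delta_R^\lambda)\simeq \Delta_{\overline R}^{\Upsilon(\lambda)}$ translates this into commutativity of both diagrams after decategorification, including the matching of $\overline\alpha_k+\overline\alpha_{k+1}$ with $\phi(\alpha_k)$. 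The second diagram in the conjecture then follows formally from the first by biadjunction, using the biadjoint pair $(E,F)$ of Proposition \ref{ch3:prop_functors-on-O-gen}(c).

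\textbf{The main obstacle.} Upgrading Grothendieck-group commutativity to an isomorphism of functors requires a uniqueness theorem for categorical representations of $\widetilde{\mathfrak{sl}}_e$. This is where the argument breaks down: the categorified module $\bigoplus_\mu [O^{\nu,\Delta}_{\mu,R}]\simeq \wedge^\nu U_e$ is a higher-level Fock space, hence is not simple, so the Chuang--Rouquier uniqueness theorem does not apply and one really does obtain only a conjecture at this level of generality. To salvage a provable statement, I would follow the path indicated in the introduction: restrict to the subcategories $\bfA^\nu[\alpha]$ under the assumption $\nu_r>|\alpha|$, where the categorified module is a controlled Fock-space piece, and import the comparison of \cite{RSVV} between $\bfA$ and the rational Cherednik category $\calO$, applied in parallel to the original categorification of $\bfA$ and to the auxiliary category $\calA$ built from the induced structure. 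This would give a (possibly different) equivalence $\bfA\simeq\calA$ intertwining the functors, yielding Theorem \ref{ch3:thm_intro-main-decomp-functors} as a weakened form of the conjecture; a proof of the conjecture in full generality would require a replacement for this Cherednik-type comparison outside the Fock-space regime.
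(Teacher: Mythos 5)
This statement is a \emph{conjecture} in the paper — there is no proof of it in the text, only the commutativity at the level of Grothendieck groups (established in Section \ref{ch3:subs_comm-Groth}) and the weakened form proved as Theorem \ref{ch3:thm_intro-main-decomp-functors} / Theorem \ref{ch3:thm_decomp_Fk-A}, which applies only to the subcategories $\bfA^\nu[\alpha]$ under the constraint $\nu_r>|\alpha|$. Your proposal correctly recognises this: you reproduce, essentially verbatim, the strategy laid out in the introduction (restrict the $\widetilde{\mathfrak{sl}}_{e+1}$-action via \cite[Thm.~3.5]{Mak-categ}, transport through $\theta$, compare the two $\widetilde{\mathfrak{sl}}_e$-structures on $O^{\nu,\Delta}_{-e,R}$), you correctly identify the canonical subcategory via the condition $\overline\mu_{k+1}=0$, and — most importantly — you correctly isolate the obstruction: the module $\wedge^\nu U_e$ categorified by $O^{\nu,\Delta}_{-e,R}$ is not simple, so no uniqueness theorem for categorical representations upgrades the Grothendieck-group equality to an isomorphism of functors. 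Your observation that the second diagram follows from the first by passing to (left) adjoints through the biadjointness of $(E,F)$ and the equivalences $\theta$ is also sound and matches the paper's implicit treatment. In short, you have not produced a proof of the conjecture — no such proof is known or given in the paper — but you have faithfully reconstructed both the intended line of attack and the precise reason it only yields the weaker $\bfA$-level result via the Cherednik-algebra comparison of \cite{RSVV}.
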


\subsection{The commutativity in the Grothendieck groups}
\label{ch3:subs_comm-Groth}

We have the following commutative diagram of vector spaces
$$
\begin{CD}
[O^{\nu,\Delta}_{-(e+1),\overline R}] @>>> \wedge^\nu U_{e+1}\\
@A{\oplus_\mu\theta_\mu^{\overline\mu}}AA @AAA\\
[O^{\nu,\Delta}_{-e,R}] @>>> \wedge^\nu U_e,
\end{CD}
$$
where the horizontal maps are respectively the isomorphisms of $\widetilde{\mathfrak{sl}}_e$-modules and $\widetilde{\mathfrak{sl}}_{e+1}$-modules from Proposition \ref{ch3:prop_functors-on-O-gen} $(e)$, the right vertical map is given by the injection $U_{e}\to U_{e+1}$ in Section \ref{ch3:subs_stand-rep-aff}. Moreover, the right vertical map is a morphism of $\widetilde{\mathfrak{sl}}_e$-modules where $\wedge^\nu U_{e+1}$ is viewed as an $\widetilde{\mathfrak{sl}}_e$-module via the inclusion $\widetilde{\mathfrak{sl}}_e\subset \widetilde{\mathfrak{sl}}_{e+1}$ introduced in Section \ref{ch3:subs_stand-rep-aff}. Thus $\oplus_\mu\theta_\mu^{\overline\mu}\colon [O^{\nu,\Delta}_{-e,R}] \to [O^{\nu,\Delta}_{-(e+1),\overline R}]$ is a morphism of $\widetilde{\mathfrak{sl}}_e$-modules which intertwines

\smallskip
\begin{itemize}
\item[\textbullet] $[E_r]$ with $[\overline E_r]$,\quad $[F_r]$ with $[\overline F_r]$ if $r\in[1,k-1]$,
\item[\textbullet] $[E_k]$ with $[\overline E_k\overline E_{k+1}]-[\overline E_{k+1}\overline E_{k}]$,\quad $[F_k]$ with $[\overline F_{k+1}\overline F_k]-[\overline F_{k}\overline F_{k+1}]$,
\item[\textbullet] $[E_r]$ with $[\overline E_{r+1}]$,\quad $[F_r]$ with $[\overline F_{r+1}]$ if $r\in[k+1,e-1]$.

\smallskip
In particular, we see that the diagrams from Conjecture \ref{ch3:conj_F_k-decomp} commute at the level of Grothendieck groups. Since there is no good notion of projective functors in the affine category $\calO$, this is not enough to prove our conjecture.

\end{itemize}

\subsection{Partitions}
A \emph{partition} of an integer $n\geqslant 0$ is a tuple of positive integers $(\lambda_1,\cdots,\lambda_s)$ such that $\lambda_1\geqslant \lambda_2\geqslant\cdots\geqslant \lambda_s$ and $\sum_{t=1}^s\lambda_t=n$. Denote by $\calP_n$ the set of all partitions of $n$ and set $\calP=\coprod_{n\in\bbN}\calP_n$. For a partition $\lambda=(\lambda_1,\cdots,\lambda_s)$ of $n$, we set $|\lambda|=n$ and $\ell(\lambda)=s$.
An $l$-\emph{partition} of an integer $n\geqslant 0$ is an $l$-tuple $\lambda=(\lambda^{1},\cdots,\lambda^{l})$ of partitions of integers $n_1,\cdots,n_l\geqslant 0$ such that $\sum_{t=1}^ln_t=n$. Let $\calP^l_n$ be the set of all $l$-partitions of $n$ and set $\calP^l=\coprod_{n\in\bbN}\calP^l_n$.

A partition $\lambda$ can be represented by a Young diagram $Y(\lambda)$ and an $l$-partition $\lambda=(\lambda^1,\cdots,\lambda^l)$ by an $l$-tuple of Young diagrams $Y(\lambda)=(Y(\lambda^1),\cdots,Y(\lambda^l))$.

Let $\lambda\in\calP^l$ be an $l$-partition. For a box $b\in Y(\lambda)$ situated in the $i$th row, $j$th column of the $r$th component we define its \emph{residue} $\Res_\nu(b)\in I$ as $\nu_r+j-i$ $\mod~e$ and its \emph{deformed residue} $\widetilde\Res_\nu(b)\in \widetilde I$ as $(\nu_r+j-i,r)$. 
Set
$$
\Res_\nu(\lambda)=\sum_{b\in Y(\lambda)}\alpha_{\Res_\nu(b)}\in Q^+_e,\quad\widetilde\Res_\nu(\lambda)=\sum_{b\in Y(\lambda)}\widetilde\alpha_{\widetilde\Res_\nu(b)}\in Q^+_{\widetilde I}.
$$

Now for $\alpha\in Q^+_e$ and $\widetilde\alpha\in Q^+_{\widetilde I}$ set
$$
\calP^l_\alpha=\{\lambda\in\calP^l;~\Res_\nu(\lambda)=\alpha\},\qquad \calP^l_{\widetilde\alpha}=\{\lambda\in\calP^l;~\widetilde\Res_\nu(\lambda)=\widetilde\alpha\}.
$$
This notation depends on $\nu$. We may write $\calP^l_{\alpha,\nu}$ and $\calP^l_{\widetilde\alpha,\nu}$ to specify $\nu$.
We have decompositions
$$
\calP^l_d=\bigoplus_{\alpha\in Q^+_e, |\alpha|=d}\calP^l_\alpha,\qquad \calP^l_\alpha=\bigoplus_{\widetilde\alpha\in Q^+_{e+1}, \pi_e(\widetilde\alpha)=\alpha}\calP^l_{\widetilde\alpha}.
$$

\subsection{The category $\bfA$}
\label{ch3:subs_cat-bfA}
Let $\calP^\nu_d\subset \calP^l_d$ be the subset of the elements $\lambda=(\lambda^1,\cdots,\lambda^l)$ such that $\ell(\lambda^r)\leqslant \nu_r$ for each $r\in[1,l]$.
We can view each $\lambda\in \calP^\nu_d$ as the weight in $P$ given by
$$
(\lambda^1_1,\cdots,\lambda^1_{\ell(\lambda^1)},0^{\nu_1-\ell(\lambda^1)},\lambda^2_1,\cdots,\lambda^2_{\ell(\lambda^2)},0^{\nu_2-\ell(\lambda^2)},\cdots,\lambda^l_1,\cdots,\lambda^l_{\ell(\lambda^l)},0^{\nu_l-\ell(\lambda^l)}).
$$
We abbreviate $\Delta[\lambda]_R=\Delta^{\lambda+\rho_\nu}_R$.

\smallskip
\begin{df}
Let $\bfA^\nu_R[d] \subset O^\nu_{-e,R}$ be the Serre subcategory generated by the modules $\Delta[\lambda]_R$ with $\lambda\in \calP^\nu_d$, see Section \ref{ch3:subs_aff-Lie}. Denote by $\bfA^{\nu,\Delta}_R[d]$ the full subcategory of $\Delta$-filtered modules in $\bfA^\nu_R[d]$.
\end{df}

\smallskip
The restriction of the functor $F$ to the subcategory $\bfA^{\nu,\Delta}_R[d]$ yields a functor $F\colon\bfA^{\nu,\Delta}_R[d]\to\bfA^{\nu,\Delta}_R[d+1]$. However, it is not true that $E(\bfA^{\nu,\Delta}_R[d+1])\subset \bfA^{\nu,\Delta}_R[d]$. Nevertheless, we can define a functor $E\colon\bfA^{\nu,\Delta}_R[d+1]\to \bfA^{\nu,\Delta}_R[d]$ that is left adjoint to $F\colon \bfA^{\nu,\Delta}_R[d]\to\bfA^{\nu,\Delta}_R[d+1]$, see \cite[Sec.~5.9]{RSVV}. This can be done in the following way. Let $h$ be the inclusion functor from $\bfA^{\nu,\Delta}_R[d]$ to ${O^{\nu,\Delta}_{-e,R}}$. Abusing the notation, we will use the same symbol for the inclusion functor from $\bfA^{\nu,\Delta}_R[d+1]$ to ${O^{\nu,\Delta}_{-e,R}}$. Let $h^*$ be the left adjoint functor to $h$. We define the functor $E$ for the category $\bfA^{\nu,\Delta}_R$ as $h^*Eh$.

There is a decomposition $\bfA^\nu_R[d]=\bigoplus_{\alpha\in Q^+_e,|\alpha|=d}\bfA^\nu_R[\alpha]$, where $\bfA^\nu_R[\alpha]$ is the Serre subcategory of $\bfA^\nu_R[d]$ generated by the Verma modules $\Delta[\lambda]_R$ such that $\lambda\in\calP^l_\alpha$. The functors $E$, $F$ admit decompositions
$$
E=\bigoplus_{i\in I_e}E_i,\qquad F=\bigoplus_{i\in I_e}F_i
$$
such that for each $\alpha\in Q^+_e$ and $i\in I_e$ we have
$$
E_i(\bfA^{\nu,\Delta}_R[\alpha])\subset \bfA^{\nu,\Delta}_R[\alpha-\alpha_i], \qquad F_i(\bfA^{\nu,\Delta}_R[\alpha])\subset \bfA^{\nu,\Delta}_R[\alpha+\alpha_i].
$$

Note that the functor $E_i$ for the category $\bfA^{\nu,\Delta}_R$ is the restriction of the functor $E_i$ for the category $O^{\nu,\Delta}_{-e,R}$ if $i\ne 0$. Thus for $i\ne 0$ the pair of functors $(E_i,F_i)$ for the category $\bfA^{\nu,\Delta}_R$ is biadjoint. But we have only a one-side adjunction $(E_0,F_0)$. Note also that if $R$ is a field, then we can define the functors as above (with the same adjunction properties) for the category $\bfA^{\nu}_R$ instead of $\bfA^{\nu,\Delta}_R$.

\smallskip
Let us write $\emptyset$ for the empty $l$-partition. Note that $\Delta[\emptyset]_R=\Delta^{\rho_\nu}_R$ is the Verma module of highest weight $\widetilde{\rho_\nu-\rho}$. Since, $\rho_\nu$ lies in $P[\bfwt_e(\rho_\nu)]$, we have $\Delta[\emptyset]_R\in O^\nu_{\bfwt_e(\rho_\nu),R}$. More generally, fix an element $\alpha=\sum_{i\in I_e}d_i\alpha_i$ in $Q^+_e$. Put $\mu=\bfwt_e(\rho_\nu)-\alpha\in X_e$. See \cite[Sec.~2.3]{RSVV} for the definition of a highest weight category over a local ring. The following proposition holds, see \cite[Sec.~5.5]{RSVV}.
\begin{prop}
The category $\bfA^\nu_R[\alpha]$ is a full subcategory of $O^\nu_{\mu,R}$ that is a highest weight category.
\qed
\end{prop}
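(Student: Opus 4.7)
My plan is to reduce this proposition to the construction and combinatorics already set up in \cite[Sec.~5.5]{RSVV}; the only genuinely new input here is the identification of the precise block of $O^\nu_{-e,R}$ to which $\bfA^\nu_R[\alpha]$ belongs.

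First I would verify the containment $\bfA^\nu_R[\alpha]\subset O^\nu_{\mu,R}$ with $\mu=\bfwt_e(\rho_\nu)-\alpha$. Since $\bfA^\nu_R[\alpha]$ is the Serre subcategory of $O^\nu_{-e,R}$ generated by the Verma modules $\Delta[\lambda]_R=\Delta_R^{\lambda+\rho_\nu}$ with $\lambda\in\calP^\nu_\alpha$, and since $O^\nu_{\mu,R}$ is itself a Serre subcategory, it suffices to show $\bfwt_e(\lambda+\rho_\nu)=\mu$ for each such $\lambda$. This reduces to a direct bookkeeping: in the $r$-th block the $i$-th coordinate of $\lambda+\rho_\nu$ equals $\lambda^r_i+\nu_r-i+1$, and a straightforward check shows that adding one box of residue $i$ to $\lambda$ replaces one summand $\varepsilon_i$ of $\bfwt_e(\lambda+\rho_\nu)$ by $\varepsilon_{i+1}$, i.e., subtracts $\alpha_i=\varepsilon_i-\varepsilon_{i+1}$. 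Induction on $|\lambda|$ starting from $\bfwt_e(\rho_\nu)$ then yields $\bfwt_e(\lambda+\rho_\nu)=\bfwt_e(\rho_\nu)-\Res_\nu(\lambda)=\mu$.

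For the highest weight structure itself I would appeal to \cite[Sec.~5.5]{RSVV}. The standard objects are the $\Delta[\lambda]_R$ for $\lambda\in\calP^\nu_\alpha$, ordered by the restriction of the $\widehat W$-linkage order on $P^{(e)}$ recalled in Section \ref{ch3:subs_ext-aff}. The essential combinatorial fact, proved in RSVV, is that $\{\lambda+\rho_\nu:\lambda\in\calP^\nu_\alpha\}$ is an order ideal inside $P^\nu[\mu]$: any $\nu$-dominant weight lying below and linked to some $\lambda+\rho_\nu$ in this set is automatically of the same form. Combined with the block containment above, this ideal property upgrades the Serre subcategory $\bfA^\nu_R[\alpha]$ to a highest weight subcategory of $O^\nu_{\mu,R}$ in the sense of \cite[Sec.~2.3]{RSVV}.

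The main potential obstacle is the verification of the order ideal property in the deformed setting over a local ring $R$, where one must control standards, costandards, and tiltings under base change to $K$ and to $\bfk$. This is handled in \cite{RSVV} by first settling the field case, where the linkage combinatorics is classical, and then lifting via projective generators and endomorphism-algebra comparisons between $O^\Delta_R$ and $O_R$; since this machinery is already in place, no further argument beyond extracting it from loc.~cit.~is needed.
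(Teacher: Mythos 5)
Your proposal is essentially the same as the paper's: the paper simply cites \cite[Sec.~5.5]{RSVV} for the highest weight structure and implicitly relies on the identity $\bfwt_e(\lambda+\rho_\nu)=\bfwt_e(\rho_\nu)-\Res_\nu(\lambda)$ to place $\bfA^\nu_R[\alpha]$ inside $O^\nu_{\mu,R}$. Your explicit residue bookkeeping for the containment is correct (adding a box of residue $i$ raises the corresponding coordinate of $\lambda+\rho_\nu$ by one, replacing $\varepsilon_i$ by $\varepsilon_{i+1}$, i.e.\ subtracting $\alpha_i$), and your appeal to RSVV for the order-ideal property and the deformed-over-$R$ highest weight machinery is exactly what the paper leans on; the only caveat is that your wording of the ``order ideal'' step should be read as a paraphrase of the actual setup in RSVV rather than a direct quotation of a lemma there.
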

For $\lambda\in\calP^l_d$ let $P[\lambda]_{R}$, $\nabla[\lambda]_{R}$ and $T[\lambda]_{R}$ be the projective, costandard and the tilting objects in $\bfA_R^\nu[d]$ with parameter $\lambda$, see \cite[Prop.~2.1]{RSVV}.

\subsection{The change of level for $\bfA$}
\label{ch3:subs_rank-ch-A}

For $\lambda_1,\lambda_2\in P$ we write $\lambda_1\geqslant\lambda_2$ if $(\lambda_1)_r\geqslant (\lambda_2)_r$ for each $r\in[1,N]$. Here, $(\lambda_i)_r$ is the $r$th entry of $\lambda_i$ for each $r$. We identify $Q_e$ with a sublattice of $X^\chi_e$ via the map $\iota^\chi$ defined in Section \ref{ch3:subs_KM-quiv}.

\smallskip
\begin{lem}
\label{ch3:lem_lam1>=lam2}
$(a)$ For each $\lambda_1,\lambda_2\in P$ we have $\bfwt^\chi_e(\lambda_1)-\bfwt^\chi_e(\lambda_2)\in Q_e$.

$(b)$ If we also have $\lambda_1\leqslant\lambda_2$, then $\bfwt^\chi_e(\lambda_1)-\bfwt^\chi_e(\lambda_2)\in Q^+_e$.
\end{lem}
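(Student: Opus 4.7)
The plan is to reduce both statements to a single elementary move, namely changing one entry of $\lambda\in P=\bbZ^N$ by $\pm 1$. Since any two weights in $\bbZ^N$ are connected by a finite sequence of such moves, and since $Q_e$ (resp.\ $Q_e^+$) is closed under $\bbZ$-linear (resp.\ $\bbN$-linear) combinations, it suffices to check the assertion one step at a time. For part (b), the monotone hypothesis $\lambda_1\leqs\lambda_2$ means that we can restrict to decrements only.

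The key elementary computation is the following. Suppose $\mu,\mu'\in P$ agree except in the $r$th coordinate, where $\mu'_r=\mu_r-1$. Write $\mu_r=ae+b$ with $a\in\bbZ$ and $b\in[0,e-1]$. Then
$$
\bfwt^\chi_e(\mu')-\bfwt^\chi_e(\mu)=\varepsilon_{\mu'_r}-\varepsilon_{\mu_r}-\chi,
$$
because all other $\varepsilon$-contributions cancel and the $\chi$-coefficient changes by $-1$. If $b\geqs 1$ then $\mu'_r\equiv b-1\pmod e$, so the right-hand side equals $\varepsilon_{b-1}-\varepsilon_b-\chi=\iota^\chi(\alpha_{b-1})$. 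If $b=0$ then $\mu'_r=(a-1)e+(e-1)$, hence $\mu'_r\equiv e-1\pmod e$, and the right-hand side equals $\varepsilon_{e-1}-\varepsilon_0-\chi=\iota^\chi(\alpha_{e-1})$, where we use that $(e-1)+1\equiv 0$ in $I_e$. Thus a single decrement always contributes exactly one simple root $\alpha_j$, with a plus sign. Reversing the roles of $\mu$ and $\mu'$ shows that a single increment contributes $-\alpha_j$.

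For part (a), connect $\lambda_2$ to $\lambda_1$ by an arbitrary sequence of elementary moves; summing the elementary contributions expresses $\bfwt^\chi_e(\lambda_1)-\bfwt^\chi_e(\lambda_2)$ as a $\bbZ$-linear combination of the $\alpha_j$'s, hence as an element of $Q_e$. For part (b), use only decrements: since $\lambda_1\leqs\lambda_2$, we can reach $\lambda_1$ from $\lambda_2$ by successively decrementing coordinates $(\lambda_2)_r$ down to $(\lambda_1)_r$ for each $r\in[1,N]$; every step contributes some $\alpha_j$ with positive sign, so the total lies in $Q^+_e$.

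The only genuinely delicate point is the boundary case $b=0$, which requires invoking the cyclic identification $i+1\equiv 0$ in $I_e$ for $i=e-1$ to recognize the resulting expression as $\iota^\chi(\alpha_{e-1})$; otherwise everything is bookkeeping. The inclusion of the formal variable $\chi$ is in fact what makes this work cleanly: without the $-\chi$ term in $\iota^\chi$, the elementary contribution $\varepsilon_{b-1}-\varepsilon_b$ would differ from $\iota(\alpha_{b-1})=\varepsilon_{b-1}-\varepsilon_b$ by nothing in $X_e$, but one would lose the injectivity needed to keep track of heights, which is precisely why the statement is phrased in terms of $\iota^\chi$.
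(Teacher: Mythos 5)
Your proposal is correct and follows essentially the same reduction as the paper: one restricts to a single elementary decrement $\lambda_1=\lambda_2-\epsilon_r$ and checks that the difference of $\bfwt^\chi_e$-weights is $\iota^\chi(\alpha_j)$ where $j$ is the residue of $(\lambda_1)_r$ modulo $e$. The explicit split into the cases $b\geqs 1$ and $b=0$ is unnecessary --- working directly with residues, as the paper does, absorbs the boundary case --- but it does no harm.
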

\begin{proof}[Proof]
It is enough to assume that we have $\lambda_1=\lambda_2-\epsilon_r$ for some $r\in[1,N]$. In this case we have $\bfwt^\chi_e(\lambda_1)-\bfwt^\chi_e(\lambda_2)=\alpha_i$, where $i\in I_e$ is the residue of the integer $(\lambda_1)_r$ modulo $e$.
\end{proof}

Let $\phi\colon Q_e\to Q_{e+1}$ and $\phi\colon X_e\to X_{e+1}$ be as in Section \ref{ch3:subs_not-quiv-I-Ibar} (see also Section \ref{ch3:subs_not-e-e+1}) and $\Upsilon$ be as in (\ref{ch3:eq_upsilon}). 
Set $\overline\alpha=\phi(\alpha)\in Q_{e+1}$, $\overline\mu=\phi(\mu)\in X_{e+1}$ and  $\beta=\bfwt^\chi_{e+1}(\rho_\nu)-\bfwt^\chi_{e+1}(\Upsilon(\rho_\nu))$. By Lemma \ref{ch3:lem_lam1>=lam2}, we have $\beta\in Q^+_{e+1}$.

\smallskip
\begin{prop}
\label{ch3:prop_equiv-A-e-e+1}
The equivalence of categories $\theta_{\mu}^{\overline\mu}$ takes the subcategory $\bfA_R^\nu[\alpha]$ of $O^\nu_{\mu,R}$ to the subcategory $\bfA^\nu_{\overline R}[\beta+\overline\alpha]$ of $O^\nu_{\overline\mu,\overline R}$.
\qed
\end{prop}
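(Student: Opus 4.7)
The plan is to reduce the assertion to a combinatorial bijection between the parametrizing sets of standard modules. Since the preceding lemma gives $\theta_\mu^{\overline\mu}(\Delta_R^{\lambda}) \simeq \Delta_{\overline R}^{\Upsilon(\lambda)}$ and both $\bfA^\nu_R[\alpha]$ and $\bfA^\nu_{\overline R}[\beta+\overline\alpha]$ are Serre subcategories generated by the Vermas $\Delta[\lambda]_R=\Delta_R^{\lambda+\rho_\nu}$ for $\lambda\in\calP^\nu_\alpha$, resp.\ $\calP^\nu_{\beta+\overline\alpha}$, it suffices to exhibit a bijection $\Psi\colon\calP^\nu_\alpha\to\calP^\nu_{\beta+\overline\alpha}$ satisfying $\Upsilon(\lambda+\rho_\nu)=\Psi(\lambda)+\rho_\nu$. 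The natural candidate is $\Psi(\lambda):=\Upsilon(\lambda+\rho_\nu)-\rho_\nu$ with $\Upsilon$ applied coordinate-wise.

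The key preparatory step is an additivity identity: for all $\lambda_1,\lambda_2\in P$,
\[
\bfwt^\chi_{e+1}(\Upsilon(\lambda_1))-\bfwt^\chi_{e+1}(\Upsilon(\lambda_2))=\phi\bigl(\bfwt^\chi_e(\lambda_1)-\bfwt^\chi_e(\lambda_2)\bigr).
\]
By additivity this reduces to the case $\lambda_1=\lambda_2-\epsilon_r$, then to a case analysis on the residue of $(\lambda_2)_r$ modulo $e$. The only delicate case is residue $k+1\pmod e$, where $\Upsilon$ jumps by $2$ (since residue $k+1\pmod{e+1}$ is skipped by $\Upsilon$); this produces exactly the doubled root $\alpha_k+\alpha_{k+1}=\phi(\alpha_k)$ on the $\Gamma_{e+1}$-side. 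All other residues correspond to a single elementary root move matching $\phi(\alpha_i)=\alpha_{i'}$.

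With the identity in hand, $\Psi(\lambda)$ is a valid $l$-partition: within each $\nu_r$-block, $\lambda+\rho_\nu$ is strictly decreasing and positive, so $\Upsilon(\lambda+\rho_\nu)$ is also strictly decreasing (by strict monotonicity of $\Upsilon$), and subtracting $\rho_\nu$ yields a weakly decreasing sequence whose last entry $\Upsilon(\lambda^r_{\nu_r}+1)-1$ is non-negative. Applying the identity to $\lambda+\rho_\nu$ versus $\rho_\nu$ gives $\Res_\nu(\Psi(\lambda))=\beta+\phi(\Res_\nu(\lambda))=\beta+\overline\alpha$, so $\Psi(\lambda)\in\calP^\nu_{\beta+\overline\alpha}$.

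Injectivity of $\Psi$ is immediate; for surjectivity, given $\mu\in\calP^\nu_{\beta+\overline\alpha}$, I must show $\mu+\rho_\nu$ lies coordinate-wise in the image of $\Upsilon$, which is $\bbZ\setminus(k+1+(e+1)\bbZ)$. The number of entries of $\mu+\rho_\nu$ congruent to $k+1\pmod{e+1}$ equals the coefficient of $\varepsilon_{k+1}$ in $\bfwt_{e+1}(\mu+\rho_\nu)$, and this vanishes because $\bfwt_{e+1}(\Upsilon(\rho_\nu))$ has no $\varepsilon_{k+1}$-term (by definition of the image of $\Upsilon$) and $\overline\alpha$ likewise (since $\phi$ avoids the vertex $k+1$). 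Then $\lambda:=\Upsilon^{-1}(\mu+\rho_\nu)-\rho_\nu$ is weakly decreasing, non-negative (automatic if $k\geqslant 1$ since $\Upsilon(1)=1$, and if $k=0$ the forbidden-residue condition forces $\mu^r_{\nu_r}\geqslant 1$), and has residue $\alpha$ by the identity. The main obstacle is the additivity identity: matching the doubling in $\phi$ with the jump in $\Upsilon$ at residue $k+1$ is the crux, and the rest is a straightforward combinatorial verification.
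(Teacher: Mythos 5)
Your proposal is correct and follows essentially the same route as the paper. The heart of both arguments is the additivity identity
$\bfwt^\chi_{e+1}(\Upsilon(\lambda_1))-\bfwt^\chi_{e+1}(\Upsilon(\lambda_2))=\phi\bigl(\bfwt^\chi_e(\lambda_1)-\bfwt^\chi_e(\lambda_2)\bigr)$,
which is precisely the paper's Lemma~\ref{ch3:lem_difference-weights}, proved in both cases by reducing to $\lambda_1=\lambda_2-\epsilon_r$ and checking the residue-$k$ case against $\phi(\alpha_k)=\alpha_k+\alpha_{k+1}$. Where the paper stays at the level of Serre-subcategory descriptions --- it rewrites the condition defining $\theta_\mu^{\overline\mu}(\bfA_R^\nu[\alpha])$ using the identity, then observes that for $\overline\lambda$ parametrizing objects of $O^\nu_{\overline\mu,\overline R}$ no coordinate is $\equiv k+1 \pmod{e+1}$, so $\overline\lambda\geqslant\rho_\nu$ is equivalent to $\overline\lambda\geqslant\Upsilon(\rho_\nu)$ and the conditions collapse into the definition of $\bfA^\nu_{\overline R}[\beta+\overline\alpha]$ --- you unwind the same information into an explicit bijection $\Psi=\Upsilon(\cdot+\rho_\nu)-\rho_\nu$ on the partition-labels and verify well-definedness, residue-compatibility, injectivity and surjectivity by hand. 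Your surjectivity check (no coordinate of $\mu+\rho_\nu$ is $\equiv k+1$, read off from the vanishing of the $\varepsilon_{k+1}$-coefficient of $\bfwt_{e+1}(\mu+\rho_\nu)=\bfwt_{e+1}(\Upsilon(\rho_\nu))-\iota(\overline\alpha)$) is exactly the observation the paper uses to collapse the two order conditions; the only imprecision is your parenthetical ``since $\phi$ avoids the vertex $k+1$'' --- in fact $\phi(\alpha_k)$ does involve the vertex $k+1$, and the correct reason the $\varepsilon_{k+1}$-coefficient of $\iota(\overline\alpha)$ vanishes is the telescoping $\iota(\alpha_k+\alpha_{k+1})=\varepsilon_k-\varepsilon_{k+2}$. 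The conclusion you draw is nevertheless right, and the two proofs are mathematically the same.
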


\smallskip
First, we prove the following lemma.

\smallskip
\begin{lem}
\label{ch3:lem_difference-weights}
If $\lambda_1,\lambda_2\in P$, then
$$
\bfwt_{e+1}^\chi(\Upsilon(\lambda_1))-\bfwt_{e+1}^\chi(\Upsilon(\lambda_2))=\phi(\bfwt_{e}^\chi(\lambda_1)-\bfwt_{e}^\chi(\lambda_2)).
$$

\end{lem}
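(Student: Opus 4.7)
The plan is to reduce to an elementary move $\lambda_1=\lambda_2-\epsilon_r$ and then verify the identity by a direct computation. First, Lemma~\ref{ch3:lem_lam1>=lam2}(a) shows that $\bfwt^\chi_e(\lambda_1)-\bfwt^\chi_e(\lambda_2)$ lies in the image of the injection $\iota^\chi_e\colon Q_e\hookrightarrow X^\chi_e$, so the right-hand side is unambiguously defined by applying $\phi\colon Q_e\to Q_{e+1}$ and re-embedding via $\iota^\chi_{e+1}$. Connecting $\lambda_2$ to $\lambda_1$ by a chain of single-coordinate unit steps, both sides of the claimed identity telescope (using that $\phi$ is $\bbZ$-linear on $Q_e$), so it suffices to treat the elementary case.

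In this case, I set $m=(\lambda_2)_r$ and write $m=ae+b$ with $b\in[0,e-1]$. A direct check (with the convention $\varepsilon_e=\varepsilon_0$ in $X_e$ handling the wrap-around when $b=0$) gives
$$
\bfwt^\chi_e(\lambda_1)-\bfwt^\chi_e(\lambda_2)=\varepsilon_{(m-1)\bmod e}-\varepsilon_{m\bmod e}-\chi=\iota^\chi_e(\alpha_{i_m}),
$$
where $i_m\in I_e$ denotes the residue of $m-1$ modulo $e$.

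It then remains to split cases on whether $i_m=k$. If $i_m\ne k$, the piecewise definition of $\Upsilon$ in (\ref{ch3:eq_upsilon}) yields $\Upsilon(m)-\Upsilon(m-1)=1$, and combining the commutativity $\mathrm{res}_{e+1}\circ\Upsilon=(\cdot)'\circ\mathrm{res}_e$ from Section~\ref{ch3:subs_not-e-e+1} with $\phi(\alpha_{i_m})=\alpha_{(i_m)'}$, both sides equal $\iota^\chi_{e+1}(\alpha_{(i_m)'})$. If $i_m=k$, then $\Upsilon(m)-\Upsilon(m-1)=2$, so the left-hand side equals $\varepsilon_k-\varepsilon_{k+2}-2\chi=\iota^\chi_{e+1}(\alpha_k+\alpha_{k+1})$, which matches $\iota^\chi_{e+1}(\phi(\alpha_k))$ since $\phi(\alpha_k)=\alpha_k+\alpha_{k+1}$ in $Q_{e+1}$.

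The main (and mild) subtlety is tracking the two boundary situations: the wrap-around at $b=0$ in the identification with $\iota^\chi_e(\alpha_{i_m})$, and the doubling case $i_m=k$, where $\phi$ produces two simple roots of $Q_{e+1}$ instead of one and $\Upsilon$ jumps by $2$ instead of $1$. Both are dispatched directly by the explicit piecewise formulae for $\Upsilon$ and $\phi$, so no structural argument is needed beyond the reduction step.
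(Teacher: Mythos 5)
Your proof is correct and follows essentially the same route as the paper's: reduce to the elementary move $\lambda_1=\lambda_2-\epsilon_r$, identify $\bfwt^\chi_e(\lambda_1)-\bfwt^\chi_e(\lambda_2)$ with $\alpha_i$ for $i$ the residue of $(\lambda_1)_r=m-1$ modulo $e$, and split on $i=k$ versus $i\ne k$. The paper's proof is terser; you have filled in the bookkeeping (the $\chi$ coefficient, the size of the $\Upsilon$-jump being $1$ or $2$, the re-embedding of $Q_e$ in $X^\chi_e$ via $\iota^\chi$ to make the right-hand side well-defined, and the wrap-around at $b=0$), but nothing is structurally different.
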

\begin{proof}[Proof of Lemma \ref{ch3:lem_difference-weights}]
It is enough to prove the statement in the case where we have $\lambda_1=\lambda_2-\epsilon_r$ for some $r\in[1,N]$. In this case we have $\bfwt_{e}^\chi(\lambda_1)-\bfwt_{e}^\chi(\lambda_2)=\alpha_i$, where $i$ is the residue of $(\lambda_1)_r$ modulo $e$. If $i\ne k$ then we have $\bfwt_{e+1}^\chi(\Upsilon(\lambda_1))-\bfwt_{e+1}^\chi(\Upsilon(\lambda_2))=\overline\alpha_{i'}=\phi(\alpha_{i})$, where $i'$ is as in (\ref{ch3:eq_i'}). 
If $i=k$ then we have $\bfwt_{e+1}^\chi(\Upsilon(\lambda_1))-\bfwt_{e+1}^\chi(\Upsilon(\lambda_2))=\overline\alpha_{k}+\overline\alpha_{k+1}=\phi(\alpha_k)$.
\end{proof}
\begin{proof}[Proof of Proposition \ref{ch3:prop_equiv-A-e-e+1}]
By definition, $\bfA^\nu_R[\alpha]\subset O^\nu_{\mu,R}$ is the Serre subcategory of $O_{\mu,R}^\nu$ generated by $\Delta_{R}^\lambda$ such that the weight $\lambda\in P^\nu$ satisfies $\lambda\geqslant\rho_\nu$ and $\bfwt^\chi_e(\rho_\nu)-\bfwt^\chi_e(\lambda)=\alpha$. Here $\geqslant$ is the order defined before Lemma \ref{ch3:lem_lam1>=lam2}.

As $\theta_{\mu}^{\overline\mu}(\Delta_R^\lambda)$ is isomorphic to $\Delta^{\Upsilon(\lambda)}_{\overline R}$, Lemma \ref{ch3:lem_difference-weights} implies that $\theta_{\mu}^{\overline\mu}(\bfA^\nu_R[\alpha])$ is the Serre subcategory of $O^\nu_{\overline\mu,\overline R}$ generated by $\Delta^{\overline\lambda}_{\overline R}$ for $\overline\lambda\in P^\nu$ such that $\overline\lambda\geqslant\Upsilon(\rho_\nu)$ and $\bfwt^\chi_{e+1}(\Upsilon(\rho_\nu))-\bfwt^\chi_{e+1}(\overline\lambda)=\overline\alpha$.

Moreover, for each module $\Delta^{\overline\lambda}_{\overline R}\in O^\nu_{\overline\mu,\overline R}$, the weight $\overline\lambda$ has no coordinates that are congruent to $k+1$ modulo $e+1$.
Then $\overline\lambda$ satisfies $\overline\lambda\geqslant \rho_\nu$ if and only if it satisfies $\overline\lambda\geqslant \Upsilon(\rho_\nu)$. We have $\bfwt^\chi_{e+1}(\rho_\nu)-\bfwt^\chi_{e+1}(\Upsilon(\rho_\nu))=\beta$. Thus $\theta_{\mu}^{\overline\mu}(\bfA^\nu_R[\alpha])$ is the Serre subcategory of $O^\nu_{\overline\mu,\overline R}$ generated by the modules $\Delta^{\overline\lambda}_{\overline R}$ where $\overline\lambda$ runs over the set of all $\overline\lambda\in P^\nu$ such that $\overline\lambda\geqslant\rho_\nu$ and $\bfwt^\chi_{e+1}(\rho_\nu)-\bfwt^\chi_{e+1}(\overline\lambda)=\overline\alpha+\beta$. This implies $\theta_{\mu}^{\overline\mu}(\bfA_R^\nu[\alpha])=\bfA_{\overline R}^\nu[\overline\alpha+\beta]$.
\end{proof}

\subsection{The category $\calA$}
\label{ch3:subs-cat-calA}

From now on, to avoid cumbersome notation we will use the following abbreviations.
First, for each $\alpha\in Q^+_e$ we set
$$
\calA^\nu_{R}[\alpha]=\bfA^\nu_{\overline R}[\beta+\overline\alpha],\qquad \calA^\nu_R[d]=\bigoplus_{|\alpha|=d}\calA^\nu_R[\alpha], \qquad \calA^\nu_R=\bigoplus_{d\in\bbN}\calA^\nu_R[d].
$$
Next, we define the endofunctors $E_0,\cdots,E_{e-1}$, $F_0,\cdots, F_{e-1}$ of $\calA^{\nu,\Delta}_R$ (or of $\calA^{\nu}_R$ is $R$ is a field) by
$$
F_0=\restr{\overline F_0}{\calA^{\nu,\Delta}_R},\cdots, F_{k-1}=\restr{\overline F_{k-1}}{\calA^{\nu,\Delta}_R},\quad F_k=\restr{\overline F_{k+1}\overline F_k}{\calA^{\nu,\Delta}_R},
$$
\begin{equation}
\label{ch3:eq_E-from-E-bar-A}
F_{k+1}=\restr{\overline F_{k+2}}{\calA^{\nu,\Delta}_R}, \cdots , F_{e-1}=\restr{\overline F_e}{\calA^{\nu,\Delta}_R},
\end{equation}
$$
E_0=\restr{\overline E_0}{\calA^{\nu,\Delta}_R},\cdots, E_{k-1}=\restr{\overline E_{k-1}}{\calA^{\nu,\Delta}_R}, \quad E_k=\restr{\overline E_{k}\overline F_{k+1}}{\calA^{\nu,\Delta}_R},
$$
$$
E_{k+1}=\restr{\overline E_{k+2}}{\calA^{\nu,\Delta}_R}, \cdots , E_{e-1}=\restr{\overline E_e}{\calA^{\nu,\Delta}_R}.
$$

We precise that here we use the functor $\overline E_0$ for the category  $\bfA^\nu_{\overline R}$. This functor is not just the naive restriction of the functor $\overline E_0$ for the category $O^\nu_{-(e+1),\overline R}$, see Section \ref{ch3:subs_cat-bfA}.

By definition, we have $E_i(\calA^{\nu,\Delta}_R[\alpha])\subset \calA^{\nu,\Delta}_R[\alpha-\alpha_i]$ and $F_i(\calA^{\nu,\Delta}_R[\alpha])\subset \calA^{\nu,\Delta}_R[\alpha+\alpha_i]$. Consider the endofunctors $E=\bigoplus_{i\in I_e}E_i$ and $F=\bigoplus_{i\in I_e}F_i$ of $\calA^{\nu,\Delta}_R$. We have $E(\calA^{\nu,\Delta}_R[d])\subset \calA^{\nu,\Delta}_R[d-1]$ and $F(\calA^{\nu,\Delta}_R[d])\subset \calA^{\nu,\Delta}_R[d+1]$.

Let $\theta_\alpha\colon\bfA^{\nu}_R[\alpha]\to\calA^{\nu}_R[\alpha]$ be the equivalence of categories in Proposition \ref{ch3:prop_equiv-A-e-e+1}. Taking the sum over $\alpha$'s, we get an equivalence $\theta\colon\bfA^{\nu}_R\to \calA_R^{\nu}$ and an equivalence $\theta_d\colon\bfA^{\nu}_R[d]\to \calA_R^{\nu}[d]$. Moreover, we have the following commutative diagrams of Grothendieck groups
\begin{equation}
\label{ch3:eq_comm-diag-Groth-A}
\begin{CD}
[\bfA^{\nu,\Delta}_R] @>{F_i}>> [\bfA^{\nu,\Delta}_R]\\
@V{\theta}VV         @V{\theta}VV\\
[\calA^{\nu,\Delta}_R] @>{F_i}>> [\calA^{\nu,\Delta}_R],\\
\end{CD}
\qquad
\begin{CD}
[\bfA^{\nu,\Delta}_R] @<{E_i}<< [\bfA^{\nu,\Delta}_R]\\
@V{\theta}VV         @V{\theta}VV\\
[\calA^{\nu,\Delta}_R] @<{E_i}<< [\calA^{\nu,\Delta}_R],\\
\end{CD}
\end{equation}
see Section \ref{ch3:subs_comm-Groth}.

For $\lambda\in\calP^\nu_d$ we set $\overline\Delta[\lambda]_R=\Delta_{\overline R}^{\Upsilon(\rho_\nu+\lambda)}\in\calA_R^\nu[d]$. By construction we have $\theta_d(\Delta[\lambda]_R)\simeq \overline\Delta[\lambda]_R$.
Let $\overline P_R[\lambda]$,  $\overline\nabla_R[\lambda]$ and $\overline T_R[\lambda]$ be the projective, the costandard and the tilting object with parameter $\lambda$ in $\calA^\nu_R[d]$.

\subsection{The categorical representation in the category $O$ over the field $K$}
Assume that $R$ is as in Assumption 1.
In this section we compare the categorical representation in $O^\nu_{-e,K}$ with the representation datum in $O^{\nu,\Delta}_{-e,R}$ introduced above. Recall that the categorical representation in $O^{\nu}_{-e,K}$ is defined with respect to the set $\scrF$ in Section \ref{ch3:subs_param-Hecke}.

 First, for each $\lambda\in P$ we define the following weight in $X^+_{\widetilde I}$
$$
\widetilde\bfwt_{e}(\lambda)=\sum_{r=1}^l\left(\sum_{t=\nu_1+\cdots+\nu_{r-1}+1}^{\nu_1+\cdots+\nu_{r}}\widetilde\varepsilon_{(\lambda_t,r)}\right).
$$

For $\widetilde\mu\in X^+_{\widetilde I}$ let $O^\nu_{\widetilde\mu,K}$ be the Serre subcategory of $O^\nu_{-e,K}$ generated by the Verma modules $\Delta^\lambda_K$ such that $\widetilde\bfwt_{e}(\lambda)=\widetilde\mu$. This decomposition is a refinement of the decomposition $O_{-e,K}^\nu=\bigoplus_{\mu\in X_e}O^\nu_{\mu,K}$ introduced in Section \ref{ch3:subs_cat-O}. More precisely, we have
$$
O^\nu_{\mu,K}=\bigoplus_{\widetilde\mu\in X^+_{\widetilde I},\pi_e(\widetilde\mu)=\mu}O^\nu_{\widetilde\mu,K}.
$$
Similarly, there are decompositions

$$
E=\bigoplus_{j\in\widetilde I}E_j, \qquad F=\bigoplus_{j\in\widetilde I}F_j
$$
such that $E_j$ and $F_j$ map $O_{\widetilde\mu,K}$
 to $O_{\widetilde\mu+\widetilde\alpha_j,K}$ and $O_{\widetilde\mu-\widetilde\alpha_j,K}$ respectively. For $i\in I_e$, we set
$$
E_i=\bigoplus_{j\in \widetilde I,\pi_e(j)=i}E_j,\qquad F_i=\bigoplus_{j\in
\widetilde I,\pi_e(j)=i}F_j.
$$
We have commutative diagrams
$$
\begin{CD}
O^{\nu,\Delta}_{-e,R} @>{E_i}>> O^{\nu,\Delta}_{-e,R}\\
@V{K\otimes_R\bullet}VV         @V{K\otimes_R\bullet}VV\\
O^{\nu,\Delta}_{-e,K} @>{E_i}>> O^{\nu,\Delta}_{-e,K},\\
\end{CD}
\qquad
\begin{CD}
O^{\nu,\Delta}_{-e,R} @>{F_i}>> O^{\nu,\Delta}_{-e,R}\\
@V{K\otimes_R\bullet}VV         @V{K\otimes_R\bullet}VV\\
O^{\nu,\Delta}_{-e,K} @>{F_i}>> O^{\nu,\Delta}_{-e,K}.\\
\end{CD}
$$

For each element $\widetilde\alpha\in Q^+_{\widetilde I}$ let $\bfA^\nu_K[\widetilde\alpha]$ be the Serre subcategory of $\bfA^\nu_K$ generated by the Verma modules $\Delta[\lambda]_K$ such that $\lambda\in \calP^l_{\widetilde\alpha,\nu}$. Similarly to Section \ref{ch3:subs_cat-bfA}, we have 
$$
E_j(\bfA^\nu_K[\widetilde\alpha])\subset \bfA^\nu_K[\widetilde\alpha-\widetilde\alpha_j],\quad F_j(\bfA^\nu_K[\widetilde\alpha])\subset \bfA^\nu_K[\widetilde\alpha+\widetilde\alpha_j].
$$
See \cite[Sec.~7.4]{RSVV} for details.

Similarly, for $j\in {\overline{\widetilde I}}\simeq\widetilde I$ we can define the endofunctor $\overline E_j$, $\overline F_j$ of the categories $O^\nu_{-(e+1),\overline K}$ and $\overline \bfA^\nu_{\overline K}$. 

The decomposition $\bfA^\nu_K[\alpha]=\bigoplus_{\pi_e(\widetilde\alpha)=\alpha}\bfA^\nu_K[\widetilde\alpha]$ yields a decomposition $\calA^\nu_K[\alpha]=\bigoplus_{\pi_e(\widetilde\alpha)=\alpha}\calA^\nu_K[\widetilde\alpha]$.
We also consider the endofunctors $E_j$, $F_j$ of $\calA^\nu_{K}$ such that for $j=(a,b)\in \widetilde I$ we have the following analogue of (\ref{ch3:eq_E-from-E-bar-A}):
$$
E_j=
\left\{\begin{array}{lll}
\restr{\overline E_{(\Upsilon(a),b)}}{\calA^\nu_K} & \mbox{ if }\pi_{e}(j)\ne k,\\
 \restr{\overline E_{(\Upsilon(a),b)}\overline E_{(\Upsilon(a)+1,b)}}{{\calA^\nu_K}}& \mbox{ if }\pi_{e}(j)= k,
\end{array}\right.
$$
$$
F_j=
\left\{\begin{array}{lll}
\restr{\overline F_{(\Upsilon(a),b)}}{\calA^\nu_K} & \mbox{ if }\pi_{e}(j)\ne k,\\
\restr{\overline F_{(\Upsilon(a)+1,b)}\overline F_{(\Upsilon(a),b)}}{\calA^\nu_K}& \mbox{ if }\pi_{e}(j)= k.
\end{array}\right.
$$
We have $E_j(\calA^\nu_K[\widetilde\alpha])\subset \calA^\nu_K[\widetilde\alpha-\widetilde\alpha_j]$ and  $F_j(\calA^\nu_K[\widetilde\alpha])\subset \calA^\nu_K[\widetilde\alpha+\widetilde\alpha_j]$.

\subsection{The modules $T_{d,R}$, $\overline T_{d,R}$}
Assume that $R$ is as in Assumption 2.

Consider the module $T_{d,R}=F^d(\Delta[\emptyset]_R)$ in $\bfA^\nu_R[d]$ and the module $\overline T_{d,R}=F^d(\overline\Delta[\emptyset]_R)$ in $\calA^\nu_R[d]$. The commutativity of the diagram (\ref{ch3:eq_comm-diag-Groth-A}) implies that we have the following equality of classes $[\theta_d(T_{d,R})]=[\overline T_{d,R}]$ in $[\calA^\nu_R[d]]$. The modules $T_{d,R}\in\bfA^\nu_R[d]$ and $\overline T_{d,R}\in\calA^\nu_R[d]$ are tilting because $\Delta[\emptyset]_R\in \bfA^\nu_R[0]$ and $\overline\Delta[\emptyset]_R\in\calA_R^\nu[0]$ are tilting and the functor $F$ preserves tilting modules, see \cite[Lem.~8.33,~Lem.~5.16~(b)]{RSVV}. Since a tilting module is characterized by its class in the Grothendieck group, we deduce that there is an isomorphism of modules
\begin{equation}
\label{ch3:eq_theta-T-Tbar-d}
\theta_d(T_{d,R})\simeq\overline T_{d,R}.
\end{equation}

The representation datum in $O^{\nu,\Delta}_{-e,R}$ given in Proposition \ref{ch3:prop_functors-on-O-gen}, we have a homomorphism $H_{d,R}(q_e)\to \End(T_{d,R})^{\rm op}$. Similarly, we have homomorphisms $H_{d,\bfk}(\zeta_e)\to \End(T_{d,\bfk})^{\rm op}$ and $H_{d,K}(q_e)\to \End(T_{d,K})^{\rm op}$. The given homomorphisms commute with the base changes $\bfk\otimes_R\bullet$ and $K\otimes_R\bullet$, see \cite[Prop.~8.30]{RSVV}.

Now we are going to construct a similar homomorphism 
\begin{equation}
\label{ch3:eq_Hd-act-onTbar}
H_{d,R}(q_e)\to \End(\overline T_{d,R})^{\rm op}
\end{equation} 
using the representation datum in $O^{\nu,\Delta}_{-(e+1),\overline R}$. To avoid confusion, we stress that we are not going to use (\ref{ch3:eq_theta-T-Tbar-d}) to define (\ref{ch3:eq_Hd-act-onTbar}). We are going to construct (\ref{ch3:eq_Hd-act-onTbar}) using the representation datum in $O^{\nu,\Delta}_{-(e+1),\overline R}$ and not the representation datum in $O^{\nu,\Delta}_{-e,R}$.

Now, assume that $R$ is as in Assumption 1.
A notion of a categorical representation of $(\widetilde{\mathfrak{sl}}_e,{\mathfrak{sl}}_\infty^{\oplus l})$ is given in \cite[Def.~B.17]{Mak-categ}. The triple $(O^{\nu,\Delta}_{-e,R},O^{\nu}_{-e,\bfk},O^{\nu}_{-e,K})$ matchs this definition. (The only difference is that the categories that we consider here do not satisfy the $\Hom$-finiteness condition that is assumed in \cite{Mak-categ}. However, it is possible to truncate the categories to make them $\Hom$-finite.) 
Similarly, the triple $(O^{\nu,\Delta}_{-(e+1),\overline R},O^{\nu}_{-(e+1),\overline\bfk},O^{\nu}_{-(e+1),\overline K})$ is a categorical representation of $(\widetilde{\mathfrak{sl}}_{e+1},{\mathfrak{sl}}_\infty^{\oplus l})$. Apply \cite[Thm.~B.18]{Mak-categ}, we get a categorical representation of $(\widetilde{\mathfrak{sl}}_e,{\mathfrak{sl}}_\infty^{\oplus l})$ on some triple of subcategories. In particular this yields a homomorphism (\ref{ch3:eq_Hd-act-onTbar}). 

Similarly, we have homomorphisms $H_{d,\bfk}(\zeta_e)\to \End(\overline T_{d,\bfk})^{\rm op}$ and $H_{d,K}(q_e)\to \End(\overline T_{d,K})^{\rm op}$. The given homomorphisms commute with the base changes $\bfk\otimes_R\bullet$ and $K\otimes_R\bullet$.

\begin{lem}
\label{ch3:lem_Tbar-fact-cycl-bfk}
The homomorphism $H_{d,\bfk}(\zeta_e)\to \End(\overline T_{d,\bfk})$ factors through a homomorphism $\overline\psi_{d,\bfk}\colon H^\nu_{d,\bfk}(\zeta_e)\to \End(\overline T_{d,\bfk})$.
\end{lem}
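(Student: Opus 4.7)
The strategy is to first prove the cyclotomic relation $\prod_{r=1}^{l}(X_1-Q_r)=0$ on $\overline T_{d,K}$, where the general-position hypothesis forces semisimplicity of the $X$-action, and then to descend to $\bfk$ via the $R$-freeness of $\overline T_{d,R}$ and the compatibility of the Hecke action with base change.

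Working over $K$, the functor $F\colon\calA^{\nu}_{K}\to\calA^{\nu}_{K}$ acquires the refined decomposition $F=\bigoplus_{\widetilde j\in\widetilde I}F_{\widetilde j}$ obtained by combining the $\widetilde I$-grading for the level $-(e+1)$ representation datum with the formulas of Section \ref{ch3:subs-cat-calA}. Because $R$ is in general position, the scalars $\exp(2\pi\sqrt{-1}(a+\tau_{b})/\kappa)\in K$ attached to distinct points of $\widetilde I$ are pairwise distinct, so each generalised eigenspace $F_{\widetilde j}(M)$ is in fact a genuine eigenspace on which $X$ acts as the scalar attached to $\widetilde j$. Consequently $X$ is semisimple on $F(M)$ for every $M\in\calA^{\nu,\Delta}_{K}$.

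Applying this to $M=\overline\Delta[\emptyset]_{K}$ and using the Grothendieck-group compatibility (\ref{ch3:eq_comm-diag-Groth-A}) refined from $I_{e}$ to $\widetilde I$, the computation is transported to the $\bfA^\nu_K$-side, where the classical description of the categorical Fock space shows that $F_{\widetilde j}(\Delta[\emptyset]_{K})\neq 0$ precisely for $\widetilde j=\widetilde\nu_{r}=(\nu_{r},r)$, $r\in[1,l]$, the deformed residues of the unique addable boxes of the empty $l$-partition. Under the identification $\widetilde I\simeq\scrF$ of Section \ref{ch3:subs_param-Hecke} these labels are exactly the parameters $Q_{r}\in K$ of (\ref{ch3:eq_Qr}). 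Combined with the semisimplicity of the preceding paragraph, this gives $\prod_{r=1}^{l}(X-Q_{r})=0$ on $F(\overline\Delta[\emptyset]_{K})$; and since $X_1$ acts on $\overline T_{d,K}=F^{d}(\overline\Delta[\emptyset]_{K})$ as $F^{d-1}$ applied to $X$ on the innermost copy of $F$, the relation $\prod_{r=1}^{l}(X_1-Q_{r})=0$ holds in $\End(\overline T_{d,K})^{\rm op}$.

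To descend to $\bfk$, one uses that $\overline T_{d,R}$ is tilting and hence $R$-free, so $\End(\overline T_{d,R})^{\rm op}$ embeds into $\End(\overline T_{d,K})^{\rm op}$ via $\bullet\otimes_{R}K$. The vanishing over $K$ therefore lifts to $\prod_{r=1}^{l}(X_1-Q_{r})=0$ already in $\End(\overline T_{d,R})^{\rm op}$, and specialising via $\bullet\otimes_{R}\bfk$ sends each $Q_{r}$ to $\zeta_{e}^{\nu_{r}}$, yielding the desired relation $\prod_{r=1}^{l}(X_1-\zeta_{e}^{\nu_{r}})=0$ in $\End(\overline T_{d,\bfk})^{\rm op}$ and thus the factorisation through $H^{\nu}_{d,\bfk}(\zeta_{e})$. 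The principal obstacle is obtaining the clean polynomial relation rather than only a nilpotent one $\prod_{r}(X_1-Q_{r})^{N}=0$; the finer $\mathfrak{sl}_{\infty}^{\oplus l}$-grading coming from the restriction of the level $-(e+1)$ categorical action, together with the general-position hypothesis on $K$, is precisely what forces $X$ to act as a scalar on each refined eigenspace and thereby makes the cyclotomic relation automatic.
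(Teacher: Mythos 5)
Your overall strategy — establish the cyclotomic relation over $K$, lift it to $R$ via the $R$-freeness of the tilting module, then specialise to $\bfk$ — is sound and genuinely different in organisation from the paper's, which reverses the order by proving the statement directly over $\bfk$ (and separately over $K$) and only afterwards deduces the $R$-version. The paper handles all three cases uniformly with Lemma~\ref{ch3:lem_categ-rep-fact-cycl-general}, which shows that in any $\frakg_\scrF$-categorification the polynomial $\prod_{i}(X_1-i)^{t_i}$, with $t_i=\dim\End(F_i(M))$, annihilates $F^d(M)$ whenever $\End(M)\simeq\bfk$ and $E_iF_i(M)\simeq M^{\oplus t_i}$; over $\bfk$ the multiplicities $t_i$ count the addable boxes of residue $i$, and the resulting polynomial is exactly the cyclotomic one. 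Your route trades that computation for the finer $\widetilde I$-grading available over $K$, plus a descent argument.

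There is, however, a genuine gap in the semisimplicity step. You assert that because the scalars attached to distinct points of $\widetilde I$ are pairwise distinct, ``each generalised eigenspace $F_{\widetilde j}(M)$ is in fact a genuine eigenspace on which $X$ acts as the scalar attached to $\widetilde j$,'' and that consequently $X$ is semisimple on $F(M)$ for \emph{every} $M\in\calA^{\nu,\Delta}_K$. Distinctness of eigenvalues controls the direct-sum decomposition into generalised eigenspaces; it says nothing about $X-\widetilde j$ being zero rather than merely nilpotent on $F_{\widetilde j}(M)$ — indeed, by Definition~\ref{ch3:def-categ_action-Hecke} these are \emph{defined} as generalised eigenspaces, and the claim is false for general $M$. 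What actually forces $X$ to act as a scalar on $F_{\widetilde j}(\overline\Delta[\emptyset]_K)$ is that over $K$ the category $\calA^\nu_K$ is semisimple and $F_{\widetilde j}(\overline\Delta[\emptyset]_K)$ is a simple standard module $\overline\Delta[\square_r]_K$ (or zero), so its endomorphism ring is $K$ (or $0$) and every endomorphism, $X$ included, is a scalar. This is precisely the special case $t_i\leqslant 1$ of Lemma~\ref{ch3:lem_categ-rep-fact-cycl-general}. With that substitution the rest of your argument — identifying the nonzero $F_{\widetilde j}(\overline\Delta[\emptyset]_K)$ with the $l$ addable boxes, matching $\widetilde j=(\nu_r,r)$ to $Q_r$ under the identification of Section~\ref{ch3:subs_param-Hecke}, embedding $\End(\overline T_{d,R})$ into $\End(\overline T_{d,K})$ via $R$-freeness, and specialising $Q_r\mapsto\zeta_e^{\nu_r}$ — goes through and gives the required factorisation.
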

\begin{proof}
The statement follows from the lemma below.
\end{proof}

Only in the lemma below, we assume that $\bfk$ is an arbitrary field and $\scrF\subset \bfk^\times$ is an arbitrary subset.
\begin{lem}
\label{ch3:lem_categ-rep-fact-cycl-general}
Let $\calC=\bigoplus_{\mu\in X_\scrF}\calC_\mu$ be a categorical representation of $\frakg_\scrF$ over $\bfk$, see Definition \ref{ch3:def-categ_action-Hecke}.
Let $M\in\calC$ be an object such that there are non-negative integers $t_i$ for $i\in \scrF$ such that $t_i$ is non-zero only for finitely many $i$ and
\begin{itemize}
    \item[\textbullet] $\End(M)\simeq \bfk$,
    \item[\textbullet] $E_iF_i(M)\simeq M^{\oplus t_i},\quad \forall~i\in \scrF$.
\end{itemize}
Then the polynomial $\prod_{i\in\scrF}(X_1-i)^{t_i}$ is in the kernel of the homomorphism $H_{d,\bfk}(q)\to \End(F^d(M))^{\rm op}$ for each $d\in\bbN$.
\end{lem}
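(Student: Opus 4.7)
The plan is to reduce the statement to the single claim at $d=1$ that $(X-i)^{t_i}=0$ as an endomorphism of $F_i(M)$ for every $i\in\scrF$, and to prove that claim by an adjunction calculation that pins down $\dim_{\bfk}\End(F_i(M))$ exactly.

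For the adjunction step, biadjointness of $(E_i,F_i)$ (built into the notion of a categorical representation via Remark \ref{ch3:rk_df-repl-F-by-E}) together with the two hypotheses on $M$ yields
$$
\End(F_i(M))\simeq\Hom(M,E_iF_i(M))\simeq\Hom(M,M^{\oplus t_i})\simeq\bfk^{\oplus t_i},
$$
so $\dim_{\bfk}\End(F_i(M))=t_i$. In particular, if $t_i=0$ then the identity endomorphism of $F_i(M)$ vanishes, hence $F_i(M)=0$. By the defining property of the decomposition $F=\bigoplus F_i$, the endomorphism $(X-i)$ is nilpotent on $F_i(M)$. The standard observation that in a finite-dimensional $\bfk$-algebra $A$ the nilpotency index of any nilpotent element $n$ is at most $\dim_\bfk A$ (the powers $1,n,\ldots,n^{k-1}$ being linearly independent for $k$ equal to the nilpotency index, by multiplying a minimal dependence by a suitable power of $n$) then gives $(X-i)^{t_i}=0$ on $F_i(M)$.

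To finish, additivity of $F$ gives the finite decomposition $F^d(M)=\bigoplus_{i\in\scrF}F^{d-1}(F_i(M))$ (finite because only finitely many $F_i(M)$ are nonzero). The image of $X_1$ in $\End(F^d(M))^{\rm op}$ is $F^{d-1}(X_M)$, which preserves this decomposition, and on the $j$-th summand the factor $(X_1-j)^{t_j}$ of the product equals $F^{d-1}\bigl((X-j)^{t_j}|_{F_j(M)}\bigr)=0$. Therefore $\prod_i(X_1-i)^{t_i}$ vanishes on every summand of $F^d(M)$, as required. The one delicate point—not really an obstacle—is the adjunction computation of $\dim\End(F_i(M))$, since it is precisely what upgrades the mere nilpotency of $(X-i)$ into the sharp bound on its nilpotency index required by the conclusion.
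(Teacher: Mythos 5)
Your proof is correct and follows the same route as the paper: reduce to $d=1$, use adjunction plus the two hypotheses on $M$ to pin down $\dim_{\bfk}\End(F_i(M))=t_i$, and then observe that the nilpotent $(X-i)$ on $F_i(M)$ must satisfy $(X-i)^{t_i}=0$. You spell out two points the paper leaves implicit — the nilpotency-index bound via linear independence of $1,(X-i),\dots$, and the reduction from $F^d$ to $F$ — but these are elaborations of the same argument, not a different approach.
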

\begin{proof}
It is enough to prove the statement for $d=1$.
By adjointness we have the following isomorphisms of vector spaces
$$
\Hom(F_i(M),F_i(M))\simeq \Hom(E_iF_i(M),M)\simeq \Hom(M,M)^{\oplus t_i}\simeq \bfk^{t_i}.
$$

Each $F_i(M)$ is killed by $(X_1-i)^{t_i}$. Then $F(M)$ is killed by $\prod_{i\in\scrF}(X_1-i)^{t_i}$.
\end{proof}

Now, we get statements similar to Lemma \ref{ch3:lem_Tbar-fact-cycl-bfk} for $R$ and $K$.

\medskip
\begin{lem}
\label{ch3:lem_Tbar-fact-cycl-K}
The homomorphism $H_{d,K}(q_e)\to \End(\overline T_{d,K})$ factors through a homomorphism $\overline\psi_{d,K}\colon H^\nu_{d,K}(q_e)\to \End(\overline T_{d,K})$.
\end{lem}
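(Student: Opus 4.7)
The plan is to mimic the proof of Lemma \ref{ch3:lem_Tbar-fact-cycl-bfk} in the $K$-setting by applying Lemma \ref{ch3:lem_categ-rep-fact-cycl-general}, but now to the refined $\frakg_\scrF\simeq(\mathfrak{sl}_\infty)^{\oplus l}$-categorical representation on $\calA^\nu_K$ obtained from \cite[Thm.~B.18]{Mak-categ}. I take $M=\overline\Delta[\emptyset]_K$ and verify the two hypotheses of that lemma with respect to the vertex set $\scrF$.

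The condition $\End(\overline\Delta[\emptyset]_K)\simeq K$ is immediate, because $\overline\Delta[\emptyset]_K=\Delta^{\Upsilon(\rho_\nu)}_{\overline K}$ is a parabolic Verma module over $K$ whose highest weight space is one-dimensional. For the second hypothesis, note that $\overline\Delta[\emptyset]_K\in\calA^\nu_K[0]$, and $E_j$ sends $\calA^\nu_K[\widetilde\alpha]$ to $\calA^\nu_K[\widetilde\alpha-\widetilde\alpha_j]$, which is zero when $\widetilde\alpha=0$. Hence $E_j(\overline\Delta[\emptyset]_K)=0$ for every $j\in\widetilde I\simeq\scrF$. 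Combining this with the adjunction $\Hom(E_jF_j M,N)\simeq\Hom(F_jM,F_jN)$ (or equivalently the $\mathfrak{sl}_2$-categorification relation at $j$) yields $E_jF_j(\overline\Delta[\emptyset]_K)\simeq\overline\Delta[\emptyset]_K^{\oplus t_j}$ for some $t_j\in\bbN$. Each $t_j$ is then identified via the Grothendieck group: by the equivalence $\theta_0$ and the commutative diagrams (\ref{ch3:eq_comm-diag-Groth-A}), the class $[F_j(\overline\Delta[\emptyset]_K)]$ corresponds to $f_j[\Delta[\emptyset]_K]$ in the Fock space, which is nonzero precisely when $j$ is the deformed residue of a box appendable to the empty $l$-partition, i.e. $j=(\nu_r,r)$ for some $r\in[1,l]$. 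Since $F_j(\overline\Delta[\emptyset]_K)$ is $\Delta$-filtered in the highest weight category $\calA^\nu_K$, it is determined up to isomorphism by its Grothendieck class, so the module-level conclusion follows: $t_j=1$ if $j=(\nu_r,r)$ and $t_j=0$ otherwise.

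Under the identification $\widetilde I\simeq\scrF$, the vertex $(\nu_r,r)$ corresponds to $Q_r=\exp(2\pi\sqrt{-1}(\nu_r+\tau_r)/\kappa)$, so $\prod_j(X_1-j)^{t_j}=\prod_{r=1}^l(X_1-Q_r)$, which is the defining relation of $H^\nu_{d,K}(q_e)$. Lemma \ref{ch3:lem_categ-rep-fact-cycl-general} then forces it into the kernel of $H_{d,K}(q_e)\to\End(\overline T_{d,K})^{\rm op}$, yielding the required factorization $\overline\psi_{d,K}$. The most delicate ingredient is the compatibility between the $X_1$-action built into the representation datum and the $\scrF$-refined decomposition supplied by \cite[Thm.~B.18]{Mak-categ}: it is this compatibility that legitimizes reading off the $X_1$-eigenvalues from the $\scrF$-indexed weight decomposition in the categorical representation, after which the cyclotomic relation drops out of the Fock space combinatorics.
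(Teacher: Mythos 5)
Your proposal follows the paper's own route exactly: the paper's proof of this lemma is the single sentence "This statement also follows from Lemma \ref{ch3:lem_categ-rep-fact-cycl-general}," so you are supplying precisely the details the author omitted, and your choice of $M=\overline\Delta[\emptyset]_K$ with $t_j=1$ for $j=(\nu_r,r)$ and $t_j=0$ otherwise is the intended verification.

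One step in your verification is logically muddled, though the conclusion is correct. You write that $E_j(\overline\Delta[\emptyset]_K)=0$ combined with the adjunction $\Hom(E_jF_jM,N)\simeq\Hom(F_jM,F_jN)$ ``yields'' $E_jF_j(\overline\Delta[\emptyset]_K)\simeq\overline\Delta[\emptyset]_K^{\oplus t_j}$. The adjunction alone cannot produce this isomorphism: in the proof of Lemma \ref{ch3:lem_categ-rep-fact-cycl-general} the isomorphism $E_iF_i(M)\simeq M^{\oplus t_i}$ is a \emph{hypothesis}, and the adjunction is used afterwards to translate it into $\dim\End(F_iM)=t_i$. Nor can you appeal to the $\mathfrak{sl}_2$-categorification relation $E_jF_j\simeq F_jE_j\oplus\Id^{\oplus\ast}$ directly, since that relation is not established for the truncated functors $E_j=h^*E_jh$ on $\calA^\nu_K$. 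The vanishing $E_j(\overline\Delta[\emptyset]_K)=0$ is only trivially true for the truncated $E_j$ (by definition $\calA^\nu_K[\widetilde\alpha-\widetilde\alpha_j]$ is zero for $\widetilde\alpha=0$), while for the ambient (restricted) $E_j$ it can fail, e.g. for $j$ corresponding to the box that would drop the bottom coordinate $\Upsilon(1)$ to $0$. The clean way to get the hypothesis over $K$ is the one you essentially land on at the end of the paragraph but misplace: over $K$ the relevant weight blocks are semisimple (since $R$ is in general position), $F_j(\overline\Delta[\emptyset]_K)$ is either zero or the unique Verma in its block, and $E_jF_j(\overline\Delta[\emptyset]_K)$ lies in the block of $\overline\Delta[\emptyset]_K$, which contains $\overline\Delta[\emptyset]_K$ as its only Verma; hence it must be a direct sum of copies of $\overline\Delta[\emptyset]_K$, and the multiplicity $t_j$ is read off from the Fock space, exactly as you compute. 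With this repair the argument is sound, and matches the proof the paper has in mind.
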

\begin{proof}
This statement also follows from Lemma \ref{ch3:lem_categ-rep-fact-cycl-general}.
\end{proof}

\medskip
\begin{lem}
\label{ch3:lem_Tbar-fact-cycl-R}
The homomorphism $H_{d,R}(q_e)\to \End(\overline T_{d,R})$ factors through a homomorphism $\overline\psi_{d,R}\colon H^\nu_{d,R}(q_e)\to \End(\overline T_{d,R})$.
\end{lem}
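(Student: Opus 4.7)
The plan is to reduce to Lemma \ref{ch3:lem_Tbar-fact-cycl-K} by a flat base change argument, exploiting that $\overline T_{d,R}$ is a tilting module.

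First I would observe that $\overline T_{d,R}=F^d(\overline\Delta[\emptyset]_R)$ is a tilting object in the highest weight $R$-linear category $\calA^\nu_R[d]$, since $\overline\Delta[\emptyset]_R$ is tilting (being both standard and costandard) and the functor $F$ preserves tilting objects (as invoked right before (\ref{ch3:eq_theta-T-Tbar-d})). A standard fact for tilting modules over a local deformation ring (see the references used in Section \ref{ch3:subs_cat-bfA} and \cite[Prop.~2.1]{RSVV}) is that their endomorphism ring is a finitely generated free $R$-module, and that this formation commutes with base change, i.e.\ $K\otimes_R\End(\overline T_{d,R})\simeq\End(\overline T_{d,K})$. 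In particular the natural map
\[
\End(\overline T_{d,R})\hookrightarrow \End(\overline T_{d,K})
\]
is injective, because $R\hookrightarrow K$ and $\End(\overline T_{d,R})$ is $R$-free.

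Next I would use the compatibility of the homomorphism (\ref{ch3:eq_Hd-act-onTbar}) with the base change $K\otimes_R\bullet$, which was noted just after its construction. This means we have a commutative diagram
\[
\begin{CD}
H_{d,R}(q_e) @>>> \End(\overline T_{d,R})^{\mathrm{op}}\\
@VVV              @VVV\\
H_{d,K}(q_e) @>>> \End(\overline T_{d,K})^{\mathrm{op}}
\end{CD}
\]
in which the right vertical arrow is injective by the preceding paragraph, and the left vertical arrow sends $X_1-Q_r\in H_{d,R}(q_e)$ to $X_1-Q_r\in H_{d,K}(q_e)$ since $R\subset K$ and the elements $Q_r$ are the same.

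Finally, by Lemma \ref{ch3:lem_Tbar-fact-cycl-K} the element $(X_1-Q_1)\cdots(X_1-Q_l)$ is in the kernel of the bottom horizontal map. Chasing the diagram, its image in $\End(\overline T_{d,R})^{\mathrm{op}}$ is killed by the injection into $\End(\overline T_{d,K})^{\mathrm{op}}$, and therefore is already zero. Hence $H_{d,R}(q_e)\to\End(\overline T_{d,R})^{\mathrm{op}}$ factors through $H^\nu_{d,R}(q_e)$, giving the desired homomorphism $\overline\psi_{d,R}$.

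The only delicate point I foresee is confirming the freeness and the base-change identity $K\otimes_R\End(\overline T_{d,R})\simeq\End(\overline T_{d,K})$ in the present affine/parabolic setting; but this is a standard feature of tilting objects in highest-weight categories over local deformation rings (as used systematically in \cite{RSVV}), and in any case is implicit in the fact that the homomorphism (\ref{ch3:eq_Hd-act-onTbar}) was constructed compatibly with $K\otimes_R\bullet$. Once this injectivity is in hand, the lemma follows immediately.
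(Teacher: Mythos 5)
Your proof is correct and follows the same route as the paper: base-change to $K$ and invoke Lemma~\ref{ch3:lem_Tbar-fact-cycl-K}. The paper's own proof simply cites the commutative diagram and leaves the crucial injectivity of $\End(\overline T_{d,R})^{\rm op}\to\End(\overline T_{d,K})^{\rm op}$ implicit; you correctly identify and justify this step via $R$-freeness of the endomorphism ring of a tilting object.
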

\begin{proof}
This statement follows from Lemma \ref{ch3:lem_Tbar-fact-cycl-K} and from the commutativity of the following diagram 
$$
\begin{CD}
H_{d,R}(q_{e}) @>>>\End(\overline T_{d,R})^{\rm op}\\
@VVV @VVV\\
H_{d,K}(q_{e}) @>>>\End(\overline T_{d,K})^{\rm op}.
\end{CD}
$$
\end{proof}

\subsection{The proof of invertibility} 
We still assume that $R$ is as in Assumption 1.
From now on, we assume $\nu_r\geqslant d$ for each $r\in[1,l]$.
The goal of this section is to prove that under this assumption the homomorphism $\overline\psi_{d,R}$ in Lemma \ref{ch3:lem_Tbar-fact-cycl-R} is an isomorphism. 


Consider the functors
$$
\begin{array}{ll}
\Psi^\nu_d\colon \bfA^\nu_R[d]\to \mod(H^\nu_{d,R}(q_e)),& M\mapsto \Hom(T_{d,R},M),\\
\overline\Psi^\nu_d\colon \calA^\nu_R[d]\to \mod(H^\nu_{d,R}(q_e)),& M\mapsto \Hom(\overline T_{d,R},M),\\
\end{array}
$$
where $\Hom(T_{d,R},M)$ and $\Hom(\overline T_{d,R},M)$ are considered as $H^{\nu}_{d,R}(q_e)$-modules with respect to the homomorphisms $\psi_{d,R}$ and $\overline \psi_{d,R}$.

Let us abbreviate $\Psi=\Psi^\nu_{d}$, $\overline\Psi=\overline\Psi^\nu_{d}$, $T_{R}=T_{d,R}$ and $\overline T_R=\overline T_{d, R}$. We may write $\Psi_R$, $\overline\Psi_R$ to specify the ring $R$. For $\lambda\in\calP^l_d$ denote by $S[\lambda]_R$ the Specht module of $H^\nu_{d,R}(q_e)$. We will use similar notation for $\bfk$ or $K$ instead of $R$. See also \cite[Sec.~2.4.3]{RSVV}.







Let us identify $\widetilde I\simeq \scrF\subset K$ as in Section \ref{ch3:subs_param-Hecke}.
For each $\uj=(j_1,\cdots,j_n)\in \widetilde{I}^d$, the Hecke algebra $H^\nu_{d,K}(q_e)$ contains an idempotent $e(\uj)$ such that for each finite dimensional $H^\nu_{d,K}(q_e)$-module $M$ the subspace $1_\uj M\subset M$ is the generalized eigenspace of the commuting operators $X_1,X_2,\cdots,X_n$ with eigenvalues $j_1,j_2,\cdots,j_n$ respectively. The idempotent $e(\uj)$ acts on $T_{K}=F^d(\overline\Delta[\emptyset]_K)$ by projection onto $F_{j_d}F_{j_{d-1}}\cdots F_{j_1}(\overline\Delta[\emptyset]_K)$.

Now, for each $\widetilde\alpha\in Q^+_{\widetilde I}$, set $e(\widetilde\alpha)=\sum_{\uj\in \widetilde I^{\widetilde\alpha}}e(\uj)$
and
$$
F_{\widetilde\alpha}=\bigoplus_{\uj=(j_1,\cdots,j_d)\in \widetilde I^{\widetilde\alpha}}F_{j_1}F_{j_2}\cdots F_{j_d}.
$$ 
We obviously have 
$$
F^d=\bigoplus_{\widetilde\alpha\in Q^+_{\widetilde I},|\widetilde\alpha|=d}F_{\widetilde\alpha}.
$$
The idempotent $e(\widetilde\alpha)$ acts on $T_{K}=F^d(\overline\Delta[\emptyset]_K)$ by projection onto $F_{\widetilde\alpha}(\overline\Delta[\emptyset]_K)$.

\smallskip
\begin{lem}
\label{ch3:lem_isom-K}
$(a)$ The homomorphism $\overline\psi_{d,K}\colon H^{\nu}_{d,K}(q_e)\to \End(\overline T_K)^{\rm op}$ is an isomorphism.

$(b)$ For each $\lambda\in\calP^l_d$ we have $\overline\Psi(\overline\Delta[\lambda]_K)\simeq S[\lambda]_K$.
\end{lem}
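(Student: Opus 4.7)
The plan is to adapt the proof of the analogous statement about $(T_{d,K},\psi_{d,K},\Psi)$ on $\bfA^\nu_K$ from \cite{RSVV}, exploiting that $K$ is the fraction field of a deformation ring in general position, so the situation is essentially ``semisimple'': over $K$ each parabolic Verma $\overline\Delta[\lambda]_K$ is simple, distinct $\lambda$'s give non-isomorphic simples, and $H^\nu_{d,K}(q_e)$ is a split semisimple algebra whose simple modules are precisely $\{S[\lambda]_K\}_{\lambda\in\calP^l_d}$ with $\dim H^\nu_{d,K}(q_e)=\sum_\lambda(\dim S[\lambda]_K)^2$.

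I would start with part (b). By biadjointness of $(E,F)$ on $\calA^\nu_K$,
$$
\overline\Psi(\overline\Delta[\lambda]_K)=\Hom(F^d(\overline\Delta[\emptyset]_K),\overline\Delta[\lambda]_K)\simeq\Hom(\overline\Delta[\emptyset]_K,E^d(\overline\Delta[\lambda]_K)).
$$
Refining via the idempotents $e(\uj)$ for $\uj\in\widetilde I^d$ decomposes the right-hand side into a sum of Hom spaces into $E_{j_d}\cdots E_{j_1}(\overline\Delta[\lambda]_K)$. In the generic setting over $K$ these pieces are nonzero exactly when $\uj$ is the deformed residue sequence of a standard tableau on the Young diagram of $\lambda$, each contributing a one-dimensional space. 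This produces a $K$-basis indexed by standard tableaux of shape $\lambda$, so the dimension matches $\dim S[\lambda]_K$. The action of $X_r$ induced by $\overline\psi_{d,K}$ is diagonal in this basis with eigenvalue recorded by $j_r$, and the action of $T_r$ comes from the crossing natural transformations of the representation datum; matching this with the Young seminormal form for $S[\lambda]_K$ identifies the $H^\nu_{d,K}(q_e)$-module structure.

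Part (a) then follows by dimension counting. Over $K$ we have $\overline T_{d,K}\simeq\bigoplus_{\lambda}\overline\Delta[\lambda]_K^{\oplus m_\lambda}$, and (b) gives $m_\lambda=\dim\overline\Psi(\overline\Delta[\lambda]_K)=\dim S[\lambda]_K$, so
$$
\dim_K\End(\overline T_{d,K})^{\rm op}=\sum_\lambda m_\lambda^2=\sum_\lambda(\dim S[\lambda]_K)^2=\dim_K H^\nu_{d,K}(q_e).
$$
The composition of $\overline\psi_{d,K}$ with the action on $\bigoplus_\lambda S[\lambda]_K$ is faithful by (b), so $\overline\psi_{d,K}$ is injective and an isomorphism by dimension comparison. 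The main obstacle is the identification in (b) of the $H^\nu_{d,K}(q_e)$-action induced by $\overline\psi_{d,K}$ with the classical Specht-module action: the homomorphism $\overline\psi_{d,K}$ is built from the $\widetilde{\mathfrak{sl}}_{e+1}$-categorical representation on $O^\nu_{-(e+1),\overline K}$ restricted to $\calA^\nu_K$ via \cite[Thm.~B.18]{Mak-categ}, rather than from the native $\widetilde{\mathfrak{sl}}_e$-action used in \cite{RSVV}, so carefully tracing the generators $X_r$ and $T_r$ through that restriction is the step that requires most care.
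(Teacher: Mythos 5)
Your approach is correct in outline but takes a genuinely different route, and the paper's is noticeably leaner. The paper proves injectivity of $\overline\psi_{d,K}$ first and directly, using only the central idempotents $e(\widetilde\alpha)$: since $e(\widetilde\alpha)$ acts on $\overline T_K$ as projection onto $F_{\widetilde\alpha}(\overline\Delta[\emptyset]_K)$, injectivity reduces to the non-vanishing of $F_{\widetilde\alpha}(\overline\Delta[\emptyset]_K)$ whenever $e(\widetilde\alpha)\ne 0$, which is transported from the known statement for $\bfA^\nu_K$ via $\theta$ (cf.\ \cite[Prop.~5.22~(d)]{RSVV}). Surjectivity is then a dimension count via $\theta_d(T_{d,K})\simeq\overline T_{d,K}$. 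Part $(b)$ is extracted with no explicit module computation at all: the injectivity argument already shows each $\overline\Delta[\lambda]_K$ occurs as a direct factor of $\overline T_K$, so $\overline T_K$ is a projective generator and $\overline\Psi_K$ is an equivalence between semisimple categories; then $\overline\Delta[\lambda]_K\mapsto S[\lambda]_K$ because $S[\lambda]_K$ is the unique simple in its block. You instead attack $(b)$ first by an explicit Young--seminormal-form identification of the $H^\nu_{d,K}(q_e)$-action, which is exactly the step you yourself flag as delicate --- it requires tracing the $T_r$-operator through the restricted $\widetilde{\mathfrak{sl}}_{e+1}$-datum of \cite[Thm.~B.18]{Mak-categ}, a nontrivial bookkeeping task that the paper is explicitly designed to avoid. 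In fact the seminormal-form matching is more than you need: over the split semisimple $H^\nu_{d,K}(q_e)$ the $X$-eigenvalue (block) decomposition already determines the isomorphism type of a simple module, so your tableau analysis could be shortened to the same block argument the paper uses. Your proposal as stated would still need that last step filled in to be complete; the paper's route bypasses it.
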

\begin{proof}[Proof]

First, we prove that the homomorphism $\overline\psi_{d,K}$ is injective.
The algebra $H^\nu_{d,K}(q_e)$ is finite dimensional and semisimple. Its center is spanned by the idempotents $e(\widetilde\alpha)$ such that $\widetilde\alpha\in Q^+_{\widetilde I}$ and $|\widetilde\alpha|=d$. 

Since the idempotent $e(\widetilde\alpha)$ acts on $T_{K}$ by projection onto $F_{\widetilde\alpha}(\overline\Delta[\emptyset]_K)$, to prove the injectivity of $\overline\psi_{d,K}$ we need to check that $F_{\widetilde\alpha}(\overline\Delta[\emptyset]_K)$ is nonzero whenever $e(\widetilde\alpha)$ is nonzero. Similarly to the argument in Section \ref{ch3:subs_comm-Groth}, we see that the equivalence $\theta\colon \bfA^\nu_K\simeq \calA^\nu_K$ yields an isomorphism of Grothendieck groups $[\bfA^\nu_K]\simeq [\calA^\nu_K]$ that commutes with functors $F_j$. Thus the module $F_{\widetilde\alpha}(\overline\Delta[\emptyset]_K)\in\calA^\nu_K$ is nonzero if and only if the module $F_{\widetilde\alpha}(\Delta[\emptyset]_K)\in\bfA^\nu_K$ is nonzero. By \cite[Prop.~5.22~(d)]{RSVV}, the module $F_{\widetilde\alpha}(\Delta[\emptyset]_K)\in\bfA^\nu_K$ is nonzero whenever $e(\widetilde\alpha)$ is nonzero. Thus $\overline\psi_{d,K}$ is injective.


Thus it is also surjective because
$$
\dim_K H^\nu_{d,K}(q_e)=\dim_K\End(T_K)^{\rm op}=\dim_K\End(\overline T_K)^{\rm op},
$$
where the first equality holds by \cite[Prop.~5.22~(d)]{RSVV} and the second holds by (\ref{ch3:eq_theta-T-Tbar-d}). This implies part $(a)$.

The discussion above implies that $\overline T_K$ contains each $\overline\Delta[\lambda]_K$, $\lambda\in \calP^l_d$ as a direct factor. In particular $\overline T_K$ is a projective generator of $\calA^\nu_K[d]$. Thus $\overline\Psi_K$ is an equivalence of categories. It must take $\overline\Delta[\lambda]_K$ to $S[\lambda]_K$ because $S[\lambda]_K$ is the unique simple module in the block $\mod(H^\nu_{\widetilde\alpha,K}(q_e))$ of $\mod(H^\nu_{d,K}(q_e))$.
\end{proof}

\smallskip
\begin{lem}
\label{ch3:lem_isom-R}
$(a)$ The homomorphism $\overline\psi_{d,K}\colon H^{\nu}_{d,R}(q_e)\to \End(\overline T_R)^{\rm op}$ is an isomorphism.

$(b)$ For each $\lambda\in\calP^l_d$ we have $\overline\Psi(\overline\Delta[\lambda]_R)\simeq S[\lambda]_R$.
\end{lem}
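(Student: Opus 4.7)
The plan is to promote the statement from the generic fibre (Lemma \ref{ch3:lem_isom-K}) to the deformation ring $R$ by a standard freeness and base change argument, exactly parallel to the one used in \cite{RSVV} to handle $\psi_{d,R}$ for the category $\bfA$.

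First I would verify that both sides of $\overline\psi_{d,R}$ are free $R$-modules of the same rank. The cyclotomic Hecke algebra $H^\nu_{d,R}(q_e)$ is free over $R$ of rank $\dim_K H^\nu_{d,K}(q_e)$ by the standard Ariki--Koike basis theorem. For the right-hand side I would use that $\overline T_R=\overline T_{d,R}$ is a tilting object in the highest weight category $\calA^\nu_R[d]$ over the local analytic deformation ring $R$ (this was established right before the lemma). For tilting objects in such a setting, $\End(\overline T_R)$ is $R$-free and compatible with base change, so $K\otimes_R\End(\overline T_R)\simeq \End(\overline T_K)$; the equality of ranks then follows from Lemma \ref{ch3:lem_isom-K}(a), which identifies $\dim_K\End(\overline T_K)$ with $\dim_K H^\nu_{d,K}(q_e)$. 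Alternatively, one can transport via the tilting isomorphism $\theta_d(T_R)\simeq \overline T_R$ of (\ref{ch3:eq_theta-T-Tbar-d}) to the known freeness result for $\End(T_R)$ in \cite{RSVV}.

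Next I would show that $\overline\psi_{d,R}$ is an isomorphism. It is a morphism of free $R$-modules of equal finite rank, and its base change to $K$ is $\overline\psi_{d,K}$, which is an isomorphism by Lemma \ref{ch3:lem_isom-K}(a) (using the commutativity of the base-change square from the proof of Lemma \ref{ch3:lem_Tbar-fact-cycl-R}). Injectivity over $R$ is automatic: the kernel is a finitely generated $R$-submodule of the free module $H^\nu_{d,R}(q_e)$, hence torsion-free, and vanishes after tensoring with $K$. For surjectivity I would first establish the analogue $\overline\psi_{d,\bfk}\colon H^\nu_{d,\bfk}(\zeta_e)\to\End(\overline T_\bfk)^{\rm op}$ is an isomorphism by running precisely the same argument as for $\overline\psi_{d,K}$ in Lemma \ref{ch3:lem_isom-K}, replacing RSVV's input \cite[Prop.~5.22]{RSVV} (nonvanishing of $F_{\widetilde\alpha}(\Delta[\emptyset]_\bfk)$) by its $\bfk$-version, which applies since $\nu_r\geqslant d$. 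Once $\overline\psi_{d,\bfk}$ is surjective, Nakayama's lemma applied to the cokernel of $\overline\psi_{d,R}$ forces this cokernel to vanish, giving part $(a)$.

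For part $(b)$, once $\overline\psi_{d,R}$ is an isomorphism, the functor $\overline\Psi$ is represented by the free $R$-module underlying $\overline T_R$ regarded as a right $H^\nu_{d,R}(q_e)$-module. The computation $\overline\Psi(\overline\Delta[\lambda]_R)\simeq S[\lambda]_R$ then follows by the same reasoning as in Lemma \ref{ch3:lem_isom-K}(b) applied over $R$: base changing to $K$ yields $S[\lambda]_K$ by that lemma, and base changing to $\bfk$ yields $S[\lambda]_\bfk$ by its $\bfk$-analogue; since both $\overline\Psi(\overline\Delta[\lambda]_R)$ and $S[\lambda]_R$ are $R$-free lifts of $S[\lambda]_\bfk$ that become isomorphic over $K$, they are isomorphic (the Specht module being the unique such lift in its block of $\mod(H^\nu_{d,R}(q_e))$).

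The main obstacle is the surjectivity in Step 3, since base change from $K$ alone only yields injectivity. The honest work is the $\bfk$-version of Lemma \ref{ch3:lem_isom-K}, which requires checking that the nonvanishing input from \cite[Prop.~5.22(d)]{RSVV} survives after passing through the equivalence $\theta$ and the restriction construction of \cite{Mak-categ} at the residue field; the hypothesis $\nu_r\geqslant d$ is exactly what makes this go through.
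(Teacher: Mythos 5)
Your proposal is genuinely different from the paper's proof, and it has a real gap at the crucial step. You correctly reduce invertibility of $\overline\psi_{d,R}$ (a map of $R$-free modules of equal rank, with invertible $K$-form) to surjectivity of $\overline\psi_{d,\bfk}$ followed by Nakayama; the freeness, compatibility with base change, and injectivity-over-$R$ parts are all fine. But the claim that one can establish $\overline\psi_{d,\bfk}$ an isomorphism by ``running precisely the same argument'' as in Lemma \ref{ch3:lem_isom-K} does not work. That proof depends essentially on the fact that over $K$ the algebra $H^\nu_{d,K}(q_e)$ is semisimple, that its center is spanned by the idempotents $e(\widetilde\alpha)$ attached to the \emph{fine} quiver $\widetilde I\simeq\scrF$, and that the category $\calA^\nu_K[d]$ is semisimple. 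Over $\bfk$ none of this is available: the parameter set is $\scrF_\bfk$, a single cyclic quiver of length $e$, so the fine decomposition by $\widetilde\alpha\in Q^+_{\widetilde I}$ collapses, the algebra $H^\nu_{d,\bfk}(\zeta_e)$ is not semisimple, and the argument ``injectivity on the central idempotents plus dimension count gives surjectivity'' simply has no $\bfk$-analogue. The paper in fact never proves a $\bfk$-statement independently; Corollary \ref{ch3:coro_isom-C} is \emph{deduced} from Lemma \ref{ch3:lem_isom-R}, so invoking the ``$\bfk$-version'' here would be circular. The same issue infects your argument for part $(b)$, which quietly uses a $\bfk$-analogue of Lemma \ref{ch3:lem_isom-K}$(b)$ that is not independently available.

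The paper avoids the residue field entirely. It forms the composite endomorphism
$u\colon H^\nu_{d,R}(q_e)\stackrel{\overline\psi_{d,R}}{\to}\End_{\calA^\nu}(\overline T_R)^{\rm op}\to\End_{\bfA^\nu}(T_R)^{\rm op}\stackrel{\psi_{d,R}^{-1}}{\to}H^\nu_{d,R}(q_e)$,
where the middle arrow is the isomorphism coming from the equivalence $\theta_d$ and (\ref{ch3:eq_theta-T-Tbar-d}), and where $\psi_{d,R}$ is the known isomorphism from \cite{RSVV}. Then invertibility of $\overline\psi_{d,R}$ is equivalent to invertibility of the $R$-algebra endomorphism $u$. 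The key tool is \cite[Prop.~2.23]{RSVV}: it suffices that $Ku$ is an isomorphism \emph{and} that $Ku$ induces the identity on $[\mod(H^\nu_{d,K}(q_e))]$. The first is Lemma \ref{ch3:lem_isom-K}$(a)$; the second follows because, by semisimplicity over $K$, $\Psi_K$ and $\overline\Psi_K\circ\theta_d$ are both equivalences that agree on objects, hence are isomorphic functors. This criterion replaces your Nakayama step by a Grothendieck-group condition checked entirely over $K$, which is precisely what sidesteps the missing $\bfk$-argument. If you want to keep your architecture, you would need to either supply an independent proof that $\overline\psi_{d,\bfk}$ is surjective (a nontrivial modular representation-theoretic statement, not a restatement of the generic one), or replace your Nakayama step by the $K_0$-criterion of \cite[Prop.~2.23]{RSVV} as the paper does.
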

\begin{proof}[Proof]
Consider the endomorphism $u$ of $H^\nu_{d,R}(q_e)$ obtained from the following chain of homomorphisms
$$
u\colon H^\nu_{d,R}(q_e)\stackrel{\overline\psi_{d,R}}{\to}\End_{\calA^\nu}(\overline T_R)^{\rm op}\stackrel{\theta_{d}^{-1}}{\to}\End_{\bfA^\nu}(T_R)^{\rm op}\stackrel{\psi_{d,R}^{-1}}{\to}H^\nu_{d,R}(q_e).
$$
The invertibility of $\overline\psi_{d,R}$ is equivalent to the invertibility of $u$. By \cite[Prop.~2.23]{RSVV} to prove that $u$ is an isomorphism it is enough to show that its localization $Ku\colon H^\nu_{d,K}(q_e)\to H^\nu_{d,K}(q_e)$ is an isomorphism and that $Ku$ induces the identity map on Grothendieck groups $[\mod(H^\nu_{d,K}(q_e))]\to [\mod(H^\nu_{d,K}(q_e))]$. The bijectivity of $Ku$ follows from Lemma \ref{ch3:lem_isom-K} $(a)$.

Now we check the condition on the Grothendieck group. We already know from \cite[Prop.~5.22~(c)]{RSVV} and the proof of Lemma \ref{ch3:lem_isom-K} that $\Psi_K$ and $\overline\Psi_K$ are equivalences of categories. Thus, by semisimplicity of the categories $\bfA^\nu_K[d]$, $\calA^\nu_K[d]$ and $\mod(H^\nu_{d,K}(q_e))$, we have an isomorphism of functors $\Psi_K\simeq \overline \Psi_K\circ \theta_{d}$ because $\Psi_K(M)\simeq \overline\Psi_K\circ \theta_d(M)$ for each $M\in\bfA_K^\nu[d]$. This implies that $Ku$ is the identity on the Grothendieck group. This proves part $(a)$.

Part $(b)$ follows from Lemma \ref{ch3:lem_isom-K} and the characterization of Specht modules, see \cite[Sec.~2.4.3]{RSVV}.
\end{proof}

\smallskip
\begin{rk}
There is no reason why the automorphism $u\colon H^\nu_{d,R}(q_e)\to H^\nu_{d,R}(q_e)$ in the proof of Lemma \ref{ch3:lem_isom-R} should be identity. Because of this, the functor $\Psi$ has no reason to coincide with $\overline\Psi\circ\theta_d$. However the automorphism $u$ of $H^\nu_{d,R}(q_e)$ induces an autoequivalence $u^*$ of $\mod(H^\nu_{d,R}(q_e))$ such that we have
\begin{equation}
\label{ch3:eq_isom-Psi-Psibar-u*}
\Psi= u^*\circ\overline\Psi\circ\theta_d.
\end{equation}
\end{rk}

\smallskip
Now, specializing to $\bfk$, we obtain the following.

\smallskip
\begin{coro}
\label{ch3:coro_isom-C}
$(a)$ The homomorphism $\overline\psi_{d,\bfk}\colon H^\nu_{d,\bfk}(\zeta_e)\to \End(\overline T_\bfk)^{\rm op}$ is an isomorphism.

$(b)$ For each $\lambda\in\calP^l_{d}$ we have $\overline\Psi_\bfk(\overline\Delta[\lambda]_\bfk)\simeq S[\lambda]_\bfk$.
\qed
\end{coro}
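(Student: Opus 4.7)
The plan is to obtain the corollary by specializing Lemma \ref{ch3:lem_isom-R} along the base change $\bfk \otimes_R \bullet$. All of the objects involved have been set up precisely so that this specialization works cleanly: the algebras $H^\nu_{d,R}(q_e)$ satisfy $H^\nu_{d,R}(q_e) \otimes_R \bfk = H^\nu_{d,\bfk}(\zeta_e)$ by definition (since $R \to \bfk$ sends $q_e$ to $\zeta_e$ and $Q_r$ to $\zeta_e^{\nu_r}$), and the homomorphisms $\overline\psi_{d,R}$, $\overline\psi_{d,\bfk}$ were already observed to be compatible with base change in the discussion preceding Lemma \ref{ch3:lem_Tbar-fact-cycl-bfk}.

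For part $(a)$, I would argue as follows. By Lemma \ref{ch3:lem_isom-R}$(a)$, the map $\overline\psi_{d,R}\colon H^\nu_{d,R}(q_e)\to \End(\overline T_R)^{\rm op}$ is an isomorphism of $R$-modules (in fact of $R$-algebras). Because $\overline T_R = F^d(\overline\Delta[\emptyset]_R)$ is a tilting object in $\calA^\nu_R[d]$ (as recalled in (\ref{ch3:eq_theta-T-Tbar-d}) and the sentence preceding it), its endomorphism algebra is free over $R$ and commutes with base change, so $\End(\overline T_R)^{\rm op} \otimes_R \bfk \simeq \End(\overline T_\bfk)^{\rm op}$. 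Applying $\bfk \otimes_R \bullet$ to the isomorphism $\overline\psi_{d,R}$ then yields $\overline\psi_{d,\bfk}$, proving $(a)$.

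For part $(b)$, the same strategy applies to the functor $\overline\Psi$. By Lemma \ref{ch3:lem_isom-R}$(b)$ we have $\overline\Psi_R(\overline\Delta[\lambda]_R) \simeq S[\lambda]_R$. Since $\overline T_R$ is tilting and $\overline\Delta[\lambda]_R$ is $\Delta$-filtered, higher $\Ext$ groups vanish and $\Hom(\overline T_R, \overline\Delta[\lambda]_R)$ is a free $R$-module compatible with base change, giving $\overline\Psi_R(\overline\Delta[\lambda]_R) \otimes_R \bfk \simeq \overline\Psi_\bfk(\overline\Delta[\lambda]_\bfk)$. On the other side, Specht modules behave well under base change: $S[\lambda]_R \otimes_R \bfk \simeq S[\lambda]_\bfk$ (by the characterization of Specht modules recalled in \cite[Sec.~2.4.3]{RSVV}). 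Combining these, the isomorphism over $R$ specializes to $\overline\Psi_\bfk(\overline\Delta[\lambda]_\bfk) \simeq S[\lambda]_\bfk$.

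The only delicate point — and hence the main (mild) obstacle — is checking that the relevant $\Hom$ and $\End$ spaces really are $R$-flat so that base change is exact on them. This is not a new difficulty here, however: it is exactly the standard tilting-module base change used throughout \cite{RSVV}, and the setup has been arranged via Proposition \ref{ch3:prop_functors-on-O-gen}$(a)$ and (\ref{ch3:eq_theta-T-Tbar-d}) precisely so that $\overline T_R$ is tilting and that all constructions commute with $\bfk \otimes_R \bullet$. No new categorical-representation machinery is required at this stage.
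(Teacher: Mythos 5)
Your proposal is correct and takes essentially the same approach as the paper, whose entire proof is the sentence ``Now, specializing to $\bfk$, we obtain the following.'' You have simply spelled out the base-change argument that the paper leaves implicit: $\overline T_R$ is tilting so its $\End$-algebra and the $\Hom$ spaces to $\Delta$-filtered objects are $R$-free and commute with $\bfk\otimes_R\bullet$, and Specht modules and the maps $\overline\psi_{d,-}$ also commute with this base change, so Lemma~\ref{ch3:lem_isom-R} specializes directly.
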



\subsection{Rational Cherednik algebras}
Let $R$ be a local commutative $\bbC$-algebra with residue field $\bbC$. Let $W$ be a complex reflection group. Denote by $S=S(W)$ and $\calA$ the set of pseudo-reflections in $W$ and the set of reflection hyperplanes respectively. Let $\frakh$ be the reflection representation of $W$ over $R$. Let $c\colon S\to R$ be a map which is constant on the $W$-conjugacy classes.

Denote by $\langle\bullet,\bullet\rangle$ the canonical pairing between $\frakh^*$ and $\frakh$. For each $s\in S$ fix a generator $\alpha_s\in \frakh^*$ of ${\rm Im}(\restr{s}{\frakh^*}-1)$ and a generator $\check\alpha_s\in\frakh$ of ${\rm Im}(\restr{s}{\frakh}-1)$ such that $\langle\alpha_s,\check{\alpha}_s\rangle=2$.

\smallskip
\begin{df}
The \emph{rational Cherednik algebra} $H_c(W,\frakh)_R$ is the quotient of the smash product $RW\ltimes T(\frakh\oplus\frakh^*)$ by the relations
$$
[x,x']=0,\qquad [y,y']=0,\qquad [y,x]=\langle x,y\rangle-\sum_{s\in S}c_s\langle\alpha_s,y\rangle\langle x,\check{\alpha}_s\rangle s,
$$
for each $x,x'\in\frakh^*$, $y,y'\in\frakh$. Here $T(\bullet)$ denotes the tensor algebra.
\end{df}

Denote by $\calO_c(W,\frakh)_R$ the category $\calO$ of $H_c(W,\frakh)_R$, see \cite[Sec.~3.2]{GGOR} and \cite[Sec.~6.1.1]{RSVV}.
Let $E$ be an irreducible representation of $\bbC W$.

\smallskip
\begin{df}
A \emph{Verma module} associated with $E$ is the following module in $\calO_c(W,\frakh)_R$
$$
\Delta_R(E):=\Ind_{RW\ltimes R[\frakh^*]}^{H_c(W,\frakh)_R}(RE).
$$
Here $\frakh\subset R[\frakh^*]$ acts on $RE$ by zero.
\end{df}
The category $\calO_c(W,\frakh)_R$ is a highest weight category over $R$ with standard modules $\Delta_R(E)$.

We call a subgroup $W'$ of $W$ \emph{parabolic} if it is a stabilizer of some point of $b\in\frakh$. In this case $W'$ is a complex reflection group with reflection representation $\frakh/\frakh^{W'}$, where $\frakh^{W'}$ is the set of $W'$-stable points in $\frakh$. Moreover, the map $c\colon S(W)\to R$ restricts to a map $c\colon S(W')\to R$. There are induction and restriction functors
$$
^\calO\Ind_{W'}^W\colon \calO_c(W',\frakh/\frakh^{W'})_R\to \calO_c(W,\frakh)_R,\quad ^\calO\Res_{W'}^W\colon \calO_c(W,\frakh)_R\to \calO_c(W',\frakh/\frakh^{W'})_R,
$$
see \cite{BE}. The definitions of these functors depend on $b$ but their isomorphism classes are independent of the choice of $b$.

The following lemma holds.

\smallskip
\begin{lem}
\label{ch3:lem_proj-conj-par-same}
Assume that $W'$ and $W''$ are conjugated parabolic subgroups in $W$. Let $P\in\calO_c(W,\frakh)_R$ be a projective module. Then the following conditions are equivalent
 \begin{itemize}
    \item[\textbullet] the module $P$ is isomorphic to a direct factor of the module $^\calO\Ind_{W'}^W(P')$ for some projective module $P'\in \calO_c(W',\frakh/\frakh^{W'})_R$,
    \item[\textbullet] the module $P$ is isomorphic to a direct factor of a module $^\calO\Ind_{W''}^W(P'')$ for some projective module $P''\in \calO_c(W'',\frakh/\frakh^{W''})_R$.
\end{itemize}
\end{lem}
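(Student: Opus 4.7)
Since $W'$ and $W''$ are conjugate, pick $w\in W$ with $W''=wW'w^{-1}$; if $W'=\mathrm{Stab}_W(b)$ then $W''=\mathrm{Stab}_W(wb)$. Left multiplication by $w$ gives a linear isomorphism $\bar w\colon\frakh/\frakh^{W'}\stackrel{\sim}{\to}\frakh/\frakh^{W''}$ that intertwines the $W'$- and $W''$-actions via the conjugation isomorphism $W'\to W''$, and that matches the bijection $S(W')\stackrel{\sim}{\to} S(W'')$, $s\mapsto wsw^{-1}$. Since $c$ is constant on $W$-conjugacy classes, this bijection preserves the Cherednik parameter. One obtains an algebra isomorphism $H_c(W',\frakh/\frakh^{W'})_R\cong H_c(W'',\frakh/\frakh^{W''})_R$ and a resulting equivalence of highest weight categories
$$\Phi_w\colon\calO_c(W',\frakh/\frakh^{W'})_R\stackrel{\sim}{\longrightarrow}\calO_c(W'',\frakh/\frakh^{W''})_R$$
which preserves projectives.

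The heart of the proof is to establish an isomorphism of functors $^\calO\Ind_{W'}^W\simeq{}^\calO\Ind_{W''}^W\circ\Phi_w$. Granting this, the lemma follows by symmetry: if $P$ is a direct summand of $^\calO\Ind_{W'}^W(P')$ with $P'$ projective, then $P$ is a direct summand of $^\calO\Ind_{W''}^W(\Phi_w(P'))$ and $\Phi_w(P')$ is projective; the converse uses $\Phi_{w^{-1}}$.

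To produce this functor isomorphism I would unpack the Bezrukavnikov--Etingof construction. Choosing basepoints $b$ for the $W'$-side and $wb$ for the $W''$-side, the functor $^\calO\Ind_{W'}^W$ is built from the completion of $H_c(W,\frakh)_R$ along the $W$-orbit $Wb$, while $^\calO\Ind_{W''}^W$ is built from the completion along $W(wb)=Wb$. These two completed algebras are canonically identified, and this identification intertwines the embeddings of $H_c(W',\frakh/\frakh^{W'})_R$ and $H_c(W'',\frakh/\frakh^{W''})_R$ with the conjugation isomorphism above. Transporting the construction through yields the desired isomorphism; it is enough to check it on standard modules and then propagate by right-exactness. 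The main obstacle is precisely this $W$-equivariance verification: the identification of completions by translation by $w$ must be shown to be compatible with the factorizations used to define $^\calO\Ind$. The remark in the paper that the isomorphism class of $^\calO\Ind$ is independent of $b$ is close in spirit but stated only for basepoints with the same stabilizer, so this extension across a $W$-translate is the genuine technical point one must verify; once it is in place, the rest of the argument is formal.
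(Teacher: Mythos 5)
Your opening move — reducing to a compatibility of $^\calO\Ind$ with the conjugation equivalence $\Phi_w$ — matches the paper's, but the way you propose to finish the argument makes the key step heavier than it needs to be.

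You want the functor isomorphism $^\calO\Ind_{W'}^W\simeq {}^\calO\Ind_{W''}^W\circ\Phi_w$ directly, and plan to get it by unpacking the Bezrukavnikov--Etingof completion construction and checking $W$-equivariance by hand. The paper instead notes that conjugation by $w$ also defines an automorphism $t$ of $H_c(W,\frakh)_R$ itself (with $t(x)=w(x)$, $t(y)=w(y)$, $t(u)=wuw^{-1}$), and that the natural commuting square has $t^*$, not the identity, at the $\calO_c(W,\frakh)_R$ level:
$$
t^*\circ {}^\calO\Ind_{W''}^W \;\simeq\; {}^\calO\Ind_{W'}^W\circ \Phi_w^{-1}.
$$
This square is just ``apply conjugation by $w$ to every structure in sight,'' so it is the natural commutativity and avoids a detailed analysis of the completions. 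The remaining work is dispatched cheaply: $t^*$ sends each Verma $\Delta(E)_R$ to an isomorphic one (because twisting a $\bbC W$-module $E$ by an inner automorphism of $W$ does not change its isomorphism class), so $t^*$ acts as the identity on the Grothendieck group, and therefore sends every projective to an isomorphic projective. That is exactly what the lemma requires. Your version implicitly relies on the fact that $t$ is inner (so $t^*\simeq\Id$) without stating it; the paper does not need this observation, only the Grothendieck-group consequence.

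One further caution: your remark that it would be ``enough to check it on standard modules and then propagate by right-exactness'' is not sound on its own — two right-exact functors that agree on standards need not agree in general. The paper never needs the full functor isomorphism; agreement of isomorphism classes of the images of projectives, which follows from the $t^*$ argument on the Grothendieck group, already suffices.
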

\begin{proof}[Proof]
Let $w$ be an element of $W$ such that $wW'w^{-1}=W''$. The conjugation by $w$ yields an isomorphism $W'\simeq W''$. Hence, the element $w$ takes $\frakh^{W'}$ to $\frakh^{W''}$. Thus we get an algebra isomorphism $H_c(W',\frakh/\frakh^{W'})_R\simeq H_c(W'',\frakh/\frakh^{W''})_R$ and an equivalence of categories $\calO_c(W',\frakh/\frakh^{W'})_R\simeq \calO_c(W'',\frakh/\frakh^{W''})_R$. Moreover, the conjugation by $w$ yields an automorphism $t$ of $H_c(W,\frakh)_R$ such that for each $x\in\frakh^*$, $y\in\frakh$, $u\in W$ we have
$$
t(x)=w(x),\quad t(y)=w(y),\quad t(u)=wuw^{-1}.
$$
The following diagram of functors is commutative up to equivalence of functors
$$
\begin{CD}
\calO_c(W,\frakh)_R @<{t^*}<< \calO_c(W,\frakh)_R\\
@A{^\calO\Ind_{W'}^W}AA     @A{^\calO\Ind_{W''}^W}AA\\
\calO_c(W',\frakh/\frakh^{W'})_R @<<<   \calO_c(W'',\frakh/\frakh^{W''})_R
\end{CD}
$$

To conclude, we need only to prove that the pull-back $t^*$ induces the identity map on the Grothendieck group of $\calO_c(W,\frakh)_R$ (and thus it maps each projective module to an isomorphic one). This is true because $t^*$ maps each Verma module $\Delta(E)_R$ to an isomorphic one because the representation $E$ of $W$ does not change the isomorphism class when we twist the $W$-action by an inner automorphism.
\end{proof}

\subsection{Cyclotomic rational Cherednik algebras}
From now on, we assume that $R=\bbC$ or that $R$ is as in Assumption 1 with residue field $\bfk=\bbC$.

Let $\Gamma\simeq\bbZ/l\bbZ$ be the group of complex $l$th roots of unity and set $\Gamma_d=\frakS_d\ltimes \Gamma^d$. For $\gamma\in\Gamma$, $r\in[1,l]$ denote by $\gamma_r$ the element of $\Gamma^d$ having $\gamma$ at the position $r$ and $1$ at other positions. Let $s_{r,t}$ be the transposition in $\frakS_d$ exchanging $r$ and $t$. For $\gamma\in\Gamma$, $r,t\in[1,l]$ set $s_{r,t}^\gamma:=s_{r,t}\gamma_r\gamma^{-1}_t\in\Gamma_d$. From now on we suppose that the group $W$ is $\Gamma_d$ and $\frakh=R^d$ is the obvious reflection representation of $\Gamma_d$. Assume also that $h,h_1,\cdots,h_{l-1}$ are some elements of $R$ and set $h_{-1}=h_{l-1}$. Let us chose the parameter $c$ in the following way
$$
c(s_{r,t}^\gamma)=-h \qquad \mbox{ for each $r,t\in [1,t]$, $r\ne t$, $\gamma\in\Gamma$},
$$
$$
c(\gamma_r)=-\frac{1}{2}\sum_{p=0}^{l-1}\gamma^{-p}(h_p-h_{p-1})\qquad \mbox{ for each $r\in[1,l]$, $\gamma\in\Gamma$, $\gamma\ne 1$}.
$$

Let $\nu_1,\cdots,\nu_l$ be as above. We set
$$
h=-1/\kappa,\quad h_p=-(\nu_{p+1}+\tau_{p+1})/\kappa-p/l, \quad p\in[1,l-1].
$$
Let us abbreviate $\calO^\nu_R[d]=\calO_c(\Gamma_d,R^d)_R$. Consider the $\KZ$-functor $\KZ^\nu_d\colon \calO^\nu_R[d]\to \mod(H^\nu_{d,R}(q_e))$ introduced in \cite[Sec.~6]{RSVV}. Denote by $^*\calO^\nu_R[d]$ the category defined in the same way as $\calO^\nu_R[d]$ with replacement of $(\nu_1,\cdots,\nu_l)$ by $(-\nu_l,\cdots,-\nu_1)$ and $(\tau_1,\cdots,\tau_l)$ by $(-\tau_l,\cdots,-\tau_1)$. Similarly, denote by $^*H^\nu_{d,R}(q_e)$ the affine Hecke algebra defined in the same way as $H^\nu_{d,R}(q_e)$ with the replacement of parameters as above. There is also a $\KZ$-functor $^*\KZ^\nu_d\colon {^*\calO}^\nu_R[d]\to \mod(^*H^\nu_{d,R}(q_e))$.

The simple $\bbC\Gamma_d$-modules are labeled by the set $\calP^l_d$. We write $E(\lambda)$ for the simple module corresponding to $\lambda$. Set $\Delta[\lambda]_R=\Delta(E(\lambda))_R$. Similarly, write $P[\lambda]_R$ and $T[\lambda]_R$ for the projective and tilting object in $\calO^\nu_R[d]$ with index $\lambda$.

The category $^*\calO^{\nu}_R[d]$ is the Ringel dual of the category $\calO^{\nu}_R[d]$, see \cite[Sec.~6.2.4]{RSVV}. In particular we have an equivalence between the categories of standardly filtered objects $\scrR\colon{^*\calO}^{\nu}_R[d]^{\Delta}\to (\calO^{\nu}_R[d]^{\Delta})^{\rm op}$. Let $\proj(R)$ be the category of projective finitely generated $R$-modules. There is an algebra isomorphism
$$
\iota\colon H^\nu_{d,R}(q_e)\to(^*H^\nu_{d,R}(q_e))^{\rm op},\quad T_r\mapsto -q_eT_r^{-1},~X_r\mapsto X_r^{-1},
$$
see \cite[Sec.~6.2.4]{RSVV}. It induces an equivalence
$$
\scrR_H=\iota^*(\bullet^\vee)\colon \mod(^*H^\nu_{d,R}(q_e))\cap\proj(R)\to(\mod(H^\nu_{d,R}(q_e))\cap\proj(R))^{\rm op},
$$
where $\bullet^\vee$ is the dual as an $R$-module.
By \cite[(6.3)]{RSVV}, 
the following diagram of functors is commutative
\begin{equation}
\label{ch3:eq_diag-scrR}
\begin{CD}
^*\calO^\nu_R[d]^\Delta @>{\scrR}>> (\calO^\nu_R[d]^\Delta)^{\rm op}\\
@V{^*\KZ^\nu_{d}}VV     @V{\KZ^\nu_{d}}VV\\
\mod({^*H}^\nu_{d,R}(q_e))\cap\proj(R) @>{\scrR_H}>>(\mod({H}^\nu_{d,R}(q_e))\cap\proj(R))^{\rm op}.
\end{CD}
\end{equation}


\smallskip
\begin{lem}
\label{ch3:lem_d=1-Cher}
Assume that $d=1$.
For each $l$-partition of $1$ $\lambda$ we have an isomorphism of $H^\nu_{1,R}(q_e)$-modules $\KZ^\nu_1(P[\lambda]_R)\simeq \overline \Psi^\nu_1(\overline T[\lambda]_R)$.
\end{lem}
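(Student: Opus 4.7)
The plan is to identify both sides of the claimed isomorphism by matching their Specht filtrations, exploiting the commutativity of the Hecke algebra $H^\nu_{1,R}(q_e) = R[X_1^{\pm 1}]/\prod_{r=1}^l(X_1-Q_r)$ in the case $d=1$, where it is free of rank $l$ over $R$.

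A key general observation is that in any highest-weight category, BGG reciprocity $[P[\lambda]:\Delta[\mu]] = [\nabla[\mu]:L[\lambda]]$ and Ringel's identity $[T[\lambda]:\Delta[\mu]] = [\nabla[\mu]:L[\lambda]]$ imply that the projective and tilting with the same label have the same $\Delta$-filtration multiplicities. Since $\overline T_{1,R} = \overline F(\overline\Delta[\emptyset]_R)$ is a tilting module with Grothendieck class $\sum_{\mu\in\calP^l_1}[\overline\Delta[\mu]_R]$ (from the categorified Fock-space action in Proposition~\ref{ch3:prop_functors-on-O-gen}(e), made explicit in Section~\ref{ch3:subs_comm-Groth}), and a tilting is characterized by its Grothendieck class, we have $\overline T_{1,R} \simeq \bigoplus_{\mu\in\calP^l_1}\overline T[\mu]_R$. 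Combining this with Lemma~\ref{ch3:lem_isom-R}(b), which says $\overline\Psi^\nu_1(\overline\Delta[\mu]_R)\simeq S[\mu]_R$, it follows that $\overline\Psi^\nu_1(\overline T[\lambda]_R)$ admits a Specht filtration by $S[\mu]_R$ with multiplicities $[\overline\nabla[\mu]_R:\overline L[\lambda]_R]$. On the other side, the analogue of Lemma~\ref{ch3:lem_isom-R}(b) on the Cherednik side from \cite{RSVV} (namely $\KZ^\nu_1(\Delta[\mu]_R)\simeq S[\mu]_R$) gives $\KZ^\nu_1(P[\lambda]_R)$ a Specht filtration with multiplicities $[\nabla[\mu]_R:L[\lambda]_R]$.

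To match the two sets of multiplicities, I would invoke the RSVV equivalence of highest-weight categories between $\calO^\nu_R[1]$ and $\bfA^\nu_R[1]$, composed with $\theta_1:\bfA^\nu_R[1]\to\calA^\nu_R[1]$ from Proposition~\ref{ch3:prop_equiv-A-e-e+1}. These equivalences preserve the highest-weight labeling and thus identify simples with simples and costandards with costandards, yielding the equality of decomposition numbers $[\nabla[\mu]_R:L[\lambda]_R] = [\overline\nabla[\mu]_R:\overline L[\lambda]_R]$, and hence an equality of Grothendieck classes of $\KZ^\nu_1(P[\lambda]_R)$ and $\overline\Psi^\nu_1(\overline T[\lambda]_R)$ in the category of $R$-projective $H^\nu_{1,R}(q_e)$-modules.

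The main obstacle is to promote the resulting equality of Grothendieck classes to an actual isomorphism of $R$-modules. For this I would exploit that $H^\nu_{1,R}(q_e)$ decomposes over $R$ into a product of commutative local $R$-algebras indexed by the blocks (determined by the residues $\zeta_e^{\nu_r}$ in $\bfk$). Any $R$-projective $H^\nu_{1,R}(q_e)$-module decomposes accordingly as a sum of $R$-projective modules over each local factor, and such modules are rigid enough to be classified by their generic class; so matching at the generic point gives the desired isomorphism over $R$. An alternative route, if easier to make precise, is to transport the isomorphism directly from the Cherednik side to the affine side via the Ringel-duality diagram~(\ref{ch3:eq_diag-scrR}).
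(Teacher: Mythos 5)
Your main argument has a genuine gap at the first step. The claimed identity $[T[\lambda]:\Delta[\mu]]=[\nabla[\mu]:L[\lambda]]$ is not a theorem: BGG reciprocity compares projectives with costandards, and tiltings do not generally satisfy the same reciprocity. (Already for the principal block of $\calO(\mathfrak{sl}_2)$, $T(0)$ has $\Delta(-2)$ as a standard subquotient while $[\nabla(-2):L(0)]=0$.) It is the Ringel-\emph{dual} projective, not the tilting with the same label, that has the same standard multiplicities. Consequently, the asserted decomposition $\overline T_{1,R}\simeq\bigoplus_\mu\overline T[\mu]_R$ from the Grothendieck class of $F(\overline\Delta[\emptyset]_R)$ does not follow either, since the right-hand side could contain standards with higher multiplicity. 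Finally, the RSVV equivalence you invoke runs between ${}^*\calO^\nu_R[1]$ (the Ringel dual, with a transpose on labels) and $\bfA^\nu_R[1]$, not between $\calO^\nu_R[1]$ and $\bfA^\nu_R[1]$, so the decomposition-number matching is also garbled; one cannot compare $[\nabla[\mu]:L[\lambda]]$ in $\calO^\nu_R[1]$ directly to $[\overline\nabla[\mu]:\overline L[\lambda]]$.

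The ``alternative route'' you flag at the end is the correct one and is essentially the paper's proof. Using the commutativity of the Ringel-duality square~(\ref{ch3:eq_diag-scrR}) one rewrites
$\KZ^\nu_1(P[\lambda]_R)=\KZ^\nu_1(\scrR(T[\lambda]_R))=\scrR_H({^*\KZ}^\nu_1(T[\lambda]_R))$,
which converts the problem about a projective into one about a tilting. Then one compares the two highest-weight covers $\scrR_H\circ{^*\KZ}^\nu_1$ and $\overline\Psi^\nu_1$ of $H^\nu_{1,R}(q_e)$: both send the standard labeled by $\lambda$ to $S[\lambda]_K$ over $K$ (Lemma~\ref{ch3:lem_isom-K}~(b) on the $\calA$ side, and the analogous RSVV statement on the Cherednik side), so the uniqueness criterion \cite[Prop.~2.21]{RSVV} identifies the two covers, hence takes the tilting with parameter $\lambda$ to isomorphic Hecke modules. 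This bypasses entirely the need for a multiplicity comparison, and in particular avoids the delicate step you anticipated of promoting a Grothendieck-class equality to an actual isomorphism over the local ring $R$.
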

\begin{proof}[Proof]
The proof is similar to the proof of \cite[Prop.~6.7]{RSVV}. The commutativity of the diagram (\ref{ch3:eq_diag-scrR}) implies
$$
\KZ^\nu_1(P[\lambda]_R)=\KZ^\nu_1(\scrR(T[\lambda]_R))=\scrR_H({^*\KZ^\nu_1}(T[\lambda]_R)).
$$

To conclude, we just need to compare the highest weight covers $\scrR_H\circ{^*\KZ^\nu_1}$ and $\overline\Psi^\nu_1$ of $H^\nu_{1,R}(q_e)$ using Lemma \ref{ch3:lem_isom-K} $(b)$ and \cite[Prop.~2.21]{RSVV}.

\end{proof}

Let $O^+_{\mu,R}$ be the affine parabolic category $\calO$ associated with the parabolic type consisting of the single block of size $\nu_1+\cdots+\nu_l$.
We define the categories $\bfA^+_{R}[d]$, $\calA^+_R[d]$ and $O^+_R[d]$ similarly. In this case we will also write the upper index $+$ in the notation of modules and functors (for example $\Delta^+[\lambda]_R$, $T^+_{d,R}$, $\KZ^+_{d}$, etc.) Let also $H^+_{d,R}(q_e)$ be the cyclotomic Hecke algebra with $l=1$. It is isomorphic to the Hecke algebra of $\frakS_d$.

We can prove the following result.

\smallskip
\begin{lem}
\label{ch3:lem_d=2,l=1-Cher}
For each $\lambda\in\calP^1_2$ we have $\KZ^+_2(P^+[\lambda]_R)\simeq \overline \Psi^+_2(\overline T^+[\lambda]_R)$.\qed
\end{lem}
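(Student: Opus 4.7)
The plan is to mimic the proof of Lemma \ref{ch3:lem_d=1-Cher} in the setting $l = 1$, $d = 2$, replacing the upper index $\nu$ by $+$ throughout and noting that $H^+_{2,R}(q_e)$ is the Hecke algebra of $\frakS_2$. First, using the Ringel duality $\scrR\colon {^*\calO}^+_R[2]^\Delta \to (\calO^+_R[2]^\Delta)^{\rm op}$, the projective object $P^+[\lambda]_R$ is identified with $\scrR(T^+[\lambda]_R)$ for the corresponding tilting object in the Ringel-dual Cherednik category. Feeding this into the commutative diagram (\ref{ch3:eq_diag-scrR}) specialized to the $+$ case yields
$$
\KZ^+_2(P^+[\lambda]_R) \;\simeq\; \KZ^+_2\bigl(\scrR(T^+[\lambda]_R)\bigr) \;\simeq\; \scrR_H\bigl({^*\KZ^+_2}(T^+[\lambda]_R)\bigr),
$$
reducing the problem to comparing $\scrR_H \circ {^*\KZ^+_2}$ with $\overline\Psi^+_2$ on the single object $T^+[\lambda]_R$.

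Next, I would interpret both $\scrR_H \circ {^*\KZ^+_2}$ and $\overline\Psi^+_2 \circ \theta_2$ as highest weight covers of $H^+_{2,R}(q_e)$ and invoke the uniqueness theorem for such covers, \cite[Prop.~2.21]{RSVV}. For this, two things must be verified: the two functors agree on standard modules after base change to $K$, and both satisfy the faithfulness hypotheses of the uniqueness theorem. The matching on standards over $K$ follows from Lemma \ref{ch3:lem_isom-K}(b) transposed to the $+$ setting (which gives $\overline\Psi^+_2(\overline\Delta^+[\mu]_K) \simeq S^+[\mu]_K$), combined with the analogous identification of $^*\KZ^+_2$ on standard modules intertwined with the Hecke algebra isomorphism $\iota$. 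The faithfulness of $\overline\Psi^+_2$ follows by an argument parallel to Lemma \ref{ch3:lem_isom-R}: the corresponding endomorphism ring isomorphism over $R$ has already been established for $\calA^+_R$ via Corollary \ref{ch3:coro_isom-C} and the $K$-level statement, while the faithfulness of $\KZ$ on the $\Delta$-filtered subcategory is part of the standard theory of Cherednik category $\calO$.

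The main obstacle I foresee is checking the cover conditions for $\overline\Psi^+_2$ in the $d=2$ case: unlike $d=1$, where there is essentially one standard module per block and the comparison is almost immediate, for $d = 2$ with $l=1$ we have two partitions $(2)$ and $(1,1)$, and one must control both Specht images simultaneously and verify that the functor $\overline\Psi^+_2$ is a $(-1)$-faithful cover in the sense required by \cite[Prop.~2.21]{RSVV}. This is where the general position assumption on $R$ enters, ensuring that the block structure at the generic fibre $K$ is sufficiently transparent for the uniqueness argument to apply. Once this is in place, the identification of the two highest weight covers on all standard modules after base change to $K$ propagates to an isomorphism over $R$, and specializing to the tilting object $T^+[\lambda]_R$ gives the desired conclusion $\KZ^+_2(P^+[\lambda]_R) \simeq \overline\Psi^+_2(\overline T^+[\lambda]_R)$.
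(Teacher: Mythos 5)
Your proposal is correct and follows essentially the same route as the paper: reduce via Ringel duality and the commutative diagram \eqref{ch3:eq_diag-scrR} to comparing the highest weight covers $\scrR_H\circ{^*\KZ^+_2}$ and $\overline\Psi^+_2$ of $H^+_{2,R}(q_e)$, then conclude by the uniqueness of covers \cite[Prop.~2.21]{RSVV} together with Lemma \ref{ch3:lem_isom-K}~$(b)$. The paper states the argument even more tersely (as a one-line appeal to the $d=1$ proof), and the concern you raise about $(-1)$-faithfulness for $d=2$ is resolved by the same lemmas that already serve the $\nu$-case, so it poses no additional difficulty.
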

\begin{proof}[Proof]
Similarly to the proof of Lemma \ref{ch3:lem_d=1-Cher} we compare the highest weight covers $\scrR_H\circ{^*\KZ^+_2}$ and $\overline\Psi^+_2$ of $H^+_{2,R}(q_e)$ using Lemma \ref{ch3:lem_isom-K} $(b)$ and \cite[Prop.~2.21]{RSVV}.

\end{proof}

Denote by $\Ind_{d,+}^{d,\nu}$ the induction functor with respect to the inclusion $H^{+}_{d,R}(q_e)\subset H^{\nu}_{d,R}(q_e)$.
We will also need the following lemma.

\smallskip
\begin{lem}
\label{ch3:lem_d=1-to-d=2,l=1}
Assume $\nu_r\geqslant 2$ for each $r\in[1,l]$. Assume also that $e>2$. For each $\lambda\in\calP^1_2$ there exists a tilting module $\overline T_{\lambda,R}\in\calA^\nu_R[2]$ such that $\Psi^\nu_2(\overline T_{\lambda,R})\simeq \Ind_{2,+}^{2,\nu}(\overline\Psi^+_2(\overline T^+[\lambda]_R))$.
\end{lem}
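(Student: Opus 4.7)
Following the strategy of Lemma \ref{ch3:lem_d=2,l=1-Cher}, the plan is to use parabolic induction in the rational Cherednik category $\calO$ to mirror the Hecke-side induction $\Ind_{2,+}^{2,\nu}$, and then to apply the uniqueness of highest weight covers to lift the resulting identity from $\calO^\nu_R[2]$ to $\calA^\nu_R[2]$.

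First, I select a parabolic subgroup $W' \simeq \frakS_2$ of $W = \Gamma_2$, realised as the stabiliser of a point $b \in \frakh = R^2$ with two equal non-zero coordinates; the reflection quotient $\frakh/\frakh^{W'}$ is one-dimensional and the induced parameter on $S(W')$ is $h = -1/\kappa$. After matching parameters, the indecomposable projective $P^+[\lambda]_R$ in $\calO^+_R[2]$ corresponds to a projective $\widetilde P^+[\lambda]_R \in \calO_c(W',\frakh/\frakh^{W'})_R$. I apply the Bezrukavnikov--Etingof parabolic induction functor ${}^\calO\Ind_{W'}^W \colon \calO_c(W',\frakh/\frakh^{W'})_R \to \calO_c(W,\frakh)_R = \calO^\nu_R[2]$ (which preserves projectives, see \cite{BE}) to obtain a projective
$$
P := {}^\calO\Ind_{W'}^W\bigl(\widetilde P^+[\lambda]_R\bigr) \in \calO^\nu_R[2].
$$
By Lemma \ref{ch3:lem_proj-conj-par-same}, the isomorphism class of $P$ depends only on the $W$-conjugacy class of $W'$. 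The hypothesis $\nu_r \geqslant 2$ is precisely what ensures that $W$ contains a parabolic subgroup of this type.

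Next, by the compatibility of the $\KZ$ functor with parabolic induction (see \cite[Sec.~6]{RSVV}),
$$
\KZ^\nu_2(P) \simeq \Ind_{2,+}^{2,\nu}\bigl(\KZ^+_2(P^+[\lambda]_R)\bigr),
$$
and combining with Lemma \ref{ch3:lem_d=2,l=1-Cher} yields
$$
\KZ^\nu_2(P) \simeq \Ind_{2,+}^{2,\nu}\bigl(\overline\Psi^+_2(\overline T^+[\lambda]_R)\bigr).
$$

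Finally, I transfer this identity to the $\overline\Psi^\nu_2$-side via Ringel duality. Writing $P \simeq \scrR(T)$ with $T \in {}^*\calO^\nu_R[2]^\Delta$ the corresponding tilting, the diagram (\ref{ch3:eq_diag-scrR}) gives $\KZ^\nu_2(P) \simeq \scrR_H({}^*\KZ^\nu_2(T))$. Both $\overline\Psi^\nu_2$ (by Lemma \ref{ch3:lem_isom-R}) and $\scrR_H \circ {}^*\KZ^\nu_2 \circ \scrR^{-1}$ are highest weight covers of $\mod(H^\nu_{2,R}(q_e))$, so the uniqueness of highest weight covers \cite[Prop.~2.21]{RSVV} produces a tilting $\overline T_{\lambda,R} \in \calA^\nu_R[2]$ with $\overline\Psi^\nu_2(\overline T_{\lambda,R}) \simeq \KZ^\nu_2(P)$, which completes the argument. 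The main technical obstacle is verifying precisely the $\KZ$--induction compatibility in the present cyclotomic setting, together with matching parameters between the rational Cherednik algebra of the parabolic $W' = \frakS_2 \subset \Gamma_2$ and that defining $\calO^+_R[2]$; the hypothesis $e > 2$ enters (as in Lemma \ref{ch3:lem_isom-K}) to guarantee the semisimplicity of $H^\nu_{2,K}(q_e)$ that underlies the uniqueness of the highest weight cover.
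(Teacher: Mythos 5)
Your final step---invoking \cite[Prop.~2.21]{RSVV} to produce the tilting module---contains the gap. If one could compare $\overline\Psi^\nu_2$ and $\KZ^\nu_2$ (via Ringel duality) as covers of $\mod(H^\nu_{2,R}(q_e))$ directly by \cite[Prop.~2.21]{RSVV}, the full equivalence of Proposition \ref{ch3:prop_constr-of-equiv} (for $d=2$) would follow at once and the present lemma would be superfluous. In fact Proposition \ref{ch3:prop_constr-of-equiv} is proved via \cite[Prop.~2.20]{RSVV}, whose condition $(4)$ is precisely what the present lemma feeds: one must check that $\KZ^\nu_d(^\calO\Ind^{\Gamma_d}_{W_H}P)$ lies in the image of $\overline\Psi^\nu_d$ on tilting objects. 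The lemmas in the paper that do use \cite[Prop.~2.21]{RSVV} (Lemmas \ref{ch3:lem_d=1-Cher} and \ref{ch3:lem_d=2,l=1-Cher}) apply it only in the small cases $d=1$ or $d=2$, $l=1$, where the Hecke algebra in play is essentially commutative; applying the comparison theorem over the full cyclotomic algebra $H^\nu_{2,R}(q_e)$ with arbitrary $\nu$ is exactly the step the machinery of condition $(4)$ is designed to avoid, so your argument is circular.

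The paper's proof never leaves the Hecke side. It observes that $\overline\Psi^\nu_2(\overline T_{2,R})\simeq H^\nu_{2,R}(q_e)$ and $\overline\Psi^+_2(\overline T^+_{2,R})\simeq H^+_{2,R}(q_e)$ by definition, so that $\overline\Psi^\nu_2(\overline T_{2,R})\simeq\Ind_{2,+}^{2,\nu}(\overline\Psi^+_2(\overline T^+_{2,R}))$ with no Cherednik input at all; the hypothesis $e>2$ gives $\zeta_e\ne -1$, so that $H^+_{2,\bfk}(\zeta_e)$ and $\calA^+_\bfk[2]$ are semisimple and one can split $\overline T^+_{2,R}=\overline T[\lambda^+]_R\oplus \overline T[\lambda^-]_R$; finally, since $\overline\Psi^\nu_2$ sends indecomposable factors of $\overline T_{2,R}$ to indecomposable modules (transferred from $\Psi^\nu_2$ via (\ref{ch3:eq_isom-Psi-Psibar-u*}) and \cite[Prop.~6.8]{RSVV}), a Krull--Schmidt argument lets one group the summands of $\overline T_{2,R}$ to match the two induced modules. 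Your attribution of $e>2$ to semisimplicity of $H^\nu_{2,K}(q_e)$ is also off: over the fraction field $K$ the cyclotomic Hecke algebra is automatically semisimple when $R$ is in general position; the constraint $\zeta_e\ne -1$ is needed for semisimplicity over the residue field $\bfk$. The Cherednik parabolic induction and $\KZ$-compatibility you set up are what the paper uses afterwards, in the proof of Proposition \ref{ch3:prop_constr-of-equiv}, to assemble the present lemma into the verification of condition $(4)$; they do not belong inside the lemma itself.
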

\begin{proof}[Proof]
Set $\lambda^+=(2)$, $\lambda^-=(1,1)$.
We have $\zeta_e\ne -1$ because $e>2$. In this case the algebra $H^+_{2,\bfk}(\zeta_e)$ is semisimple. The category $\calA^+_\bfk[2]$ is also semisimple. This implies
$$
\overline T^+_{2,R}\simeq \overline\Delta[\lambda^+]_R\oplus\overline\Delta[\lambda^-]_R= \overline T[\lambda^+]_R\oplus \overline T[\lambda^-]_R.
$$
By definition, we have $\overline\Psi^+_{2}(\overline T^+_{2,R})\simeq H^+_{2,R}(q_e)$ and $\overline\Psi^\nu_{2}(\overline T_{2,R})\simeq H^\nu_{2,R}(q_e)$. This implies
$$
\overline\Psi^\nu_{2}(\overline T_{2,R})\simeq \Ind_{2,+}^{2,\nu}(\overline\Psi^+_{2}(\overline T^+_{2,R})).
$$
By the proof of \cite[Prop.~6.8]{RSVV},
the functor $\Psi_2^\nu$ takes indecomposable factors of $T_{2,R}$ to indecomposable modules. Thus, by (\ref{ch3:eq_isom-Psi-Psibar-u*}), the functor $\overline\Psi_2^\nu$ takes indecomposable factors of $\overline T_{2,R}$ to indecomposable modules. Thus there is a decomposition $\overline T_{2,R}=\overline T_{\lambda^+,R}\oplus \overline T_{\lambda^-,R}$ such that $\overline T_{\lambda^+,R}$, $\overline T_{\lambda^-,R}$ satisfy the required properties.


\end{proof}

\subsection{Proof of Theorem \ref{ch3:thm_intro-main-decomp-functors}}
In this section we finally give a proof of over main result.

A priori there is no reason to have the following isomorphism of functors $\Psi^\nu_\alpha\simeq \overline \Psi^\nu_\alpha\circ \theta_\alpha$. However, we can modify the equivalence $\theta_\alpha$ to make this true.

For $d_1<d_2$ we have an inclusion $H^\nu_{d_1,R}(q_e)\subset H^\nu_{d_2,R}(q_e)$. Let $\Ind_{d_1}^{d_2}\colon \mod(H^\nu_{d_1,R}(q_e))\to \mod(H^\nu_{d_2,R}(q_e))$ be the induction with respect to this inclusion. The following lemma can be proved similarly to \cite[Lem.~5.41]{RSVV}.

\smallskip
\begin{lem}
\label{ch3:lem_com-F-calA-Hecke}
Assume that $\nu_r> d$ for each $r\in[1,l]$. Then the following diagram of functors is commutative.
$$
\begin{CD}
\calA^\nu_R[d] @>{F}>> \calA^\nu_R[d+1]\\
@V{\overline\Psi^\nu_d}VV                @V{\overline\Psi^\nu_{d+1}}VV\\
\mod(H^\nu_{d,R}(q_e))@>{\Ind_d^{d+1}}>>\mod(H^\nu_{d+1,R}(q_e))
\end{CD}
$$
\qed
\end{lem}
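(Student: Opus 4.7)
The plan is to construct a natural transformation
$$
\eta\colon \Ind_{d}^{d+1}\circ \overline\Psi^\nu_{d}\longrightarrow \overline\Psi^\nu_{d+1}\circ F
$$
and then prove it is an isomorphism by reducing to standard modules. For the construction, recall that $\overline T_{d+1,R}=F(\overline T_{d,R})$, so applying $F$ to morphisms gives a natural $R$-linear map
$$
\Hom(\overline T_{d,R},M)\longrightarrow \Hom(F(\overline T_{d,R}),F(M))=\overline\Psi^\nu_{d+1}(F(M)).
$$
To check $H^\nu_{d,R}(q_e)$-linearity for the restricted action on the target, one verifies that the inclusion $H^\nu_{d,R}(q_e)\hookrightarrow H^\nu_{d+1,R}(q_e)$ generated by $T_1,\dots,T_{d-1},X_1,\dots,X_d$ is sent by $\overline\psi_{d+1,R}$ to the image of $\overline\psi_{d,R}$ whiskered by $F$ on the left; this is immediate from the defining formulas $X_r\mapsto F^{d-r}XF^{r-1}$ and $T_r\mapsto F^{d-r-1}TF^{r-1}$ in the representation datum. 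By the universal property of $\Ind_{d}^{d+1}$, this gives the required $\eta$.

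To show $\eta_M$ is an isomorphism, it suffices to work on the standardly filtered subcategory $\calA^{\nu,\Delta}_R[d]$: the functor $F$ preserves $\Delta$-filtrations, and $\overline\Psi^\nu_d=\Hom(\overline T_{d,R},-)$ is exact on short exact sequences of $\Delta$-filtered objects since $\overline T_{d,R}$ is tilting. By the universal property of $\Delta$-filtrations it is then enough to verify $\eta_{\overline\Delta[\lambda]_R}$ is an isomorphism for every $\lambda\in\calP^\nu_d$. Lemma \ref{ch3:lem_isom-R}(b) identifies the LHS with $\Ind_{d}^{d+1} S[\lambda]_R$. For the RHS, the hypothesis $\nu_r>d$ guarantees that each addable box of $\lambda$ stays within $\calP^\nu_{d+1}$ (no row is forced past length $\nu_r$), so $F(\overline\Delta[\lambda]_R)$ admits a $\Delta$-filtration whose subquotients are exactly the $\overline\Delta[\mu]_R$ with $\mu$ obtained from $\lambda$ by adding one box. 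Applying $\overline\Psi^\nu_{d+1}$ and Lemma \ref{ch3:lem_isom-R}(b) gives a filtration of $\overline\Psi^\nu_{d+1}(F(\overline\Delta[\lambda]_R))$ by the Specht modules $S[\mu]_R$ matching the branching rule for $\Ind_d^{d+1}S[\lambda]_R$.

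The main obstacle is upgrading this matching of associated gradeds to a statement about $\eta_{\overline\Delta[\lambda]_R}$ itself. The strategy, mirroring \cite[Lem.~5.41]{RSVV} and the argument used in Lemma \ref{ch3:lem_isom-R}, is to first check the statement after localization to $K$: there $H^\nu_{d,K}(q_e)$ is semisimple, so the filtrations split and one need only match the $H^\nu_{d+1,K}(q_e)$-module structures block by block, which reduces to the compatibility of $F$ with the $\widetilde{\mathfrak{sl}}_e$-categorification of the Fock space (Proposition \ref{ch3:prop_functors-on-O-gen}(e)) and the classical branching rule for Specht modules over semisimple Hecke algebras. Having established that $K\eta_{\overline\Delta[\lambda]_R}$ is an isomorphism, one descends to $R$ by \cite[Prop.~2.23]{RSVV}: both source and target are $R$-free of the same finite rank (by the branching count), $\eta$ commutes with base change $K\otimes_R(-)$, and $\eta$ induces the identity on Grothendieck groups of $\mod(H^\nu_{d+1,K}(q_e))$. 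This forces $\eta_{\overline\Delta[\lambda]_R}$ itself to be an isomorphism over $R$, completing the proof.
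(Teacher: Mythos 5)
The construction of $\eta$, the reduction to $\Delta$-filtered objects, and the computation of both sides on standard modules are all sound. In particular, the reduction is legitimate because $\Ind_d^{d+1}$ is exact ($H^\nu_{d+1,R}(q_e)$ is free over $H^\nu_{d,R}(q_e)$), $\overline\Psi^\nu_d$ and $\overline\Psi^\nu_{d+1}$ are exact on $\Delta$-filtered objects since $\overline T_{d,R}$, $\overline T_{d+1,R}$ are tilting, and $F$ preserves $\Delta$-filtrations; and the hypothesis $\nu_r>d$ is used exactly where you say.

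The descent from $K$ to $R$ is the genuine gap. First, \cite[Prop.~2.23]{RSVV} is a criterion for an \emph{$R$-algebra endomorphism} to be invertible, and it is applied in Lemma \ref{ch3:lem_isom-R} to the endomorphism $u$ of $H^\nu_{d,R}(q_e)$; the map $\eta_{\overline\Delta[\lambda]_R}$ is a homomorphism of $H^\nu_{d+1,R}(q_e)$-modules, not an algebra endomorphism, so the proposition does not apply, and ``$\eta$ induces the identity on Grothendieck groups'' is not a well-formed condition for a natural transformation. Second, the intended fallback ---  source and target are $R$-free of the same finite rank and $K\eta$ is an isomorphism --- does not imply $\eta$ is an isomorphism over $R$: multiplication by a non-unit on a free rank-one module already fails. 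You need extra input. The cleanest repair is to test $\eta$ not on standard modules but on the tilting module $\overline T_{d,R}$: by Lemma \ref{ch3:lem_isom-R}(a) both sides become $H^\nu_{d+1,R}(q_e)$ and $\eta_{\overline T_{d,R}}$ unwinds to the multiplication map $H^\nu_{d+1,R}(q_e)\otimes_{H^\nu_{d,R}(q_e)}H^\nu_{d,R}(q_e)\to H^\nu_{d+1,R}(q_e)$, which is an isomorphism over $R$. Since every indecomposable tilting $\overline T[\lambda]_R$ is a direct summand of $\overline T_{d,R}$ (the multiplicities $\dim_K S[\lambda]_K$ are positive), $\eta$ is an isomorphism on all tilting objects; then for any $\Delta$-filtered $M$ take a tilting coresolution $0\to M\to T^0\to T^1$ and use that both composite functors are left exact to conclude $\eta_M$ is an isomorphism. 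This also sidesteps any delicate block-by-block branching argument over $K$.
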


For a partition $\lambda$ denote by $\lambda^*$ the transposed partition. For an $l$-partition $\lambda=(\lambda_1,\cdots,\lambda_l)$ set $\lambda^*=((\lambda_l)^*,\cdots,(\lambda_1)^*)$.
There is an algebra isomorphism
$$
{\rm IM}\colon H^\nu_{d,R}(q_e)\to {^*H}^\nu_{d,R}(q_e), \quad  T_r\mapsto -q_eT_r^{-1},~X_r\mapsto X_r^{-1},
$$
see \cite[Sec.~6.2.4]{RSVV}. Let ${\rm IM}^*\colon \mod({^*H}^\nu_{d,R}(q_e))\to \mod(H^\nu_{d,R}(q_e))$ be the induced equivalence of categories. We have
\begin{equation}
\label{ch3:eq_IM-on-Specht}
{\rm IM}^*(S[\lambda^*]_R)\simeq S[\lambda]_R.
\end{equation}

The following proposition is proved in \cite[Thm.~6.9]{RSVV}.
\begin{prop}
\label{ch3:prop_equiv-Cher-bfA}
Assume that $\nu_r\geqslant d$ for each $r\in[1,l]$. Then there is an equivalence of categories $\gamma_d\colon{^*\calO}^{\nu}_R[d]\simeq \bfA^\nu_R[d]$ taking $\Delta[\lambda^*]_R$ to $\Delta[\lambda]_R$. Moreover,  we have the following isomorphism of functors $\Psi^\nu_d\circ\gamma_d\simeq {\rm IM}^*\circ{^*\KZ}^{\nu}_d$.
\end{prop}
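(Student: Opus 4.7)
The plan is to apply the uniqueness theorem for highest weight covers of the cyclotomic Hecke algebra $H^\nu_{d,R}(q_e)$. Both categories in question come equipped with a quotient functor to $\mod(H^\nu_{d,R}(q_e))$: on the Cherednik side, the composition ${\rm IM}^*\circ{^*\KZ}^\nu_d$ sends $\Delta[\lambda^*]_R$ to the Specht module $S[\lambda]_R$ by (\ref{ch3:eq_IM-on-Specht}); on the affine side, $\Psi^\nu_d$ sends $\Delta[\lambda]_R$ to $S[\lambda]_R$ by Lemma \ref{ch3:lem_isom-R}(b). Hence the two covers match up on Grothendieck groups via the bijection $\lambda^*\leftrightarrow \lambda$, which will be the candidate bijection of standard labels.

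Next, I would verify that both pairs $({^*\calO}^\nu_R[d],\,{\rm IM}^*\circ{^*\KZ}^\nu_d)$ and $(\bfA^\nu_R[d],\,\Psi^\nu_d)$ are $1$-faithful highest weight covers in the sense of \cite{RSVV}. This is where the hypothesis $\nu_r\geqslant d$ (together with $e>2$) enters: it lets one reduce the $\mathrm{Ext}^{\leqslant 1}$ comparisons, via parabolic restriction on the Cherednik side and via Lemma \ref{ch3:lem_com-F-calA-Hecke} on the affine side, to checks in rank one and in rank two with $l=1$. The rank-one comparison is Lemma \ref{ch3:lem_d=1-Cher}, transported from $\overline\Psi$ to $\Psi$ via the autoequivalence $u^*$ of (\ref{ch3:eq_isom-Psi-Psibar-u*}); the rank-two $l=1$ comparison is Lemma \ref{ch3:lem_d=2,l=1-Cher}, combined with Lemma \ref{ch3:lem_d=1-to-d=2,l=1} to bootstrap the comparison from $H^+_{2,R}(q_e)$ up to the full cyclotomic algebra $H^\nu_{2,R}(q_e)$.

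Once $1$-faithfulness is in place on both sides, the uniqueness theorem for $1$-faithful highest weight covers produces an equivalence $\gamma_d\colon {^*\calO}^\nu_R[d]\simeq\bfA^\nu_R[d]$ realizing $\Psi^\nu_d\circ\gamma_d\simeq {\rm IM}^*\circ{^*\KZ}^\nu_d$ and automatically matching the labellings of standards, giving $\gamma_d(\Delta[\lambda^*]_R)\simeq \Delta[\lambda]_R$. The main obstacle is the $1$-faithfulness verification, which is the technical heart of the argument; but this is precisely the content of \cite[Thm.~6.9]{RSVV}, so the proof here reduces to checking that the hypotheses of that theorem are satisfied in our setup, which is accomplished by the three lemmas above.
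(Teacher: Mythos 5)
The paper's own proof of Proposition~\ref{ch3:prop_equiv-Cher-bfA} is the single line ``The following proposition is proved in \cite[Thm.~6.9]{RSVV}''; the $\bfA$-side equivalence is an \emph{input} to this paper, not something established here. You correctly spot the reference and the underlying mechanism (a uniqueness theorem for $1$-faithful highest weight covers of $H^\nu_{d,R}(q_e)$, compared via their effect on standards and Grothendieck groups), and your sketch of that mechanism matches the structure that the paper later makes explicit in the proof of the $\calA$-side analogue, Proposition~\ref{ch3:prop_constr-of-equiv}. However, your plan to verify the hypotheses of the uniqueness theorem using Lemmas~\ref{ch3:lem_d=1-Cher}, \ref{ch3:lem_d=2,l=1-Cher}, \ref{ch3:lem_d=1-to-d=2,l=1} runs the logic backwards: those lemmas concern $\overline\Psi$ and were written precisely to supply the $\calA$-side ingredients missing from \cite{RSVV}; the $\Psi$-side ingredients that \cite[Thm.~6.9]{RSVV} actually uses (its Props.~6.7 and 6.8) are already in \cite{RSVV}. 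Re-deriving them from the $\overline\Psi$-side lemmas via $\Psi\simeq u^*\circ\overline\Psi\circ\theta_d$ is not circular, but it has two real costs. First, Lemma~\ref{ch3:lem_d=1-to-d=2,l=1} requires $e>2$ (so that $H^+_{2,\bfk}(\zeta_e)$ is semisimple), whereas Proposition~\ref{ch3:prop_equiv-Cher-bfA} carries no such hypothesis; your route therefore proves a strictly weaker statement. Second, the ``transport by $u^*$'' step needs to be made precise: from $\Psi=u^*\circ\overline\Psi\circ\theta_d$ and Lemma~\ref{ch3:lem_d=1-Cher} you only get $\Psi^\nu_1(T[\lambda]_R)\simeq u^*\bigl(\KZ^\nu_1(P[\lambda]_R)\bigr)$, and you must still argue that $u^*$ fixes the isomorphism class of the target (which does follow, since $Ku$ is the identity on Grothendieck groups and the modules in question are determined by their classes, but it is not automatic and should be stated). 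In short: the correct and intended proof here is the one-line citation, and the extra machinery you invoke is what the paper uses to prove the next proposition, not this one.
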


Now, we prove a similar statement for $\calA^\nu_R[d]$. For each reflection hyperplane $H$ of the complex reflection group $\Gamma_d$ let $W_H\subset \Gamma_d$ be the pointwise stabilizer of $H$. 

\smallskip
\begin{prop}
\label{ch3:prop_constr-of-equiv}
Assume that $\nu_r\geqslant d$ for each $r\in[1,l]$ and $e>2$.
There is an equivalence of categories $\overline\gamma_d\colon {^*\calO}^{\nu}_R[d]\simeq \calA^\nu_R[d]$, taking $\Delta[\lambda^*]_R$ to $\overline\Delta[\lambda]_R$. Moreover, we have the following isomorphism of functors $\overline\Psi^\nu_d\circ\overline\gamma_d\simeq {\rm IM}^*\circ{^*\KZ}^{\nu}_d$.
\end{prop}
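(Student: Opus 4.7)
The plan is to mimic the proof of Proposition \ref{ch3:prop_equiv-Cher-bfA} (i.e. \cite[Thm.~6.9]{RSVV}), replacing the pair $(\bfA^\nu_R[d],\Psi^\nu_d)$ by $(\calA^\nu_R[d],\overline\Psi^\nu_d)$. The strategy is to show that $\calA^\nu_R[d]$ and ${^*\calO}^\nu_R[d]$ are both highest weight covers of $\mod(H^\nu_{d,R}(q_e))$ and then to invoke the uniqueness criterion for such covers \cite[Prop.~2.21]{RSVV}. The equivalence $\overline\gamma_d$ will be defined by comparing the two covers, and the remaining statements (matching of standard objects and the $\overline\Psi^\nu_d\circ\overline\gamma_d\simeq{\rm IM}^*\circ{^*\KZ}^\nu_d$ identification) will drop out once the cover uniqueness is verified.

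First I would assemble the two covers. On the $\calA$-side, Lemma \ref{ch3:lem_isom-R} gives that $\overline T_R$ is a ``Schur-like'' object with $\End_{\calA^\nu_R[d]}(\overline T_R)^{\rm op}\simeq H^\nu_{d,R}(q_e)$ and $\overline\Psi^\nu_d(\overline\Delta[\lambda]_R)\simeq S[\lambda]_R$; together with Corollary \ref{ch3:coro_isom-C} this shows $\overline\Psi^\nu_d$ realises $\calA^\nu_R[d]$ as a highest weight cover sending $\overline\Delta[\lambda]_R$ to $S[\lambda]_R$. On the ${^*\calO}$-side, ${^*\KZ}^\nu_d$ is known to be a highest weight cover of $\mod({^*H}^\nu_{d,R}(q_e))$; composing with the equivalence ${\rm IM}^*$ and using \eqref{ch3:eq_IM-on-Specht}, one obtains a highest weight cover of $\mod(H^\nu_{d,R}(q_e))$ sending $\Delta[\lambda^*]_R$ to $S[\lambda]_R$. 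Hence the standard labellings on the two covers match after the transposition $\lambda\mapsto\lambda^*$, which is exactly what is needed for the uniqueness theorem to produce the desired equivalence $\overline\gamma_d$.

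The serious work is the $1$-faithfulness hypothesis of \cite[Prop.~2.21]{RSVV}, which amounts to verifying that the two covers agree on small ranks --- concretely, that tilting (or projective) covers of standards at $d=1$ and the $(l,d)=(1,2)$ step match under the two functors. This is the role of Lemma \ref{ch3:lem_d=1-Cher} (which identifies $\KZ^\nu_1(P[\lambda]_R)$ with $\overline\Psi^\nu_1(\overline T[\lambda]_R)$) and Lemma \ref{ch3:lem_d=2,l=1-Cher} (the $l=1$, $d=2$ analogue). Lemma \ref{ch3:lem_d=1-to-d=2,l=1} then upgrades the $l=1$ input to arbitrary $l$ by inducing along $H^+_{2,R}(q_e)\subset H^\nu_{2,R}(q_e)$; this is where the hypotheses $e>2$ (forcing semisimplicity of $H^+_{2,\bfk}(\zeta_e)$) and $\nu_r\geqslant d$ enter. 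Lemma \ref{ch3:lem_proj-conj-par-same} is used to make the induction-from-parabolics argument independent of the chosen base point on the reflection arrangement.

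The main obstacle I expect is checking the $1$-faithfulness condition on standardly filtered objects precisely from the small-rank inputs above: one needs to see that the functor $\overline\Psi^\nu_d$ is fully faithful on $\calA^{\nu,\Delta}_R[d]$, which by the usual Rouquier-style reduction translates to identifying certain $\Hom$- and $\Ext^1$-spaces between tilting (or projective) covers of rank-one and rank-two standards on both sides; this is the technical heart of the argument and is the analogue of \cite[Prop.~6.8--6.9]{RSVV}. Once this is done, the equivalence $\overline\gamma_d$ exists and is uniquely determined by sending $\Delta[\lambda^*]_R$ to $\overline\Delta[\lambda]_R$; the compatibility $\overline\Psi^\nu_d\circ\overline\gamma_d\simeq{\rm IM}^*\circ{^*\KZ}^\nu_d$ then follows because both composites are highest weight covers of $\mod(H^\nu_{d,R}(q_e))$ agreeing on all standard objects, hence isomorphic by the uniqueness of such covers.
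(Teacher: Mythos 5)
Your overall approach matches the paper's: run the same highest-weight-cover comparison as in the proof of Proposition~\ref{ch3:prop_equiv-Cher-bfA} (i.e.\ \cite[Thm.~6.9]{RSVV}), with $(\calA^\nu_R[d],\overline\Psi^\nu_d)$ replacing $(\bfA^\nu_R[d],\Psi^\nu_d)$, and the lemmas you list are indeed the ones the paper invokes. However, your account of which lemma does which job does not match the actual argument, and in one place it overcomplicates the proof.

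The paper does not re-establish $1$-faithfulness of $\overline\Psi^\nu_d$ by a fresh small-rank computation. Instead, it notes that $\Psi^\nu_d$ is already known to be fully faithful on $\bfA^{\nu,\Delta}_R[d]$ and $\bfA^{\nu,\nabla}_R[d]$ by \cite[Thm.~5.37~(c)]{RSVV}, and then transfers this to $\overline\Psi^\nu_d$ immediately via the relation $\Psi=u^*\circ\overline\Psi\circ\theta_d$ from \eqref{ch3:eq_isom-Psi-Psibar-u*}: since $\theta_d$ and $u^*$ are equivalences, $\overline\Psi^\nu_d$ inherits full faithfulness on $\Delta$- and $\nabla$-filtered objects for free. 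This removes the ``technical heart'' you anticipate having to supply.

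The small-rank lemmas \ref{ch3:lem_d=1-Cher}, \ref{ch3:lem_d=2,l=1-Cher} and \ref{ch3:lem_d=1-to-d=2,l=1}, together with Lemma~\ref{ch3:lem_proj-conj-par-same} and Lemma~\ref{ch3:lem_com-F-calA-Hecke}, are used for a different purpose: they verify the fourth hypothesis of \cite[Prop.~2.20]{RSVV} (note: 2.20, not 2.21), namely that for every reflection hyperplane $H$ of $\Gamma_d$ and every projective $P\in\calO(W_H)_R$, the object $\KZ^\nu_d({^\calO\Ind}^{\Gamma_d}_{W_H}P)$ lies in the essential image of tilting objects under $\overline\Psi^\nu_d$. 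This is a statement about where $\KZ$-values of induced projectives land, not a faithfulness computation. The dichotomy of cases is on the type of $H$ ($\Ker(\gamma_r-1)$ versus $\Ker(s_{r,t}^\gamma-1)$): Lemma~\ref{ch3:lem_proj-conj-par-same} reduces to a convenient representative, Lemma~\ref{ch3:lem_d=1-Cher} handles the first type, Lemmas~\ref{ch3:lem_d=2,l=1-Cher} and \ref{ch3:lem_d=1-to-d=2,l=1} handle the second, and Lemma~\ref{ch3:lem_com-F-calA-Hecke} propagates the conclusion from $d=1$ or $d=2$ to general $d$ by commuting $F$ with Hecke-algebra induction. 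If you follow your sketch as written you would end up reproving full faithfulness unnecessarily while not quite seeing that the small-rank lemmas belong to condition (4) instead.
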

\begin{proof}[Proof]
The proof is similar to the proof of \cite[Thm. 6.9]{RSVV}.
We set $\calC={^*\calO}^{\nu}_R[d]$, $\calC'=\calA^\nu_R[d]$. Consider the following functors
$$
\begin{array}{lll}
Y\colon \calC\to \mod(H^\nu_{d,R}(q_e)),\quad &Y={\rm IM}^*\circ {^*\KZ}^{\nu}_d,\\
Y'\colon \calC'\to \mod(H^\nu_{d,R}(q_e)),\quad &Y'=\overline\Psi^\nu_d.
\end{array}
$$
By \cite[Prop.~2.20]{RSVV} it is enough to check the following four conditions.
\begin{itemize}
\item[(1)] We have $Y(\Delta[\lambda^*]_R)\simeq Y'(\Delta[\lambda]_R)$ and the bijection $\Delta[\lambda^*]_R\mapsto \Delta[\lambda]_R$ between the sets of standard objects in $\calC$ and $\calC'$ respects the highest weight orders.
\item[(2)] The functor $Y$ is fully faithful on $\calC^\Delta$ and $\calC^\nabla$.
\item[(3)] The functor $Y'$ is fully faithful on $\calC'^\Delta$ and $\calC'^\nabla$.
\item[(4)] For each reflection hyperplane $H$ of $\Gamma_d$ and each projective module $P\in\calO(W_H)_R$ we have
$$
\KZ^\nu_d(^\calO\Ind^{\Gamma_d}_{W_H}P)\in F'(\calC'^{\rm tilt}).
$$
\end{itemize}

It is explained in the proof of \cite[Thm.~6.9]{RSVV} that condition $(4)$ announced here implies the fourth condition in \cite[Prop.~2.20]{RSVV}.

We have $Y(\Delta[\lambda^*]_R)\simeq Y'(\Delta[\lambda]_R)$ by Lemma \ref{ch3:lem_isom-R} $(b)$, \cite[Lem.~6.6]{RSVV} and (\ref{ch3:eq_IM-on-Specht}). The composition of the equivalence $\theta_d\colon\bfA^\nu_R[d]\to\calA^\nu_R[d]$ with the equivalence $\gamma_d\colon{^*\calO}^\nu_R[d]\simeq\bfA^\nu_R[d]$ yields an equivalence of highest weight categories $\calC\simeq \calC'$ that takes $\Delta[\lambda^*]_R$ to $\Delta[\lambda]_R$. This implies $(1)$.

Condition $(2)$ is already checked in \cite[Sec.~6.3.2]{RSVV}.

The functor $\Psi^\nu_d$ is fully faithful on $\bfA^{\nu,\Delta}_R[d]$ and $\bfA^{\nu,\nabla}_R[d]$ by \cite[Thm.~5.37~(c)]{RSVV}. Thus the functor $\overline\Psi^\nu_d$ is fully faithful on $\calA^{\nu,\Delta}_R[d]$ and $\calA^{\nu,\nabla}_R[d]$ by (\ref{ch3:eq_isom-Psi-Psibar-u*}). This implies $(3)$.


Let us check condition $(4)$.
There are two possibilities for the hyperplane $H$.
\begin{itemize}
    \item[\textbullet] The hyperplane is $\Ker(\gamma_r-1)$ for $r\in[1,d]$. 
By Lemma \ref{ch3:lem_proj-conj-par-same}, we can assume that $H=\Ker(\gamma_1-1)$. By Lemma \ref{ch3:lem_d=1-Cher} there exists a tilting module $\overline T\in \calA^\nu_R[1]$ such that $\KZ^\nu_1(P)\simeq \overline\Psi^\nu_1(\overline T)$. 
We get
$$
\KZ^\nu_d(^\calO\Ind^{\Gamma_d}_{W_H}P)\simeq\Ind_1^d(\KZ^\nu_1(P))\simeq \Ind_1^d(\overline \Psi^\nu_1(\overline T))\simeq \overline\Psi^\nu_d(F^{d-1}(\overline T)).
$$
Here the first isomorphism follows from \cite[(6.1)]{RSVV}, the third isomorphism follows from Lemma \ref{ch3:lem_com-F-calA-Hecke}.
    \item[\textbullet] The hyperplane is $\Ker(s_{r,t}^\gamma-1)$ for $r,t\in[1,d]$, $\gamma\in\Gamma$. 
By Lemma \ref{ch3:lem_proj-conj-par-same}, we can assume that $H=\Ker(s_{1,2})$. By Lemma \ref{ch3:lem_d=2,l=1-Cher} there is a tilting module $\overline T^+\in\calA_R^+[2]$ such that $\KZ^+_2(P)\simeq \overline \Psi^+_2(\overline T^+)$. By Lemma \ref{ch3:lem_d=1-to-d=2,l=1} there is a tilting module $\overline T\in\calA^\nu_R[2]$ such that $\Ind_{2,+}^{2,\nu}(\overline\Psi^+_2(\overline T^+))\simeq \overline\Psi^\nu_2(\overline T)$. Thus we get $\Ind_{2,+}^{2,\nu}\KZ^+_2(P)\simeq \overline\Psi^\nu_2(\overline T)$.
\end{itemize}

We obtain
$$
\KZ^\nu_d(^\calO\Ind^{\Gamma_d}_{W_H}P)\simeq \Ind_{2,+}^{d,\nu}(\KZ^+_2(P))\simeq \Ind_{2,\nu}^{d,\nu}(\overline\Psi^\nu_2(\overline T))\simeq \Psi^\nu_d(F^{d-2}(\overline T)).
$$
Here the first isomorphism follows from \cite[(6.1)]{RSVV}, the third isomorphism follows from Lemma \ref{ch3:lem_com-F-calA-Hecke}.
\end{proof}

Now, composing the equivalences of categories in Propositions \ref{ch3:prop_equiv-Cher-bfA}, \ref{ch3:prop_constr-of-equiv} we obtain the following result.

\smallskip
\begin{coro}
\label{ch3:coro_constr-of-equiv-A-A}
Assume that $\nu_r\geqslant d$ for each $r\in[1,l]$ and $e>2$.
There is an equivalence of categories $\theta'_d\colon \bfA^\nu_R[d]\simeq \calA^\nu_R[d]$ such that we have the following isomorphism of functors $\overline\Psi^\nu_d\circ\theta'_d\simeq\Psi^\nu_d$.\qed
\end{coro}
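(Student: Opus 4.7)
The approach is essentially a direct chase through the two equivalences just constructed. Both Propositions~\ref{ch3:prop_equiv-Cher-bfA} and \ref{ch3:prop_constr-of-equiv} exhibit ${^*\calO}^{\nu}_R[d]$ as a highest weight cover of $\mod(H^\nu_{d,R}(q_e))$ via the functor ${\rm IM}^*\circ{^*\KZ}^{\nu}_d$, realized once through $\bfA^\nu_R[d]$ and once through $\calA^\nu_R[d]$. Thus I would simply define
\[
\theta'_d \;:=\; \overline\gamma_d \circ \gamma_d^{-1} \;\colon\; \bfA^\nu_R[d] \;\xrightarrow{\sim}\; \calA^\nu_R[d],
\]
which is an equivalence as a composition of equivalences. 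This is the object whose existence the corollary asserts.

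Next I would verify the required functor isomorphism. By Proposition~\ref{ch3:prop_equiv-Cher-bfA} we have $\Psi^\nu_d \circ \gamma_d \simeq {\rm IM}^*\circ{^*\KZ}^{\nu}_d$, hence $\Psi^\nu_d \simeq {\rm IM}^*\circ{^*\KZ}^{\nu}_d \circ \gamma_d^{-1}$. By Proposition~\ref{ch3:prop_constr-of-equiv}, $\overline\Psi^\nu_d \circ \overline\gamma_d \simeq {\rm IM}^*\circ{^*\KZ}^{\nu}_d$. Composing with $\gamma_d^{-1}$ on the right gives
\[
\overline\Psi^\nu_d \circ \theta'_d \;=\; \overline\Psi^\nu_d \circ \overline\gamma_d \circ \gamma_d^{-1} \;\simeq\; {\rm IM}^*\circ{^*\KZ}^{\nu}_d \circ \gamma_d^{-1} \;\simeq\; \Psi^\nu_d,
\]
which is exactly the required isomorphism.

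There is no real obstacle left at this stage: all the hard work was absorbed into Proposition~\ref{ch3:prop_constr-of-equiv}, where the four conditions of \cite[Prop.~2.20]{RSVV} had to be verified for $\calA^\nu_R[d]$ (in particular condition $(4)$, which required the comparisons in Lemmas~\ref{ch3:lem_d=1-Cher}, \ref{ch3:lem_d=2,l=1-Cher}, \ref{ch3:lem_d=1-to-d=2,l=1}, and used the hypotheses $\nu_r\geqslant d$ and $e>2$). The assumptions of the corollary are exactly the ones needed to invoke both Propositions~\ref{ch3:prop_equiv-Cher-bfA} and \ref{ch3:prop_constr-of-equiv}, and once those are in hand the corollary is a one-line composition. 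I would state the proof in just these few lines.
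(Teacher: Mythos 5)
Your proof is correct and is exactly the paper's argument: the corollary's text explicitly says it is obtained "by composing the equivalences of categories in Propositions \ref{ch3:prop_equiv-Cher-bfA}, \ref{ch3:prop_constr-of-equiv}," which is precisely your definition $\theta'_d=\overline\gamma_d\circ\gamma_d^{-1}$ followed by the chain of functor isomorphisms you write out.
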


For each $\alpha\in Q^+_e$ such that $|\alpha|=d$ let $\theta'_\alpha\colon \bfA^\nu_R[\alpha]\simeq \calA^\nu_R[\alpha]$ be the restriction of $\theta'_d$.

From now on we work over the field $\bbC$. 
The following lemma can be proved by the method used in \cite[Sec.~5.9]{RSVV}.

\smallskip
\begin{lem}
\label{ch3:lem_F-comm-KZ-e}
Assume that $\nu_r>d$ for each $r\in[1,l]$.
The following diagrams are commutative modulo an isomorphism of functors.
$$
\begin{CD}
\bfA^\nu[d]@>{F}>> \bfA^\nu[d+1]\\
@V{\Psi^\nu_d}VV               @V{\Psi^\nu_{d+1}}VV\\
\mod(H^\nu_d(\zeta_e))     @>{\Ind_d^{d+1}}>> \mod(H^\nu_{d+1}(\zeta_e))
\end{CD}
$$
$$
\begin{CD}
\calA^\nu[d]@>{F}>> \calA^\nu[d+1]\\
@V{\overline\Psi^\nu_d}VV               @V{\overline\Psi^\nu_{d+1}}VV\\
\mod(H^\nu_d(\zeta_e))     @>{\Ind_d^{d+1}}>> \mod(H^\nu_{d+1}(\zeta_e))
\end{CD}
$$
\qed
\end{lem}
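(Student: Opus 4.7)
The plan is to observe that the two diagrams are the specializations to $R=\bbC$ (with its trivial local deformation structure, so that $q_e=\zeta_e$) of already-established facts. The second diagram is literally Lemma \ref{ch3:lem_com-F-calA-Hecke} at $R=\bbC$; no further argument is needed there. The first diagram is the analogue for $\bfA^\nu$, which is \cite[Lem.~5.41]{RSVV}. So the only real task is to record how that same argument runs, and to note that it applies identically to $\bfA^\nu$ and to $\calA^\nu$.

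The common mechanism is the following. For the $\bfA$-side, $\Psi^\nu_d = \Hom(T_{d},-)$ is representable by $T_d=F^d(\Delta[\emptyset])$, and by construction $T_{d+1}\simeq F(T_d)$. Hence for any $M\in\bfA^\nu[d]$,
$$
\Psi^\nu_{d+1}(F(M))=\Hom(F(T_d),F(M)),
$$
and the $H^\nu_{d+1}(\zeta_e)$-module structure comes from the representation datum of Proposition \ref{ch3:prop_functors-on-O-gen}, i.e.\ from the action on $F^{d+1}$. The natural map
$$
\Ind_d^{d+1}\!\bigl(\Psi^\nu_d(M)\bigr)=H^\nu_{d+1}(\zeta_e)\otimes_{H^\nu_d(\zeta_e)}\Hom(T_d,M)\longrightarrow \Hom(F(T_d),F(M)),\qquad h\otimes\phi\longmapsto h\cdot F(\phi),
$$
is a morphism of $H^\nu_{d+1}(\zeta_e)$-modules because the subalgebra $H^\nu_d(\zeta_e)\subset H^\nu_{d+1}(\zeta_e)$ is exactly the image of $\End(T_d)^{\rm op}$ acting through $F$ on $F(T_d)$. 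That this map is an isomorphism is the content of \cite[Lem.~5.41]{RSVV}: one checks it is exact in $M\in\bfA^{\nu,\Delta}[d]$ and then verifies the isomorphism on standard objects, where both sides are identified with Specht modules via Lemma \ref{ch3:lem_isom-K}$(b)$ and its $\bbC$-analogue, using the hypothesis $\nu_r>d$ to ensure the Hecke-algebraic dimensions match.

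The $\calA$-side is handled by the same argument with $T_{d,R}$ replaced by $\overline T_{d,R}$ and $\Psi^\nu_d$ by $\overline\Psi^\nu_d$: one has $\overline T_{d+1}\simeq F(\overline T_d)$ by the very definition of $\overline T_{d,R}$ as $F^d(\overline\Delta[\emptyset]_R)$, and the same natural transformation is shown to be an isomorphism using Corollary \ref{ch3:coro_isom-C} in place of Lemma \ref{ch3:lem_isom-K}$(b)$. This is precisely the specialization of Lemma \ref{ch3:lem_com-F-calA-Hecke} to $R=\bbC$. The only step that requires care is the identification of $\Hom(\overline T_d,\overline T_d)$ with $H^\nu_d(\zeta_e)$ compatibly with the action on $F(\overline T_d)$, which is exactly the isomorphism $\overline\psi_{d,\bbC}$ of Corollary \ref{ch3:coro_isom-C}$(a)$, and this is the place where the assumption $\nu_r>d$ (slightly stronger than $\nu_r\geqslant d$) enters. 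The main obstacle, therefore, is not in the present lemma but in those earlier base-change and invertibility statements; granted those, the commutativity here is formal.
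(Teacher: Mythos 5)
Your proof is correct and takes the same route as the paper, which establishes this lemma by a one-line reference to the method of \cite[Lem.~5.41]{RSVV} (via \cite[Sec.~5.9]{RSVV}). You have correctly identified the two diagrams as the $\bfA$-side (RSVV's lemma over $\bbC$) and the $\calA$-side (the $\bbC$-specialization of Lemma~\ref{ch3:lem_com-F-calA-Hecke}, itself proved by the same method), and your sketch of the mechanism via $T_{d+1}\simeq F(T_d)$, the natural map $\Ind_d^{d+1}\Psi^\nu_d(M)\to\Hom(F(T_d),F(M))$, and the Specht-module comparison on standard objects reproduces the argument of \cite[Lem.~5.41]{RSVV}.
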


Now, Theorem \ref{ch3:thm_intro-main-decomp-functors} follows from the following one.

\smallskip
\begin{thm}
\label{ch3:thm_decomp_Fk-A}
Assume that $\nu_r>d$ for each $r\in[1,l]$ and $e>2$.
Then the following diagram is commutative
$$
\begin{CD}
\calA^\nu[d]@>{F_k}>> \calA^\nu[d+1]\\
@A{\theta'_d}AA                 @A{\theta'_{d+1}}AA\\
\bfA^\nu[d]@>{F_k}>> \bfA^\nu[d+1].
\end{CD}
$$

In particular, for each $\alpha\in Q^+_e$ such that $|\alpha|=d$, the following diagram is commutative

$$
\begin{CD}
\calA^\nu[\alpha]@>{F_k}>> \calA^\nu[\alpha+\alpha_k]\\
@A{\theta'_\alpha}AA                 @A{\theta'_{\alpha+\alpha_k}}AA\\
\bfA^\nu[\alpha]@>{F_k}>> \bfA^\nu[\alpha+\alpha_k].
\end{CD}
$$
\end{thm}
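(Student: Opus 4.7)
The idea is to project the alleged isomorphism $F_k\circ\theta'_d\simeq \theta'_{d+1}\circ F_k$ onto the side of Hecke-module categories via $\overline\Psi^\nu_{d+1}$, verify the isomorphism there (where everything reduces to the combinatorial statement that induction on Hecke algebras preserves $i$-residue blocks), and then lift it back using the fully-faithfulness of $\overline\Psi^\nu_{d+1}$ on $\Delta$-filtered objects.

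\textbf{Step 1: Check commutativity on the Hecke side.} Decompose $\Ind_d^{d+1}=\bigoplus_{i\in I_e}\Ind_{d,i}^{d+1}$ into summands indexed by the generalized $\zeta_e^i$-eigenspace of $X_{d+1}$; then Lemma~\ref{ch3:lem_F-comm-KZ-e} upgrades naturally to an isomorphism $\overline\Psi^\nu_{d+1}\circ F_k\simeq \Ind_{d,k}^{d+1}\circ\overline\Psi^\nu_d$ (and similarly for $\Psi^\nu_\bullet$). Combining this with the compatibility $\overline\Psi^\nu_d\circ\theta'_d\simeq\Psi^\nu_d$ of Corollary~\ref{ch3:coro_constr-of-equiv-A-A} yields
\[
\overline\Psi^\nu_{d+1}\circ F_k\circ\theta'_d \;\simeq\; \Ind_{d,k}^{d+1}\circ\overline\Psi^\nu_d\circ\theta'_d \;\simeq\; \Ind_{d,k}^{d+1}\circ \Psi^\nu_d,
\]
and symmetrically
\[
\overline\Psi^\nu_{d+1}\circ\theta'_{d+1}\circ F_k \;\simeq\; \Psi^\nu_{d+1}\circ F_k \;\simeq\; \Ind_{d,k}^{d+1}\circ \Psi^\nu_d.
\]
So both composites agree after post-composing with $\overline\Psi^\nu_{d+1}$.

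\textbf{Step 2: Lift to the categorical level.} Both composites $F_k\circ\theta'_d$ and $\theta'_{d+1}\circ F_k$ are exact functors from $\bfA^\nu[d]$ to $\calA^\nu[d+1]$, and both restrict to functors between the $\Delta$-filtered subcategories: $\theta'_\bullet$ are equivalences of highest weight categories so they preserve $\Delta$-filtrations, and $F_k$ preserves $\Delta$-filtrations as shown in Section~\ref{ch3:subs_cat-bfA}. The functor $\overline\Psi^\nu_{d+1}$ is fully faithful on $\calA^{\nu,\Delta}[d+1]$ (this is condition~(3) verified in the proof of Proposition~\ref{ch3:prop_constr-of-equiv}). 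Therefore the isomorphism of Step~1, restricted to $\bfA^{\nu,\Delta}[d]$, lifts uniquely to a natural isomorphism $F_k\circ\theta'_d|_{\bfA^{\nu,\Delta}[d]}\simeq \theta'_{d+1}\circ F_k|_{\bfA^{\nu,\Delta}[d]}$. In particular the isomorphism holds on the projective generator of $\bfA^\nu[d]$ (projectives being $\Delta$-filtered in a highest weight category). Two exact functors between abelian categories that agree naturally on a projective generator are isomorphic, so the full commutativity follows, and the refined statement for each weight component $\alpha$ is immediate since all functors respect the decompositions by $\alpha\in Q_e^+$.

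\textbf{Expected obstacle.} Step~1 is essentially combinatorial bookkeeping; the substantive point is Step~2, and specifically checking that the isomorphism obtained on the Hecke side is natural (not merely pointwise) so that it can be pulled back through the fully-faithful functor $\overline\Psi^\nu_{d+1}$. The naturality of the isomorphism $\overline\Psi^\nu_{d+1}\circ F_k\simeq \Ind_{d,k}^{d+1}\circ\overline\Psi^\nu_d$ in Lemma~\ref{ch3:lem_F-comm-KZ-e} is the load-bearing ingredient: it is what makes the whole argument a statement about functors rather than about individual objects, and verifying that the isomorphism provided by that lemma does assemble into a genuine natural transformation (compatible with the decomposition by residues) is where care is required.
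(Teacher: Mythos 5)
Your proof is correct and follows essentially the same route as the paper's: combine Corollary~\ref{ch3:coro_constr-of-equiv-A-A} and Lemma~\ref{ch3:lem_F-comm-KZ-e} to obtain the natural isomorphism after post-composing with $\overline\Psi^\nu_{d+1}$, then lift it back using the fully-faithfulness of $\overline\Psi$ on $\Delta$-filtered objects. The paper compresses the lifting step into a citation of \cite[Lem.~2.4]{Shan-Fock}, whereas you spell it out (lift on $\Delta$-filtered objects, restrict to projectives, invoke Eilenberg--Watts for exact functors); this is exactly the argument that reference provides, so the two proofs coincide in substance.
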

\begin{proof}[Proof]
The result follows from Corollary \ref{ch3:coro_constr-of-equiv-A-A}, Lemma \ref{ch3:lem_F-comm-KZ-e} and an argument similar to \cite[Lem.~2.4]{Shan-Fock}.
\end{proof}

\section{Graded lifts of the functors}
\label{ch3:sec_gr-lifts}

\subsection{Graded categories}
For any noetherian ring $A$, let $\mod(A)$ be the category of finitely generated left $A$-modules. For any noetherian $\bbZ$-graded ring $A=\bigoplus_{n\in\bbZ}A_n$, let $\grmod(A)$ be the category of $\bbZ$-graded finitely generated left $A$-modules. The morphisms in $\grmod(A)$ are the morphisms which are homogeneous of degree zero. For each $M\in\grmod(A)$ and each $r\in\bbZ$ denote by $M_r$ the homogeneous component of degree $r$ in $M$. For $n\in\bbZ$ let $M\langle n \rangle$ be the $n$th shift of grading on $M$, i.e., we have $(M\langle n \rangle)_r=M_{r-n}$. For each $\bbZ$-graded finite dimensional $\bbC$-vector space $V$, let $\dim_q V\in\bbN[q,q^{-1}]$ be its graded dimension, i.e., $\dim_q V=\sum_{r\in \bbZ}(\dim V_r) q^r$.

The following lemma is proved in \cite[Lem.~2.5.3]{BGS}.

\smallskip
\begin{lem}
\label{ch3:lem-grad-unique}
Assume that $M$ is an indecomposable $A$-module of finite length. Then,
if $M$ admits a graded lift, this lift is unique up to grading shift and isomorphism.
\qed
\end{lem}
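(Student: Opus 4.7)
The plan is to show that any two graded lifts of $M$ differ by a grading shift. Let $\widetilde M_1, \widetilde M_2$ be two graded lifts of $M$ and fix an ungraded isomorphism $f\colon \widetilde M_1 \to \widetilde M_2$ in $\mod(A)$ obtained by composing the two identifications with $M$. Since $M$ has finite length, each $\widetilde M_i$ has only finitely many non-zero graded components (any non-zero homogeneous piece is a sub-$A$-module, so an infinite sum of such would force $M$ to have infinite length). Consequently $\Hom_A(\widetilde M_1,\widetilde M_2)$ splits as the finite direct sum $\bigoplus_r \Hom_{\grmod(A)}(\widetilde M_1, \widetilde M_2\langle -r\rangle)$, and we can decompose $f = \sum_r f_r$ with $f_r$ homogeneous of degree $r$. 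Doing the same for $f^{-1} = \sum_s g_s$, the equation $f^{-1}\circ f = \mathrm{id}$ yields, by selecting its degree-zero part,
$$\sum_r g_{-r}\circ f_r = \mathrm{id}_{\widetilde M_1}$$
in $E := \End_{\grmod(A)}(\widetilde M_1)$.

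Next I would argue that $E$ is a local ring. Write $R := \End_A(\widetilde M_1) \simeq \End_A(M)$; by Fitting's lemma, $R$ is local because $M$ is indecomposable of finite length. The grading on $\widetilde M_1$ endows $R$ with a $\bbZ$-grading whose degree-zero component is exactly $E$, and I claim that $a \in E$ is a unit in $E$ if and only if it is a unit in $R$. Indeed, if $ab = 1 = ba$ in $R$ with $b = \sum_r b_r$, matching degrees forces $a b_0 = 1 = b_0 a$, so $b_0 \in E$ is a two-sided inverse. Hence the non-units of $E$ coincide with $E \cap \frakm$ where $\frakm \subset R$ is the unique maximal ideal, and they clearly form an ideal of $E$. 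Since $\sum_r g_{-r}\circ f_r = \mathrm{id}$ in this local ring, at least one summand $g_{-r_0}\circ f_{r_0}$ must be a unit in $E$, which forces $f_{r_0}\colon \widetilde M_1 \to \widetilde M_2\langle -r_0\rangle$ to be a split monomorphism in $\grmod(A)$.

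To conclude, I would observe that $\widetilde M_2\langle -r_0\rangle$ has underlying ungraded module $M$, which is indecomposable; any graded direct sum decomposition of $\widetilde M_2\langle -r_0\rangle$ would descend to a non-trivial ungraded decomposition of $M$, so $\widetilde M_2\langle -r_0\rangle$ is indecomposable in $\grmod(A)$ as well. A split monomorphism from a non-zero object into an indecomposable is necessarily an isomorphism, so $f_{r_0}$ exhibits the desired isomorphism $\widetilde M_1 \simeq \widetilde M_2\langle -r_0\rangle$. The main subtlety in this approach is the verification that the degree-zero subring $E$ of the (not necessarily graded-local) ring $R$ is itself local; once this is in place, the rest is a formal Krull--Schmidt style manipulation.
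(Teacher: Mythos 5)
Your argument follows the standard route (the paper simply cites \cite[Lem.~2.5.3]{BGS} rather than proving this itself), and its main structure is correct: decompose the ungraded isomorphism into homogeneous pieces, observe that the degree-zero endomorphism ring $E$ is local because it is the degree-zero subring of the local ring $\End_A(M)$, extract a single homogeneous split monomorphism $f_{r_0}$, and finish by indecomposability of $\widetilde M_2\langle -r_0\rangle$.

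One parenthetical, however, is wrong. A homogeneous component $\widetilde M_r$ of a graded $A$-module is only an $A_0$-module, not an $A$-submodule; already for $A=\bbC[x]$ with $\deg x=1$, the degree-zero piece $A_0=\bbC$ of the regular module is not an ideal. The conclusion you draw from it (that $\widetilde M_i$ has finitely many non-zero graded components) is therefore not justified, and it can even fail over a general $\bbZ$-graded ring: $M=A=\bbC[x,x^{-1}]$ is indecomposable of finite length yet its obvious grading has infinitely many non-zero pieces. Fortunately the claim is also unnecessary for your argument. Since $\widetilde M_1$ is a finitely generated graded module, $\Hom_A(\widetilde M_1,\widetilde M_2)$ is the internal \emph{direct sum} $\bigoplus_r\Hom_{\grmod(A)}(\widetilde M_1,\widetilde M_2\langle -r\rangle)$, so any particular $f$ is automatically a finite sum of homogeneous pieces, regardless of how the graded supports of the $\widetilde M_i$ are distributed; no bound on those supports is needed. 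Once that parenthetical and the word ``finite'' before ``direct sum'' are deleted, the rest of the proof --- the locality of $E$, the extraction of the unit $g_{-r_0}\circ f_{r_0}$ in $E$, and the indecomposability step --- goes through as written.
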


\smallskip
\begin{df}
A $\bbZ$-\emph{category} (or a \emph{graded category}) is an additive category $\widetilde\calC$ with a fixed auto-equivalence $T\colon\widetilde\calC\to \widetilde\calC$. We call $T$ the shift functor. For each $X\in\widetilde \calC$ and $n\in\bbZ$, we set $X\langle n \rangle=T^n(X)$. A functor of $\bbZ$-categories is a functor commuting with the shift functor.
\end{df}

\smallskip
For a graded noetherian ring $A$ the category $\grmod(A)$ is a $\bbZ$-category where $T$ is the shift of grading, i.e., for $M=\oplus_{n\in\bbZ}M_n\in \grmod(A), ~k\in\bbZ$, we have $T(M)_k=M_{k-1}$.

\smallskip
\begin{df}
\label{def_gr-ver}
Let $\calC$ be an abelian category. We say that an abelian $\bbZ$-category $\widetilde\calC$ is a \emph{graded version} of $\calC$ if there exists a functor $F_{\calC}\colon\widetilde \calC\to\calC$ and a graded noetherian ring $A$ such that we have the following commutative diagram, where the horizontal arrows are equivalences of categories and the top horizontal arrow is a functor of $\bbZ$-categories
$$
\begin{CD}
\widetilde \calC\ @>>> \grmod(A)\\
@V{F_\calC}VV                 @V\mbox{forget}VV\\
\calC            @>>> \mod(A).
\end{CD}
$$
\end{df}

\smallskip
In the setup of Definition \ref{def_gr-ver}, we say that an object $\widetilde X\in\widetilde\calC$ is a \emph{graded lift} of an object $X\in\calC$ if we have $F_\calC(\widetilde X)\simeq X$. For objects $X, Y\in\calC$ with fixed graded lifts $\widetilde{X},\widetilde{Y}$ the $\bbZ$-module $\Hom_\calC(X,Y)$ admits a $\bbZ$-grading given by $\Hom_\calC(X,Y)_n=\Hom_{\widetilde\calC}(\widetilde X\langle n\rangle,\widetilde Y)$. In the sequel we will often denote the object $X$ and its graded lift $\widetilde X$ by the same symbol.

\smallskip
\begin{df}
For two abelian categories $\calC_1$, $\calC_2$ with graded versions $\widetilde{\calC_1}$, $\widetilde{\calC_2}$ we say that the functor of $\bbZ$-categories $\widetilde\Phi\colon\widetilde\calC_1\to\widetilde\calC_2$ is a \emph{graded lift} of a functor  $\Phi\colon\calC_1\to\calC_2$ if $F_{\calC_2}\circ\widetilde\Phi=\Phi\circ F_{\calC_1}$.
\end{df}

\subsection{The truncated category $O$}

We can extend the Bruhat order $\leqslant$ on $\widetilde W$ to an order $\leqslant$ on $\widehat W$ in the following way. For each $w_1,w_2\in\widehat W$ we have $w_1\leqslant w_2$ if and only if there exists $n\in \bbZ$ such that $w_1\pi^n,w_2\pi^n\in \widetilde W$ and we have $w_1\pi^n\leqslant w_2\pi^n$ in $\widetilde W$. Note that the order on $\widehat W$ is defined in such a way that for $w_1,w_2\in \widehat W$ we can have $w_1\leqslant w_2$ only if $\widetilde W w_1=\widetilde W w_2$.

Fix $\mu=(\mu_1,\cdots,\mu_e)\in X_e[N]$. Let $W_\mu$ be the stabilizer of the weight $1_\mu\in P$ in $\widetilde W$ (or equivalently in $\widehat W$). 
Let $J_{\mu}$ (resp. $J_{\mu,+}$) be the set of shortest (resp. longest) representatives of the cosets $\widehat W/W_\mu$ in $\widehat W$. For each $v\in \widehat W$ put $^vJ_\mu=\{w\in J_\mu;~w\leqslant v\}$ and $^vJ_{\mu,+}=\{w\in J_{\mu,+};~w\leqslant v\}$. They are finite posets.

Assume that $R$ is a local deformation ring. Let $^vO_{\mu,R}$ be the Serre subcategory of $O_{\mu,R}$ generated by the modules $\Delta^{w(1_\mu)}_R$ with $w\in {^vJ_\mu}$. This is a highest weight category, see \cite[Lem.~3.7]{SVV}.
Note that the definition of the category $^vO_{\mu,R}$ does not change if we replace $v$ by the minimal length element in $vW_\mu$ (i.e., by an element of $J_\mu$). However, in some situations it will be more convenient to assume that $v$ is maximal in $vW_\mu$ (and not minimal).

Recall the decomposition
$$O_{\mu,R}=\bigoplus_{n\in\bbZ}\calO_{\pi^n(1_\mu),R}$$
in (\ref{ch3:eq_dec-O-What-Wtilde}). Note that the definition of the order on $\widehat W$ implies that the category $^vO_{\mu,R}$ lies in $\calO_{\pi^n(1_\mu),R}$, where $n\in\bbZ$ is such that $v\in \widetilde W\pi^n$.

\subsection{Linkage}
We still consider the non-parabolic category $O$. In particular we have $l=N$.

Let $\bfk$ be a deformation ring that is a field. Recall that the affine Weyl group $\widetilde W$ is generated by reflections $s_\alpha$, where $\alpha$ is a real affine root. Now we consider the following equivalence relation $\sim_\bfk$ on $P$. We define it as the equivalence relation generated by $\lambda_1\sim_\bfk\lambda_2$  when $\widetilde\lambda_1+\widehat\rho=s_\alpha(\widetilde\lambda_2+\widehat\rho)$ for some real affine root $\alpha$. The definition of $\sim_\bfk$ depends on $\bfk$ because the definitions of $\widetilde\lambda$ and $\widehat\rho$ depend on the elements $\tau_r,\kappa\in \bfk$.

Now, let $R$ be a deformation ring that is a local ring with residue field $\bfk$. Then for $\lambda_1,\lambda_2\in P$ we write $\lambda_1\sim_R\lambda_2$ if and only if we have $\lambda_1\sim_\bfk\lambda_2$. Note that the definition of the equivalence relation above is motivated by \cite[Thm.~3.2]{Fie-cen}.

In the particular case when $R$ is a local deformation ring, the equivalence relation $\sim_R$ coincides with the equivalence relation $\sim_\bbC$ because we have $\tau_r=0$ and $\kappa=e$ in the residue field of $R$. The relation $\sim_\bbC$ can be easily described in terms of the $e$-action of $\widehat W$ on $P$, introduced in Section \ref{ch3:subs_ext-aff}. We have $\lambda_1\sim_\bbC\lambda_2$ if and only the elements $\lambda_1+\rho$ and $\lambda_2+\rho$ of $P^{(e)}$ are in the same $\widetilde W$-orbit.

\smallskip
\begin{rk}
\label{ch3:rk-small-orbits}
Let $\bfk$ be as above.

$(a)$ Assume that for each $r,t\in[1,l]$ such that $r\ne t$ we have $\tau_r-\tau_t\not\in \bbZ$. In this case the equivalence relation $\sim_\bfk$ is the equality.

$(b)$ Assume that we have $\tau_r-\tau_t\in\bbZ$ for a unique couple $(r,t)$ as above. In this case each equivalence class with respect to $\sim_\bfk$ contains at most two elements.  

$(c)$ Let $R$ be as local deformation ring in general position with the field of fractions $K$. By $(a)$, the equivalence relation $\sim_K$ is just the equality. Now, let $\frakp$ be a prime ideal of height $1$ in $R$. In this case, each equivalence class with respect to $\sim_{R_\frakp}$ contains at most two elements (this follows from \cite[Prop.~5.22~$(a)$]{RSVV}, $(a)$ and $(b)$).  
\end{rk}

\smallskip
The relation $\sim_R$ yields a decomposition of the category $O_{-e,R}$ in a direct sum of subcategories, see \cite[Prop.~2.8]{Fie-cen}. More precisely, let $\Lambda$ be an $\sim_R$-equivalence class in $P$. Let $\calO_{\Lambda,R}$ be the Serre subcategory of $O_{-e,R}$ generated by $\Delta(\lambda)$ for $\lambda\in\Lambda$. Then we have
\begin{equation}
\label{ch3:eq_block-decomp-Fie}
O_{\mu,R}=\bigoplus_{\Lambda\subset P[\mu]-\rho}\calO_{\Lambda,R}.
\end{equation}
For example, if $R$ is a local deformation ring, then this decomposition coincides with (\ref{ch3:eq_dec-O-What-Wtilde}).
The following lemma explains what happens after the base change, see \cite[Lem.~2.9,~Cor.~2.10]{Fie-cen}.

\smallskip
\begin{lem}
The $R$ and $T$ be deformation rings that are local and let $R\to T$ be a ring homomorphism.

$(a)$ The equivalence relation $\sim_T$ is finer than the relation $\sim_R$.

$(b)$ Let $\Lambda$ be an equivalence class with respect to $\sim_R$. Then $T\otimes_R \calO_{\Lambda,R}$ is equal to $\bigoplus_{\Lambda'}\calO_{\Lambda',T}$, where the sum is taken by all $\sim_T$-equivalence classes $\Lambda'$ in $\Lambda$.
\qed
\end{lem}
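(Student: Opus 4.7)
The strategy is first to prove part (a) by interpreting the elementary linkage equations as a vanishing condition of explicit vectors in $\widehat\bfh^*$ that make sense over the ring $R$ itself (not just over its residue field), and then to deduce (b) from (a) together with the block decomposition (\ref{ch3:eq_block-decomp-Fie}) and the fact that base change sends Verma modules to Verma modules.

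For part (a), I read the hypothesis as saying $R\to T$ is a local homomorphism of local rings, so that $\frakm_R$ is the preimage of $\frakm_T$. Recall that $\sim_\bfk$ is generated by the single-step relation $\widetilde\lambda_1+\widehat\rho=s_\alpha(\widetilde\lambda_2+\widehat\rho)$ for a real affine root $\alpha$. Moving everything to one side, this asserts the vanishing in $\widehat\bfh^*_{\bfk}$ of a vector $v_{\lambda_1,\lambda_2,\alpha}$ whose coordinates are polynomial expressions in the deformation parameters $\tau_1,\ldots,\tau_l,\kappa$ and in the integer/rational constants defining $\alpha$. The key observation is that $v_{\lambda_1,\lambda_2,\alpha}$ is already defined in $\widehat\bfh^*_R$: the generator step $\lambda_1\sim_R\lambda_2$ holds exactly when its coordinates all lie in $\frakm_R$. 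Now, given a chain of elementary steps $\lambda_1=\mu_0,\mu_1,\ldots,\mu_n=\lambda_2$ witnessing $\lambda_1\sim_T\lambda_2$, each corresponding vector $v_{\mu_{i-1},\mu_i,\alpha_i}$ has its coordinates (regarded as elements of $R$ via $R\to T$) lying in $\frakm_T$; since the map is local, those coordinates already lie in $\frakm_R$, and the same chain witnesses $\lambda_1\sim_R\lambda_2$. This proves (a).

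For part (b), the base change $T\otimes_R(-)$ is exact and sends each $\Delta^\lambda_R$ to $\Delta^\lambda_T$, so $T\otimes_R\calO_{\Lambda,R}$ is the Serre subcategory of $O_T$ generated by $\{\Delta^\lambda_T:\lambda\in\Lambda\}$. By part (a), each $\lambda\in\Lambda$ has a unique $\sim_T$-class $\Lambda'\subset\Lambda$, and $\Delta^\lambda_T\in\calO_{\Lambda',T}$; this gives $T\otimes_R\calO_{\Lambda,R}\subset\bigoplus_{\Lambda'\subset\Lambda}\calO_{\Lambda',T}$. Conversely, for each $\sim_T$-class $\Lambda'\subset\Lambda$, the generating Vermas of $\calO_{\Lambda',T}$ all lie in $T\otimes_R\calO_{\Lambda,R}$, giving the reverse inclusion.

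The main obstacle is getting part (a) right. The delicate point is making precise the interpretation of the linkage relation as a polynomial condition in the parameters over $R$ (and not merely over the residue field), and using the locality of $R\to T$ essentially to pull back vanishing in $\frakm_T$ to vanishing in $\frakm_R$. Once this is set up the rest is formal, and (b) is a direct consequence.
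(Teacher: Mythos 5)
The paper itself offers no proof of this lemma; it is cited directly from Fiebig \cite[Lem.~2.9, Cor.~2.10]{Fie-cen}, so the only question is whether your argument is actually correct.

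Your conceptual approach to part $(a)$ --- interpreting each elementary linkage step as the vanishing, in the residue field, of an element of $R$ built from $\tau_1,\ldots,\tau_l,\kappa$ and the root data --- is the right one. But you insert an assumption that is neither stated in the lemma nor needed: that $R\to T$ is a \emph{local} homomorphism, so that the preimage of $\frakm_T$ equals $\frakm_R$. This is where the proof breaks. The lemma only assumes $R\to T$ is some ring homomorphism between local deformation rings, and the paper's actual uses of the lemma (in the proof of Proposition \ref{ch3:prop_eval-on-proj}) are the base changes $R\to R_{\frakp}$ to a localization at a prime of height $1$ and $R\to K$ to the fraction field. Neither of these is a local homomorphism: for $R$ a local domain and $\frakp\ne\frakm_R$, the image of $\frakm_R$ in $R_\frakp$ (resp.\ in $K$) is not contained in $\frakp R_\frakp$ (resp.\ in $0$). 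So your proof, as written, does not cover the cases the paper needs.

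The good news is that locality of the map is never required, and the correct reason is strictly weaker. For any ring homomorphism $\phi\colon R\to T$ with $T$ local, the preimage $\phi^{-1}(\frakm_T)$ is a prime ideal of $R$; since $R$ is local, every prime ideal of $R$ is contained in $\frakm_R$, hence $\phi^{-1}(\frakm_T)\subset\frakm_R$. Thus if a linkage expression has image in $\frakm_T$, it already lies in $\frakm_R$, and your chain argument then closes as desired. Replace ``since the map is local'' with this observation and $(a)$ holds in the required generality.

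For part $(b)$, the assertion that $T\otimes_R(-)$ is exact is false in general: $T$ need not be flat over $R$ (e.g.\ $T=\bfk$ the residue field). What you actually need is only that $T\otimes_R\Delta^\lambda_R\simeq\Delta^\lambda_T$, which is fine because Verma modules are free over $R$, together with the fact that the block decomposition on each side is controlled by the respective linkage relation; then $(a)$ forces the claimed matching of summands. That is the substance of the argument, but the exactness claim should be dropped, and the precise meaning of $T\otimes_R\calO_{\Lambda,R}$ (essential image versus Serre subcategory generated by the base-changed standards) deserves one more sentence of care.
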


\smallskip
\begin{df}
\label{ch3:def_generic}
We say that the category $\calO_{\Lambda,R}$ is \emph{generic} if $\Lambda$ contains a unique element and \emph{subgeneric} if it contains exactly two elements.
\end{df}
\smallskip

More details about the structure of generic and subgeneric categories can be found in \cite[Sec.~3.1]{Fie-str}.

\subsection{Centers}
We assume that $R$ is a deformation ring that is a local ring with the residue field $\bfk$ and the field of fractions $K$. Recall that we have $l=N$ because we consider the non-parabolic category $O$.

Let $\Lambda$ be an equivalence class in $P$ with respect to $\sim_R$. Consider the category $\calO_{\Lambda,R}$ as in (\ref{ch3:eq_block-decomp-Fie}). There is a partial order $\leqslant$ on $\Lambda$ such that $\lambda_1\leqslant \lambda_2$ when $\widetilde\lambda_2-\widetilde\lambda_1$ is a sum of simple roots. There exists an element $\lambda\in\Lambda$ such that $\Lambda$ is minimal in $\Lambda$ with respect to this order. Assume that $\Lambda$ is finite.

\smallskip
\begin{df}
The \emph{antidominant projective module in $\calO_{\Lambda,R}$} is the projective cover in $\calO_{\Lambda,R}$ of the simple module $L_R(\lambda)$, where $\lambda$ is the minimal element in $\Lambda$. (The existence of the protective cover as above is explained in \cite[Thm.~2.7]{Fie-cen}.)
\end{df}

\smallskip
This notion has no sense if $\Lambda$ is infinite. However we can consider the truncated version. Fix $v\in \widehat W$. We have a truncation of the decomposition (\ref{ch3:eq_block-decomp-Fie}):
\begin{equation}
\label{ch3:eq_block-decomp-Fie-trunc}
^vO_{\mu,R}=\bigoplus_{\Lambda}{^v\calO}_{\Lambda,R},
\end{equation}
where we put ${^v\calO}_{\Lambda,R}=\calO_{\Lambda,R}\cap{^vO}_{\mu,R}$.

By \cite[Thm.~2.7]{Fie-cen} each simple module in $^vO_{\mu,R}$ has a projective cover.
As above, we denote by $\lambda$ the element of $\Lambda$ that is minimal in $\Lambda$ with respect to the order $\leqslant$.

\smallskip
\begin{df}
The \emph{antidominant projective module in $^v\calO_{\Lambda,R}$} is the projective cover in $^v\calO_{\Lambda,R}$ of the simple module $L_R(\lambda)$.
\end{df}

\smallskip
From now on we assume that $R$ is a local deformation ring in general position, see Section \ref{ch3:subs_def-ring}. Let $\bfk$ and $K$ be the residue field and the field of fractions of $R$ respectively.  We set $h_0=\tau_l-\tau_1-\kappa+e$ and $h_r=\tau_{r+1}-\tau_{r}$ for $r\in[1,l-1]$. We have $h_r\ne 0$ for each $r\in[0,l-1]$ because the ring is assumed to be in general position. Under the assumption on $R$, the decomposition (\ref{ch3:eq_block-decomp-Fie-trunc}) contains only one term. Let $^vP^\mu_R$ be the antidominant projective module in $^vO_{\mu,R}$, i.e., $^vP^\mu_R$ is the projective cover of $L_R^{\pi^n(1_\mu)}$, where $n$ is such that we have $\pi^n\leqslant v$.

\smallskip
\begin{lem}
\label{ch3:lem_antid-proj}
$(a)$ The module $^vP^\mu_R$ has a $\Delta$-filtration such that each Verma module in the category $^vO_{\mu,R}$ appears exactly ones as a subquotient in this $\Delta$-filtration.

$(b)$ For each base change $R'\otimes_R\bullet$, where $R'$ is a deformation ring that is local, the module $R'\otimes_R {^vP}^\mu_{R}$ splits into a direct sum of anti-dominant projective modules in the blocks of the category $^vO_{\mu,R'}$.

\end{lem}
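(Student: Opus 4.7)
The plan is a base-change argument: pass from $R$ to its fraction field $K$, where the block decomposition becomes maximally refined, and read off the $\Delta$-multiplicities of $\,^vP^\mu_R$ from this. By Remark \ref{ch3:rk-small-orbits}(c), over $K$ the relation $\sim_K$ reduces to equality, so
$$
{}^vO_{\mu,K} \;=\; \bigoplus_{w\in {}^vJ_\mu} {}^v\calO_{\{w(1_\mu)-\rho\},K}
$$
is a direct sum of semisimple singleton blocks, each with $\Delta^{w(1_\mu)}_K$ as unique simple, standard and projective object. Since $R\to K$ is flat, the base change preserves both projectivity and $\Delta$-filtration multiplicities, whence
$$
K\otimes_R {}^vP^\mu_R \;\simeq\; \bigoplus_{w} \bigl(\Delta^{w(1_\mu)}_K\bigr)^{\oplus a_w},\qquad a_w := [{}^vP^\mu_R : \Delta_R^{w(1_\mu)}],
$$
and part (a) reduces to proving $a_w = 1$ for every $w\in{}^vJ_\mu$.

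For the lower bound $a_w \geq 1$, note that $\lambda_0 := \pi^n(1_\mu)-\rho$ satisfies $\Delta_R(\lambda_0)=L_R(\lambda_0)$ by minimality in the highest weight order, and a standard antidominance argument (e.g.\ via the deformed Jantzen sum formula) shows that $L_R(\lambda_0)$ is a composition factor of every Verma module of $\,^v\calO_{\Lambda,R}$; BGG reciprocity then gives $a_w = [\Delta_R^{w(1_\mu)}: L_R(\lambda_0)] \geq 1$. For the upper bound $a_w \leq 1$, I would invoke the deformed Soergel Endomorphismensatz, which identifies $\End({}^vP^\mu_R)$ with (a truncation of) Fiebig's structure algebra and shows in particular that it is \emph{commutative}; this is in \cite{Fie-cen}. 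Base changing, $\End_K(K\otimes_R{}^vP^\mu_R)\simeq\prod_w M_{a_w}(K)$ is then forced to be commutative, yielding $a_w\leq 1$. Combining the two bounds proves (a).

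For part (b), the refinement of the block decomposition (\ref{ch3:eq_block-decomp-Fie-trunc}) under $R\to R'$ splits
$$
R'\otimes_R{}^vP^\mu_R \;=\; \bigoplus_{\Lambda'\subset\Lambda} Y_{\Lambda'},\qquad Y_{\Lambda'}\in\, {}^v\calO_{\Lambda',R'},
$$
with each $Y_{\Lambda'}$ projective and $\Delta$-filtered. The argument of (a) applied over $R'$ shows every Verma of $\,^v\calO_{\Lambda',R'}$ appears exactly once in $Y_{\Lambda'}$. Writing $Y_{\Lambda'}=\bigoplus_\lambda P_{R'}(\lambda)^{n_\lambda}$ in indecomposables and evaluating BGG reciprocity first at the minimal Verma $\Delta_{R'}(\lambda'_0)=L_{R'}(\lambda'_0)$ of $\Lambda'$ forces $n_{\lambda'_0}=1$; evaluating at the remaining Vermas $\Delta^\lambda_{R'}$ and using $[\Delta^\lambda_{R'}:L_{R'}(\lambda)]=1$ together with the already-established inequality $[\Delta^\lambda_{R'}:L_{R'}(\lambda'_0)] \geq 1$ then forces $n_\lambda=0$ for $\lambda\neq\lambda'_0$. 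Hence $Y_{\Lambda'}$ is the antidominant projective in $\,^v\calO_{\Lambda',R'}$.

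The main obstacle is the upper bound $a_w \leq 1$ in (a): the lower bound and the BGG-reciprocity manipulations are essentially formal, but without the Soergel--Fiebig commutativity of $\End({}^vP^\mu_R)$, nothing prevents a priori some Vermas from appearing with higher multiplicity in the $\Delta$-filtration.
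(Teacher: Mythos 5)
Your argument for part (b), and the lower bound and BGG-reciprocity manipulations in part (a), are sound and essentially parallel to what Fiebig does. However the paper itself gives no independent proof at all: it simply cites \cite[Thm.~2~(2)]{Fie-str} for the $\Delta$-filtration statement, the proof of \cite[Lem.~4]{Fie-str} for the multiplicity-one claim, and \cite[Rem.~5]{Fie-str} for the base-change statement in (b). So the two ``proofs'' are not really comparable; you are reconstructing an argument where the paper defers to Fiebig.

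The substantive problem is the circularity in your upper bound $a_w\leq 1$. You invoke the Endomorphismensatz (``$\End({}^vP^\mu_R)$ is commutative, hence equal to the structure algebra''), but in this paper that statement is Proposition \ref{ch3:prop_eval-on-proj}, whose proof \emph{explicitly uses} Lemma \ref{ch3:lem_antid-proj}(b): the localization step there reads ``by Lemma \ref{ch3:lem_antid-proj}~$(b)$ the localisation $R_\frakp\otimes_R{}^vP^\mu_R$ of the antidominant projective module splits into a direct sum of antidominant projective modules.'' The same dependence is present in Fiebig's own proof (the Endomorphismensatz is obtained by intersecting the structure algebra over height-one primes, which requires precisely the base-change behaviour of ${}^vP^\mu_R$ you are trying to establish). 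So you cannot feed the Endomorphismensatz back into a proof of this lemma without breaking the logical order. The honest fix is either to cite \cite[Thm.~2~(2), Lem.~4]{Fie-str} directly, as the paper does, or to replace the commutativity argument by the genuinely prior fact underlying it: that the antidominant simple occurs with multiplicity exactly one in each costandard module (equivalently, that the antidominant Verma embeds in every Verma of the block with one-dimensional Hom and that the Jantzen/Shapovalov filtration has simple bottom layer), which is how Fiebig actually obtains the upper bound before any discussion of $\End({}^vP^\mu_R)$.
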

\begin{proof}[Proof]
The first part in $(a)$ holds by \cite[Thm.~2~(2)]{Fie-str} and the second part in $(a)$ holds by the proof of \cite[Lem.~4]{Fie-str}. Finally, $(b)$ follows from \cite[Rem.~5]{Fie-str}.
\end{proof}

\smallskip
We will need the following lemma.

\smallskip
\begin{lem}
\label{ch3:lem_center-of-modA}
Let $A$ be a ring. Then the center $Z(\mod(A))$ of the category $\mod(A)$ is isomorphic to the center $Z(A)$ of the ring $A$.
\end{lem}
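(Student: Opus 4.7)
The plan is to construct ring homomorphisms in both directions between $Z(A)$ and $\End(\mathrm{Id}_{\mod(A)})$ (which is by definition $Z(\mod(A))$) and verify they are mutually inverse. The construction is standard, so the main work is bookkeeping rather than any genuine obstacle.

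Forward direction: given $z \in Z(A)$, define a natural transformation $\eta_z \colon \mathrm{Id} \to \mathrm{Id}$ by letting $(\eta_z)_M \colon M \to M$ be left multiplication by $z$, i.e.\ $m \mapsto zm$. Centrality of $z$ guarantees that $(\eta_z)_M$ is $A$-linear: $(\eta_z)_M(am) = zam = azm = a(\eta_z)_M(m)$. Naturality in $M$ is immediate from $A$-linearity of morphisms. The assignment $z \mapsto \eta_z$ is plainly a ring homomorphism $Z(A) \to Z(\mod(A))$.

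Backward direction: given $\eta \in Z(\mod(A))$, note that $A$, viewed as a left module over itself, lies in $\mod(A)$ (it is cyclic, generated by~$1$). Set $z := \eta_A(1)$. Since $\eta_A$ is $A$-linear we have $\eta_A(a) = a\eta_A(1) = az$ for every $a \in A$. To see $z \in Z(A)$, fix $b \in A$ and consider right multiplication $r_b \colon A \to A$, $a \mapsto ab$, which is a morphism of left $A$-modules. Naturality of $\eta$ applied to $r_b$ yields $r_b \circ \eta_A = \eta_A \circ r_b$; evaluating at $1$ gives $zb = \eta_A(b) = bz$, so $z$ is central. I would then verify that $\eta \mapsto \eta_A(1)$ is a ring homomorphism.

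Mutual inverse check: clearly $\eta_z$ sends $1 \in A$ to $z$, so one composition is the identity on $Z(A)$. Conversely, given $\eta$ with $z = \eta_A(1)$, for any $M \in \mod(A)$ and $m \in M$ consider the $A$-linear map $f_m \colon A \to M$, $a \mapsto am$. Naturality gives $\eta_M \circ f_m = f_m \circ \eta_A$, and evaluating at $1$ produces $\eta_M(m) = f_m(z) = zm = (\eta_z)_M(m)$, so $\eta = \eta_{\eta_A(1)}$.

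There is no real obstacle; the only point requiring a moment of care is to use the regular left module $A \in \mod(A)$ together with the family of right-multiplication maps to pin down $\eta_A$ and extract centrality. Everything else is formal.
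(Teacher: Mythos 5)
Your proof is correct and follows essentially the same route as the paper: evaluate a central endofunctor endomorphism on the regular module $A$ to extract a central element, and then use the morphisms $A\to M$, $1\mapsto m$, to show the natural transformation acts by multiplication by that element on every module. The only cosmetic difference is that you verify centrality of $\eta_A(1)$ via naturality against right multiplications $r_b$, whereas the paper invokes directly that $z_A$ commutes with all endomorphisms of the $A$-module $A$; these are the same argument.
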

\begin{proof}
There is an obvious injective homomorphism $Z(A)\to Z(\mod(A))$. We need only to check that it is also surjective.

Let $z$ be an element of the center of $\mod(A)$. By definition, $z$ consists of an endomorphism $z_M$ of $M$ for each $M\in\mod(A)$ such that these endomorphisms commute with all morphisms between the modules in $\mod(A)$. Then $z_A$ is an endomorphism of the $A$-module $A$ that commutes with each other endomorphism of the $A$-module $A$. Thus $z_A$ is a multiplication by an element $a$ in the center of $A$.

Now we claim that for each module $M\in\mod(A)$ the endomorphism $z_M$ is the multiplication by $a$. Fix $m\in M$. Let $\phi\colon A\to M$ be the morphism of $A$-modules sending $1$ to $m$. We have $\phi\circ z_A=z_M\circ\phi$. Then
$$
z_M(m)=z_M\circ \phi(1)=\phi\circ z_A(1)=\phi(a)=am.
$$
This completes the proof.
\end{proof}

\smallskip
Let $Z_{\mu,R}$ (resp. $^vZ_{\mu,R}$) be the center of the category $O_{\mu,R}$ (resp. $^vO_{\mu,R}$).

\smallskip
\begin{prop}
\label{ch3:prop_eval-on-proj}
The evaluation homomorphism $^vZ_{\mu,R}\to \End(^vP^\mu_R)$ is an isomorphism.
\qed
\end{prop}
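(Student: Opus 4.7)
The strategy is to prove the statement by a two-step comparison: first with the fraction field $K$, then at each height-one prime of $R$. This is the deformation-theoretic adaptation of Soergel's Endomorphismensatz developed by Fiebig in \cite{Fie-cen}, \cite{Fie-str}. The starting observation is that both $^vZ_{\mu,R}$ and $\End({^vP}^\mu_R)$ are $R$-torsion-free: the endomorphism ring because $^vP^\mu_R$ is $R$-free, by its $\Delta$-filtration (Lemma \ref{ch3:lem_antid-proj}(a)); the center because any central element acts on each Verma module by a scalar (its central character), and this defines an $R$-linear embedding of $^vZ_{\mu,R}$ into $\prod_w R$. Hence the evaluation map $\phi\colon {^vZ}_{\mu,R}\to \End({^vP}^\mu_R)$ is determined by its base change $\phi_K$ together with its localizations $\phi_\frakp$ at each height-one prime $\frakp$ of $R$.

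Over $K$, Remark \ref{ch3:rk-small-orbits}(a) tells us that $\sim_K$ is the equality on $P$, so $^vO_{\mu,K}$ is a direct sum of generic blocks each containing a single Verma module which is simple and projective. Combined with Lemma \ref{ch3:lem_antid-proj}(b), this gives $^vP^\mu_K\simeq\bigoplus_{w\in{^v}J_\mu}\Delta^{w(1_\mu)}_K$ and identifies both $^vZ_{\mu,K}$ and $\End({^vP}^\mu_K)$ with $\prod_w K$, the evaluation map being the identity. In particular $\phi_K$ is an isomorphism, and this immediately gives injectivity of $\phi$ over $R$: if $z\in {^vZ}_{\mu,R}$ kills $^vP^\mu_R$ then $z$ kills every Verma subquotient, so $z$ has vanishing central character at each $w(1_\mu)$; hence $z$ maps to zero in $^vZ_{\mu,K}$, and torsion-freeness forces $z=0$.

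For surjectivity, I localize at each height-one prime $\frakp$ of $R$. Since $R$ has dimension at most two and $\End({^vP}^\mu_R)$ is $R$-free, it suffices to prove $\phi_\frakp$ is surjective for every such $\frakp$. By Remark \ref{ch3:rk-small-orbits}(c), every $\sim_{R_\frakp}$-equivalence class has at most two elements, so $^vO_{\mu,R_\frakp}$ decomposes into generic and subgeneric blocks. By Lemma \ref{ch3:lem_antid-proj}(b), $^vP^\mu_{R_\frakp}$ splits correspondingly, the center splits as a product of centers of blocks, and $\phi_\frakp$ decouples over these blocks. The generic case is trivial: the block contains one Verma and both sides equal $R_\frakp$.

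The main obstacle is the subgeneric case, and this is where the essential input from Fiebig's work is needed. For a subgeneric block containing two linked weights $\lambda<\lambda'$, the antidominant projective is the unique non-split extension $0\to\Delta^{\lambda'}_{R_\frakp}\to P\to\Delta^{\lambda}_{R_\frakp}\to 0$ (cf.\ \cite[Sec.~3.1]{Fie-str}). One must verify by direct computation that both $\End(P)$ and the center of the subgeneric block coincide with the same subring of $R_\frakp\oplus R_\frakp$, namely the pairs $(a,b)$ with $a-b$ lying in the principal ideal generated by the difference of the central characters of $\lambda$ and $\lambda'$. This is the affine analogue of the classical subgeneric Soergel calculation and is exactly the content of the structure theorems of \cite{Fie-str}; I would invoke these directly rather than reproduce the computation.
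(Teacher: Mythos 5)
Your proposal follows the same architecture as the paper's proof: reduce to the fraction field $K$ and to the height-one localizations $R_\frakp$, exploit the generic/subgeneric structure of blocks given by Remark \ref{ch3:rk-small-orbits}, split the antidominant projective via Lemma \ref{ch3:lem_antid-proj} $(b)$, and appeal to Fiebig's subgeneric computation. The content over $K$ and over each $R_\frakp$ is handled correctly, as is the injectivity argument via torsion-freeness.

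The gap is in the reduction step for surjectivity. You assert that because $R$ has dimension at most two and $\End({^vP}^\mu_R)$ is $R$-free, it suffices to show $\phi_\frakp$ is surjective at every height-one prime. But an injection $M\hookrightarrow N$ with $N$ free and $M$ torsion-free, which becomes an isomorphism after localization at every prime of height $\leqslant 1$, need not be an isomorphism when $\dim R=2$: the inclusion $\frakm\hookrightarrow R$ of the maximal ideal into a two-dimensional regular local ring is a counterexample (the cokernel is the residue field, invisible in codimension $\leqslant 1$). Torsion-freeness of $^vZ_{\mu,R}$, which you deduce from the embedding into $\prod_w R$, is therefore not enough; what the argument needs is the sharper fact that $^vZ_{\mu,R}=\bigcap_{\frakp}{^vZ}_{\mu,R_\frakp}$ inside $^vZ_{\mu,K}$, and the parallel statement for $\End({^vP}^\mu_R)$. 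The paper establishes precisely this as equation (\ref{ch3:eq_Z-as-inter-loc}): it identifies $^vZ_{\mu,R}$ with the ring-theoretic center $Z({^vA}_{\mu,R})$ via Lemma \ref{ch3:lem_center-of-modA}, uses that the basic algebra ${^vA}_{\mu,R}$ is free over $R$ (hence equals the intersection of its height-one localizations), and then intersects with $Z({^vA}_{\mu,K})$. Once both source and target of the evaluation map are known to be such intersections, the isomorphisms $ev_{R_\frakp}$ and $ev_K$ do imply $ev_R$ is an isomorphism. You should supply this step — your argument is otherwise sound.
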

\begin{proof}[Proof]
The statement is proved in \cite[Lem.~5]{Fie-str}. There are however some subtle points that we explain.

Firstly, the statement of \cite[Lem.~5]{Fie-str} announces the result for the non-truncated category $\calO$. But in fact, the main point of the proof of \cite[Lem.~5]{Fie-str} is to show the statement first in the truncated case.

Secondly, \cite[Lem.~5]{Fie-str} assumes that the deformation ring $R$ is the localization of the symmetric algebra $S(\widehat\frakh)$ at the maximal ideal generated by $\widehat\frakh$. Let us sketch the argument of \cite[Lem.~5]{Fie-str} to show that it works well for our assumption on $R$.

Denote by $ev_R\colon {^vZ}_{\mu,R}\to \End(^vP^\mu_R)$ the homomorphism in the statement.
Let $I(R)$ be the set of prime ideals of height $1$ in $R$.
We claim that we have
\begin{equation}
\label{ch3:eq_Z-as-inter-loc}
^vZ_{\mu,R}=\bigcap_{\frakp\in I(R)}{^vZ}_{\mu,R_\frakp},
\end{equation}
where the intersection is taken inside of $^vZ_{\mu,K}$.
Really, let $^vA_{\mu,R}$ be the endomorphism algebra of the minimal projective generator of $^vO_{\mu,R}$. We have an equivalence of categories $^vO_{\mu,R}\simeq \mod(^vA_{\mu,R})$. By Lemma \ref{ch3:lem_center-of-modA} we have an algebra isomorphism $^vZ_{\mu,R}\simeq Z(^vA_{\mu,R})$. The same is true if we replace $R$ by $R_\frakp$ or $K$. By \cite[Prop.~2.4]{Fie-cen} we have $^vA_{\mu,R_\frakp}\simeq R_\frakp \otimes_R{^vA}_{\mu,R}$, $^vA_{\mu,K}\simeq K \otimes_R{^vA}_{\mu,R}$. The algebra ${^vA}_{\mu,R}$ is free over $R$ as an endomorphism algebra of a projective module in $^vO_{\mu,R}$. Thus we have ${^vA}_{\mu,R}=\bigcap_{\frakp\in I(R)}{^vA}_{\mu,R_\frakp}$, where the intersection is taken in ${^vA}_{\mu,K}$. If we intersect each term in the previous formula with $^vZ_{\mu,K}=Z({^vA}_{\mu,K})$, we get (\ref{ch3:eq_Z-as-inter-loc}).

 Similarly, we have
$$
\End(^vP^\mu_{R})=\bigcap_{\frakp\in I(R)}\End(R_\frakp\otimes_R{^vP}^\mu_{R})
$$
incide of $\End(K\otimes_R{^vP}^\mu_{R})$.

To conclude, we only need to show that the evaluation homomorphisms
$$
ev_{R_\frakp}\colon{^vZ}_{\mu,R_\frakp}\to\End(R_\frakp\otimes_R{^vP}^{\mu}_{R}),\qquad ev_K\colon{^vZ}_{\mu,K}\to\End(K\otimes_R{^vP}^{\mu}_R)
$$
are isomorphisms for each $\frakp\in I(R)$ and that the following diagram is commutative
$$
\begin{CD}
\End(R_\frakp\otimes_R{^vP}^{\mu}_{R})@>>> \End(K\otimes_R{^vP}^{\mu}_R)\\
@A{ev_{R_\frakp}}AA @A{ev_K}AA\\
^vZ_{\mu,R_\frakp}@>>> ^vZ_{\mu,K}\\
\end{CD}
$$

The commutativity of the diagram is obvious. Since $R$ is in general position, the category $^vO_{\mu,K}$ is semisimple, see Remark \ref{ch3:rk-small-orbits}. Moreover, for each $\frakp\in I(R)$, the category $^vO_{\mu,R_\frakp}$ is a direct sum of blocks with at most two Verma modules in each, see Remark \ref{ch3:rk-small-orbits}. Similarly, by Lemma \ref{ch3:lem_antid-proj} $(b)$ the localisation $R_\frakp\otimes_R{^vP}^\mu_{R}$ of the antidominant projective module splits into a direct sum of antidominant projective modules. Now, the invertibility of $ev_{R_\frakp}$ and $ev_{K}$ follows from \cite[Lem.~2]{Fie-str}. 

\end{proof}

\smallskip
We will need the following lemma.

\smallskip
\begin{lem}
\label{ch3:lem_trunc-categ-gen}
Assume that $\calC_1$ is an abelian category and $\calC_2$ is an abelian subcategory of $\calC_1$. Let $i\colon \calC_2\to \calC_1$ be the inclusion functor. For each object $M\in\calC_1$ we assume that $M$ has a maximal quotient that is in $\calC_2$ and we denote this quotient by $\tau(M)$. Then we have the following.

$(a)$ The functor $\tau\colon\calC_2\to\calC_1$ is left adjoint to $i$.

$(b)$ Let $L$ be a simple object in $\calC_2$. Assume that $L$ has a projective cover $P$ in $\calC_1$. Then $\tau(P)$ is a projective cover of $L$ in $\calC_2$.
\end{lem}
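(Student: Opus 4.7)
The plan is to derive both parts of the lemma from the universal property that characterizes $\tau(M)$, treating $(a)$ as a formal adjunction statement and $(b)$ as an essentiality check that reduces to the corresponding statement for $P$ in $\calC_1$.

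For part $(a)$, I would first observe that the canonical surjection $\eta_M\colon M\twoheadrightarrow i(\tau(M))$ is universal among morphisms from $M$ to objects of $i(\calC_2)$. Indeed, for any $f\colon M\to i(N)$ with $N\in\calC_2$, the image $\Im(f)\subseteq i(N)$ is a subobject of an object of $\calC_2$, hence (using that $\calC_2$ is an abelian subcategory closed under subobjects of its objects, as is the case for the Serre-type truncations appearing in the paper) of the form $i(Q)$; being a quotient of $M$ lying in $\calC_2$, it must factor through the maximal such quotient $\tau(M)$. Uniqueness of the factorization follows from the fact that $\eta_M$ is an epimorphism, and the construction depends functorially on $M$ and $N$, yielding the adjunction $\Hom_{\calC_2}(\tau(M),N)\simeq\Hom_{\calC_1}(M,i(N))$.

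Part $(b)$ splits into two steps. Projectivity of $\tau(P)$ in $\calC_2$ is automatic: the inclusion $i$ is exact because $\calC_2$ is an abelian subcategory, so its left adjoint preserves projectives. The surjection $p\colon P\twoheadrightarrow L$ factors through $\eta_P$ by part $(a)$, producing a surjection $\bar p\colon\tau(P)\twoheadrightarrow L$. The only real calculation is then checking that $\bar p$ is essential. Given $Y\hookrightarrow\tau(P)$ with $Y+\ker\bar p=\tau(P)$, one sets $X:=\eta_P^{-1}(Y)\subseteq P$. Since $\eta_P$ is surjective, taking preimages gives a lattice isomorphism from subobjects of $\tau(P)$ to subobjects of $P$ containing $\ker\eta_P$, and in particular commutes with finite sums; combined with the identity $\eta_P^{-1}(\ker\bar p)=\ker p$, this yields $X+\ker p=P$. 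Essentiality of the projective cover $p$ forces $X=P$, and applying $\eta_P$ gives $Y=\tau(P)$. Hence $\bar p$ is essential and $\tau(P)$ is a projective cover of $L$ in $\calC_2$.

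The main (and mild) obstacle lies in part $(a)$: to interpret ``maximal quotient in $\calC_2$'' as a universal object one needs a small closure hypothesis on $\calC_2$ inside $\calC_1$. In the intended applications, where $\calC_2$ is a truncation of a block of the affine category $\calO$, this closure holds automatically, so the lemma applies without any extra assumption; everything else is a formal adjunction argument together with the short subobject-lattice computation above.
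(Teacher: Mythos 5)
Your proof is correct and takes essentially the same approach as the paper: part $(a)$ via factoring $f\colon M\to i(N)$ through $\tau(M)$ using $\Im f\in\calC_2$, and part $(b)$ by pulling the essentiality condition back along the surjection $P\twoheadrightarrow\tau(P)$ to the projective cover in $\calC_1$ (the paper phrases this as ``every proper subobject of $\tau(P)$ lies in $\ker\overline p$'' rather than via the superfluous-kernel/lattice argument, but the content is the same). The closure-under-subobjects hypothesis you flag in part $(a)$ is indeed used silently in the paper's step $\Im f\in\calC_2$, and holds in all the intended applications since $\calC_2$ is always a Serre subcategory (or of the form $\mod(A/I)\subset\mod(A)$).
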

\begin{proof}
Take $M\in\calC_1$ and $N\in\calC_2$. For each homomorphism $f\colon M\to N$ we have $M/\Ker f\simeq \Im f\in\calC_2$. Thus $\Ker f$ must contain the kernel of $M\to\tau(M)$. This implies that each homomorphism $f\colon M\to N$ factors through $\tau(M)$. This proves $(a)$.

Now, we prove $(b)$. We have a projective cover $p\colon P\to L$ in $\calC_1$. First, we clam that the object $\tau(P)$ is projective in $\calC_2$. Really, by $(a)$ the functors from $\calC_2$ to the category of $\bbZ$-modules  $\Hom_{\calC_2}(\tau(P),\bullet)$ and $\Hom_{\calC_1}(P,\bullet)$ are isomorphic. Thus the first of them should be exact because the second one is exact by the projectivity of $P$. This shows that $\tau(P)$ is projective in $\calC_2$. Denote by $\overline p$ the surjection $\overline p\colon \tau(P)\to L$ induced by $p\colon P\to L$. Let $t$ be the surjection $t\colon P\to\tau(P)$. We have $p=\overline p\circ t$. Now we must prove that each proper submodule $K\subset \tau(P)$ is in $\ker \overline p$. Really, if this is not true for some $K$, then $p(t^{-1}(K))$ is nonzero. Then we have $p(t^{-1}(K))=L$ because the module $L$ is simple. Then by the definition of a projective cover we must have $t^{-1}(K)=P$. This is impossible because $t$ is surjective and $K$ is a proper submodule of $\tau(P)$.
\end{proof}

\smallskip
\begin{rk}
\label{ch3:rk_grade-trunc-gen}
Let $A$ be a graded noetherian ring. Let $I\subset A$ be a homogeneous ideal. Put $\calC_1=\mod(A)$, $\calC_2=\mod(A/I)$, $\widetilde\calC_1=\grmod(A)$, $\widetilde\calC_2=\grmod(A/I)$. There is an obvious inclusion of categories $i\colon\calC_2\to\calC_1$ and it has an obvious graded lift $\widetilde i\colon\widetilde\calC_2\to\widetilde\calC_1$. The left adjoint functor $\tau$ to $i$ is defined by $\tau(M)=M/IM$ and the left adjoint functor $\widetilde\tau$ to $\widetilde i$ is also defined by $\widetilde\tau(M)=M/IM$. This implies that the functor $\widetilde\tau$ is a graded lift of $\tau$.
\end{rk}

\smallskip
Recall that we denote by $s_0,\cdots,s_{N-1}$ the simple reflections in $\widetilde W$.

\smallskip
\begin{prop}
\label{ch3:prop_eval-on-Verma-full}
We have an isomorphism
$$
Z_{\mu,R}\simeq\left\{(z_w)\in\prod_{w\in J_\mu}R;~z_{w}\equiv z_{s_r w}~\mod~h_r~\forall r\in[0,N-1],w\in {J_\mu}\cap {s_rJ_\mu}\right\}
$$
which maps an element $z\in {Z}_{\mu,R}$ to the tuple $(z_w)_{w\in {J_\mu}}$ such that $z$ acts on the Verma module $\Delta^{w(1_\mu)}_R$ by $z_w$.
\end{prop}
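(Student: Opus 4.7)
The plan is to reduce to the truncated setting where Proposition \ref{ch3:prop_eval-on-proj} is available, and then take an inverse limit. For each $v\in\widehat W$, Proposition \ref{ch3:prop_eval-on-proj} identifies $^vZ_{\mu,R}$ with $\End({^vP}^\mu_R)$, and Lemma \ref{ch3:lem_antid-proj}(a) shows that $^vP^\mu_R$ has a $\Delta$-filtration in which every Verma $\Delta^{w(1_\mu)}_R$ with $w\in{^vJ_\mu}$ occurs exactly once. Since $\End(\Delta^{w(1_\mu)}_R)=R$, a central element acts by a scalar $z_w\in R$ on each such Verma, yielding a homomorphism
$$
\Phi^v\colon {^vZ}_{\mu,R}\longrightarrow \prod_{w\in {^vJ_\mu}} R,\qquad z\mapsto (z_w)_{w\in{^vJ_\mu}}.
$$
Injectivity of $\Phi^v$ follows because a central element annihilating every subquotient of the Verma filtration on $^vP^\mu_R$ lies in the radical part of the filtration on the faithful module $^vP^\mu_R$; by freeness of $\End({^vP}^\mu_R)$ over $R$ (Lemma \ref{ch3:lem_antid-proj}(a)) it must vanish.

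To characterize the image, I will apply formula \eqref{ch3:eq_Z-as-inter-loc} from the proof of Proposition \ref{ch3:prop_eval-on-proj}, which gives $^vZ_{\mu,R}=\bigcap_{\frakp\in I(R)}{^vZ}_{\mu,R_\frakp}$ inside $^vZ_{\mu,K}$. Over $K$, the category is semisimple, so $^vZ_{\mu,K}=\prod R$ with no constraint. At a height-one prime $\frakp$, Remark \ref{ch3:rk-small-orbits}(c) together with Lemma \ref{ch3:lem_antid-proj}(b) implies that $^vO_{\mu,R_\frakp}$ decomposes into generic blocks (one Verma, contributing only the factor $R_\frakp$) and subgeneric blocks containing exactly two Vermas. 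For a subgeneric block, the structure theorem of \cite[Sec.~3]{Fie-str} (as used in the proof of Proposition \ref{ch3:prop_eval-on-proj} via \cite[Lem.~2]{Fie-str}) describes its center as
$$
\{(a,b)\in R_\frakp\oplus R_\frakp\ :\ a\equiv b\ \mathrm{mod}\ h\},
$$
where $h\in R$ is the deformation parameter attached to the affine root exchanging the two Vermas.

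The key identification, and the main technical step, is to match these subgeneric pairs with pairs $(w,s_rw)$ for $w\in{^vJ_\mu}$: the two weights $w(1_\mu)$ and $s_rw(1_\mu)$ are related by the reflection along the affine root that deforms to $h_r$ (with $h_r=\tau_{r+1}-\tau_r$ for $r\in[1,l-1]$ and $h_0=\tau_l-\tau_1-\kappa+e$ for $r=0$), and a subgeneric linkage appears precisely over the prime $\frakp$ containing $h_r$. Intersecting over all height-one primes $\frakp$ thus produces exactly the system of congruences $z_w\equiv z_{s_rw}\bmod h_r$ for each $r\in[0,N-1]$ and each $w\in{^vJ_\mu}\cap s_r{^vJ_\mu}$. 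Finally, since $O_{\mu,R}=\bigcup_v{^vO}_{\mu,R}$ and the natural restriction maps $Z_{\mu,R}\to{^vZ}_{\mu,R}$ present $Z_{\mu,R}$ as $\varprojlim_v{^vZ}_{\mu,R}$, passing to the inverse limit over $v$ yields the claimed description of $Z_{\mu,R}$. The hard part is verifying in detail the correspondence between subgeneric pairs at each $\frakp$ and simple-reflection pairs $(w,s_rw)$, and checking that the parameter recovered from Fiebig's subgeneric center is exactly $h_r$ — this is essentially an extension of the linkage analysis in \cite[Sec.~3.1]{Fie-str} to our slightly different deformation conventions.
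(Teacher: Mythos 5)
Your outline reconstructs the Fiebig argument that the paper is citing: the paper's own proof simply refers to \cite[Thm.~3.6]{Fie-cen} and notes that, as in the proof of Proposition~\ref{ch3:prop_eval-on-proj}, the same proof goes through for the ring $R$ of Assumption~1. Your steps — reduce to the truncated case via the antidominant projective and Proposition~\ref{ch3:prop_eval-on-proj}, pass through the localization formula $^vZ_{\mu,R}=\bigcap_{\frakp}{^vZ}_{\mu,R_\frakp}$, analyze the (sub)generic blocks at height-one primes via Remark~\ref{ch3:rk-small-orbits}, and finally pass to an inverse limit over $v$ to remove the truncation — are exactly what the cited argument does. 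So the approach matches.

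Two points deserve more care. First, your injectivity argument is not correct as written: from ``$z$ kills every layer of the $\Delta$-filtration'' you get that $z$ is nilpotent in $\End({^vP}^\mu_R)$, but a commutative ring that is free of finite rank over $R$ can certainly contain nonzero nilpotents (e.g.\ $R[x]/(x^2)$), so ``free $\Rightarrow$ vanishes'' does not follow. The correct conclusion uses that $\End({^vP}^\mu_R)$ is $R$-torsion-free and hence embeds into $\End({^vP}^\mu_K)$, which is reduced (indeed a product of copies of $K$, since $^vO_{\mu,K}$ is semisimple by general position); or, more simply, injectivity is already a byproduct of the localization formula you invoke next. Second, the ``hard part'' you flag is genuinely the crux and is in fact a bit subtler than you indicate: a priori the subgeneric blocks at a height-one prime $\frakp$ are indexed by pairs of weights linked by an arbitrary affine real reflection $s_\alpha$, not only by simple ones, and the moduli attached to non-simple reflections are generically distinct from $h_0,\dots,h_{N-1}$. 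To land precisely on the congruences in the statement — only simple reflections $s_r$ and only the specific $h_r$'s — one must show that for each $\frakp$ only linkages of the form $w\leftrightarrow s_r w$ occur among the elements of $J_\mu$, with modulus exactly (a unit times) $h_r$. This identification is a nontrivial check that the paper delegates entirely to Fiebig's proof and to the general-position hypothesis on $R$; as stated, your proposal recognizes that something must be verified but does not explain why non-simple reflections can be discarded, which is where the actual content lies.
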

\begin{proof}[Proof]
The statement is proved in \cite[Thm.~3.6]{Fie-cen} in the case where $R$ is the localization of the symmetric algebra $S(\widehat\frakh)$ at the maximal ideal generated by $\widehat\frakh$. Similarly to Proposition \ref{ch3:prop_eval-on-proj}, the same proof works under our assumption on the ring $R$.
\end{proof}

\smallskip
Proposition \ref{ch3:prop_eval-on-Verma-full} has the following truncated version that can be proved in the same way using the approach of localizations at the prime ideals of height 1. (See, for example, the proof of Proposition \ref{ch3:prop_eval-on-proj}). For each such localization the result becomes clear by \cite[Cor.~3.5]{Fie-cen} and Remark \ref{ch3:rk-small-orbits}.

\smallskip
\begin{prop}
\label{ch3:prop_eval-on-Verma-trucn}
We have an isomorphism
\begin{equation}
\label{ch3:eq_eval-on-Verma-trunc}
^vZ_{\mu,R}\simeq\left\{(z_w)\in\prod_{w\in ^vJ_\mu}R;~z_{w}\equiv z_{s_r w}~\mod~h_r~\forall r\in[0,N-1],w\in {^vJ_\mu}\cap s_r{^vJ_\mu}\right\}
\end{equation}
which maps an element $z\in {^vZ}_{\mu,R}$ to the tuple $(z_w)_{w\in {^vJ_\mu}}$ such that $z$ acts on the Verma module $\Delta^{w(1_\mu)}_R$ by $z_w$.
\qed
\end{prop}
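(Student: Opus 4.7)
The plan is to mimic the strategy of Proposition \ref{ch3:prop_eval-on-proj}: reduce the statement to questions over the localizations $R_\frakp$ at height-one primes $\frakp \subset R$, and then invoke the generic/subgeneric structure theory of Fiebig together with Remark \ref{ch3:rk-small-orbits}.

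First I would show that $^vZ_{\mu,R}$ coincides with $\bigcap_{\frakp \in I(R)} {^vZ}_{\mu,R_\frakp}$ taken inside $^vZ_{\mu,K}$. Exactly as in the proof of Proposition \ref{ch3:prop_eval-on-proj}, one identifies $^vZ_{\mu,R}$ with the center of the endomorphism algebra $^vA_{\mu,R}$ of a minimal projective generator, observes that this algebra is free over $R$ (since $R$ has dimension $\leqslant 2$ and the truncation makes everything finitely generated), and uses that a finitely generated free $R$-module equals the intersection of its localizations at height-one primes. The same holds for the right-hand side of (\ref{ch3:eq_eval-on-Verma-trunc}), because the congruence conditions are closed under such intersections and because each $\Delta^{w(1_\mu)}_R$ is free over $R$. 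Thus it suffices to prove the claim after base change to each $R_\frakp$ and to $K$.

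Next I would handle the field-of-fractions case. Since $R$ is in general position, Remark \ref{ch3:rk-small-orbits}(a) (as transported to the affine setting via $\sim_K$) gives that each block of $^vO_{\mu,K}$ is generic, i.e. contains a single Verma module, and so $^vO_{\mu,K}$ is semisimple. Then $^vZ_{\mu,K} \simeq \prod_{w \in {^vJ_\mu}} K$, and the evaluation map is visibly an isomorphism, with no congruence conditions (they all become trivial over $K$). Then I would treat the localizations $R_\frakp$. By Remark \ref{ch3:rk-small-orbits}(c) each $\sim_{R_\frakp}$-class contains at most two elements, so the category $^vO_{\mu,R_\frakp}$ decomposes into a direct sum of generic blocks (single Verma) and subgeneric blocks (two Vermas, linked by a single reflection $s_r$ whose associated parameter $h_r$ lies in $\frakp$). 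For generic blocks, the center is just $R_\frakp$ and acts by a scalar on the unique Verma. For subgeneric blocks, \cite[Cor.~3.5]{Fie-cen} describes the center explicitly as pairs $(z_w, z_{s_r w}) \in R_\frakp \times R_\frakp$ with $z_w \equiv z_{s_r w} \pmod{h_r}$, and the evaluation map on the two Vermas is an isomorphism onto that subring.

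Finally I would assemble the pieces: the evaluation map $^vZ_{\mu,R_\frakp} \to \prod_{w \in {^vJ_\mu}} R_\frakp$ lands in the subring cut out by the subgeneric congruences $z_w \equiv z_{s_r w} \pmod{h_r}$ for those $r$ with $h_r \in \frakp$, and every other congruence in (\ref{ch3:eq_eval-on-Verma-trunc}) is automatic over $R_\frakp$ because the corresponding $h_r$ is a unit there. Intersecting over $\frakp \in I(R)$ gives exactly the congruences $z_w \equiv z_{s_r w} \pmod{h_r}$ for all $r$, which is the right-hand side of (\ref{ch3:eq_eval-on-Verma-trunc}). The main obstacle, as in \cite[Lem.~5]{Fie-str}, is to check cleanly that the formation of $^vA_{\mu,R}$ (and hence its center) commutes with the relevant localizations, so that the intersection description really holds; this rests on \cite[Prop.~2.4]{Fie-cen} and on the fact that truncation makes the categories of finite type so that projective covers behave well under localization.
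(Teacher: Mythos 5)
Your proposal is correct and follows essentially the same route the paper indicates: reduce to height-one localizations as in Proposition~\ref{ch3:prop_eval-on-proj}, dispose of the generic case over $K$, handle the subgeneric blocks over $R_\frakp$ via \cite[Cor.~3.5]{Fie-cen} together with Remark~\ref{ch3:rk-small-orbits}, and glue by intersecting. The paper itself treats this as a routine variant and only gestures at these ingredients, so your fuller write-out is in the intended spirit.
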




\smallskip
For each $v\in\widehat W$, set $^vJ=\{w\in \widehat W;~w\leqslant v\}$ and
$$
^vZ_{R}\simeq\left\{(z_w)\in\prod_{w\in {^vJ}}R;~z_{w}\equiv z_{s_r w}~\mod~h_{r}~\forall r\in[0,N-1],w\in {^vJ}\cap s_r{^vJ}\right\}.
$$
If $v$ is in $J_{\mu,+}$, then the group $W_\mu$ acts on $^vZ_R$ by $w(z)=z'$ where $z'_x=z_{xw^{-1}}$ for each $x\in {^vJ}$. Note that the algebra $^vZ^{W_\mu}_{R}$ of $W_\mu$-invariant elements in $^vZ_{R}$ is obviously isomorphic to the right hand side in (\ref{ch3:eq_eval-on-Verma-trunc}). Thus Proposition \ref{ch3:prop_eval-on-Verma-trucn} identifies the center $^vZ_{\mu,R}$ of $^vO_{\mu,R}$ with $^vZ^{W_\mu}_{R}$.

\subsection{The action on standard and projetive modules}
As above, we fix $k\in[0,e-1]$ and set $\mu'=\mu-\alpha_k$. 
From now on, we assume that $R$ is as in Assumption 1 with residue field $\bfk=\bbC$.

From now on we also always assume that we have
$W_\mu\subset W_{\mu'}$.
This happens if and only if we have $\mu_k=1$. In this case we have $J_{\mu'}\subset J_\mu$ and $J_{\mu',+}\subset J_{\mu,+}$. From now on we always assume that the element $v$ is in $J_{\mu',+}$ (thus $v$ is also in $J_{\mu,+}$). We have an inclusion of algebras ${^vZ}_{\mu',R}\subset {^vZ}_{\mu,R}$ because ${^vZ}_{\mu',R}\simeq {^vZ}_{R}^{W_{\mu'}}$ and ${^vZ}_{\mu,R}\simeq {^vZ}_{R}^{W_{\mu}}$. Let $\Res\colon \mod(^vZ_{\mu,R})\to \mod(^vZ_{\mu',R})$ and $\Ind\colon \mod(^vZ_{\mu',R})\to \mod(^vZ_{\mu,R})$ be the restriction and the induction functors. We may write $\Res_{\mu}^{\mu'}$ and $\Ind_{\mu'}^{\mu}$ to specify the parameters.

It is easy to see on Verma modules using two lemmas below that the functors $E_k$ and $F_k$ restrict to functors of truncated categories
$$
F_k\colon {^vO}^\Delta_{\mu,R}\to {^vO}^\Delta_{\mu',R},\qquad E_k\colon {^vO}^\Delta_{\mu',R}\to {^vO}^\Delta_{\mu,R}.
$$

\smallskip
\begin{lem}
\label{ch3:lem_Fk-Verma-spcase1}
For each $w\in\widehat W$, we have $F_k(\Delta^{w(1_\mu)}_R)\simeq\Delta^{w(1_{\mu'})}_R$.
\end{lem}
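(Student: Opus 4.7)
The plan is to extract the conclusion from the explicit action of $F_k$ on classes of Verma modules together with the fact that $F_k$ preserves $\Delta$-filtered modules. The key observation is that the standing hypothesis $W_\mu\subset W_{\mu'}$, which is equivalent to $\mu_k=1$, forces $F_k(\Delta^{w(1_\mu)}_R)$ to contain a single Verma module in its $\Delta$-filtration, so that the conclusion can be read off directly.

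First I would invoke Proposition~\ref{ch3:prop_functors-on-O-gen}, parts $(d)$ and $(e)$. Under the identification $[O^{\nu,\Delta}_{-e,R}]\simeq\wedge^\nu U_e$, the functor $[F_k]$ corresponds to the Chevalley generator $f_k$, which acts on the basis of $U_e$ by $f_k(u_r)=\delta_{k\equiv r}\,u_{r+1}$. A direct computation in $\wedge^\nu U_e$ then yields
\[
[F_k(\Delta^\lambda_R)] = \sum_{r\in[1,N],\ \lambda_r\equiv k\,\mod\,e}[\Delta^{\lambda+\epsilon_r}_R]
\]
for any $\lambda\in P^\nu$. Since $F_k$ preserves $\Delta$-filtered objects, and in a highest weight category the Grothendieck class of a $\Delta$-filtered module determines the multiplicities of its $\Delta$-filtration, the module $F_k(\Delta^\lambda_R)$ admits a $\Delta$-filtration by the listed Verma modules, each appearing once. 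By the explicit form of $1_\mu$ in Remark~\ref{ch3:rk_choice-of-1_mu}, the condition $\mu_k=1$ means that $1_\mu$ has exactly one coordinate equal to $k$, at some position $p$; and since the $\widehat W$-action on $P$ described in Section~\ref{ch3:subs_ext-aff} manifestly preserves the multiset of coordinate residues modulo $e$, the weight $w(1_\mu)$ has exactly one coordinate, at some position $r$, congruent to $k$ modulo $e$. The sum thus collapses to a single term, giving $F_k(\Delta^{w(1_\mu)}_R)\simeq\Delta^{w(1_\mu)+\epsilon_r}_R$.

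It then remains to identify $w(1_\mu)+\epsilon_r$ with $w(1_{\mu'})$. By the definition of the antidominant representatives, one checks that $1_{\mu'}=1_\mu+\epsilon_p$: replacing the unique $k$ at position $p$ by $k+1$ produces an $e$-antidominant sequence of the required shape for $1_{\mu'}$, and the same holds in the shifted convention for $k=0$. The $\widehat W$-action on $P$ is affine with linear part in the finite symmetric group $W=\frakS_N$, which permutes the $\epsilon_i$; hence $w(1_\mu+\epsilon_p)=w(1_\mu)+\epsilon_{w'(p)}$, where $w'\in W$ is the image of $w$. Since $w'(p)$ is by construction the position where the $k$ of $1_\mu$ is transported by $w$, one has $w'(p)=r$, concluding the identification. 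The main potential obstacle is the upgrade from the Grothendieck-group calculation to an actual module isomorphism, but this follows cleanly from the $\Delta$-filteredness of $F_k$ combined with the uniqueness of $\Delta$-multiplicities in a highest weight category.
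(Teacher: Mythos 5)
Your proof is correct and follows essentially the same route as the paper's: observe from $\mu_k=1$ that $w(1_\mu)$ has a unique coordinate congruent to $k$ mod $e$, compute the Grothendieck class of $F_k(\Delta^{w(1_\mu)}_R)$ via Proposition \ref{ch3:prop_functors-on-O-gen} $(e)$, upgrade to a module isomorphism using $\Delta$-filteredness, and identify the resulting highest weight. The only difference is that you spell out the final step $w(1_\mu)+\epsilon_r = w(1_{\mu'})$ (via $1_{\mu'}=1_\mu+\epsilon_p$ and the observation that the linear part of the $\widehat W$-action lies in $\frakS_N$), which the paper simply asserts.
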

\begin{proof}
Since $\mu_k=1$, the weight $w(1_\mu)\in P$ has a unique coordinate containing an element congruent to $k$ modulo $e$.  Let $r\in[1,N]$ be the position number of this coordinate.
By Proposition \ref{ch3:prop_functors-on-O-gen} $(e)$, we have $[F_k(\Delta^{w(1_\mu)}_R)]=[\Delta^{w(1_\mu)+\epsilon_r}_R]$. The equality of classes in the Grothendieck group implies that we have an isomorphism of modules $F_k(\Delta^{w(1_\mu)}_R)\simeq \Delta^{w(1_{\mu})+\epsilon_r}_R$. Finally, since $w(1_\mu)+\epsilon_r=w(1_{\mu'})$, we get $F_k(\Delta^{w(1_\mu)}_R)\simeq\Delta^{w(1_{\mu'})}_R$.
\end{proof}

\smallskip
\begin{lem}
\label{ch3:lem_Ek-Verma-spcase1}
For each $w\in\widehat W$, we have $[E_k(\Delta^{w(1_{\mu'})}_R)]=\sum_{z\in W_{\mu'}/W_\mu}[\Delta^{wz(1_{\mu})}_R]$.
\end{lem}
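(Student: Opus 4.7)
The plan is to compute $[E_k(\Delta^{w(1_{\mu'})}_R)]$ directly in the Grothendieck group via the Fock-space identification of Proposition \ref{ch3:prop_functors-on-O-gen}$(e)$, and then to match the resulting sum of Verma classes with $\sum_{z\in W_{\mu'}/W_\mu}[\Delta^{wz(1_\mu)}_R]$ through a combinatorial bijection. Since we are in the non-parabolic case $\nu=(1,\ldots,1)$, Proposition \ref{ch3:prop_functors-on-O-gen}$(e)$ identifies $[O^{\Delta}_{-e,R}]$ with $U_e^{\otimes N}$, sending $[\Delta^\lambda_R]$ to $u_{\lambda_1}\otimes\cdots\otimes u_{\lambda_N}$ and $E_k$ to the Chevalley generator $e_k$ of $\widetilde{\mathfrak{sl}}_e$. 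Combining $e_k(u_r)=\delta_{k\equiv r-1}\,u_{r-1}$ with the standard Hopf comultiplication gives
$$
[E_k(\Delta^\lambda_R)] = \sum_{r\,:\,\lambda_r\equiv k+1\bmod e} [\Delta^{\lambda-\epsilon_r}_R],
$$
so specializing to $\lambda=w(1_{\mu'})$ reduces the lemma to an identity of multisets of weights.

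To set up the bijection, let $r_0$ denote the unique position at which $1_\mu$ and $1_{\mu'}$ differ (equivalently, the unique position of value $\equiv k\bmod e$ in $1_\mu$), so that $1_\mu=1_{\mu'}-\epsilon_{r_0}$, and let $B=\{i:1_{\mu'}(i)\equiv k+1 \bmod e\}$; then $r_0\in B$ and $|B|=\mu_{k+1}+1=|W_{\mu'}/W_\mu|$. Each $u\in\widehat W$ decomposes as $u=\sigma_u\circ T_{v_u}$, with $\sigma_u$ a permutation of $[1,N]$ (possibly combined with a cyclic shift coming from $\pi$) and $T_{v_u}$ translation by some $v_u\in e\bbZ^N$; this decomposition gives $u(\lambda+\delta)=u(\lambda)+\sigma_u(\delta)$ for any small perturbation $\delta$, and in particular shows that $u$ preserves all residues mod $e$. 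Applying these identities to $1_\mu=1_{\mu'}-\epsilon_{r_0}$ and using $z(1_{\mu'})=1_{\mu'}$ for $z\in W_{\mu'}$ yields
$$
wz(1_\mu)=w(1_{\mu'})-\epsilon_{\sigma_w\sigma_z(r_0)}.
$$

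The map $zW_\mu\mapsto\sigma_w\sigma_z(r_0)$ is then a bijection from $W_{\mu'}/W_\mu$ onto $\sigma_w(B)$, which equals $\{r:w(1_{\mu'})_r\equiv k+1 \bmod e\}$ precisely because $w$ preserves residues mod $e$. Injectivity is immediate from the identification $W_\mu=\{z\in W_{\mu'}:\sigma_z(r_0)=r_0\}$; the main subtle point is surjectivity, which requires $W_{\mu'}$ to act transitively on $B$ via $\sigma_{(\cdot)}$. This is a standard fact about stabilizers in affine Weyl groups: $W_{\mu'}$ is the parabolic subgroup generated by those simple reflections of $\widetilde W$ that fix $1_{\mu'}$, and these reflections transitively permute each block of positions of equal value in $1_{\mu'}$, even allowing for the presence of the affine reflection $s_0$ in $W_{\mu'}$. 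Combining this bijection with the Fock-space formula above yields the desired equality of classes.
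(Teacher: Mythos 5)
Your proof follows essentially the same route as the paper's: apply Proposition \ref{ch3:prop_functors-on-O-gen}$(e)$ to express $[E_k(\Delta^{w(1_{\mu'})}_R)]$ as a sum over positions $r$ with $w(1_{\mu'})_r\equiv k+1\pmod e$, then exhibit a bijection from $W_{\mu'}/W_\mu$ to that set of positions. You simply spell out the bijection (well-definedness, injectivity via $W_\mu=\{z\in W_{\mu'}:\sigma_z(r_0)=r_0\}$, and surjectivity via transitivity of $W_{\mu'}$ on $B$) in more detail than the paper, which asserts it directly.
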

\begin{proof}
By  Proposition \ref{ch3:prop_functors-on-O-gen} $(e)$, we have
\begin{equation}
\label{ch3:eq_Ek-on-Vermas-1}
[E_{k}(\Delta^{w(1_{\mu'})}_R)]=\sum_{r} [\Delta^{w(1_{\mu'})-\epsilon_r}_R],
\end{equation}
where the sum in taken by all indices $r\in[1,N]$ such that the $r$th coordinate of $w(1_\mu)$ is congruent to $k+1$ modulo $e$. For each such $r$ we have $w(1_{\mu'})-\epsilon_r=wz(1_\mu)$ for a unique element $z\in W_{\mu'}/W_{\mu}$. Moreover, the obtained map $r\mapsto z$ is a bijection from the set of possible indices $r$ to $W_{\mu'}/W_{\mu}$. Thus $(\ref{ch3:eq_Ek-on-Vermas-1})$ can be rewritten as
$$
[E_{k}(\Delta^{w(1_{\mu'})}_R)]=\sum_{z\in W_{\mu'}/W_{\mu}} [\Delta^{wz(1_{\mu})}_R].
$$
\end{proof}


\smallskip
\begin{lem} 
We have $E_{k}(^vP^{\mu'}_R)\simeq {^vP}^{\mu}_R$.
\end{lem}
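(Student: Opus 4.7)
The plan is to prove the isomorphism in three steps: show that $E_k({^vP}^{\mu'}_R)$ is projective in ${^vO}_{\mu,R}$, compute its class in the Grothendieck group, and then invoke the fact that projective modules in a highest weight category are determined up to isomorphism by their class.

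For projectivity, I would use that the excerpt has already observed that $E_k$ and $F_k$ restrict to functors between the truncated categories. Since $(E_k, F_k)$ is a biadjoint pair of exact functors on $O$ by Proposition \ref{ch3:prop_functors-on-O-gen}, and $^vO$ is a full subcategory, these adjunctions descend to the truncated setting. Hence for each $M \in {^vO}_{\mu,R}$ we have
$$\Hom_{^vO_{\mu,R}}(E_k({^vP}^{\mu'}_R), M) \simeq \Hom_{^vO_{\mu',R}}({^vP}^{\mu'}_R, F_k(M)).$$
Since $F_k$ is exact and ${^vP}^{\mu'}_R$ is projective in ${^vO}_{\mu',R}$, the right-hand side is exact in $M$, proving that $E_k({^vP}^{\mu'}_R)$ is projective in ${^vO}_{\mu,R}$.

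For the Grothendieck class, I would use Lemma \ref{ch3:lem_antid-proj}(a), which gives a $\Delta$-filtration of ${^vP}^{\mu'}_R$ in which every Verma $\Delta^{w(1_{\mu'})}_R$ with $w \in {^v}J_{\mu'}$ appears exactly once. Applying the exact functor $E_k$ and using Lemma \ref{ch3:lem_Ek-Verma-spcase1},
$$[E_k({^vP}^{\mu'}_R)] = \sum_{w \in {^v}J_{\mu'}} \sum_{z \in W_{\mu'}/W_\mu} [\Delta^{wz(1_\mu)}_R].$$
The key combinatorial claim is that the map $\Phi \colon {^v}J_{\mu'} \times (W_{\mu'}/W_\mu) \to {^v}J_\mu$, sending $(w, z)$ to the shortest representative of $wzW_\mu$, is a bijection. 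Bijectivity without the truncation is standard coset factorization for $W_\mu \subset W_{\mu'}$. Compatibility with the Bruhat truncation $\leq v$ rests on the choice that $v$ is the \emph{longest} representative in $vW_{\mu'}$, hence also the longest in $vW_\mu$: this forces $wz \leq v$ whenever $w \leq v$ and $z \in W_{\mu'}$, and conversely any $w' \in {^v}J_\mu$ lies in a coset $w'W_{\mu'}$ whose shortest representative is still $\leq v$. Via this bijection, together with Lemma \ref{ch3:lem_antid-proj}(a) applied to ${^vP}^\mu_R$, the computation above becomes $[E_k({^vP}^{\mu'}_R)] = \sum_{w' \in {^v}J_\mu} [\Delta^{w'(1_\mu)}_R] = [{^vP}^\mu_R]$.

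Finally, since the classes of the indecomposable projectives form a basis of the Grothendieck group of the highest weight category ${^vO}_{\mu,R}$ (dual to the basis of simple classes via BGG reciprocity), two projective modules with the same class are isomorphic. Applying this to $E_k({^vP}^{\mu'}_R)$ and ${^vP}^\mu_R$ yields the result. I expect the main obstacle to be the combinatorial verification of bijectivity of $\Phi$ and, in particular, its compatibility with the truncation $\leq v$; everything else is a formal consequence of biadjointness, exactness, and the highest weight structure.
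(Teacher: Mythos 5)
Your proposal follows the same route as the paper: compute the Grothendieck class of $E_k({}^vP^{\mu'}_R)$ by combining the $\Delta$-filtration of ${}^vP^{\mu'}_R$ from Lemma \ref{ch3:lem_antid-proj}(a) with the formula for $E_k$ on Verma modules from Lemma \ref{ch3:lem_Ek-Verma-spcase1}, identify it with $[{}^vP^\mu_R]$, and conclude by projectivity. You simply spell out two points the paper leaves implicit --- that $E_k({}^vP^{\mu'}_R)$ is projective (via the adjunction with $F_k$) and the coset bijection $({}^vJ_{\mu'}) \times (W_{\mu'}/W_\mu) \to {}^vJ_\mu$ together with its compatibility with the Bruhat truncation --- and both are correct.
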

\begin{proof}[Proof]
By Lemma \ref{ch3:lem_antid-proj} $(a)$, the class $[^vP^{\mu}_R]$ of $^vP^{\mu}_R$ in the Grothendieck group of $^vO_{\mu,R}^\Delta$ is the sum of all classes of Verma modules of the category $^vO^{\Delta}_{\mu,R}$ and similarly for $[^vP^{\mu'}_R]$. Taking the sum in the equality in Lemma \ref{ch3:lem_Ek-Verma-spcase1} over all $w\in {^vJ_{\mu'}}$, we get $[E_{k}(^vP^{\mu'}_R)]=[^vP^{\mu}_R]$. Finally, this yields an isomorphism $E_{k}(^vP^{\mu'}_R)\simeq {^vP}^{\mu}_R$ because the modules $E_{k}(^vP^{\mu'}_R)$ and $^vP^{\mu}_R$ are projective.
\end{proof}

\smallskip
Fix an isomorphism $E_{k}(^vP^{\mu'}_R)\simeq {^vP}^{\mu}_R$ as above. Then by functoriality it yields an algebra homomorphism $\End(^vP^{\mu'}_R)\to \End(^vP^{\mu}_R)$.

\smallskip
\begin{lem}
\label{ch3:lem_diag-cent-proj-mu'-mu}
The following diagram of algebra homomorphisms is commutative
$$
\begin{CD}
\End(^vP^{\mu'}_R)@>>> \End(^vP^{\mu}_R)\\
@AAA @AAA\\
^vZ_{\mu',R}@>>> {^vZ}_{\mu,R},
\end{CD}
$$
where the top horizontal map is as above, the bottom horizontal map is the inclusion and the vertical maps are the isomorphisms from Proposition \ref{ch3:prop_eval-on-proj}.
\end{lem}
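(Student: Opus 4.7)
The plan is to invoke Proposition \ref{ch3:prop_eval-on-proj}, which says the right vertical arrow $ev_\mu\colon {^vZ}_{\mu,R}\to \End({^vP}^\mu_R)$ is an algebra isomorphism, so that every endomorphism of ${^vP}^\mu_R$ is the image under $ev_\mu$ of a unique central element. It then suffices to show that the central element $ev_\mu^{-1}(E_k(ev_{\mu'}(z)))\in {^vZ}_{\mu,R}$ equals $\iota(z)$ for each $z\in {^vZ}_{\mu',R}$. By Proposition \ref{ch3:prop_eval-on-Verma-trucn}, a central element is determined by the tuple of scalars by which it acts on the Verma modules $\Delta^{w(1_\mu)}_R$ for $w\in {^vJ}_\mu$, so this reduces to comparing these scalars.

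First I would compute the scalar induced by $E_k(ev_{\mu'}(z))$ on each Verma subquotient. By Lemma \ref{ch3:lem_antid-proj}$(a)$, the module ${^vP}^{\mu'}_R$ is $\Delta$-filtered with each $\Delta^{x(1_{\mu'})}_R$ ($x\in {^vJ}_{\mu'}$) appearing once, and $ev_{\mu'}(z)$ restricts on this subquotient to multiplication by $z_x\in R$. Since $E_k$ is $R$-linear, exact and preserves $\Delta$-filtered modules (Proposition \ref{ch3:prop_functors-on-O-gen}), the endomorphism $E_k(ev_{\mu'}(z))$ acts on $E_k(\Delta^{x(1_{\mu'})}_R)$ as $z_x\cdot\mathrm{id}$, hence acts by $z_x$ on every subquotient of its $\Delta$-filtration. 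By Lemma \ref{ch3:lem_Ek-Verma-spcase1} the Vermas occurring in $E_k(\Delta^{x(1_{\mu'})}_R)$ are precisely the $\Delta^{xy(1_\mu)}_R$ for $y\in W_{\mu'}/W_\mu$, each with multiplicity one, and under the fixed isomorphism $E_k({^vP}^{\mu'}_R)\simeq {^vP}^\mu_R$ these assemble into the filtration of Lemma \ref{ch3:lem_antid-proj}$(a)$ for $\mu$.

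Next I would match this with $\iota(z)$. For $w\in {^vJ}_\mu$, let $x\in {^vJ}_{\mu'}$ be the shortest representative of the right coset $wW_{\mu'}$; then $\Delta^{w(1_\mu)}_R$ occurs inside $E_k(\Delta^{x(1_{\mu'})}_R)$, and the previous step shows $E_k(ev_{\mu'}(z))$ acts on it by $z_x$. On the other hand, under the identifications ${^vZ}_{\mu,R}\simeq ({^vZ}_R)^{W_\mu}$ and ${^vZ}_{\mu',R}\simeq ({^vZ}_R)^{W_{\mu'}}$ the arrow $\iota$ is just the inclusion of invariants; viewing $z$ as a $W_{\mu'}$-invariant tuple $(z_a)_{a\in {^vJ}}$ gives $\iota(z)_w=z_x$ because $w$ and $x$ lie in the same $W_{\mu'}$-coset, and this matches the scalar computed above.

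The main obstacle is the bookkeeping of coset representatives: verifying that the $\Delta$-filtration of $E_k({^vP}^{\mu'}_R)$ obtained by applying $E_k$ term by term corresponds, under the chosen isomorphism with ${^vP}^\mu_R$, to the filtration in Lemma \ref{ch3:lem_antid-proj}$(a)$ in a manner compatible with the indexing used in Proposition \ref{ch3:prop_eval-on-Verma-trucn}. Once this identification is pinned down, the rest of the argument reduces to the scalar computation above.
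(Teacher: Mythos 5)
Your proposal follows essentially the same approach as the paper's proof: reduce the claim to comparing scalars on the Verma subquotients of a $\Delta$-filtration, apply $E_k$ term by term to a fixed $\Delta$-filtration of $^vP^{\mu'}_R$, use Lemma \ref{ch3:lem_Ek-Verma-spcase1} to identify the resulting Verma constituents of $^vP^{\mu}_R$, and then match the scalar $z_x$ against the inclusion of invariants $({^vZ}_R)^{W_{\mu'}}\subset({^vZ}_R)^{W_\mu}$. The ``obstacle'' you flag at the end is in fact a non-issue: since every endomorphism of $^vP^{\mu}_R$ comes from the center by Proposition \ref{ch3:prop_eval-on-proj}, it acts by a single scalar on any occurrence of $\Delta^{w(1_\mu)}_R$ regardless of the filtration chosen, so reading off that scalar from the $E_k$-induced filtration already pins down the corresponding central element without any further matching of filtrations.
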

\begin{proof}[Proof]
Note that each element in $\End(^vP^{\mu}_R)$ is induced by the center ${^vZ}_{\mu,R}$. In partilucar, each endomorphism of $^vP^{\mu}_R$ preserves each submodule of $^vP^{\mu}_R$. Moreover, by Lemma \ref{ch3:lem_antid-proj} $(a)$, each Verma module in $^vO_{\mu,R}$ is isomorphic to a subquotient of $^vP^{\mu}_R$. Thus, by Proposition \ref{ch3:prop_eval-on-proj} and Proposition \ref{ch3:prop_eval-on-Verma-trucn}, an element of $\End(^vP^{\mu}_R)$ is determined by its action on the subquotients of a $\Delta$-filtration of $^vP^{\mu}_R$.

Fix an element $z=(z_w)$ in $^vZ_{\mu',R}$, see Proposition \ref{ch3:prop_eval-on-Verma-trucn}. Fix also a $\Delta$-filtration of $^vP^{\mu'}_R$. The element $z$ acts on $^vP^{\mu'}_R$ in such a way that it preserves each component of the $\Delta$-filtration and the induced action on the subquotient $\Delta^{w(1_{\mu'})}_R$ of $^vP^{\mu'}_R$ is the multiplication by $z_w$.

For each $w\in \widehat W$, the module $E_{k}(\Delta^{w(1_{\mu'})}_R)$ is $\Delta$-filtered. The subquotients in this $\Delta$-filtration can be described by Lemma \ref{ch3:lem_Ek-Verma-spcase1}. 
Since the functor $E_k$ is exact, the $\Delta$-filtration of $^vP^{\mu'}_R$ induces a $\Delta$-filtration of $^vP^{\mu}_R\simeq E_{k}(^vP^{\mu'}_R)$.  Thus the image of $z$ by
$$
^vZ_{\mu',R}\to\End(^vP^{\mu'}_R)\to\End(^vP^{\mu}_R)
$$
acts on the subquotients of the $\Delta$-filtration of $^vP^{\mu}_R$ in the following way: it acts on the subquotient $\Delta^{w(1_{\mu})}_R$ of $^vP^{\mu}_R$ by the multiplication by $z_w$. On the other hand, the image of $z$ by
$$
^vZ_{\mu',R}\to{^vZ}_{\mu,R}\to\End(^vP^{\mu}_R)
$$
acts on the subquotients in the same way. This proves the statement because an element of $\End(^vP^{\mu}_R)$ is determined by its action on the subquotients of a $\Delta$-filtration of $^vP^{\mu}_R$.

\end{proof}

\subsection{The functor $\bbV$}
Now, we assume that $v$ is an arbitrary elements of $\widehat W$.  
We have a functor
$$
\bbV_{\mu,R}\colon {^vO}_{\mu,R}\to \mod(^vZ_{\mu,R}), \quad M\mapsto \Hom(^vP_R^\mu,M).
$$
Set $^vZ_{\mu}=\bbC\otimes_R{^vZ}_{\mu,R}$ and $^vZ=\bbC\otimes_R{^vZ}_{R}$. By \cite[Prop.~2.6]{Fie-cen} we have $\bbC\otimes_R{^vP}^\mu_{R}={^vP}^\mu$. Next, \cite[Prop.~2.7]{Fie-cen} yields an algebra isomorphism $^vZ_{\mu}\simeq \End(^vP_{\mu})$. Now, consider the functor
$$
\bbV_{\mu}\colon {^vO}_{\mu}\to \mod(^vZ_{\mu}),\quad M\mapsto \Hom(^vP^\mu,M).
$$

A Koszul grading on the category $^vO_{\mu}$ is constructed in \cite{SVV}. Let us denote by $^v\widetilde O_{\mu}$ the graded version of this category.

The functor $\bbV$ above has following properties.

\smallskip
\begin{prop}
\label{ch3:prop_Vk}
$(a)$ The functor $\bbV_{\mu,R}$ is fully faithful on $^vO_{\mu,R}^\Delta$.

$(b)$ The functor $\bbV_{\mu}$ is fully faithful on projective objects in $O_{\mu}$.

$(c)$ The functor $\bbV_{\mu}$ admits a graded lift $\widetilde\bbV_{\mu}\colon {^v\widetilde O}_{\mu}\to \grmod(^vZ_{\mu})$.
\end{prop}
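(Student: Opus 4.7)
The plan is to prove the three parts in order, with (b) deduced from (a) by base change to $\bbC$ and (c) obtained by equipping $^vP^\mu$ with a graded lift in the Koszul category of \cite{SVV}.

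For (a), I would follow Fiebig's localization-at-height-one strategy. Proposition \ref{ch3:prop_eval-on-proj} identifies $^vZ_{\mu,R}$ with $\End(^vP^\mu_R)$, so $\bbV_{\mu,R}$ is tautologically fully faithful on finite direct sums of $^vP^\mu_R$. To extend this to all of $^vO_{\mu,R}^\Delta$, localize at a height-$1$ prime $\frakp\subset R$; by Remark \ref{ch3:rk-small-orbits} $(c)$ the category $^vO_{\mu,R_\frakp}$ splits into blocks containing at most two Verma modules, and by Lemma \ref{ch3:lem_antid-proj} $(b)$ the module $R_\frakp\otimes_R{^vP}^\mu_R$ splits compatibly as a direct sum of antidominant projectives in these blocks. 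In the generic and subgeneric cases the fully faithfulness of $\bbV$ on $\Delta$-filtered objects is checked directly using the structure described in \cite[Sec.~3.1]{Fie-str}. One then globalizes: both source and target of
$$
\Hom_{^vO_{\mu,R}}(M,N)\to\Hom_{^vZ_{\mu,R}}(\bbV_{\mu,R}M,\bbV_{\mu,R}N)
$$
are finitely generated torsion-free $R$-modules for $M,N\in{^vO}_{\mu,R}^\Delta$ (using Lemma \ref{ch3:lem_antid-proj} $(a)$ to reduce along Verma filtrations), so agreement at all height-$1$ localizations forces a global isomorphism, since $R$ has dimension $\leqslant 2$ by Assumption 1.

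For (b), apply (a) to lifts: each projective $P$ in $^vO_\mu$ admits a projective lift $P_R\in{^vO}_{\mu,R}^\Delta$, and $\bbC\otimes_R{^vP}^\mu_R\simeq {^vP}^\mu$ by \cite[Prop.~2.6]{Fie-cen}. Since $\Hom$-spaces between $\Delta$-filtered modules over $R$ commute with the specialization $\bbC\otimes_R(-)$, the isomorphism of Hom-spaces from (a) descends to $\bbC$. For (c), the Koszul grading of $^vO_\mu$ from \cite{SVV} provides the graded category $^v\widetilde O_\mu$. The module $^vP^\mu$ is indecomposable (being the projective cover of a simple), so by Lemma \ref{ch3:lem-grad-unique} it admits a graded lift, unique up to shift; fix such a lift. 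Then $^vZ_\mu\simeq \End(^vP^\mu)$ inherits a $\bbZ$-grading, and the assignment
$$
\widetilde\bbV_\mu(M):=\bigoplus_{n\in\bbZ}\Hom_{^v\widetilde O_\mu}({^vP}^\mu\langle n\rangle,M)
$$
gives a graded lift of $\bbV_\mu$; finite generation of $\widetilde\bbV_\mu(M)$ as a graded $^vZ_\mu$-module reduces to finite generation of the underlying ungraded module $\bbV_\mu(M)$.

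The main obstacle I anticipate is the gluing step in (a): one must carefully verify torsion-freeness of the relevant $\Hom$-spaces over $R$ so that local isomorphisms globalize, and confirm that the natural transformation provided by $\bbV_{\mu,R}$ is compatible with the non-canonical block decompositions that appear after localization at each height-$1$ prime. Once this is settled, the remaining parts (b) and (c) are essentially formal.
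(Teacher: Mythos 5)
The paper's proof of this proposition is by citation only: part (a) is \cite[Prop.~2(1)]{Fie-str}, part (b) is \cite[Prop.~4.50(b)]{SVV}, and part (c) is extracted from the proof of \cite[Lem.~5.10]{SVV}. Your argument unpacks what those references do rather than simply citing them, and in particular your localization-at-height-one strategy for (a) is indeed the shape of Fiebig's argument. Two places need repair, though.

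For part (c), you write that $^vP^\mu$ ``admits a graded lift, unique up to shift'' by Lemma \ref{ch3:lem-grad-unique}. That lemma only asserts \emph{uniqueness} of a graded lift for an indecomposable module, not \emph{existence}. Existence has to come from elsewhere: since $^v\widetilde O_\mu\simeq\grmod({^vA}_\mu)$ with $^vA_\mu$ the Koszul endomorphism algebra, the simple $L^{\pi^n(1_\mu)}$ has a graded lift (concentrated in a single degree), and its projective cover in $\grmod({^vA}_\mu)$ supplies a graded lift of $^vP^\mu$. With this adjustment the rest of (c) — inducing a grading on $^vZ_\mu\simeq\End(^vP^\mu)$ and setting $\widetilde\bbV_\mu(M)=\bigoplus_n\Hom({^vP^\mu}\langle n\rangle,M)$ — is correct and is essentially what \cite{SVV} do.

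The more serious issue is your deduction of (b) from (a) by base change. You need the map
$$
\bbC\otimes_R\Hom_{^vZ_{\mu,R}}\bigl(\bbV_{\mu,R}P_{1,R},\bbV_{\mu,R}P_{2,R}\bigr)\;\longrightarrow\;\Hom_{^vZ_{\mu}}\bigl(\bbV_{\mu}P_{1},\bbV_{\mu}P_{2}\bigr)
$$
to be an isomorphism. Injectivity is fine (the left side is $R$-torsion-free), but surjectivity is not automatic: $\Hom$ over $^vZ_{\mu,R}$ commutes with base change when the source module is projective over $^vZ_{\mu,R}$, and there is no a priori reason for $\bbV_{\mu,R}(P_{1,R})$ to be a projective $^vZ_{\mu,R}$-module. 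You do get that the map $\Hom_{^vO_\mu}(P_1,P_2)\to\Hom_{^vZ_\mu}(\bbV_\mu P_1,\bbV_\mu P_2)$ is injective from the commutative square, but fullness (i.e.\ surjectivity) requires an extra input. The reference \cite[Prop.~4.50(b)]{SVV} works directly over $\bbC$ (using the geometric description of $^vZ_\mu$ and the structure of the images $\bbV_\mu(P_i)$ as modules over the cohomology ring); a pure base-change argument from (a) does not close this surjectivity gap as written.
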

\begin{proof}[Proof]
Part $(a)$ is \cite[Prop.~2]{Fie-str} (1). Part $(b)$ is \cite[Prop.~4.50]{SVV} $(b)$. Part $(c)$ is given in the proof of \cite[Lem.~5.10]{SVV}.
\end{proof}

\subsection{The cohomology of Schubert varieties}

All cohomology groups in this section have coefficients in $\bbC$.

Set $G=GL_N$. Let $B\subset G(\bbC((t)))$ be the standard Borel subgroup. Let $P_\mu\subset G(\bbC((t)))$ be the parabolic subgroup with Lie algebra $\widehat\bfp_{\mu}$.
Let $X_\mu$ be the partial affine flag ind-scheme $G(\bbC((t)))/P_\mu$. The affine Bruhat cells in $X_\mu$ are indexed by $J_\mu$. For $w\in J_\mu$ we denote by $X_{\mu,w}$ (resp. $\overline X_{\mu,w}$) the corresponding finite dimensional affine Bruhat cell (resp. Schubert variety). Note that we have $X_{\mu,w}\simeq \bbC^{\ell(w)}$. The following statement is proved in \cite[Prop.~4.43~(a)]{SVV}.

\smallskip
\begin{lem}
Assume $v\in J_\mu\cap \widetilde W$. There is an isomorphism of graded algebras between $^vZ_{\mu}$ and the cohomology $H^*(\overline X_{\mu,v})$.
\qed
\end{lem}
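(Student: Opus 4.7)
The plan is to identify both sides of the claim with the same combinatorial moment-graph-type model. On the algebraic side, Proposition~\ref{ch3:prop_eval-on-Verma-trucn} together with the identification ${}^vZ_{\mu,R}\simeq{}^vZ_R^{W_\mu}$ yields an explicit presentation of the deformed center as the subring of $\prod_{w\in{}^vJ_\mu}R$ cut out by the congruences $z_w\equiv z_{s_rw}\bmod h_r$. Base-changing along $R\to\bbC$ provides an analogous presentation of $^vZ_\mu$, in which the linear parts of the $h_r$ (viewed in the cotangent space of $R$ at its residue field) play the role of the defining divisibility conditions.

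On the geometric side, the hypothesis $v\in\widetilde W$ places $\overline X_{\mu,v}$ in the identity component of $X_\mu$, so the standard Goresky--Kottwitz--MacPherson / Kostant--Kumar formalism for partial affine flag varieties applies. It realizes the torus-equivariant cohomology $H^*_T(\overline X_{\mu,v})$ as the subring of $\prod_{w\in{}^vJ_\mu}\mathrm{Sym}(\frakt^*)$ consisting of tuples $(f_w)$ satisfying $f_w\equiv f_{s_rw}\bmod\alpha_r$ whenever $s_rw\in{}^vJ_\mu$. Equivariant formality of $\overline X_{\mu,v}$ (which is paved by affine cells) then gives $H^*(\overline X_{\mu,v})=\bbC\otimes_{\mathrm{Sym}(\frakt^*)}H^*_T(\overline X_{\mu,v})$, so that reducing modulo the augmentation ideal preserves the moment-graph pattern.

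The key step is to match parameters: the quantities $h_0=\tau_l-\tau_1-\kappa+e$ and $h_r=\tau_{r+1}-\tau_r$ for $r\in[1,l-1]$ must be identified with the simple affine roots of $\widetilde W$ under the natural isomorphism between the cotangent space of the deformation ring at its closed point and $\frakt^*$. Once this dictionary is fixed, the two moment-graph presentations become literally the same ring, and restriction to $W_\mu$-invariants on both sides matches the passage from the full flag to the partial flag $X_\mu$; this yields the desired isomorphism of algebras.

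The main technical obstacle is the graded compatibility: the grading on $^vZ_\mu$ inherited from the Koszul grading on $^vO_\mu$ through the graded lift $\widetilde\bbV_\mu$ of Proposition~\ref{ch3:prop_Vk}~$(c)$ must correspond to the cohomological grading on $H^*(\overline X_{\mu,v})$, with each $h_r$ and each $\alpha_r$ placed in degree two. This requires careful bookkeeping of the degrees assigned to the deformation parameters $\tau_r,\kappa-e$, but is essentially built into the Soergel-theoretic framework of \cite[Sec.~4]{SVV}, from which the statement is quoted.
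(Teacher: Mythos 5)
Your overall strategy — realize both sides as structure algebras of moment graphs and match them after specialization — is the right philosophy and is close in spirit to what the cited reference \cite[Prop.~4.43]{SVV} does, so the approach is not misguided. However, there are at least two genuine gaps. First, the GKM presentation of $H^*_T(\overline X_{\mu,v})$ that you state is wrong: equivariant cohomology of a Schubert variety is the subring of $\prod_w\mathrm{Sym}(\frakt^*)$ cut out by congruences $f_w\equiv f_{s_\beta w}\bmod\beta$ for \emph{every} reflection $s_\beta$, not just the simple ones. Imposing only the simple-root congruences gives a strictly larger ring: already for $SL_3/B$ the element supported at $w_0$ with value $\alpha_1\alpha_2$ satisfies all simple-root congruences but is not in $H^*_T$, since its degree $(=4)$ is smaller than $\ell(w_0)=6$ and the only classes supported at $w_0$ are $\mathrm{Sym}(\frakt^*)$-multiples of $\sigma_{w_0}$. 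So the presentation you want to match against Proposition~\ref{ch3:prop_eval-on-Verma-trucn} is not a theorem.

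Second, the proposed ``dictionary'' cannot work as stated. You ask for a ``natural isomorphism between the cotangent space of the deformation ring at its residue field and $\frakt^*$,'' but the deformation ring $R$ is local analytic of dimension $\leqslant 2$, so its cotangent space has dimension at most $2$, while $\frakt^*$ has dimension on the order of $N$. The linear parts $\bar h_0,\dots,\bar h_{N-1}$ are therefore necessarily linearly dependent in $\frakm_R/\frakm_R^2$ once $N>2$, unlike the simple affine roots in $\frakt^*$, so the two moment-graph rings cannot ``become literally the same ring.'' What the argument actually requires is a map $\mathrm{Sym}(\frakt^*)\to R$ sending roots to their $h$-shadows, together with a base-change/flatness argument comparing $\bbC\otimes_R {^vZ}_{\mu,R}$ with $\bbC\otimes_{\mathrm{Sym}(\frakt^*)}H^*_T(\overline X_{\mu,v})$; this is precisely the delicate point, and it is not addressed by asserting that specialization ``preserves the moment-graph pattern'' (it does not: the congruences collapse to trivialities at the closed point, and the specialized ring is governed by generators of ${^vZ}_{\mu,R}$ as an $R$-algebra, not by the congruences themselves). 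Until these two points are repaired the proof does not go through.
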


\smallskip
Now we are going to extend the notions $X_{\mu,w}$ and $\overline X_{\mu,w}$ to an arbitrary $w\in J_\mu$ in order to get an extended version of the previous lemma.

Let $\pi$ be the cyclic shift the of Dynkin diagram of type $A_{N-1}^{(1)}$ that takes the root $\alpha_i$ to the root $\alpha_{i-1}$ for $i\in\bbZ/N\bbZ$. It yields an automorphism $\pi\colon G\to G$. 
Then for each $n\in\bbZ$ we have a parabolic subgroup $\pi^n(P_\mu)\subset G(\bbC((t)))$. Recall that the symbol $\pi$ also denotes an element of $\widehat W$, see Section \ref{ch3:subs_ext-aff}. Let $X^n_\mu$ be the partial affine flag ind-scheme defined in the same way as $X_\mu$ with respect to the parabolic subgroup $\pi^n(P_\mu)\subset G(\bbC((t)))$ instead of $P_\mu$. In particular, we have $X_\mu=X^0_\mu$. Let us use the abbreviation  $\pi^n(W_\mu)$ for the subgroup $\pi^nW_\mu\pi^{-n}$ of $\widetilde W$. Note that the group $\pi^n(W_\mu)$ is the Weyl group of the Levi of $\pi^n(P_\mu)$. The Bruhat cells and the Schubert varieties in $X_\mu^n$ are indexes by the shortest representatives of the cosets in $\widetilde W/\pi^n(W_{\mu})$. For such a representative $w$ let $X_{\mu,w}^n$ (resp. $\overline X_{\mu,w}^n$) be the Bruhat cell (resp. Schubert variety) in $X^n_\mu$.

Assume that $v\in J_\mu$. Then $v$ has a unique decomposition of the form $v=w\pi^n$, such that $w$ is minimal in $w\pi^{n}(W_{\mu})$. Then we set $X_{\mu,v}=X^n_{\mu,w}$ and $\overline X_{\mu,v}=\overline X^n_{\mu,w}$. Note that for $v\in J_\mu$ we have $X_{\mu,v}\simeq \bbC^{\ell(v)}$. 
We get the following generalization of the lemma above.

\smallskip
\begin{lem}
\label{ch3:lem_coh-Schub-extended}
Assume $v\in J_\mu$. There is an isomorphism of graded algebras between $^vZ_{\mu}$ and the cohomology $H^*(\overline X_{\mu,v})$.
\end{lem}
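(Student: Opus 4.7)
The strategy is to reduce to the preceding case $v \in J_\mu \cap \widetilde W$ by transporting both sides under the cyclic Dynkin diagram automorphism $\pi^{-n}$. First, write $v = w\pi^n$ with $w \in \widetilde W$ minimal in $w\pi^n(W_\mu)$, so that $\overline X_{\mu, v} = \overline X^n_{\mu, w}$ by the definition introduced just before the lemma. Since $\pi$ acts on $G$ as a Dynkin diagram automorphism preserving the standard Borel $B$, the automorphism $\pi^{-n}$ of $G(\bbC((t)))$ carries $\pi^n(P_\mu)$ to $P_\mu$ and induces an isomorphism of ind-schemes $X^n_\mu \simeq X_\mu$. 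It sends the Bruhat cell $Bu\pi^n(P_\mu)/\pi^n(P_\mu)$ to $B\pi^{-n}(u)P_\mu/P_\mu$, so $\overline X^n_{\mu,w}$ corresponds to $\overline X_{\mu, v'}$, where $v' := \pi^{-n} w \pi^n \in \widetilde W$. A short length computation, using that $\pi^{\pm n}$ preserves the length function on $\widetilde W$ and permutes its simple reflections, shows $v' \in J_\mu \cap \widetilde W$. The preceding lemma then gives
\[
H^*(\overline X_{\mu, v}) \simeq H^*(\overline X_{\mu, v'}) \simeq {}^{v'}Z_\mu
\]
as graded algebras.

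It remains to exhibit an isomorphism of graded algebras ${}^vZ_\mu \simeq {}^{v'}Z_\mu$. The map
\[
{}^vJ_\mu \longrightarrow {}^{v'}J_\mu, \qquad u = u'\pi^n \longmapsto \pi^{-n} u' \pi^n \quad (u' \in \widetilde W),
\]
is a Bruhat-order-preserving bijection intertwining the simple reflections $s_r \leftrightarrow s_{r-n}$ and the corresponding deformation parameters $h_r \leftrightarrow h_{r-n}$. Inserting this bijection into the combinatorial description of the deformed centers from Proposition \ref{ch3:prop_eval-on-Verma-trucn} yields an $R$-algebra isomorphism ${}^vZ_{\mu, R} \simeq {}^{v'}Z_{\mu, R}$ (modulo the cyclic permutation of the $h_r$). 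Specializing at $R \to \bbC$ gives an isomorphism of $\bbC$-algebras ${}^vZ_\mu \simeq {}^{v'}Z_\mu$. Compatibility with the Koszul gradings follows from the intrinsic character of the Koszul grading on $O_{-e}$ established in \cite{SVV}, together with the uniqueness of graded lifts (Lemma \ref{ch3:lem-grad-unique}).

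Combining the two steps yields the desired graded isomorphism ${}^vZ_\mu \simeq H^*(\overline X_{\mu, v})$. The main obstacle is transporting the Koszul grading through the combinatorial identification in the second paragraph; this is handled by the rigidity of the Koszul grading.
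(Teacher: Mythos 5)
Your strategy — reduce to the previous lemma (which handles $v \in J_\mu \cap \widetilde W$) by conjugating everything with the cyclic automorphism $\pi^{-n}$ — is a genuinely different route from the paper's. The paper does not reduce to the base case at all; instead it observes that $^vO_\mu$ is a Serre subcategory of the block $\calO_{\pi^n(1_\mu)}$, whose stabilizer in $\widetilde W$ is $\pi^n(W_\mu)$ and whose associated partial flag ind-scheme is exactly $X^n_\mu$, and then applies the same citation \cite[Prop.~4.43~(a)]{SVV} directly to that block. Since $\overline X_{\mu,v}$ is \emph{defined} as $\overline X^n_{\mu,w}$, the conclusion follows in one step. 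The paper's approach is shorter and, crucially, hands the graded statement to SVV in a single application rather than requiring a match of gradings across a transport map.

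Your approach can in principle be made to work, but as written there are two gaps. First, the isomorphism $^vZ_{\mu,R}\simeq{}^{v'}Z_{\mu,R}$ you extract from Proposition~\ref{ch3:prop_eval-on-Verma-trucn} is not $R$-linear: the bijection $u'\pi^n\mapsto\pi^{-n}u'\pi^n$ sends the condition modulo $h_r$ to the condition modulo $h_{r}$ attached to $s_{r+n}$, i.e., it is semilinear with respect to the cyclic permutation of the deformation parameters, as you note. Passing to the residue field $\bbC$ kills this twist, but this needs to be said carefully; it is not a formal consequence of ``specializing at $R\to\bbC$'' alone, since in $\bbC$ all $h_r$ vanish and the right-hand side of Proposition~\ref{ch3:prop_eval-on-Verma-trucn} does not directly describe $^vZ_\mu=\bbC\otimes_R{^vZ}_{\mu,R}$ (this is exactly the phenomenon the paper has to work around elsewhere, cf.\ the discussion preceding Lemma~\ref{ch3:lem_Zmu-Zmu'-mod}). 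Second, and more seriously, you conclude that the combinatorially constructed isomorphism $^vZ_\mu\simeq{}^{v'}Z_\mu$ is \emph{graded} by invoking Lemma~\ref{ch3:lem-grad-unique}, but that lemma is about graded lifts of indecomposable modules of finite length, not about gradings of algebras. The right reference for rigidity of Koszul gradings is \cite[Cor.~2.5.2]{BGS}, and even then the argument requires an intermediate step: you must show that the ring isomorphism you built out of the combinatorics of Proposition~\ref{ch3:prop_eval-on-Verma-trucn} agrees with the map on centers induced by a (graded) categorical equivalence $^v\widetilde O_\mu\simeq{}^{v'}\widetilde O_\mu$ coming from the Dynkin automorphism. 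That compatibility is not established in your argument and is the main thing that would need to be supplied.
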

\begin{proof}
Consider the decomposition $v=w\pi^n$ as above.  By definition, the truncated category $^vO_{\mu}$ is a Serre subcategory of $\calO_{\pi^n(1_\mu)}$. It is generated by modules $L^{x\pi^n(1_\mu)}$, where $x\in \widetilde W$ is such that $x\leqslant w$. Note also that the stabilizer of the weight $\pi^n(1_\mu)$ in $\widetilde W$ is $\pi^n(W_\mu)$. Then, by \cite[Prop.~4.43~(a)]{SVV}, we have an isomorphism of graded algebras $^vZ_{\mu}=H^*(\overline X_{\mu,w}^n)$. On the other hand the variety $\overline X_{\mu,v}$ is defined as $\overline X_{\mu,w}^n$.
\end{proof}

\smallskip
Now, assume that $v\in J_{\mu',+}$. Recall that in this case we have an inclusion of algebras ${^vZ}_{\mu',R}\subset {^vZ}_{\mu,R}$ because of the assumption $W_{\mu}\subset W_{\mu'}$. We want to show that after the base change we get an inclusion of algebras ${^v}Z_{\mu'}\subset {^vZ}_{\mu}$. However, this is not obvious because the functor $\bbC\otimes_R\bullet$ is not left exact. But this fact can be justified using geometry. The injectivity of the homomorphism ${^v}Z_{\mu'}\to {^vZ}_{\mu}$ is a consequence of Lemma \ref{ch3:lem_Zmu-Zmu'-mod} below.

Denote by $w_\mu$ the longest elements in $W_{\mu}$. The shortest elements in $vW_{\mu}$ and $vW_{\mu'}$ are respectively $vw_\mu$ and $vw_{\mu'}$. By Lemma \ref{ch3:lem_coh-Schub-extended}, we have algebra isomorphisms ${^vZ}_{\mu}\simeq H^*(\overline X_{\mu,vw_\mu})$ and ${^vZ}_{\mu'}\simeq H^*(\overline X_{\mu',vw_{\mu'}})$. 

The group $\widetilde W$ is a Coxeter group. In particular we have a length function $\ell\colon \widetilde W\to \bbN$. We can extend it to $\widehat W$ be setting $\ell(w\pi^n)=\ell(w)$ for each $n\in\bbZ$ and $w\in\widetilde W$. Now we are ready to prove the following result.

\smallskip
\begin{lem}
\label{ch3:lem_Zmu-Zmu'-mod}
There is the following isomorphism of graded $^vZ_{\mu'}$-modules
$$
^vZ_{\mu}\simeq \bigoplus_{r=0}^{\mu_{k+1}}{^vZ}_{\mu'} \langle 2r\rangle.
$$
\end{lem}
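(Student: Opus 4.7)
The plan is to use the geometric identification ${^vZ}_\mu\simeq H^*(\overline X_{\mu,vw_\mu})$ (and analogously for $\mu'$) from Lemma~\ref{ch3:lem_coh-Schub-extended} and to realize the inclusion ${^vZ}_{\mu'}\hookrightarrow{^vZ}_\mu$ as the pull-back in cohomology along a projection of Schubert varieties. Writing $vw_\mu=w\pi^n$ as in the proof of Lemma~\ref{ch3:lem_coh-Schub-extended}, the hypothesis $W_\mu\subset W_{\mu'}$ translates to the inclusion of parabolic subgroups $\pi^n(P_\mu)\subset\pi^n(P_{\mu'})$ in $G(\bbC((t)))$, and hence to a natural projection $p\colon X^n_\mu\to X^n_{\mu'}$. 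Since $\mu$ and $\mu'$ differ only at positions $k$ and $k+1$ (with $\mu_k=1$ and $\mu'_{k+1}=\mu_{k+1}+1$), the fiber $\pi^n(P_{\mu'})/\pi^n(P_\mu)$ reduces to a finite-dimensional partial flag variety $GL_{\mu_{k+1}+1}/P\simeq\mathrm{Gr}(1,\mu_{k+1}+1)\simeq\bbP^{\mu_{k+1}}$, and $p$ is a Zariski-locally-trivial $\bbP^{\mu_{k+1}}$-bundle.

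Next, I would verify that $p$ restricts to a surjection $\overline X_{\mu,vw_\mu}\to\overline X_{\mu',vw_{\mu'}}$ which remains a Zariski-locally-trivial $\bbP^{\mu_{k+1}}$-bundle; in fact one expects $\overline X_{\mu,vw_\mu}=p^{-1}(\overline X_{\mu',vw_{\mu'}})$. Granting this, the Leray--Hirsch theorem (applicable to projective bundles even over singular bases, because $H^*(\bbP^{\mu_{k+1}})$ is generated in even degrees by the hyperplane class, which extends globally as the first Chern class of a tautological quotient line bundle on $X^n_\mu$) yields an isomorphism of graded $H^*(\overline X_{\mu',vw_{\mu'}})$-modules
$$
H^*(\overline X_{\mu,vw_\mu})\;\simeq\;H^*(\overline X_{\mu',vw_{\mu'}})\otimes_\bbC H^*(\bbP^{\mu_{k+1}})\;\simeq\;\bigoplus_{r=0}^{\mu_{k+1}}H^*(\overline X_{\mu',vw_{\mu'}})\langle 2r\rangle.
$$
Transporting this through Lemma~\ref{ch3:lem_coh-Schub-extended} gives the claimed decomposition, and the fact that the module structure is the natural inclusion ${^vZ}_{\mu'}\subset{^vZ}_\mu$ follows from functoriality of the geometric realization, since $p^*$ corresponds on centers to the inclusion of $W_{\mu'}$-invariants into $W_\mu$-invariants supplied by the tuple description in Proposition~\ref{ch3:prop_eval-on-Verma-trucn}.

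The main obstacle will be the first assertion of the previous paragraph: identifying $\overline X_{\mu,vw_\mu}$ with $p^{-1}(\overline X_{\mu',vw_{\mu'}})$. This is a Coxeter-combinatorial statement. One should show that $vw_\mu=vw_{\mu'}\cdot z_0$, where $z_0$ is the longest element among the shortest right-coset representatives of $W_\mu$ in $W_{\mu'}$, and that this factorization is length-additive. Since the hypothesis $\mu_k=1$ identifies $W_{\mu'}/W_\mu$ with $S_{\mu_{k+1}+1}/S_{\mu_{k+1}}$ (with $S_{\mu_{k+1}}$ embedded as the stabilizer of a fixed element), the shortest right-coset representatives of $W_\mu$ in $W_{\mu'}$ have lengths exactly $0,1,\dots,\mu_{k+1}$. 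The affine Bruhat cells contained in $p^{-1}(X^n_{\mu',vw_{\mu'}})$ are accordingly indexed by the $\mu_{k+1}+1$ elements $vw_{\mu'}\cdot z'$, all of which lie inside $\overline X_{\mu,vw_\mu}$. This simultaneously pins down $\overline X_{\mu,vw_\mu}$ as the correct preimage and, by the Schubert-cell decomposition, independently confirms the grading shifts $\langle 2r\rangle$ for $r=0,\dots,\mu_{k+1}$ arising from Leray--Hirsch.
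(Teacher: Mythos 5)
Your proof is correct, and it starts from the same geometric input as the paper: the identification ${^vZ}_\mu\simeq H^*(\overline X_{\mu,vw_\mu})$ from Lemma~\ref{ch3:lem_coh-Schub-extended}, and the projection $p\colon X^n_\mu\to X^n_{\mu'}$. Where the two arguments differ is in how the module decomposition is extracted. The paper works entirely at the level of affine Bruhat cells: it refines $\overline X_{\mu,vw_\mu}=\coprod_{w\in {^vJ}_\mu}X_{\mu,w}$ via the factorization ${^vJ}_\mu=\{wx :\ w\in{^vJ}_{\mu'},\ x\in J_{\mu'}^\mu\}$, notes that each $X_{\mu,wx}$ is an affine fibration of rank $\ell(x)$ over $X_{\mu',w}$, and reads off the graded shifts from lengths. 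You instead globalize: you argue $\overline X_{\mu,vw_\mu}=p^{-1}(\overline X_{\mu',vw_{\mu'}})$ via the length-additive factorization $vw_\mu=vw_{\mu'}\cdot z_0$ with $\ell(z_0)=\mu_{k+1}$, so that $p$ restricts to a Zariski-locally-trivial $\bbP^{\mu_{k+1}}$-bundle, and then apply Leray--Hirsch. Both routes hinge on exactly the same Coxeter-combinatorial core (which you correctly isolate as the key verification) and produce the shifts $\langle 2r\rangle$, $0\leqslant r\leqslant\mu_{k+1}$, for the same underlying reason. The Leray--Hirsch formulation has the advantage of delivering the free ${^vZ}_{\mu'}$-module structure explicitly, via a distinguished module basis coming from the fiber cohomology; the paper's cell argument makes the graded ranks immediate but leaves the module structure to be inferred from the cell-level fibration. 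In either version one also needs that the isomorphism of Lemma~\ref{ch3:lem_coh-Schub-extended} carries the inclusion ${^vZ}_{\mu'}\subset{^vZ}_\mu$ of Proposition~\ref{ch3:prop_eval-on-Verma-trucn} to $p^*$ in cohomology; you invoke this as functoriality, which is the right thing to say, and it is exactly the same compatibility the paper's proof uses silently.
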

\begin{proof}
Let $J_{\mu'}^\mu$ be the set of shortest representatives of classes in $W_{\mu'}/W_\mu$.
We have the following decomposition into affine cells 
$$
\overline X_{\mu,vw_\mu}=\coprod_{w\in {^{v}J}_\mu}X_{\mu,w}=\coprod_{w\in {^{v}J}_{\mu'}}\coprod_{x\in J_{\mu'}^\mu}X_{\mu,wx}.
$$
This yields
$$
\begin{array}{rcl}
^vZ_{\mu}&\simeq & H^*(\overline X_{\mu,vw_\mu})\\
&\simeq&\bigoplus_{w\in {^vJ}_\mu}H^*(X_{\mu,w})\langle 2\ell(vw_\mu)-2\ell(w)\rangle\\
&\simeq& \bigoplus_{w\in {^vJ}_{\mu'}}\bigoplus_{x\in J_{\mu'}^\mu}H^*(X_{\mu',wx})\langle 2\ell(vw_\mu)-2\ell(w)-2\ell(x)\rangle.
\end{array}
$$
We also have
$\overline X_{\mu,v}=\coprod_{w\in {^vJ}_\mu} X_{\mu,w}$. 
This implies
$$
\begin{array}{rcl}
^vZ_{\mu'}&\simeq& H^*(\overline X_{\mu',vw_{\mu'}})\\
&\simeq& \bigoplus_{w\in {^vJ}_{\mu'}}H^*(X_{\mu',w})\langle 2\ell(vw_{\mu'})-2\ell(w)\rangle
\end{array}
$$

Note that we have $\ell(w_{\mu'})-\ell(w_\mu)=\mu_{k+1}$. Moreover, for each $w\in {^vJ}_{\mu'}$ and $x\in J_{\mu'}^\mu$ the variety $X_{\mu,wx}$ is an affine fibration over $X_{\mu',w}$. This implies
$$
^vZ_{\mu}\simeq \bigoplus_{x\in J_{\mu'}^\mu}{^vZ}_{\mu'}\langle 2\ell(vw_\mu)-2\ell(vw_{\mu'})-2\ell(x) \rangle=\bigoplus_{r=0}^{\mu_{k+1}}{^vZ}_{\mu'}\langle 2r \rangle.
$$
\end{proof}

\smallskip
We will write $\Ind$ and $\Res$ for the induction and restriction functors
$$
\Ind\colon \mod(^vZ_{\mu'})\to\mod(^vZ_{\mu}),\qquad \Res\colon \mod(^vZ_{\mu})\to\mod(^vZ_{\mu'}).
$$
We fix the graded lifts of $\widetilde\Res$ and $\widetilde\Ind$ of the functors $\Res$ and $\Ind$ in the following way
$$
\widetilde\Res(M)=M\langle -\mu_{k+1} \rangle,\qquad\widetilde\Ind(M)={^vZ}_{\mu}\otimes_{{^vZ}_{\mu'}} M.
$$

Now, Lemma \ref{ch3:lem_Zmu-Zmu'-mod} implies the following.

\smallskip
\begin{coro}
\label{ch3:coro_Res-Ind-adj}
$(a)$ The pair of functors $(\Res,\Ind)$ is biadjoint.

$(b)$ The pairs of functors $(\widetilde\Ind,\widetilde\Res\langle{\mu_{k+1}}\rangle)$ and $(\widetilde\Res,\widetilde\Ind\langle{-\mu_{k+1}}\rangle)$ are adjoint.

$(c)$
$$
\widetilde\Res\circ\widetilde\Ind =\Id^{\oplus [\mu_{k+1}+1]_q}:= \bigoplus_{r=0}^{\mu_{k+1}}\Id\langle{2r-\mu_{k+1}}\rangle,
$$
where $\Id$ is the identity endofunctor of the category $\grmod(Z_{\mu'})$.
\qed
\end{coro}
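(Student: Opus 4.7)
The plan is to derive all three parts from the explicit decomposition in Lemma \ref{ch3:lem_Zmu-Zmu'-mod}, which exhibits $^vZ_\mu$ as a graded free $^vZ_{\mu'}$-module of rank $\mu_{k+1}+1$ with basis concentrated in degrees $0,2,\dots,2\mu_{k+1}$.

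Part (c) is a direct substitution: by the definitions of $\widetilde\Res$ and $\widetilde\Ind$, we have $\widetilde\Res\circ\widetilde\Ind(M)=({^vZ}_\mu\otimes_{^vZ_{\mu'}}M)\langle-\mu_{k+1}\rangle$, and plugging in Lemma \ref{ch3:lem_Zmu-Zmu'-mod} term-by-term produces $\bigoplus_{r=0}^{\mu_{k+1}}M\langle 2r-\mu_{k+1}\rangle$, which is exactly $\Id^{\oplus[\mu_{k+1}+1]_q}(M)$.

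For part (a), the adjunction $(\Ind,\Res)$ is the standard tensor--Hom adjunction. For the reverse adjunction $(\Res,\Ind)$ one must identify $\Ind={^vZ_\mu}\otimes_{^vZ_{\mu'}}(-)$ with the coinduction functor $\Hom_{^vZ_{\mu'}}({^vZ_\mu},-)$. This identification requires a Frobenius structure on $^vZ_{\mu'}\subset{^vZ_\mu}$, which I would extract from the geometric realization of Lemma \ref{ch3:lem_coh-Schub-extended}: both algebras are cohomology rings of smooth projective Schubert varieties, and the natural fibration $\overline X_{\mu,vw_\mu}\to\overline X_{\mu',vw_{\mu'}}$ (whose fibers are partial flag varieties) supplies a fiberwise Poincar\'e-duality trace form that is $^vZ_{\mu'}$-bilinear and non-degenerate.

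Part (b) is the graded version of (a). The pair $(\widetilde\Ind,\widetilde\Res\langle\mu_{k+1}\rangle)$ is the standard graded tensor--Hom adjunction: the shift $\langle\mu_{k+1}\rangle$ precisely cancels the built-in shift $\langle-\mu_{k+1}\rangle$ of $\widetilde\Res$, so one is simply pairing $\widetilde\Ind$ with bare restriction of scalars. For the pair $(\widetilde\Res,\widetilde\Ind\langle-\mu_{k+1}\rangle)$, Lemma \ref{ch3:lem_Zmu-Zmu'-mod} identifies the graded $^vZ_{\mu'}$-dual of $^vZ_\mu$ with $^vZ_\mu\langle-2\mu_{k+1}\rangle$, which refines the Frobenius identification of (a) to a graded isomorphism $\Hom_{^vZ_{\mu'}}({^vZ_\mu},-)\cong\widetilde\Ind\langle-2\mu_{k+1}\rangle$; composing with the shift $\langle\mu_{k+1}\rangle$ acquired when passing to the right adjoint through the built-in $\langle-\mu_{k+1}\rangle$ of $\widetilde\Res$ produces the stated $\widetilde\Ind\langle-\mu_{k+1}\rangle$. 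The main obstacle is the Frobenius identification invoked in (a) and (b); everything else is bookkeeping with grading shifts, and the Frobenius property itself is classical for cohomology rings of smooth projective fibrations.
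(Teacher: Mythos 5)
Your argument is sound and fills in exactly what the paper leaves unsaid (the paper's entire ``proof'' of this corollary is the sentence that it follows from Lemma \ref{ch3:lem_Zmu-Zmu'-mod}). Part $(c)$ and the adjunction $(\widetilde\Ind,\widetilde\Res\langle\mu_{k+1}\rangle)$ are indeed immediate from the freeness statement, and you are right that the other half of $(a)$--$(b)$ needs a genuine Frobenius/Gorenstein input: a finite graded-free extension of commutative graded local algebras need \emph{not} be Frobenius even if the basis degrees are distinct (e.g.\ $\bbC[x,y]/(x^2,xy,y^2)$ with $\deg x=2$, $\deg y=4$, viewed over $\bbC$, is free with basis in degrees $0,2,4$ but its graded dual is not a cyclic module). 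So Lemma \ref{ch3:lem_Zmu-Zmu'-mod} alone does not finish $(a)$--$(b)$, and your appeal to the geometry is the correct additional ingredient. One imprecision: the Schubert varieties $\overline X_{\mu,vw_\mu}$ and $\overline X_{\mu',vw_{\mu'}}$ are projective but \emph{not} smooth in general, so ``smooth projective Schubert varieties'' is wrong as stated. What one actually needs is that $\overline X_{\mu,vw_\mu}\to\overline X_{\mu',vw_{\mu'}}$ is the restriction of the Zariski-locally-trivial $P_{\mu'}/P_\mu$-bundle on partial affine flag varieties, and under $W_\mu\subset W_{\mu'}$ with $\mu_k=1$ the fiber is $P_{\mu'}/P_\mu\simeq\bbP^{\mu_{k+1}}$; the projective bundle theorem then gives a ring isomorphism $^vZ_\mu\simeq{^vZ_{\mu'}}[\xi]/(f(\xi))$ with $f$ monic of degree $\mu_{k+1}+1$, $\deg\xi=2$, and any such $B[\xi]/(f)$ is a graded Frobenius $B$-algebra whose $B$-dual is $B[\xi]/(f)\langle-2\mu_{k+1}\rangle$. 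That yields precisely the identification $\Hom_{^vZ_{\mu'}}({^vZ_\mu},-)\simeq{^vZ_\mu}\otimes_{^vZ_{\mu'}}(-)\langle-2\mu_{k+1}\rangle$ you invoke, and the rest of your grading-shift bookkeeping is correct.
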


\subsection{Graded lifts of the functors}

As above we assume $W_\mu\subset W_{\mu'}$ and that $v\in J_{\mu',+}$.

\smallskip
\begin{lem}
\label{ch3:lem_diag-Fk-Res}
The following diagram of functors is commutative
$$
\begin{CD}
{^vO}_{\mu,R}^{\Delta} @>{F_{k}}>> {^vO}_{\mu', R}^{\Delta}\\
@V{\bbV_{\mu, R}}VV @V{\bbV_{\mu',R}}VV\\
\mod(^vZ_{\mu,R}) @>{\Res}>> \mod(^vZ_{\mu',R}).
\end{CD}
$$
\end{lem}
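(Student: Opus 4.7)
The plan is to construct the natural isomorphism $\bbV_{\mu',R}\circ F_k\simeq \Res\circ\bbV_{\mu,R}$ directly from the biadjunction $(E_k,F_k)$ on $\Delta$-filtered modules together with the fixed isomorphism $E_k({^vP}^{\mu'}_R)\simeq {^vP}^\mu_R$, and then to verify compatibility with the ${^vZ}_{\mu',R}$-actions using Lemma \ref{ch3:lem_diag-cent-proj-mu'-mu}.

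For the construction step, take $M\in {^vO}_{\mu,R}^\Delta$. Since $F_k$ preserves $\Delta$-filtered truncated categories (noted just before the statement) and $(E_k,F_k)$ is biadjoint on $\Delta$-filtered modules by Proposition \ref{ch3:prop_functors-on-O-gen}~$(c)$, I would write the chain of natural $R$-linear isomorphisms
$$\bbV_{\mu',R}(F_k(M))=\Hom({^vP}^{\mu'}_R,F_k(M))\simeq \Hom(E_k({^vP}^{\mu'}_R),M)\simeq \Hom({^vP}^\mu_R,M)=\bbV_{\mu,R}(M),$$
where the first isomorphism is the $(E_k,F_k)$-adjunction unit and the second is induced by the fixed isomorphism $E_k({^vP}^{\mu'}_R)\simeq {^vP}^\mu_R$. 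This is visibly functorial in $M$, so it supplies a natural isomorphism of underlying $R$-linear functors.

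The main (but mild) obstacle is to verify that this isomorphism is a morphism of ${^vZ}_{\mu',R}$-modules, where the target is given the ${^vZ}_{\mu',R}$-structure by restriction along the inclusion ${^vZ}_{\mu',R}\hookrightarrow {^vZ}_{\mu,R}$. Unwinding the actions, an element $z\in {^vZ}_{\mu',R}$ acts on $\Hom({^vP}^{\mu'}_R,F_k(M))$ by pre-composition with the endomorphism of ${^vP}^{\mu'}_R$ corresponding to $z$ under the isomorphism of Proposition \ref{ch3:prop_eval-on-proj}. Transporting through the adjunction and the fixed isomorphism, this becomes pre-composition on $\Hom({^vP}^\mu_R,M)$ by the image of $z$ under the algebra homomorphism $\End({^vP}^{\mu'}_R)\to \End({^vP}^\mu_R)$ obtained by applying $E_k$ and conjugating by $E_k({^vP}^{\mu'}_R)\simeq {^vP}^\mu_R$. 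By Lemma \ref{ch3:lem_diag-cent-proj-mu'-mu}, this composite equals the image of $z$ under ${^vZ}_{\mu',R}\hookrightarrow {^vZ}_{\mu,R}\simeq \End({^vP}^\mu_R)$, which is precisely the action defining $\Res\circ\bbV_{\mu,R}(M)$.

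Combining the two steps gives the required isomorphism of functors $\bbV_{\mu',R}\circ F_k\simeq \Res\circ \bbV_{\mu,R}$ on ${^vO}_{\mu,R}^\Delta$. No genuinely hard input is needed beyond Lemma \ref{ch3:lem_diag-cent-proj-mu'-mu}; the argument is essentially a careful bookkeeping of module structures through an adjunction isomorphism.
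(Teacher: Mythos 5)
Your proposal is correct and follows essentially the same route as the paper: the identical chain of isomorphisms through the $(E_k,F_k)$-adjunction and the fixed isomorphism $E_k({^vP}^{\mu'}_R)\simeq {^vP}^\mu_R$, with Lemma~\ref{ch3:lem_diag-cent-proj-mu'-mu} supplying the compatibility of the $^vZ_{\mu',R}$-module structures. You simply spell out the bookkeeping of the center action a bit more explicitly than the paper does.
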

\begin{proof}
Let $M$ be an object in $^vO_{\mu,R}^{\Delta}$.
We have the following chain of isomorphisms of $^vZ_{\mu',R}$-modules.
$$
\begin{array}{lll}
\bbV_{\mu',R}\circ F_{k}(M)&\simeq& \Hom(^vP^{\mu'}_{R},F_{k}(M))\\
&\simeq&\Hom(E_{k}(^vP^{\mu'}_{R}),M)\\
&\simeq&\Hom(^vP^{\mu}_{R},M)\\
&\simeq&\bbV_{\mu,R}(M)\\
\end{array}
$$
Here, the $^vZ_{\mu,R}$-modules in the last two lines are considered as $^vZ_{\mu',R}$-modules with respect to the inclusion $^vZ_{\mu',R}\subset {^vZ}_{\mu,R}$.
The third isomorphism in the chain is an isomorphism of $^vZ_{\mu',R}$-modules by Lemma \ref{ch3:lem_diag-cent-proj-mu'-mu}.
\end{proof}

\smallskip
\begin{lem}
\label{ch3:lem_diag-Ek-Ind}
The following diagram of functors is commutative
$$
\begin{CD}
^vO^{\Delta}_{\mu,R} @<{E_{k}}<< ^vO^\Delta_{\mu',R}\\
@V{\bbV_{\mu,R}}VV @V{\bbV_{\mu',R}}VV\\
\mod(^vZ_{\mu,R}) @<{\Ind}<< \mod(^vZ_{\mu',R}).
\end{CD}
$$
\end{lem}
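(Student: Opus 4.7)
The plan is to construct a natural transformation
$$
\alpha_M : \Ind\,\bbV_{\mu',R}(M) \to \bbV_{\mu,R}(E_k M), \qquad M \in {^vO}^{\Delta}_{\mu',R},
$$
and then to show $\alpha_M$ is an isomorphism by reducing to Verma modules. For the construction, the unit of the biadjunction $(E_k,F_k)$ provides a morphism $M \to F_k E_k M$; applying $\bbV_{\mu',R}$ and identifying $\bbV_{\mu',R}(F_k E_k M)$ with $\Res\,\bbV_{\mu,R}(E_k M)$ via Lemma \ref{ch3:lem_diag-Fk-Res} yields a $^vZ_{\mu',R}$-linear map $\bbV_{\mu',R}(M) \to \Res\,\bbV_{\mu,R}(E_k M)$. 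The adjunction $\Ind \dashv \Res$ from Corollary \ref{ch3:coro_Res-Ind-adj} then produces the desired $^vZ_{\mu,R}$-linear map $\alpha_M$.

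Both $\Ind \circ \bbV_{\mu',R}$ and $\bbV_{\mu,R} \circ E_k$ are exact on ${^vO}^{\Delta}_{\mu',R}$: $E_k$ is exact, $\Ind$ is exact because $^vZ_{\mu,R}$ is free over $^vZ_{\mu',R}$ by Lemma \ref{ch3:lem_Zmu-Zmu'-mod}, and each functor $\bbV_{\bullet,R}$ is exact on its respective $\Delta$-filtered subcategory since the antidominant projectives $^vP^\bullet_R$ are projective. Hence by a devissage along the $\Delta$-filtration it suffices to show that $\alpha_{\Delta^\lambda_R}$ is an isomorphism for each Verma $\Delta^\lambda_R \in {^vO}^{\Delta}_{\mu',R}$, with $\lambda = w(1_{\mu'})$, $w \in {^vJ}_{\mu'}$.

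For such a $\lambda$, both sides are $R$-free of the same rank $\mu_{k+1}+1$: the left hand side by Lemma \ref{ch3:lem_Zmu-Zmu'-mod}, and the right hand side because $E_k \Delta^\lambda_R$ admits by Lemma \ref{ch3:lem_Ek-Verma-spcase1} a $\Delta$-filtration whose $\mu_{k+1}+1$ subquotients $\Delta^{wz(1_\mu)}_R$ (for $z \in W_{\mu'}/W_\mu$) each have $R$-free rank-one $\bbV_{\mu,R}$-image by Proposition \ref{ch3:prop_eval-on-Verma-trucn}. Since $\alpha_{\Delta^\lambda_R}$ is a morphism between finitely generated free $R$-modules of equal rank, Nakayama applied to its cokernel reduces its invertibility to the invertibility of $\alpha_{\Delta^\lambda_R}\otimes_R \bbC$. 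Over $\bbC$ the centers are the Schubert cohomologies $H^*(\overline X_{\mu,v})$ and $H^*(\overline X_{\mu',v})$ by Lemma \ref{ch3:lem_coh-Schub-extended}, and the matching of the two cyclic $^vZ_{\mu,\bbC}$-module structures comes from the $\bbP^{\mu_{k+1}}$-fibration $\overline X_{\mu,v} \to \overline X_{\mu',v}$ that underlies the proof of Lemma \ref{ch3:lem_Zmu-Zmu'-mod}. This concrete identification of cyclic module structures over $\bbC$ is the main technical obstacle; it amounts to tracing that the unit of $E_k \dashv F_k$ sends the canonical generator of the left hand side to a generator of the right hand side compatible with the Schubert-geometric description of the inclusion $^vZ_{\mu',\bbC} \subset {^vZ}_{\mu,\bbC}$.
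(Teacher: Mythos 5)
Your plan is genuinely different from the paper's proof and it has a real gap precisely where you flag the difficulty. The paper establishes the lemma by a short chain of isomorphisms, natural in $M$: starting from $\bbV_{\mu,R}\circ E_k(M)=\Hom({^vP}^\mu_R,E_kM)$, it applies the adjunction $\Hom({^vP}^\mu_R,E_kM)\simeq\Hom(F_k({^vP}^\mu_R),M)$, then full faithfulness of $\bbV_{\mu',R}$ on $\Delta$-filtered objects (Proposition~\ref{ch3:prop_Vk}~$(a)$), then the already-proved commutation of $F_k$ with $\Res$ (Lemma~\ref{ch3:lem_diag-Fk-Res}), then $\bbV_{\mu,R}({^vP}^\mu_R)\simeq{^vZ}_{\mu,R}$ (Proposition~\ref{ch3:prop_eval-on-proj}), and finally $\Hom_{{^vZ}_{\mu',R}}({^vZ}_{\mu,R},\bullet)\simeq\Ind$ via the biadjointness of $(\Res,\Ind)$ (Corollary~\ref{ch3:coro_Res-Ind-adj}~$(a)$). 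No reduction to Verma modules, no rank counting, and no geometric comparison are needed.

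Your route, by contrast, constructs a candidate natural map $\alpha_M$ from the unit of $E_k\dashv F_k$ and then tries to verify it is an isomorphism on Verma modules after base change to $\bbC$. The setup through the devissage and the Nakayama reduction is reasonable, but the last step is not carried out: you assert that showing $\alpha_{\Delta^\lambda}\otimes_R\bbC$ is invertible ``amounts to tracing that the unit of $E_k\dashv F_k$ sends the canonical generator of the left hand side to a generator of the right hand side compatible with the Schubert-geometric description of the inclusion $^vZ_{\mu',\bbC}\subset{^vZ}_{\mu,\bbC}$,'' and you stop there. That compatibility is exactly the content that has to be proved; knowing that both sides are free of rank $\mu_{k+1}+1$ gives no information about whether the specific map induced by the unit is an isomorphism. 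Without this verification the argument is incomplete. It is instructive that the paper's proof side-steps this entirely: by reversing the adjunction to put it on the projective generator rather than on a general module, and invoking full faithfulness of $\bbV$ plus the identification $\bbV({^vP}^\mu_R)\simeq{^vZ}_{\mu,R}$, the required comparison of module structures becomes automatic rather than something to be checked on Schubert cohomology.
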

\begin{proof}
Let $M$ be an object in $^vO^\Delta_{\mu',R}$. We have the following chain of isomorphisms of $^vZ_{\mu,R}$-modules.
$$
\begin{array}{lll}
\bbV_{\mu,R}\circ E_k(M)&\simeq&\Hom(^vP^\mu_R,E_k(M))\\
&\simeq&\Hom(F_k(^vP^\mu_R),M)\\
&\simeq&\Hom(\bbV_{\mu',R}\circ F_k(^vP^\mu_R),\bbV_{\mu',R}(M))\\
&\simeq&\Hom(\Res\circ\bbV_{\mu,R}(^vP^\mu_R),\bbV_{\mu',R}(M))\\
&\simeq&\Hom(^vZ_{\mu,R},\bbV_{\mu',R}(M))\\
&\simeq&\Ind\circ\bbV_{\mu',R}(M)\\
\end{array}
$$
Here the third isomorphism holds by Proposition \ref{ch3:prop_Vk} $(a)$, the fourth isomorphism holds by Lemma \ref{ch3:lem_diag-Fk-Res}.
The last isomorphism holds because, by Corollary \ref{ch3:coro_Res-Ind-adj} $(a)$, the functor $\Hom_{^vZ_{\mu',R}}(^vZ_{\mu,R},\bullet)$, which is obviously right adjoint to $\Res$, is isomorphic to $\Ind$.
\end{proof}

\smallskip
Now, Lemmas \ref{ch3:lem_diag-Fk-Res}-\ref{ch3:lem_diag-Ek-Ind} imply the following.

\smallskip
\begin{coro}
\label{ch3:coro_diag-F-Res}
The following diagrams of functors are commutative
$$
\begin{CD}
^vO_{\mu} @>{F_{k}}>> ^vO_{\mu'}\\
@V{\bbV_{\mu}}VV @V{\bbV_{\mu'}}VV\\
\mod(^vZ_{\mu}) @>{\Res}>> \mod(^vZ_{\mu'}),
\end{CD}
$$
$$
\begin{CD}
^vO_{\mu} @<{E_{k}}<< ^vO_{\mu'}\\
@V{\bbV_{\mu}}VV @V{\bbV_{\mu'}}VV\\
\mod(^vZ_{\mu}) @<{\Ind}<< \mod(^vZ_{\mu'}).
\end{CD}
$$
\end{coro}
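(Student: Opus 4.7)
The plan is to obtain Corollary \ref{ch3:coro_diag-F-Res} in two steps: first, descend the commutativity of Lemmas \ref{ch3:lem_diag-Fk-Res} and \ref{ch3:lem_diag-Ek-Ind} from $R$ to the residue field $\bbC$ on $\Delta$-filtered objects; second, propagate the resulting isomorphism of functors from $\Delta$-filtered objects to the whole abelian category by exactness.

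For the first step I would exploit that $\bbC\otimes_R{^vP}^\mu_R\simeq {^vP}^\mu$ and $\bbC\otimes_R{^vP}^{\mu'}_R\simeq {^vP}^{\mu'}$ by \cite[Prop.~2.6]{Fie-cen}, and that for an object $M\in {^vO}^\Delta_{\mu,R}$ the $R$-module $\Hom({^vP}^\mu_R,M)$ is finitely generated and free (since $\Hom$-groups between a projective and a $\Delta$-filtered object in a highest weight category over $R$ are free over $R$). Consequently $(\bbC\otimes_R\bullet)\circ\bbV_{\mu,R}\simeq \bbV_\mu\circ(\bbC\otimes_R\bullet)$ on $^vO^\Delta_{\mu,R}$, and similarly for $\mu'$. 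Applying $\bbC\otimes_R\bullet$ to the commutative diagrams of Lemmas \ref{ch3:lem_diag-Fk-Res}--\ref{ch3:lem_diag-Ek-Ind}, and using Proposition \ref{ch3:prop_functors-on-O-gen}$(a)$ for the compatibility of $E_k$ and $F_k$ with base change, yields the two squares of the corollary restricted to $^vO^\Delta_\mu$ (resp.\ $^vO^\Delta_{\mu'}$).

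For the second step, I would observe that all six functors appearing in the diagrams are exact on the full abelian categories: the pair $(E_k,F_k)$ is biadjoint and hence exact; $\bbV_\mu$ and $\bbV_{\mu'}$ are exact as $\Hom(P,\bullet)$ for projective $P$; $\Res$ is exact automatically; and $\Ind$ is exact by Lemma \ref{ch3:lem_Zmu-Zmu'-mod}, which exhibits $^vZ_\mu$ as a finite free $^vZ_{\mu'}$-module. Any $M\in {^vO}_\mu$ fits into a presentation $P_1\to P_0\to M\to 0$ with $P_0,P_1$ projective, and projectives lie in $^vO_\mu^\Delta$. A natural isomorphism between two exact functors that are defined on $^vO_\mu^\Delta$ and that is already known on projectives then extends uniquely to a natural isomorphism on all of $^vO_\mu$ by taking cokernels; the same argument handles the second diagram.

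The main technical point to verify carefully is the base-change compatibility $\bbC\otimes_R\Hom({^vP}^\mu_R,M)\simeq \Hom({^vP}^\mu,\bbC\otimes_R M)$ for $M\in {^vO}^\Delta_{\mu,R}$, since $\Hom$ does not commute with arbitrary base change. This reduces to freeness of the Hom-module, which in turn follows from the projectivity of $^vP^\mu_R$ together with the $\Delta$-filtration of $M$; a short spectral sequence or direct limit argument on the length of a $\Delta$-filtration (together with $\Ext^1({^vP}^\mu_R,\Delta)=0$ for each standard $\Delta$) then completes this verification.
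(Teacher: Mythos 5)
Your proposal matches the paper's proof in structure and content: the paper likewise obtains the commutativity on $\Delta$-filtered objects by passing to the residue field in Lemmas \ref{ch3:lem_diag-Fk-Res} and \ref{ch3:lem_diag-Ek-Ind}, and then invokes the standard extension argument (phrased in the paper via $K^b(\text{proj})\simeq D^b$ as in the proof of Lemma \ref{ch3:lem_gr-lift-E-F+adj-case1}) to pass from $\Delta$-filtered objects to the whole abelian category. Your cokernel formulation of that last step and your explicit verification of the base-change compatibility of $\bbV$ are just more detailed spellings-out of the same argument.
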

\begin{proof}
Passage to the residue field in Lemma \ref{ch3:lem_diag-Ek-Ind} implies that the diagrams in the statement are commutative on $\Delta$-filtered objects. A standard argument (see for example the proof of Lemma \ref{ch3:lem_gr-lift-E-F+adj-case1}) shows that the commutativity on $\Delta$-filtered objects implies the commutativity. 
\end{proof}
\smallskip
Let $^vO_{\mu}^{\rm proj}$ and $^v\widetilde O_{\mu}^{\rm proj}$ be the full subcategories of projective modules in $^vO_{\mu}$ and $^v\widetilde O_{\mu}$ respectively. The fully faithfulness of the functor $\bbV_{\mu}$ on projective modules implies the fully faithfulness of the functor $\widetilde\bbV_{\mu}$ on projective modules. These functors identify $^vO_{\mu,}^{\rm proj}$ and $^v\widetilde O_{\mu}^{\rm proj}$ with some full subcategories in $\mod(^vZ_{\mu})$ and $\grmod(^vZ_{\mu})$ that we denote $\mod(^vZ_{\mu})^{\rm proj}$ and $\grmod(^vZ_{\mu})^{\rm proj}$ respectively. Since the functor $F_k$ takes projective modules to projective modules, the commutativity of the diagram in Corollary \ref{ch3:coro_diag-F-Res} implies that the functor $\Res$ takes the category $\mod(^vZ_{\mu})^{\rm proj}$ to $\mod(^vZ_{\mu'})^{\rm proj}$. This implies that its graded lift $\widetilde\Res$ takes $\grmod(^vZ_{\mu})^{\rm proj}$ to $\grmod(^vZ_{\mu'})^{\rm proj}$. Similar statements hold for $\Ind$ and $\widetilde\Ind$.



\smallskip
\begin{lem}
\label{ch3:lem_gr-lift-E-F+adj-case1}
$(a)$ The functors $E_k$ and $F_k$ admit graded lifts $\widetilde E_k\colon{^v\widetilde O}_{\mu'} \to {^v\widetilde O}_{\mu}$ and $\widetilde F_k\colon{^v\widetilde O}_{\mu} \to {^v\widetilde O}_{\mu'}$. They can be chosen in such a way that the condition below holds.

$(b)$
The following pairs of functors are adjoint
$$
(\widetilde F_{k},\widetilde E_{k}\langle -\mu_{k+1} \rangle),\quad (\widetilde E_{k},\widetilde F_{k}\langle \mu_{k+1} \rangle).
$$
\end{lem}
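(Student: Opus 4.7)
The plan is to transport the graded functors $\widetilde{\Res}$ and $\widetilde{\Ind}$ from $\grmod(^vZ_{\mu})$ and $\grmod(^vZ_{\mu'})$ back to $^v\widetilde O_\mu$ and $^v\widetilde O_{\mu'}$ via the Soergel-type functors $\widetilde{\bbV}_\mu$ and $\widetilde{\bbV}_{\mu'}$. Since $E_k$ and $F_k$ are biadjoint, they preserve projective objects, so they restrict to functors between the subcategories of projectives. By Proposition \ref{ch3:prop_Vk} $(b)$--$(c)$ the graded lifts $\widetilde{\bbV}_\mu$, $\widetilde{\bbV}_{\mu'}$ are fully faithful on projectives, and they identify $^v\widetilde O_\mu^{\rm proj}$, $^v\widetilde O_{\mu'}^{\rm proj}$ with $\grmod(^vZ_\mu)^{\rm proj}$, $\grmod(^vZ_{\mu'})^{\rm proj}$. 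Corollary \ref{ch3:coro_diag-F-Res} shows that after passing to these subcategories, $F_k$ and $E_k$ correspond to $\Res$ and $\Ind$; since $\widetilde{\Res}$ and $\widetilde{\Ind}$ preserve the subcategories of graded projectives, transport of structure gives well-defined graded lifts $\widetilde F_k$ and $\widetilde E_k$ on the level of projective objects.

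To promote these to graded lifts on the full graded categories, I would use projective presentations. Given $M \in {^v\widetilde O}_\mu$, pick a graded projective presentation $P_1 \to P_0 \to M \to 0$ and define $\widetilde F_k(M)$ as the cokernel of the induced map $\widetilde F_k(P_1) \to \widetilde F_k(P_0)$; the exactness of $F_k$ after forgetting the grading ensures that this recovers $F_k(M)$. A standard argument (lifting chain maps between projective presentations and exploiting that morphisms between graded objects are determined degree by degree) shows that the construction is functorial and independent, up to canonical isomorphism, of the chosen presentation. The same procedure applied to $\Ind$ yields $\widetilde E_k$, establishing part $(a)$.

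Part $(b)$ is then a transport of the biadjunctions from Corollary \ref{ch3:coro_Res-Ind-adj}. On the projective subcategories the biadjunctions $(\widetilde{\Res}, \widetilde{\Ind}\langle -\mu_{k+1}\rangle)$ and $(\widetilde{\Ind}, \widetilde{\Res}\langle \mu_{k+1}\rangle)$ translate, via the fully faithful embeddings $\widetilde{\bbV}_\mu$, $\widetilde{\bbV}_{\mu'}$, to the analogous adjunctions between $\widetilde F_k$ and $\widetilde E_k\langle -\mu_{k+1}\rangle$. These extend to the full graded categories because the adjunction unit and counit can be constructed first on projective generators and then propagated to any object through its projective presentation, using the exactness of both functors.

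The principal obstacle is to pin down the precise grading shift. By Lemma \ref{ch3:lem-grad-unique} a graded lift of an indecomposable object is only determined up to an overall shift, so the fact that exactly $\langle \mu_{k+1}\rangle$ (rather than some other power) enters the adjunction is non-trivial: it must be read off from the normalization $\widetilde{\Res}(M) = M\langle -\mu_{k+1}\rangle$ fixed before Corollary \ref{ch3:coro_Res-Ind-adj}, and then propagated coherently to all projective objects. Rigidifying $\widetilde F_k$ on a chosen antidominant projective fixes the shift for all projectives, and checking that the shift remains consistent on arbitrary objects via the presentation-based extension is the delicate bookkeeping step that makes the stated adjunction degrees match exactly.
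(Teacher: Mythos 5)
Your proposal is correct and follows essentially the same strategy as the paper: transport $\widetilde\Res$ and $\widetilde\Ind$ through the Soergel functors $\widetilde\bbV$ to get the graded lifts on projectives, then extend to the whole graded category and transfer the adjunctions from Corollary \ref{ch3:coro_Res-Ind-adj}. The only minor technical difference is that the paper extends from projectives via the equivalence $K^b(\text{proj})\simeq D^b$ using finite global dimension, whereas you use cokernels of graded projective presentations; both are standard and interchangeable here.
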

\begin{proof}
Let us prove $(a)$. We give the prove only for the functor $F_k$. The proof for $E_k$ is similar. The proof below is similar to the proof of \cite[Lem.~5.10]{SVV}.

As explained above, the functor $\widetilde\Res$ restricts to a functor $\grmod(^vZ_{\mu})^{\rm proj}\to\grmod(^vZ_{\mu'})^{\rm proj}$. Together with the equivalences of categories $^v\widetilde O_{\mu}^{\rm proj}\simeq\grmod(^vZ_{\mu})^{\rm proj}$ and $^v\widetilde O_{\mu'}^{\rm proj}\simeq\grmod(^vZ_{\mu'})^{\rm proj}$ obtained by restricting $\widetilde\bbV_{\mu}$ and $\widetilde\bbV_{\mu'}$ this yields a functor $\widetilde F_k\colon {^v\widetilde O}_{\mu}^{\rm proj}\to {^v\widetilde O}_{\mu'}^{\rm proj}$. Next, we obtain a functor of homotopy categories $\widetilde F_k\colon K^b(^v\widetilde O_{\mu}^{\rm proj})\to K^b(^v\widetilde O_{\mu'}^{\rm proj})$. Since the categories $^v\widetilde O_{\mu}$ and $^v\widetilde O_{\mu'}$ have finite global dimensions, we have equivalences of categories $K^b(^v\widetilde O_{\mu}^{\rm proj})\simeq D^b(^v\widetilde O_{\mu})$ and $K^b(^v\widetilde O_{\mu'}^{\rm proj})\simeq D^b(^v\widetilde O_{\mu'})$.
Thus we get a functor of triangulated categories $\widetilde F_k\colon D^b(^v\widetilde O_{\mu})\to D^b(^v\widetilde O_{\mu'})$. If we repeat the same construction for non-graded categories, we obtain a functor $F_k\colon D^b(^vO_{\mu})\to D^b(^vO_{\mu'})$ that is the same as the functor between the bounded derived categories induced by the exact functor $F_k\colon { ^vO}_{\mu}\to {^vO}_{\mu'}$, see Corollary \ref{ch3:coro_diag-F-Res}. This implies that the following diagram is commutative
$$
\begin{CD}
D^b(^v\widetilde O_{\mu})@>{\widetilde F_k}>>D^b(^v\widetilde O_{\mu'})\\
@V{\rm forget}VV                   @V{\rm forget}VV\\
D^b(^vO_{\mu})@>{F_k}>> D^b(^vO_{\mu'})\\
\end{CD}
$$
Since the bottom functor takes $^vO_{\mu}$ to $^vO_{\mu'}$, the top functor takes $^v\widetilde O_{\mu}$ to $^v\widetilde O_{\mu'}$. This completes the proof of $(a)$.


Now we prove $(b)$.
The functors $\widetilde E_k$ and $\widetilde F_k$ are constructed as unique functors such that we have the following commutative diagrams
\begin{equation}
\label{ch3:eq_diag-E-Res-grad}
\begin{CD}
^vO_{\mu} @>{\widetilde F_{k}}>> ^vO_{\mu'}\\
@V{\widetilde\bbV_{\mu}}VV @V{\widetilde\bbV_{\mu'}}VV\\
\mod(^vZ_{\mu'}) @>{\widetilde\Res}>> \mod(^vZ_{\mu'}),
\end{CD}
\qquad
\begin{CD}
^vO_{\mu} @<{\widetilde E_{k}}<< ^vO_{\mu'}\\
@V{\widetilde\bbV_{\mu}}VV @V{\widetilde\bbV_{\mu'}}VV\\
\mod(^vZ_{\mu}) @<{\widetilde\Ind}<< \mod(^vZ_{\mu'}).
\end{CD}
\end{equation}

By Corollary \ref{ch3:coro_Res-Ind-adj} $(b)$ and Proposition \ref{ch3:prop_Vk} $(b)$, the restrictions of the pairs $(\widetilde F_{k},\widetilde E_{k}\langle -\mu_{k+1} \rangle)$ and $(\widetilde E_{k},\widetilde F_{k}\langle \mu_{k+1} \rangle)$ to the subcategories of projective objects are biadjoint. We can conclude using the lemma below.
\end{proof}

\smallskip
\begin{lem}
\label{ch3:lem_lift-adj-proj}
Let $\calC_1$, $\calC_2$ be abelian categories of finite global dimension and let $\calC_1'$, $\calC_2'$ be the full subcategories of projective objects. Assume that $E\colon \calC_1\to \calC_2$, $F\colon \calC_2\to\calC_1$ are exact functors. Assume that $E$ and $F$ send projective objects to projective objects and denote $E'\colon \calC_1'\to \calC_2'$, $F'\colon \calC_2'\to \calC_1'$ the restrictions of $E$ and $F$. Assume that the pair $(E',F')$ is adjoint. Then the pair $(E,F)$ is adjoint.
\end{lem}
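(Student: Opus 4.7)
The plan is to promote the adjunction $(E', F')$ from the subcategories of projectives to the full abelian categories by means of two-term projective presentations. The hypothesis of finite global dimension (which in particular guarantees enough projectives) together with the exactness of $E$ and $F$ and their preservation of projectives are precisely what makes this extension go through: $E$ sends projective presentations in $\calC_1$ to projective presentations in $\calC_2$, and $F$ does the same in the other direction.

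Concretely, fix $X\in\calC_1$ and $Y\in\calC_2$ and choose exact sequences $P_1\to P_0\to X\to 0$ in $\calC_1$ and $Q_1\to Q_0\to Y\to 0$ in $\calC_2$ with $P_i,Q_j$ projective. Since $E$ is exact and preserves projectives, $E(P_1)\to E(P_0)\to E(X)\to 0$ is a projective presentation of $E(X)$; similarly $F(Q_1)\to F(Q_0)\to F(Y)\to 0$ is a projective presentation of $F(Y)$. The goal is a natural isomorphism $\Hom_{\calC_2}(E(X),Y)\cong\Hom_{\calC_1}(X,F(Y))$.

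\emph{Step 1.} I would first identify $\Hom_{\calC_1}(P_i,F(Y))$ with $\Hom_{\calC_2}(E(P_i),Y)$ for $i=0,1$. Applying the exact functor $\Hom_{\calC_1}(P_i,-)$ to the projective presentation of $F(Y)$ gives
$$\Hom_{\calC_1}(P_i,F(Y))\cong\operatorname{coker}\bigl(\Hom_{\calC_1}(P_i,F(Q_1))\to\Hom_{\calC_1}(P_i,F(Q_0))\bigr).$$
The adjunction $(E',F')$ on projectives supplies natural isomorphisms $\Hom_{\calC_1}(P_i,F(Q_j))\cong\Hom_{\calC_2}(E(P_i),Q_j)$, and applying the exact functor $\Hom_{\calC_2}(E(P_i),-)$ to the projective presentation of $Y$ identifies the corresponding cokernel with $\Hom_{\calC_2}(E(P_i),Y)$. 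This yields a natural isomorphism $\Hom_{\calC_1}(P_i,F(Y))\cong\Hom_{\calC_2}(E(P_i),Y)$.

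\emph{Step 2.} I would then take kernels. The left-exactness of $\Hom(-,F(Y))$ identifies $\Hom_{\calC_1}(X,F(Y))$ with $\Ker\bigl(\Hom(P_0,FY)\to\Hom(P_1,FY)\bigr)$, and likewise $\Hom_{\calC_2}(E(X),Y)$ with $\Ker\bigl(\Hom(EP_0,Y)\to\Hom(EP_1,Y)\bigr)$. The isomorphism of Step 1 is compatible with the connecting maps (this is just the naturality in the projective argument), so it identifies the two kernels, giving the desired adjunction isomorphism. Naturality in $X,Y$ then follows from the usual ``covering lemma'' for projective resolutions, which makes lifts of morphisms between presentations functorial up to chain homotopy, combined with the naturality of the adjunction $(E',F')$; and independence of the chosen presentations is standard.

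There is no genuine obstacle here: the content of the argument is that exactness of $E,F$ and preservation of projectives let a two-term Hom computation commute with the adjunction. The only work is the routine bookkeeping of naturality and compatibility with lifts, which is entirely standard homological algebra.
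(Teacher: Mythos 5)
Your argument is correct, but it follows a genuinely different route from the one in the paper, which promotes the unit and counit of $(E',F')$ to the homotopy category $K^b(\calC_i')$, invokes the equivalence $K^b(\calC_i')\simeq D^b(\calC_i)$ afforded by finite global dimension, checks that the resulting functors agree with the derived functors of $E$ and $F$, and finally restricts the adjunction back to the hearts. You instead stay entirely at the abelian level and compute both $\Hom(E(X),Y)$ and $\Hom(X,F(Y))$ from two-term projective presentations, using exactness of $E,F$, preservation of projectives, and exactness of $\Hom$ out of a projective to express each as the same kernel-of-cokernel; the adjunction on projectives and its naturality then give the identification. This is more elementary and in fact uses less: you never need a full projective resolution nor the derived equivalence, so finite global dimension is not really used beyond (implicitly) guaranteeing enough projectives --- which, strictly speaking, is not a consequence of finite global dimension but is automatic in the module-category setting the paper works in. The trade-off is that the paper's derived-category detour produces the adjunction at the level of $D^b$ in one stroke (which it also wants elsewhere), while yours requires a short but unavoidable check of naturality in $X$ and $Y$ and independence of the chosen presentations, all of which you correctly flag as standard comparison-theorem bookkeeping.
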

\begin{proof}[Proof]
Let
$$
\varepsilon'\colon E'F'\to{\Id},\qquad
\eta'\colon\Id\to F'E'
$$
be the counit and the unit of the adjoint pair $(E',F')$.

We can extend the functors $E'$ and $F'$ to functors $E'\colon K^b(\calC_1')\to K^b(\calC_2')$ and $F'\colon K^b(\calC_2')\to K^b(\calC_1')$ of the homotopy categories of bounded complexes. The counit $\varepsilon'$ and the unit $\eta'$ extend to natural transformations of functors of homotopy categories. These extended natural transformations still satisfy the properties of the counit and the unit of an adjunction. Thus the extended pair $(E',F')$ is adjoint.

Since the categories $\calC_1$ and $\calC_2$ have finite global dimensions, we have equivalences of categories
\begin{equation}
\label{ch3:eq_K(C')=D(C)}
K^b(\calC_1')\simeq D^b(\calC_1),\qquad K^b(\calC_2')\simeq D^b(\calC_2).
\end{equation}
By construction, the functors
\begin{equation}
\label{ch3:eq_E-F-D(C)}
E\colon D^b(\calC_1)\to D^b(\calC_2),\quad F\colon D^b(\calC_2)\to D^b(\calC_1)
\end{equation}
obtained from functors $E'$ and $F'$ via the equivalences (\ref{ch3:eq_K(C')=D(C)}) coincide with the functors induced from $E\colon \calC_1\to\calC_2$ and $F\colon \calC_2\to\calC_1$. The pair of functors $(E,F)$ in (\ref{ch3:eq_E-F-D(C)}) is adjoint with a counit $\varepsilon$ and a unit $\eta$, obtained from $\varepsilon'$ and $\eta'$. These counit and unit restrict to natural transformations of functors of abelian categories $E\colon \calC_1\to\calC_2$ and $F\colon \calC_2\to\calC_1$. This proves the statement.
\end{proof}

\smallskip
We need the following lemma later.

\smallskip
\begin{lem}
\label{ch3:lem_prod-FE-case1}
We have the following isomorphism of functors
$$
\widetilde F_k\widetilde E_k\simeq \Id^{\oplus [\mu_{k+1}+1]_q}:=\bigoplus_{r=0}^{\mu_{k+1}}\Id\langle 2r-\mu_{k+1} \rangle,
$$
where $\Id$ is the identity endofunctor of the category $^v\widetilde O_{\mu'}$.
\end{lem}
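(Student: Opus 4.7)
The plan is to transport, via the graded Soergel-type functor $\widetilde\bbV_{\mu'}$, the decomposition of $\widetilde\Res\circ\widetilde\Ind$ established in Corollary \ref{ch3:coro_Res-Ind-adj} (c) to the desired decomposition of $\widetilde F_k\widetilde E_k$. The diagrams (\ref{ch3:eq_diag-E-Res-grad}) have been set up precisely so that this reduction works directly.

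First, I would compose the two squares of (\ref{ch3:eq_diag-E-Res-grad}) to obtain the chain of natural isomorphisms of functors from $^v\widetilde O_{\mu'}$ to $\grmod(^vZ_{\mu'})$,
$$
\widetilde\bbV_{\mu'}\circ\widetilde F_k\widetilde E_k\ \simeq\ \widetilde\Res\circ\widetilde\bbV_{\mu}\circ\widetilde E_k\ \simeq\ \widetilde\Res\circ\widetilde\Ind\circ\widetilde\bbV_{\mu'}.
$$
Applying Corollary \ref{ch3:coro_Res-Ind-adj} (c) to the right-hand side, it becomes $\bigoplus_{r=0}^{\mu_{k+1}}\widetilde\bbV_{\mu'}\langle 2r-\mu_{k+1}\rangle$.

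Next, I would descend this identity to the category $^v\widetilde O_{\mu'}$ itself. By Lemma \ref{ch3:lem_gr-lift-E-F+adj-case1} (b) both $\widetilde E_k$ and $\widetilde F_k$ admit a left and a right adjoint, hence they are exact and preserve projective objects; in particular $\widetilde F_k\widetilde E_k$ sends the subcategory $^v\widetilde O_{\mu'}^{\rm proj}$ into itself. Combined with the graded version of the full faithfulness of $\widetilde\bbV_{\mu'}$ on projectives (Proposition \ref{ch3:prop_Vk} (b)--(c)), the displayed isomorphism restricts to a natural isomorphism
$$
\widetilde F_k\widetilde E_k\ \simeq\ \bigoplus_{r=0}^{\mu_{k+1}}\Id\langle 2r-\mu_{k+1}\rangle
$$
of endofunctors of $^v\widetilde O_{\mu'}^{\rm proj}$.

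Finally, I would extend this isomorphism from projectives to all objects of $^v\widetilde O_{\mu'}$ by the standard argument: both functors are exact, so for $M\in{^v\widetilde O}_{\mu'}$ one picks a projective presentation $P_1\to P_0\to M\to 0$, applies both functors to get two diagrams with exact rows, and uses the natural isomorphism on $P_0,P_1$ together with the Five Lemma (and a check of independence of the presentation) to obtain a canonical natural isomorphism on the cokernel. The genuine content of the lemma sits in Corollary \ref{ch3:coro_Res-Ind-adj} (c), which itself comes from the geometric cell decomposition of $\overline X_{\mu,vw_\mu}$ in Lemma \ref{ch3:lem_Zmu-Zmu'-mod}; I do not foresee a substantial obstacle. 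The one point to verify with care is that the commutativity (\ref{ch3:eq_diag-E-Res-grad}) holds on the whole of $^v\widetilde O_{\mu}$ and $^v\widetilde O_{\mu'}$ (and not merely on $\Delta$-filtered or projective objects), which however was built in when $\widetilde E_k$ and $\widetilde F_k$ were constructed in Lemma \ref{ch3:lem_gr-lift-E-F+adj-case1}.
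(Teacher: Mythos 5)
Your proof takes essentially the paper's route: compose the two squares of (\ref{ch3:eq_diag-E-Res-grad}), invoke Corollary~\ref{ch3:coro_Res-Ind-adj}(c), use full faithfulness of $\widetilde\bbV_{\mu'}$ on projectives (Proposition~\ref{ch3:prop_Vk}) to get the isomorphism on ${^v\widetilde O}_{\mu'}^{\rm proj}$, and then extend to all of ${^v\widetilde O}_{\mu'}$. The only variation is in the last step, where the paper extends via the $K^b(\mathrm{proj})\simeq D^b$ device of Lemma~\ref{ch3:lem_lift-adj-proj} while you use a direct projective-presentation/Five-Lemma argument; both are standard and equivalent.
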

\begin{proof}[Proof]
By Corollary \ref{ch3:coro_Res-Ind-adj} $(c)$ we have $\widetilde \Res\circ\widetilde\Ind\simeq \Id^{\oplus [\mu_{k+1}+1]_q}$. Then the diagrams (\ref{ch3:eq_diag-E-Res-grad}) and Proposition \ref{ch3:prop_Vk} $(b)$ yield an isomorphism $\widetilde F_k\widetilde E_k\simeq \Id^{\oplus [\mu_{k+1}+1]_q}$ on projective modules in $^v\widetilde O_{\mu'}$. This isomorphism can be extended to the category $^v\widetilde O_{\mu'}$ in the same way as in the proof of Lemma \ref{ch3:lem_lift-adj-proj}.
\end{proof}

\subsection{The case $W_{\mu'}\subset W_{\mu}$}
\label{ch3:subs_second-case}
In the sections above we assumed $W_{\mu}\subset W_{\mu'}$ (or equivalently $\mu_k=1$). In this section we announce similar results in the case $W_{\mu'}\subset W_{\mu}$ (or equivalently $\mu_{k+1}=0$). All the proofs are the same as in the previous case but the roles of $E_k$ and $F_k$ should be exchanged.

Here we always assume that $v$ is in $J_{\mu,+}$ (thus also in $J_{\mu',+}$).
In contrast with the situation above, we have $^vZ_{\mu'}\subset {^vZ}_{\mu}$. Consider the induction and the restriction functors $\Ind\colon \mod(^vZ_{\mu'})\to \mod(^vZ_{\mu})$ and $\Res\colon\mod(^vZ_{\mu})\to \mod(^vZ_{\mu'})$.

Similarly to Corollary \ref{ch3:coro_diag-F-Res} we can prove the following statement.

\smallskip
\begin{lem}
The following diagrams of functors are commutative
$$
\begin{CD}
^vO_{\mu} @>{F_{k}}>> ^vO_{\mu'}\\
@V{\bbV_{\mu}}VV @V{\bbV_{\mu'}}VV\\
\mod(^vZ_{\mu}) @>{\Ind}>> \mod(^vZ_{\mu'}),
\end{CD}
$$
$$
\begin{CD}
^vO_{\mu} @<{E_{k}}<< ^vO_{\mu'}\\
@V{\bbV_{\mu}}VV @V{\bbV_{\mu'}}VV\\
\mod(^vZ_{\mu}) @<{\Res}<< \mod(^vZ_{\mu'}).
\end{CD}
$$
\qed
\end{lem}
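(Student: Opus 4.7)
The author has already indicated that the proof parallels Lemmas \ref{ch3:lem_Fk-Verma-spcase1}--\ref{ch3:lem_diag-Ek-Ind} with the roles of $E_k$ and $F_k$ exchanged throughout. My plan is to transcribe each step of the previous case in the reversed setting and verify that the bookkeeping of inclusions and adjunctions is consistent.

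First, I would lay down the basic setup. The assumption $\mu_{k+1}=0$ gives $W_{\mu'}\subset W_\mu$, and taking Weyl invariants in $^vZ_R$ reverses this to $^vZ_{\mu,R}\subset{^vZ}_{\mu',R}$, so that $\Ind\colon\mod({^vZ}_\mu)\to\mod({^vZ}_{\mu'})$ and $\Res\colon\mod({^vZ}_{\mu'})\to\mod({^vZ}_\mu)$. The Schubert-cell computation behind Lemma \ref{ch3:lem_Zmu-Zmu'-mod} runs verbatim with $\mu_k$ in place of $\mu_{k+1}$ and yields $^vZ_{\mu'}\simeq\bigoplus_{r=0}^{\mu_k}{^vZ}_\mu\langle 2r\rangle$, from which the biadjointness of $(\Ind,\Res)$ analogous to Corollary \ref{ch3:coro_Res-Ind-adj} follows.

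Next, I would carry out the Verma-module calculations. Since $1_{\mu'}$ has a unique coordinate congruent to $k+1$ modulo $e$ (obtained by upgrading the $k$-entry of $1_\mu$), Proposition \ref{ch3:prop_functors-on-O-gen}(e) with the Grothendieck-group characterization of Vermas gives $E_k(\Delta^{w(1_{\mu'})}_R)\simeq\Delta^{w(1_\mu)}_R$ and $[F_k(\Delta^{w(1_\mu)}_R)]=\sum_{z\in W_\mu/W_{\mu'}}[\Delta^{wz(1_{\mu'})}_R]$. Summing the second identity over $w\in{^vJ_\mu}$ and using Lemma \ref{ch3:lem_antid-proj}(a) identifies the class of $F_k({^vP}^\mu_R)$ with that of ${^vP}^{\mu'}_R$; since both are projective, $F_k({^vP}^\mu_R)\simeq{^vP}^{\mu'}_R$. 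The analogue of Lemma \ref{ch3:lem_diag-cent-proj-mu'-mu} is then obtained by the same subquotient-matching reasoning: an element $(z_w)\in{^vZ}_{\mu,R}$ acts on $\Delta^{w(1_\mu)}_R$ by $z_w$, and after applying $F_k$ and refining the $\Delta$-filtration via the Verma computation, the induced endomorphism of ${^vP}^{\mu'}_R$ acts on the subquotient $\Delta^{wz(1_{\mu'})}_R$ by $z_w$. This coincides with the action of the image of $(z_w)$ under the inclusion $^vZ_{\mu,R}\hookrightarrow{^vZ}_{\mu',R}$, so the functorial map $\End({^vP}^\mu_R)\to\End({^vP}^{\mu'}_R)$ induced by $F_k$ is precisely this inclusion.

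Finally, the two commutative squares follow from the same adjunction chains. For the $E_k$ diagram, biadjointness of $(E_k,F_k)$ and the identification $F_k({^vP}^\mu_R)\simeq{^vP}^{\mu'}_R$ give
\[
\bbV_\mu E_k(M)\simeq\Hom({^vP}^\mu,E_kM)\simeq\Hom(F_k{^vP}^\mu,M)\simeq\Hom({^vP}^{\mu'},M)=\bbV_{\mu'}(M),
\]
and the previous step shows that the induced $^vZ_{\mu,R}$-structure coincides with $\Res\bbV_{\mu'}(M)$. For the $F_k$ diagram I would mimic the proof of Lemma \ref{ch3:lem_diag-Ek-Ind}: start from $\Hom({^vP}^{\mu'},F_kM)\simeq\Hom(E_k{^vP}^{\mu'},M)$, apply the fully faithfulness of $\bbV_\mu$ on projectives (Proposition \ref{ch3:prop_Vk}(b)), insert the $E_k$ diagram proved above, and conclude via the $(\Res,\Ind)$-biadjunction that the result is $\Hom_{^vZ_{\mu'}}({^vZ}_{\mu'},\Ind\bbV_\mu M)\simeq\Ind\bbV_\mu(M)$. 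Specializing from $R$ to $\bbC$ and extending from $\Delta$-filtered objects to all of $^vO_\mu$ is done exactly as in Corollary \ref{ch3:coro_diag-F-Res}. No step presents a real obstacle; the only vigilance required is to keep the reversed directions of the inclusions and adjunctions consistent throughout.
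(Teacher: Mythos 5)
Your proposal correctly transcribes the argument for the $W_{\mu}\subset W_{\mu'}$ case to the present one, with $E_k$, $F_k$ (and $\Res$, $\Ind$) interchanged, and in particular it correctly identifies the inclusion direction $^vZ_{\mu,R}\subset {^vZ}_{\mu',R}$ (the sentence in the paper just before the lemma has a typo reversing this, but the diagrams in the statement itself, and the graded statements that follow, agree with your reading). The only slip is an off-by-one in the Schubert-cell count: it should be $^vZ_{\mu'}\simeq\bigoplus_{r=0}^{\mu_k-1}{^vZ}_{\mu}\langle 2r\rangle$, since $|W_{\mu}/W_{\mu'}|=\mu_k$ and $\ell(w_{\mu})-\ell(w_{\mu'})=\mu_k-1$; this has no bearing on the ungraded biadjointness of $(\Res,\Ind)$ or on the commutativity of the two squares, though it would matter for the graded shifts in the subsequent lemma.
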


\smallskip
Next, similarly to Lemmas \ref{ch3:lem_gr-lift-E-F+adj-case1}, \ref{ch3:lem_prod-FE-case1} we can deduce the following result.

\smallskip
\begin{lem}
$(a)$ The functors $E_k$ and $F_k$ admit graded lifts $\widetilde E_k\colon{^v\widetilde O}_{\mu'} \to {^v\widetilde O}_{\mu}$. They can be chosen in such a way that the conditions below hold.

$(b)$ The following pairs of functors are adjoint
$$
(\widetilde F_{k},\widetilde E_{k}\langle \mu_{k}-1 \rangle),\quad (\widetilde E_{k},\widetilde F_{k}\langle -\mu_{k}+1 \rangle).
$$

$(c)$ We have the following isomorphism of functors
$$
\widetilde E_k\widetilde F_k\simeq \Id^{\oplus [\mu_{k}]_q}:=\bigoplus_{r=0}^{\mu_{k}-1}\Id\langle 2r-\mu_{k}+1 \rangle,
$$
where $\Id$ is the identity endofunctor of the category $^v\widetilde O_{\mu}$.
\end{lem}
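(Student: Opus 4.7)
The plan is to transcribe verbatim the arguments of Corollary \ref{ch3:coro_Res-Ind-adj}, Lemma \ref{ch3:lem_gr-lift-E-F+adj-case1} and Lemma \ref{ch3:lem_prod-FE-case1} under the swap $(E_k,F_k)\leftrightarrow(F_k,E_k)$ and $(\Ind,\Res)\leftrightarrow(\Res,\Ind)$. First I would prove the analogue of Lemma \ref{ch3:lem_Zmu-Zmu'-mod}: under the assumption $W_{\mu'}\subset W_{\mu}$ and $v\in J_{\mu,+}$, the inclusion of centers goes the other way, namely $^{v}Z_{\mu'}\subset{^{v}Z}_{\mu}$, and the decomposition of $\overline X_{\mu',vw_{\mu'}}$ into affine cells indexed by $^{v}J_{\mu}\times J_{\mu}^{\mu'}$ (shortest representatives of $W_{\mu}/W_{\mu'}$) yields a graded $^{v}Z_{\mu}$-module isomorphism
$$
{^{v}Z}_{\mu'}\simeq\bigoplus_{r=0}^{\mu_{k}-1}{^{v}Z}_{\mu}\langle 2r\rangle,
$$
since $\ell(w_{\mu})-\ell(w_{\mu'})=\mu_{k}-1$ when $\mu_{k+1}=0$. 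From this I would fix the graded lifts $\widetilde{\Res}(M)=M\langle -\mu_k+1\rangle$ and $\widetilde{\Ind}(M)={^{v}Z}_{\mu}\otimes_{{^{v}Z}_{\mu'}}M$ and deduce the analogue of Corollary \ref{ch3:coro_Res-Ind-adj}: the pair $(\Res,\Ind)$ is biadjoint, the graded adjunctions are $(\widetilde{\Ind},\widetilde{\Res}\langle-\mu_k+1\rangle)$ and $(\widetilde{\Res},\widetilde{\Ind}\langle\mu_k-1\rangle)$, and $\widetilde{\Ind}\circ\widetilde{\Res}\simeq\bigoplus_{r=0}^{\mu_k-1}\Id\langle 2r-\mu_k+1\rangle$.

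For part $(a)$, I would invoke the preceding lemma of Section \ref{ch3:subs_second-case} (the one identifying $F_k$ with $\Ind$ and $E_k$ with $\Res$ through $\bbV$) together with Proposition \ref{ch3:prop_Vk}(c). The argument is the one in Lemma \ref{ch3:lem_gr-lift-E-F+adj-case1}: restrict $\widetilde{\Ind}$, $\widetilde{\Res}$ to the graded projective subcategories, transport them via $\widetilde\bbV_{\mu}$, $\widetilde\bbV_{\mu'}$ to endofunctors of $^{v}\widetilde O^{\mathrm{proj}}_{\bullet}$, extend to the homotopy categories of bounded complexes of projectives, identify these with bounded derived categories using finite global dimension, and check that the underlying non-graded derived functor coincides with the one induced by the exact $F_k$ (respectively $E_k$); then exactness forces the lift to take $^{v}\widetilde O_{\mu}$ into $^{v}\widetilde O_{\mu'}$ (respectively the other way).

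For part $(b)$, the lifts $\widetilde F_k,\widetilde E_k$ are characterized by commutative diagrams with $\widetilde\bbV$ involving $\widetilde{\Ind}$ and $\widetilde{\Res}$ respectively, so the adjunction shifts for $(\Ind,\Res)$ above immediately translate into adjunctions $(\widetilde F_k,\widetilde E_k\langle\mu_k-1\rangle)$ and $(\widetilde E_k,\widetilde F_k\langle -\mu_k+1\rangle)$ on graded projectives; I would then invoke Lemma \ref{ch3:lem_lift-adj-proj} to promote these adjunctions from the subcategories of graded projectives to the full graded abelian categories. For part $(c)$, the isomorphism $\widetilde{\Ind}\circ\widetilde{\Res}\simeq\bigoplus_{r=0}^{\mu_k-1}\Id\langle 2r-\mu_k+1\rangle$ on graded modules over $^{v}Z_{\mu}$ transports through $\widetilde\bbV_{\mu}$ to an isomorphism $\widetilde E_k\widetilde F_k\simeq\Id^{\oplus[\mu_k]_q}$ on the graded projective subcategory, which extends to all of $^{v}\widetilde O_{\mu}$ by the derived/homotopy-category argument already used in Lemma \ref{ch3:lem_prod-FE-case1}.

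There is essentially no new mathematical obstacle; the only point requiring care is the bookkeeping of shifts. Specifically, one must verify that with $\mu_{k+1}=0$ the length difference $\ell(w_{\mu})-\ell(w_{\mu'})$ is exactly $\mu_k-1$ (not $\mu_k$ as one might naively expect by analogy with the other case) and that the Schubert-cell decomposition giving the $^{v}Z_{\mu}$-module structure of $^{v}Z_{\mu'}$ produces the symmetric interval $\{2r-\mu_k+1:0\leqslant r\leqslant \mu_k-1\}$ of shifts; once that is settled the three parts are formal consequences of the analogous statements already proved.
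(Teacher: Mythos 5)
Your plan is correct and is exactly the route the paper indicates in Section \ref{ch3:subs_second-case}: repeat the proofs of Lemmas \ref{ch3:lem_Zmu-Zmu'-mod}--\ref{ch3:lem_prod-FE-case1} with the roles of $E_k$ and $F_k$ (and of $\Ind$ and $\Res$) interchanged. In particular, you correctly identified the essential combinatorial input $\ell(w_\mu)-\ell(w_{\mu'})=\binom{\mu_k}{2}-\binom{\mu_k-1}{2}=\mu_k-1$ when $\mu_{k+1}=0$, with $|J^{\mu'}_\mu|=[W_\mu:W_{\mu'}]=\mu_k$, and the resulting Schubert-cell decomposition ${^vZ}_{\mu'}\simeq\bigoplus_{r=0}^{\mu_k-1}{^vZ}_{\mu}\langle 2r\rangle$; the rest is the formal transport through $\widetilde\bbV$, Lemma \ref{ch3:lem_lift-adj-proj}, and the homotopy-category argument, exactly as you describe.

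One small remark on the bookkeeping, since you flagged it as the delicate point. Because $W_{\mu'}\subset W_\mu$, the containment of centers is $^vZ_\mu={^vZ}_R^{W_\mu}\subset{^vZ}_R^{W_{\mu'}}={^vZ}_{\mu'}$ (the paper's sentence stating the opposite containment is a typo; the displayed diagrams are the correct version). Consequently $\widetilde\Ind(M)={^vZ}_{\mu'}\otimes_{{^vZ}_\mu}M$ rather than ${^vZ}_{\mu}\otimes_{{^vZ}_{\mu'}}M$, and with your normalization $\widetilde\Res(M)=M\langle-\mu_k+1\rangle$ the graded adjunctions for $(\Ind,\Res)$ should read $(\widetilde\Ind,\widetilde\Res\langle\mu_k-1\rangle)$ and $(\widetilde\Res,\widetilde\Ind\langle-\mu_k+1\rangle)$ --- you wrote the shifts with the opposite signs in this intermediate step, but under the dictionary $\widetilde\Ind\leftrightarrow\widetilde F_k$, $\widetilde\Res\leftrightarrow\widetilde E_k$ the correct version translates precisely to the adjunctions in part $(b)$, which is what you wrote in the end.
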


\section{Koszul duality}
\label{ch3:sec-Koszul}

\subsection{Bimodules over a semisimple basic algebra}
\label{ch3:subs_bimod}
Let $B$ be a $\bbC$-algebra isomorphic to a finite direct sum of copies of $\bbC$. We have $B=\bigoplus_{\lambda\in\Lambda}\bbC e_\lambda$ for some idempotents $e_\lambda$.

\smallskip
\begin{df}
Let $\bmod(B)$ be the category of finite dimensional $(B,B)$-bimodules.
\end{df}

\smallskip
A bimodule $M\in \bmod(B)$ can be viewed just as a collection of finite dimensional $\bbC$-vector spaces $e_\lambda M e_\mu$ for $\lambda,\mu\in\Lambda$.
To each bimodule $M\in\bmod(B)$ we can associate a bimodule $M^\star\in\bmod(M)$ as follows $M^\star=\Hom_{\bmod(B)}(M,B\otimes_{\bbC}B)$. The bimodule structure on $M^\star$ is defined in the following way. For $f\in M^\star$, $m\in M$, $b_1,b_2\in B$ we have $b_1fb_2(m)=f(b_2mb_1)$.

\smallskip
\begin{lem}
\label{ch3:lem_bimod}
Assume that $M,N\in \bmod(B)$, $X,Y\in\mod(B)$, $Z\in\mod(B)^{\rm op}$. Then we have the following isomorphisms:

{\rm (a)} $\Hom_{\bmod(B)}(M,N)\simeq\bigoplus_{\lambda,\mu\in\Lambda}\Hom_\bbC(e_\lambda M e_\mu,e_\lambda M e_\mu)$,

{\rm (b)} $\Hom_B(X,Y)\simeq \bigoplus_{\lambda\in\Lambda}\Hom_\bbC(e_\lambda M,e_\lambda M)$,

{\rm (c)} $X\otimes_B Z=\bigoplus_{\lambda\in\Lambda}Xe_\lambda\otimes_\bbC e_\lambda Z$,

{\rm (d)} $e_\lambda M^\star e_\mu\simeq (e_\mu Me_\lambda)^*$, where $\bullet^*$ is the usual duality for $\bbC$-vector spaces,

{\rm (e)} $\Hom_B(M^\star\otimes_BX,Y)\simeq\Hom_B(X,M\otimes_BY)$,

{\rm (f)} $(M\otimes_B N)^\star\simeq N^\star\otimes_B M^\star$.
\end{lem}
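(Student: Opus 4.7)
The proof is entirely formal and reduces everything to the semisimplicity of $B$. The plan is to exploit that any $(B,B)$-bimodule $M$ decomposes canonically as $M=\bigoplus_{\lambda,\mu\in\Lambda}e_\lambda Me_\mu$ (and any left or right $B$-module decomposes correspondingly) because the $e_\lambda$ form a complete set of orthogonal idempotents in $B$, and each $e_\lambda Me_\mu$ is just a $\bbC$-vector space. All statements then follow by direct calculation, and I would prove them in the order (a), (b), (c), (d), (f), (e).

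For (a), I would observe that a bimodule map $f\colon M\to N$ must satisfy $f(e_\lambda me_\mu)=e_\lambda f(m)e_\mu$, so $f$ is equivalent to the data of its restrictions $e_\lambda Me_\mu\to e_\lambda Ne_\mu$, each of which is an arbitrary $\bbC$-linear map. (The $M$ on the right in the statement is a typo for $N$.) Part (b) is identical with only the left action: a $B$-linear map $X\to Y$ is determined by its restrictions $e_\lambda X\to e_\lambda Y$. Part (c) follows from the fact that in $X\otimes_B Z$ the relation $xe_\lambda\otimes z=x\otimes e_\lambda z$ combined with the decompositions of $X$ and $Z$ forces the tensor product to split as claimed; the summands for distinct $\lambda$ are independent because $e_\lambda e_\mu=\delta_{\lambda\mu}e_\lambda$.

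For (d), I would unwind the definition: an element $f\in M^\star$ is a bimodule map $M\to B\otimes_\bbC B$, and then $e_\lambda f e_\mu$ corresponds to the component of $f$ landing in $\bbC e_\lambda\otimes\bbC e_\mu$ applied to the summand $e_\mu Me_\lambda$ (the left-right swap comes from the twist in the bimodule structure on $M^\star$: $b_1fb_2(m)=f(b_2mb_1)$). This gives a natural identification with $(e_\mu Me_\lambda)^*$. Part (f) then follows from (d) combined with (c): both sides have $(\lambda,\mu)$-component identified with $\bigoplus_\nu (e_\nu Ne_\lambda)^*\otimes_\bbC(e_\mu Me_\nu)^*$, using the standard vector space isomorphism $(U\otimes V)^*\simeq V^*\otimes U^*$ for finite dimensional $U,V$.

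Finally, for (e), I would use (a)-(d) to reduce the statement to the vector space level. On each $(\lambda,\mu)$-block, the left side becomes $\Hom_\bbC\bigl(\bigoplus_\nu (e_\nu Me_\lambda)^*\otimes_\bbC e_\nu X,\,e_\lambda Y\bigr)$ while the right side becomes $\Hom_\bbC\bigl(e_\lambda X,\,\bigoplus_\nu e_\lambda Me_\nu\otimes_\bbC e_\nu Y\bigr)$, and these agree via the standard tensor-hom adjunction $\Hom(U^*\otimes V,W)\simeq\Hom(V,U\otimes W)$ for finite dimensional $U$. No step here is a serious obstacle; the only thing to be careful about is keeping track of the left/right swap in the bimodule structure of $M^\star$, which is responsible for the $(e_\mu Me_\lambda)^*$ in (d) (with indices reversed) and hence for the correct formulation of (e) and (f).
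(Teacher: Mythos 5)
Your argument is correct and follows essentially the same path the paper takes: the paper simply states that (a), (b), (c) are obvious, that (d) follows from (a), that (e) follows from (b), (c), (d), and that (f) follows from (c), (d), and your writeup fills in exactly those reductions. Two tiny slips worth noting, neither of which damages the argument: in (e) there is no $\mu$ index (only $\lambda$ appears, since $X,Y$ are one-sided modules), and the two $\lambda$-blocks you display are not literally identified by tensor-hom adjunction block-by-block --- after applying $\Hom(U^*\otimes V,W)\simeq\Hom(V,U\otimes W)$ with $U=e_\nu Me_\lambda$, $V=e_\nu X$, $W=e_\lambda Y$, the result lands in the $\nu$-block of the right-hand side rather than the $\lambda$-block, so one must sum over both indices and transpose them; and in (d) the image of $e_\lambda fe_\mu$ lies in $e_\mu B\otimes Be_\lambda=\bbC e_\mu\otimes\bbC e_\lambda$ (you wrote $\bbC e_\lambda\otimes\bbC e_\mu$), but the conclusion $(e_\mu Me_\lambda)^*$ is unaffected. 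You are also right that the statement of (a) (and likewise (b)) contains a typo where $N$ (resp.\ $Y$) should appear on the right-hand side.
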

\begin{proof}[Proof]
Parts (a), (b), (c) are obvious. Part (d) follows from (a). Part (e) follows from (b), (c), (d). Part (f) follows from (c), (d).
\end{proof}


\subsection{Quadratic dualities}
\label{ch3:subs_quad-dual}

Let $A=\oplus_{n\in \bbN}A_n$ be a finite dimensional $\bbN$-graded algebra over $\bbC$. Assume that $A_0$ is semisimple and basic. Let $T_{A_0}(A_1)=\bigoplus_{n\in N}A_1^{\otimes n}$ be the tensor algebra of $A_1$ over $A_0$, here $A_1^{\otimes n}$ means $A_1\otimes_{A_0}A_1\otimes_{A_0}\cdots \otimes_{A_0}A_1$ with $n$ components $A_1$. The algebra $A$ is said to be \emph{quadratic} if it is generated by elements of degree $0$ and $1$ with relations in degree $2$, i.e., the kernel of the obvious map $T_{A_0}(A_1)\to A$ is generated by elements in $A_1\otimes_{A_0}A_1$.

\smallskip
\begin{df}
Consider the ($A_0$,$A_0$)-bimodule morphism $\phi\colon A_1\otimes_{A_0}A_1\to A_2$ given by the product in $A$. Let $\phi^\star \colon A_2^\star\to A_1^\star\otimes_{A_0} A_1^\star$ be the dual morphism to $\phi$, see Lemma \ref{ch3:lem_bimod}, here $\bullet^\star$ is as in Section \ref{ch3:subs_bimod} with respect to $B=A_0$.
The \emph{quadratic dual algebra} to $A$ is the quadratic algebra $A^!=T_{A_0}(A_1^\star)/(\Im~\phi^\star)$.
\end{df}

\smallskip
\begin{rk}
In the previous definition we do not assume that the algebra $A$ is quadratic itself. However, if it is true, we have a graded $\bbC$-algebra isomorphism $(A^!)^!\simeq A$.
\end{rk}

\smallskip

Let $\calC$ be an abelian category such that its objects are graded modules.
Denote by $\Com^\downarrow(\calC)$ the category of complexes $X^\bullet$ in $\calC$ such that the $j$th graded component of $X^i$ is zero when $i>>0$ or $i+j<<0$. Similarly, let $\Com^\uparrow(\calC)$ the category of complexes $X^\bullet$ in $\calC$ such that the $j$th graded component of $X^i$ is zero when $i<<0$ or $i+j>>0$. Denote by $D^\downarrow(\calC)$ and  $D^\uparrow(\calC)$ the corresponding derived categories of such complexes.
We will use the following abbreviations
$$
D^\downarrow(A)=D^\downarrow(\grmod(A)),\quad D^\uparrow(A)=D^\uparrow(\grmod(A)),\quad
D^b(A)=D^b(\grmod(A)).
$$

In the situation above we have the following functors
$\calK\colon D^\downarrow(A)\to D^\uparrow(A^!)$ and $\calK'\colon D^\uparrow(A^!)\to D^\downarrow(A)$ called \emph{quadratic duality functors}. See \cite[Sec.~5]{MOS} for more details.

\subsection{Koszul algebras}
\label{ch3:subs_Koszul-alg}
Let $A=\bigoplus_{n\in\bbN}A_n$ be a finite dimensional $\bbN$-graded $\bbC$-algebra such that $A_0$ is semisimple. We identify $A_0$ with the left graded $A$-module $A_0\simeq A/{\oplus_{n>0}A_n}$.

\smallskip
\begin{df}
The graded algebra $A$ is \emph{Koszul} if the left graded $A$-module $A_0$ admits a projective resolution $\cdots\to P^2\to P^1\to P^0\to A_0$ such that $P^r$ is generated by its degree $r$ component.
\end{df}

\smallskip
If $A$ is Koszul, we consider the graded $\bbC$-algebra $A^!=\Ext^*_A(A_0,A_0)^{\rm op}$ and we call it the \emph{Koszul dual} algebra to $A$.
The following is well-known, see \cite{BGS}.

\smallskip
\begin{prop}
\label{ch3:prop_Koszul-duality}
Let $A$ be a Koszul $\bbC$-algebra. Assume that $A$ and $A^!$ are finite dimensional. Then, the following holds.

$(a)$ The algebra $A$ is quadratic. The Koszul dual algebra $A^!$ coincides with the quadratic dual algebra.

$(b)$ The algebra $A^!$ is also Koszul and there is a graded algebra isomorphism $(A^!)^!\simeq A$.

$(c)$ There is an equivalence of categories
$$
\calK\colon {D}^b(A)\to {D}^b(A^!), \qquad M\mapsto \RHom_A(A_0,M).
$$
 \qed
\end{prop}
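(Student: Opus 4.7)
My plan is to follow the Beilinson--Ginzburg--Soergel approach (\cite{BGS}), handling the three parts in the order (a), (b), (c) since each builds on the previous one. The main work is in part (a); once that is established, parts (b) and (c) are rather formal consequences.

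For part (a), I would analyze the minimal linear projective resolution $\cdots \to P^2 \to P^1 \to P^0 \to A_0$ of $A_0$ term by term. Since $P^0$ is projective and generated in degree $0$ with a surjection onto $A_0$, minimality forces $P^0 \simeq A$ as a graded left $A$-module. The kernel of $P^0 \to A_0$ equals $A_{\geq 1} = A\cdot A_1$, and since $P^1$ is generated in degree $1$ and surjects onto this kernel, we obtain $P^1 \simeq A \otimes_{A_0} A_1$ with differential given by multiplication. The kernel of the map $A \otimes_{A_0} A_1 \to A$ lives in degrees $\geq 2$ and, in degree $2$, is precisely $\ker(\phi) \subset A_1 \otimes_{A_0} A_1$. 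Because $P^2$ is projective and generated in its degree $2$ part, the surjection $P^2 \to \ker(P^1 \to P^0)$ forces all defining relations of $A$ to live in $A_1 \otimes_{A_0} A_1$, proving quadraticity. Computing $\Ext^*_A(A_0, A_0)$ from this resolution yields $\Ext^n(A_0,A_0) = \Hom_{(A_0,A_0)}((P^n)_n, A_0)$, and in low degrees $\Ext^1 \simeq A_1^\star$, $\Ext^2 \simeq \ker(\phi)^\star$. A direct check that the Yoneda product realizes the multiplication in $T_{A_0}(A_1^\star)$ modulo $\Im \phi^\star$ identifies $A^!$ with the quadratic dual.

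For part (b), I would exhibit an explicit linear projective resolution of $A_0$ over $A^!$ by constructing the Koszul bimodule complex $K^\bullet$, whose $n$th term is $A \otimes_{A_0} (A^!_n)^\star$ with differential built from the identification $A_1 \otimes_{A_0} (A_1^\star) \to A_0$. Using the quadratic presentation from (a), one checks that applying $A_0 \otimes_A -$ or the analogous construction on the $A^!$ side produces a linear resolution of $A_0$ over $A^!$, which proves $A^!$ is Koszul. The isomorphism $(A^!)^! \simeq A$ then follows from (a) applied to $A^!$ together with the double-dual identification $(A_1^\star)^\star \simeq A_1$ and $(\Im \phi^\star)^\perp = \ker \phi$, using the finite-dimensionality of $A_0$ and the bimodule duality in Lemma \ref{ch3:lem_bimod}.

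For part (c), I would invoke the quadratic duality functors $\calK$ and $\calK'$ from Section \ref{ch3:subs_quad-dual}, which are always defined between $D^\downarrow(A)$ and $D^\uparrow(A^!)$. The key step is to verify that when both $A$ and $A^!$ are finite dimensional, these functors restrict to mutually quasi-inverse equivalences between $D^b(\grmod(A))$ and $D^b(\grmod(A^!))$; this comes down to showing that the Koszul complex $K^\bullet$ built in (b) realizes $\calK$ on the level of complexes and that finite-dimensionality of both sides bounds the homological and internal degrees simultaneously. Evaluating on $A_0$ shows $\calK(A_0) = \RHom_A(A_0, A_0) = A^!$, so indeed $\calK(M) = \RHom_A(A_0, M)$ on objects. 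The main obstacle in this last part is the bookkeeping of the two gradings (cohomological and internal) needed to see that the a priori unbounded quadratic-duality functors land in $D^b$; this is where the finite-dimensionality hypothesis on $A$ and $A^!$ is used essentially.
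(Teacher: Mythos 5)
Your sketch correctly reconstructs the standard Beilinson--Ginzburg--Soergel argument (\cite{BGS}, Theorems 2.10.1, 2.10.2, and the derived equivalence of Section 2.12), and the paper itself gives no proof here: it simply records the statement as well-known and refers to \cite{BGS}. So the approach matches the paper's intent; the only caveat is that the quadraticity step in your part (a) is a little terse -- one should note explicitly that Koszulity (linearity of the whole resolution, not just of $P^2$) is what rules out relations in degree $\geq 3$, since the degree-$n$ part of $\ker(P^1\to P^0)$ must already be generated by lower syzygies coming from $P^2$, which sits in degree $2$.
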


\smallskip
If $A$ is Koszul, then the functors $\calK$ and $\calK'$ from the previous section are mutually inverse. Moreover, the equivalence $\calK$ of bounded derived categories in Proposition \ref{ch3:prop_Koszul-duality} $(c)$ is the restriction of the functor $\calK$ from the previous section.

\smallskip
\begin{df}
Let $A$ and $B$ be Koszul algebras. We say that the functor $\Phi\colon D^b(A)\to D^b(B)$ is Koszul dual to the functor $\Psi\colon D^b(A^!)\to D^b(B^!)$ if the following diagram of functor is commutative
$$
\begin{CD}
D^b(A) @>{\Psi}>> D^b(B)\\
@V{\calK}VV                @V{\calK}VV\\
D^b(A^!)@>{\Phi}>> D^b(B^!).
\end{CD}
$$
\end{df}

\subsection{Categories of linear complexes}
In this section we recall some results from \cite{MOS} about linear complexes. Let $A$ be as in Section \ref{ch3:subs_quad-dual}.

\smallskip
\begin{df}
Let $\mathcal{LC}(A)$ be the category of complexes $\cdots\to\calX^{k-1}\to\calX^{k}\to\calX^{k+1}\to\cdots$ of projective modules in $\grmod(A)$ such that for each $k\in\bbZ$ each indecomposable direct factor $P$ of $\calX^k$ is a direct factor of $A\langle k\rangle$.
\end{df}

\smallskip
\begin{prop}
There is an equivalence of categories $\epsilon_A\colon \mathcal{LC}(A)\simeq \grmod(A^!)$.\qed
\end{prop}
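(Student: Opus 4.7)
The plan is to construct $\epsilon_A$ and a quasi-inverse $\eta_A$ explicitly, and then verify they are mutually inverse. Throughout, I will use heavily that $A_0$ is semisimple basic, so that Lemma \ref{ch3:lem_bimod} gives a clean matrix-coefficient description of morphisms between projective graded $A$-modules.

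First, I would construct $\epsilon_A$. Given $\calX^\bullet\in\mathcal{LC}(A)$, each term decomposes as $\calX^k=\bigoplus_{\lambda\in\Lambda}(Ae_\lambda\langle k\rangle)^{\oplus m_{k,\lambda}}$. Put $M_k:=\Hom_{\grmod(A)}(\calX^{-k},A_0\langle -k\rangle)$; this is a finite dimensional $A_0$-module picking out the top of $\calX^{-k}$. Place $M_k$ in internal degree $k$ and set $\epsilon_A(\calX^\bullet)=\bigoplus_k M_k$. Via Lemma \ref{ch3:lem_bimod} the differential $d\colon \calX^{-k-1}\to\calX^{-k}$, which is a degree-zero graded map between projectives, is described by matrix coefficients in $e_\mu A_1 e_\lambda$. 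Dualizing these coefficients (again by Lemma \ref{ch3:lem_bimod}) produces an $A_0$-bimodule map $A_1^\star\otimes_{A_0} M_k\to M_{k+1}$, giving the candidate $A^!_1$-action.

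Second, I would verify that this assembles to a well-defined graded $A^!$-module. By the presentation $A^!=T_{A_0}(A_1^\star)/(\Im\phi^\star)$, it suffices to check that the iterated action $A_1^\star\otimes_{A_0}A_1^\star\otimes_{A_0}M_k\to M_{k+2}$ kills $\Im(\phi^\star)\otimes_{A_0}M_k$. Unwinding the duality, this iterated action is precisely the matrix of $d\circ d\colon \calX^{-k-2}\to\calX^{-k}$ in its $A_2$-component, so the vanishing condition translates exactly into $d^2=0$; this is the heart of the argument.

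Third, I would build a quasi-inverse $\eta_A$. For $M=\bigoplus_k M_k\in\grmod(A^!)$, set $\eta_A(M)^k:=A\otimes_{A_0}M_{-k}^\star\langle k\rangle$, an object of $\mathcal{LC}(A)$ term-by-term. The $A^!_1$-action $A_1^\star\otimes_{A_0}M_{-k-1}\to M_{-k}$ dualizes (Lemma \ref{ch3:lem_bimod} (d), (f)) to a map $M_{-k}^\star\to A_1\otimes_{A_0}M_{-k-1}^\star$, from which the differential $\eta_A(M)^k\to\eta_A(M)^{k+1}$ is obtained by left $A$-linear extension. The verification that $d^2=0$ is the same identification as above, run backwards, and uses that the $A^!$-action factors through the quadratic relations.

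Fourth, the checks that $\epsilon_A\eta_A\simeq\Id$ and $\eta_A\epsilon_A\simeq\Id$ are natural isomorphisms are now routine: on a projective $Ae_\lambda\langle k\rangle$ concentrated in homological degree $k$ the top is the idempotent $e_\lambda$ viewed in degree $k$, which recovers the corresponding simple $A_0$-summand of $A^!$ in internal degree $k$, and vice versa. Functoriality on morphisms of complexes (respectively of $A^!$-modules) is immediate from the matrix-coefficient description. The main obstacle will be the second step: tracking the bimodule dualities carefully enough to see that $\Im\phi^\star$ annihilates the action if and only if $d^2=0$. Once that compatibility is in place, everything else follows by formal semisimple linear algebra over $A_0$.
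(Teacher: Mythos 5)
Your overall plan is the right one, and it matches the construction the paper gives (the description of $\epsilon_A^{-1}$ immediately after the Proposition, borrowed from \cite{MOS}): read off the graded pieces of an $A^!$-module from the heads of the terms of a linear complex, translate the differential into the $A^!_1$-action via the bimodule dualities of Lemma~\ref{ch3:lem_bimod}, and observe that $d^2=0$ is exactly the statement that the action factors through $T_{A_0}(A_1^\star)/(\Im\,\phi^\star)=A^!$. You have correctly located the one nontrivial step (showing $\Im\,\phi^\star$ annihilates the iterated action if and only if $d\circ d=0$), and the ``semisimple linear algebra over $A_0$'' framing is exactly why the rest reduces to bookkeeping.

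There is, however, a genuine variance problem in the way you set things up. The assignment $\calX^\bullet\mapsto M_k:=\Hom_{\grmod(A)}(\calX^{-k},A_0\langle -k\rangle)$ is \emph{contravariant} in $\calX^\bullet$, and correspondingly your $\eta_A$, built from $M_{-k}^\star$, is contravariant in $M$. As written you would therefore produce an anti-equivalence, not the covariant equivalence $\mathcal{LC}(A)\simeq\grmod(A^!)$ that the Proposition asserts; equivalently, $\Hom_{\grmod(A)}(\calX^{-k},A_0\langle -k\rangle)$ carries a natural \emph{right} action of $\Ext^\bullet_A(A_0,A_0)$ by Yoneda composition, so after the paper's normalization $A^!=\Ext^\bullet_A(A_0,A_0)^{\rm op}$ it is a right $A^!$-module, not a left one. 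The fix is straightforward: take $M_k$ to be the head $A_0\otimes_A\calX^{-k}$ (suitably shifted into degree zero), which is covariant and lands in left $A_0$-modules; drop the corresponding $\star$ in $\eta_A$. With that change your $\epsilon_A,\eta_A$ become the functors the paper describes, up to the cosmetic choice $M_k\leftrightarrow\calX^{-k}$ in place of the paper's $M_k\leftrightarrow\calX^k$. None of this affects the heart of the argument you identified in the second step, but without the repair the statement you would actually prove is an equivalence with $\grmod\bigl((A^!)^{\rm op}\bigr)$, which is not what the Proposition says.
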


\smallskip
Let us describe the construction of $\epsilon^{-1}_A$. Let $M=\oplus_{n\in\bbZ}M_n$ be in $\grmod(A^!)$. The graded $A^!$-module structure yields morphisms of $A_0$-modules $f'_n\colon A^!_1\otimes M_n\to M_{n+1}$ for each $n\in\bbZ$. We have
\begin{eqnarray*}
\Hom_{A_0}(A^!_1\otimes_{A_0} M_n,M_{n+1})&=&\Hom_{A_0}(M_n,(A^!_1)^\star\otimes_{A_0} M_{n+1})\\
&=&\Hom_{A_0}(M_n,A_1\otimes_{A_0} M_{n+1}).
\end{eqnarray*}

Let $f_n\colon\Hom_{A_0}(M_n,A_1\otimes_{A_0} M_{n+1})$ be the image of $f'_n$ by the chain of isomorphisms above.

We have $\epsilon_A^{-1}(M)=\cdots\stackrel{\partial_{k-2}}{\to}\calX^{k-1}\stackrel{\partial_{k-1}}{\to}\calX^{k}\stackrel{\partial_k}{\to}\calX^{k+1}\stackrel{\partial_{k+1}}{\to}\cdots$ with $\calX^k=A\langle k\rangle\otimes_{A_0} M_k$ and
$$
\partial_k\colon A\langle k\rangle\otimes_{A_0} M_k\to A\langle k+1 \rangle\otimes_{A_0} M_{k+1},\quad a\otimes m\mapsto (a\otimes \Id)(f_k(m)).
$$

The quadratic duality functor discussed in the previous section can be characterized as follows, see \cite[Prop.~21]{MOS}.

\smallskip
\begin{lem}
\label{ch3:lem_Kos-Tot}
Up to isomorphism of functors, the following diagram is commutative:
$$
\begin{diagram}
\node[2]{D^\uparrow(\mathcal{LC}(A))} \arrow{sw,l}{\mathrm{Tot}} \\
\node{D^\downarrow(A)}
\node[2]{D^\uparrow(A^!)} \arrow[2]{w,b}{\calK'} \arrow{nw,t}{\epsilon^{-1}_A}\\
\end{diagram},
$$
where $\mathrm{Tot}$ is the functor taking the total complex.
\qed
\end{lem}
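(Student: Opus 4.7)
The plan is to unpack both sides explicitly on a graded module $M \in \grmod(A^!)$ and then extend by triangulated functoriality. For such $M$, the complex $\epsilon_A^{-1}(M)$ described just above the lemma is the linear complex of projective graded $A$-modules $\cdots \to A\langle k\rangle \otimes_{A_0} M_k \to A\langle k+1\rangle \otimes_{A_0} M_{k+1} \to \cdots$, with differential obtained from the $A^!$-action on $M$ via the adjunction $\Hom_{A_0}(A^!_1 \otimes_{A_0} M_k, M_{k+1}) \simeq \Hom_{A_0}(M_k, A_1 \otimes_{A_0} M_{k+1})$. Viewed in $D^\uparrow(\mathcal{LC}(A))$ this is concentrated in outer degree zero, so $\mathrm{Tot}$ returns the same complex, regarded now as an object of $D^\downarrow(A)$.

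First I would check that $\calK'(M)$, as defined in Section \ref{ch3:subs_quad-dual} following \cite{MOS}, coincides with exactly this complex: the quadratic duality construction recalled there produces a Koszul-type complex with terms $A\langle k\rangle \otimes_{A_0} M_k$ and differential encoded by the very same map $M_k \to A_1 \otimes_{A_0} M_{k+1}$ dual to the $A^!_1$-action, so once the conventions are matched the identification is tautological. Next, for a general object $X^\bullet \in D^\uparrow(A^!)$, applying $\epsilon_A^{-1}$ termwise produces a double complex whose rows are the linear complexes $\epsilon_A^{-1}(X^i)$ and whose vertical differentials are induced from those of $X^\bullet$; the functor $\calK'$ on complexes is built in the same way, by applying the single-module construction in each degree and totalising, so both sides agree termwise as double complexes. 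Exactness of both $\calK'$ and $\mathrm{Tot} \circ \epsilon_A^{-1}$ as functors between triangulated categories, together with the fact that $D^\uparrow(A^!)$ is generated by shifts of single graded modules, then propagates the isomorphism to all of $D^\uparrow(A^!)$.

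The main obstacle will be careful bookkeeping of sign conventions and the distinction between the several gradings involved: the homological degree inside $\mathcal{LC}(A)$, the homological degree of $D^\uparrow(A^!)$, and the internal grading of the graded $A$- and $A^!$-modules. In particular one must verify that the boundedness conditions defining $D^\uparrow$ and $D^\downarrow$ are preserved by $\mathrm{Tot}$ (so that a double complex unbounded in the directions allowed by $D^\uparrow(\mathcal{LC}(A))$ still totalises to an object of $D^\downarrow(A)$), and that the two differentials of the double complex anticommute with the correct signs. Once these conventions are nailed down, everything reduces to the tautological identification on single graded modules.
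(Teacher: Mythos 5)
The paper does not prove this lemma at all: it is stated with a closing \qed and cited directly from \cite[Prop.~21]{MOS}, so there is no ``paper approach'' to compare to. Your sketch is therefore doing strictly more work than the text, and the core idea — unwinding $\epsilon_A^{-1}$ on a single graded $A^!$-module $M$, recognizing the resulting linear complex with terms $A\langle k\rangle\otimes_{A_0}M_k$ and differentials coming from $M_k\to A_1\otimes_{A_0}M_{k+1}$ as the Koszul/cobar complex that computes $\calK'(M)$, and then observing that both sides of the diagram produce the same bicomplex termwise on a general object of $D^\uparrow(A^!)$ — is indeed the content of the MOS proof. The caveats you flag (matching sign conventions, and verifying that the boundedness conditions defining $D^\uparrow$ and $D^\downarrow$ are compatible with totalisation) are exactly the bookkeeping points that make the statement nontrivial.

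Two caveats on the proposal itself. First, the step you describe as ``once the conventions are matched the identification is tautological'' is really the entire content of the lemma; without actually writing out the definition of $\calK'$ from \cite[Sec.~5]{MOS} and comparing it term by term with $\epsilon_A^{-1}$, you have reduced the claim to an unverified assertion rather than proved it. Second, your closing devissage paragraph — that agreement on single-module generators plus exactness of both functors propagates the isomorphism to all of $D^\uparrow(A^!)$ — is not by itself a valid argument: two exact functors between triangulated categories that happen to send generators to isomorphic objects need not be naturally isomorphic, since a compatible system of isomorphisms (i.e.\ a natural transformation) must be produced, not merely object-by-object identifications. Fortunately you do not need that paragraph: the preceding observation, that both functors produce literally the same bicomplex of graded $A$-modules with the same total complex, already gives the natural isomorphism directly. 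You should drop the devissage and instead make the termwise double-complex comparison, including the induced maps coming from the differential of $X^\bullet$, the actual proof.
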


\subsection{The main lemma about Koszul dual functors}
\label{ch3:subs_key-lem}
Let $\{e_\lambda;\lambda\in\Lambda\}$ be the set of indecomposable idempotents of $A_0$, i.e., we have $A_0=\bigoplus_{\lambda\in\Lambda}\bbC e_\lambda$. Denote by $e^!_\lambda$ the corresponding idempotent of $A^!_0$ via the identification $A_0\simeq A^!_0$. For each subset $\Lambda'\subset\Lambda$ set $e_{\Lambda '}=\sum_{\lambda\in\Lambda '}e_\lambda$. Consider the graded algebras
$$
A_{\Lambda'}=e_{\Lambda'}Ae_{\Lambda'}, \qquad _{\Lambda'}A=A/(e_{\Lambda\backslash\Lambda'}).
$$
Similarly, we can define $A^!_{\Lambda'}$ and $_{\Lambda'}A^!$.

We have a functor $F\colon\grmod(A_{\Lambda'})\to\grmod(A)$, $M\mapsto Ae_{\Lambda'}\otimes_{A_{\Lambda'}} M$. Note also that the category $\grmod(_{\Lambda'}A^!)$ can be viewed as a subcategory of $\grmod(A^!)$ containing modules that are killed by $e_{\Lambda\backslash\Lambda'}$. Let $\iota\colon\grmod(_{\Lambda'}A^!)\to \grmod(A^!)$ be the inclusion. The following proposition is proved in \cite[Thm.~28]{MOS}.

\smallskip
\begin{prop}
\label{ch3:prop_dual-F-G}
{\rm (a)} The quadratic dual algebra to $A_{\Lambda'}$ is isomorphic to $_{\Lambda'}A^!$.

{\rm (b)} The following diagram commutes up to isomorphism of functors.
$$
\begin{CD}
D^\downarrow(A) @<\calK'<< D^\uparrow(A^!)\\
@AFAA                      @A\iota AA\\
D^\downarrow(A_{\Lambda'}) @<\calK'<< D^\uparrow(_{\Lambda'}A^!)\\
\end{CD}
$$
\end{prop}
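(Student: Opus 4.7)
\medskip

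For part (a), my plan is to work directly from the definition of the quadratic dual. The degree $0$ component of $A_{\Lambda'}$ is $e_{\Lambda'}A_0 e_{\Lambda'}=\bigoplus_{\lambda\in\Lambda'}\bbC e_\lambda$, and the degree $1$ component is $e_{\Lambda'}A_1 e_{\Lambda'}$. Applying Lemma~\ref{ch3:lem_bimod}(d) with $B=A_0$ gives $(e_{\Lambda'}A_1e_{\Lambda'})^\star\simeq e_{\Lambda'}A_1^\star e_{\Lambda'}=e_{\Lambda'}A^!_1 e_{\Lambda'}$, which is exactly the degree~$1$ component of ${}_{\Lambda'}A^!$. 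The remaining point is that the relations match: the multiplication $\phi_{\Lambda'}\colon e_{\Lambda'}A_1e_{\Lambda'}\otimes_{(A_0)_{\Lambda'}}e_{\Lambda'}A_1e_{\Lambda'}\to e_{\Lambda'}A_2e_{\Lambda'}$ is obtained from $\phi\colon A_1\otimes_{A_0}A_1\to A_2$ by left and right multiplication by $e_{\Lambda'}$, so by the compatibility of $\star$ with the idempotent truncation (another application of Lemma~\ref{ch3:lem_bimod}) the image of $\phi_{\Lambda'}^\star$ is $e_{\Lambda'}(\Im\,\phi^\star)e_{\Lambda'}$; quotienting $T_{(A_0)_{\Lambda'}}(e_{\Lambda'}A_1^\star e_{\Lambda'})$ by this ideal yields $e_{\Lambda'}A^! e_{\Lambda'}\otimes\cdots$, which equals ${}_{\Lambda'}A^!$ since killing $e_{\Lambda\backslash\Lambda'}$ on a tensor algebra over $A_0^!$ is the same as truncating to $e_{\Lambda'}$ on both sides.

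For part (b), the strategy is to replace $\calK'$ by the explicit description of Lemma~\ref{ch3:lem_Kos-Tot}, namely $\calK'\simeq \mathrm{Tot}\circ\epsilon^{-1}$, and then verify commutativity of the resulting diagram at the level of linear complexes (which then passes to derived categories by taking totalizations). Concretely, fix $M\in\grmod({}_{\Lambda'}A^!)$. Because $e_{\Lambda\backslash\Lambda'}$ acts as zero on $M$, we have $M_k=e_{\Lambda'}M_k$ for every $k$, and therefore the $k$-th term of $\epsilon_A^{-1}(\iota M)$ is
$$
A\langle k\rangle\otimes_{A_0}M_k \;=\; A\langle k\rangle\otimes_{A_0} e_{\Lambda'}M_k \;=\; Ae_{\Lambda'}\langle k\rangle\otimes_{(A_{\Lambda'})_0}M_k.
$$
On the other side, $\epsilon_{A_{\Lambda'}}^{-1}(M)$ has $k$-th term $A_{\Lambda'}\langle k\rangle\otimes_{(A_{\Lambda'})_0}M_k$, and applying $F=Ae_{\Lambda'}\otimes_{A_{\Lambda'}}(-)$ to it gives the same expression. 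The comparison of differentials is a direct check: the differential in $\epsilon_A^{-1}(\iota M)$ is built from the structure map $A^!_1\otimes_{A_0}M_k\to M_{k+1}$ via the adjunction of Section~\ref{ch3:subs_quad-dual}, and under the identification of part~(a) this map factors through $e_{\Lambda'}A^!_1e_{\Lambda'}\otimes M_k$, which is precisely the structure map defining the differential of $F(\epsilon_{A_{\Lambda'}}^{-1}(M))$.

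Having checked that $\epsilon_A^{-1}\circ\iota\simeq F\circ \epsilon_{A_{\Lambda'}}^{-1}$ as functors into $\mathcal{LC}(A)$ (with $F$ understood to act termwise on complexes of projectives), commutativity of the proposition's diagram follows by applying $\mathrm{Tot}$ and invoking Lemma~\ref{ch3:lem_Kos-Tot} on each side, using that $\mathrm{Tot}$ commutes with $F$ because $F$ is exact (in fact, $Ae_{\Lambda'}$ is projective as a right $A_{\Lambda'}$-module, so $F$ needs no derivation).

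The main obstacle I anticipate is bookkeeping rather than conceptual: making sure that the grading shifts, the idempotent truncations, and the $\star$-duality are kept strictly consistent, and in particular verifying that the two candidate presentations of the differential really coincide, not just agree up to a scalar on each summand $e_\lambda(-)e_\mu$ with $\lambda,\mu\in\Lambda'$. Once the termwise identification of differentials in $\mathcal{LC}(A)$ is established the rest is formal.
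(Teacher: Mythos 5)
Your strategy for part (b) is exactly the one the paper sketches (the paper itself only cites \cite{MOS} and gives the same reduction to the $\epsilon^{-1}$-diagram via Lemma~\ref{ch3:lem_Kos-Tot}), and your term-by-term identification of the two linear complexes and their differentials is the right way to fill in the details. Two points deserve a more careful statement, though neither breaks the argument.

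First, in part (a) the phrase \emph{``the image of $\phi_{\Lambda'}^\star$ is $e_{\Lambda'}(\Im\phi^\star)e_{\Lambda'}$''} glosses over the fact that the two-sided ideal $(e_{\Lambda\backslash\Lambda'})$ in $T_{A_0}(A_1^\star)$ also kills elements whose \emph{middle} tensor index lies in $\Lambda\backslash\Lambda'$, not just those with a bad outer index; that is, ${}_{\Lambda'}(T_{A_0}(A_1^\star))_2$ is $e_{\Lambda'}A_1^\star e_{\Lambda'}\otimes_{(A_0)_{\Lambda'}}e_{\Lambda'}A_1^\star e_{\Lambda'}$, which is the truncation of $A_1^\star\otimes_{A_0}A_1^\star$ in the middle factor too, so one must check that the \emph{projection} of $\Im\phi^\star$ onto this summand equals $\Im\phi_{\Lambda'}^\star$. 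This does hold (the component of $\phi^\star$ landing in a given middle index $\mu$ is dual to the restriction of $\phi$ to the corresponding summand, and restriction is dual to projection), but the word ``truncating on both sides'' understates what has to be verified. Second, the parenthetical \emph{``$Ae_{\Lambda'}$ is projective as a right $A_{\Lambda'}$-module''} is false in general (for $\lambda\notin\Lambda'$ the right $e_{\Lambda'}Ae_{\Lambda'}$-module $e_\lambda A e_{\Lambda'}$ need not be projective). Fortunately your argument does not need it: the objects of $\mathcal{LC}(A_{\Lambda'})$ are complexes of projective $A_{\Lambda'}$-modules by definition, and applying $F$ termwise to such a complex already computes the correct derived object, so $\mathrm{Tot}$ commutes with $F$ on these complexes for that reason, not because $F$ is exact.
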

\begin{proof}[Idea of proof of (b)]
By Lemma \ref{ch3:lem_Kos-Tot} it is enough to proof the commutativity of the following diagram.
$$
\begin{CD}
\mathcal{LC}(A) @<\epsilon_A^{-1}<< \grmod(A^!)\\
@AFAA                      @A\iota AA\\
\mathcal{LC}(A_{\Lambda'}) @<\epsilon_{A_{\Lambda'}}^{-1}<< \grmod(_{\Lambda'}A^!)\\
\end{CD}
$$
\end{proof}

\smallskip
We can generalize this result as follows.

\smallskip
\begin{lem}
\label{ch3:lem_key}
Let $A'$ be a finite dimensional $\bbN$-graded $\bbC$-algebra. Assume that for some subset $\Lambda'\subset\Lambda$ there is a graded (unitary) homomorphism $\psi\colon A'\to A_{\Lambda'}$ such that
\begin{itemize}
    \item[\rm{(a)}] $\psi$ is an isomorphism in degrees $0$ and $1$,
    \item[\rm{(b)}] $\psi$ induces an isomorphism between the kernel of $A'_1\otimes_{A'_0}A'_1\to A'_2$ and the kernel of $(A_{\Lambda'})_1\otimes_{(A_{\Lambda'})_0}(A_{\Lambda'})_1\to (A_{\Lambda'})_2$.
\end{itemize}
Then the quadratic dual of $A'$ is isomorphic to $_{\Lambda'}A$.

Consider the graded $(A,A')$-bimodule $Ae_{\Lambda'}$, where the right $A'$-module structure is obtained from the right $A_{\Lambda'}$-module structure using $\psi$. Consider the functor $T\colon \grmod(A')\to\grmod(A)$, $M\mapsto Ae_{\Lambda'}\otimes_{A'}M$. Then the following diagram commutes up to an isomorphism of functors.
$$
\begin{CD}
D^\downarrow(A) @<\calK'<< D^\uparrow(A^!)\\
@ATAA                      @A\iota AA\\
D^\downarrow(A') @<\calK'<< D^\uparrow(_{\Lambda'}A^!)\\
\end{CD}
$$
\end{lem}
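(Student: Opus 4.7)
My proof would have two parts mirroring the structure of the statement.

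\textbf{Part 1 (identification of quadratic duals).} The key observation is that the quadratic dual of a graded algebra $B$ depends only on three pieces of data: the degree zero subalgebra $B_0$, the degree one bimodule $B_1$ (as a $B_0$-bimodule), and the kernel of the multiplication $B_1 \otimes_{B_0} B_1 \to B_2$. Indeed, by definition $B^! = T_{B_0}(B_1^\star)/(\operatorname{Im}\phi^\star)$ involves no data beyond these. Hypotheses (a) and (b) assert precisely that these three pieces coincide for $A'$ and $A_{\Lambda'}$ via $\psi$, so $(A')^! \simeq (A_{\Lambda'})^!$, which by Proposition \ref{ch3:prop_dual-F-G}(a) is isomorphic to $_{\Lambda'}A^!$.

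\textbf{Part 2 (commutativity of the diagram).} I would adapt the strategy sketched in the proof of Proposition \ref{ch3:prop_dual-F-G}(b). Using Lemma \ref{ch3:lem_Kos-Tot}, the two $\calK'$ functors factor as $\mathrm{Tot}\circ\epsilon_A^{-1}$ and $\mathrm{Tot}\circ\epsilon_{A'}^{-1}$, so it suffices to establish an isomorphism of functors between
\[
T \circ \epsilon_{A'}^{-1} \quad \text{and} \quad \epsilon_A^{-1} \circ \iota
\]
from $\grmod(_{\Lambda'}A^!)$ to $\mathcal{LC}(A)$. Here I use Part 1 to identify $(A')^! \simeq {}_{\Lambda'}A^!$, so $\epsilon_{A'}^{-1}$ makes sense on $\grmod(_{\Lambda'}A^!)$.

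Given $M \in \grmod(_{\Lambda'}A^!)$, we have $M = e_{\Lambda'}M$, so $M_k = e_{\Lambda'}M_k$ for every $k$. Then $\epsilon_{A'}^{-1}(M)$ is the complex with components $\calY^k = A'\langle k\rangle \otimes_{A'_0} M_k$, and applying $T$ yields $T(\calY^k) = Ae_{\Lambda'}\langle k\rangle \otimes_{A'} (A'\langle k\rangle \otimes_{A'_0} M_k) = Ae_{\Lambda'}\langle k\rangle \otimes_{A'_0} M_k$. On the other hand, $\epsilon_A^{-1}(\iota(M))$ has components $\calX^k = A\langle k\rangle\otimes_{A_0} M_k = A\langle k\rangle\otimes_{A_0}(e_{\Lambda'}M_k) \simeq Ae_{\Lambda'}\langle k\rangle\otimes_{A'_0}M_k$, where in the last step I use the identification $A'_0 \simeq (A_{\Lambda'})_0 = e_{\Lambda'}A_0 e_{\Lambda'}$ from hypothesis (a). Thus $T(\calY^k) \simeq \calX^k$. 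The differentials of both complexes are induced by the left coaction $M_k \to A_1 \otimes_{A_0} M_{k+1}$ (respectively $M_k \to A'_1 \otimes_{A'_0} M_{k+1}$) obtained from the left $A^!$-module structure on $M$; hypothesis (a) says $\psi$ identifies $A'_1$ with $e_{\Lambda'}A_1 e_{\Lambda'}$, and since $e_{\Lambda'}M = M$ only this piece of $A_1$ is ever used, so the two differentials coincide under the above identifications.

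The main obstacle is merely careful bookkeeping of idempotents and the two-sided $A_0$-module structures across the tensor products; once the components and differentials are checked to match as above, the result follows directly from Lemma \ref{ch3:lem_Kos-Tot} just as in the proof of Proposition \ref{ch3:prop_dual-F-G}(b). No new conceptual ingredient is needed beyond the observation that the entire construction of $\epsilon_A^{-1}$ on objects of $\grmod(_{\Lambda'}A^!)$ only sees the subalgebra $A_{\Lambda'}$, which by hypotheses (a) and (b) is indistinguishable from $A'$ through the degree $\leqslant 2$ data that enters.
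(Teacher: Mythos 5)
Your proof is correct and takes essentially the same approach as the paper: both reduce via Lemma \ref{ch3:lem_Kos-Tot} to the commutativity of the diagram with $\epsilon^{-1}$ and $\mathcal{LC}$-categories, and both verify this by matching components and differentials using that $M=e_{\Lambda'}M$ together with hypotheses (a) and (b). The paper organizes the bookkeeping by factoring $T=F\circ\Phi$ through an auxiliary equivalence $\Phi\colon\mathcal{LC}(A')\to\mathcal{LC}(A_{\Lambda'})$ and invoking Proposition \ref{ch3:prop_dual-F-G}(b) for the $F$ part, whereas you do the direct comparison all at once; the underlying computation is the same.
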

\begin{proof}[Proof]
By definition, the quadratic dual of $A'$ depends only on the algebra $A'_0$, the $(A'_0,A'_0)$-bimodule $A'_1$ and the kernel of $A'_1\otimes_{A'_0}A'_1\to A'_2$. Thus the quadratic dual algebras of $A'$ and $A_{\Lambda'}$ are isomorphic. Finally, Proposition \ref{ch3:prop_dual-F-G} (a) implies that the quadratic dual of $A'$ is isomorphic to $_{\Lambda'}A$.

Now, by Lemma \ref{ch3:lem_Kos-Tot} is enough to prove the commutativity of the following diagram up to an isomorphism of functors.
$$
\begin{CD}
\mathcal{LC}(A) @<\epsilon^{-1}_A<< \grmod(A^!)\\
@ATAA                      @A\iota AA\\
\mathcal{LC}(A') @<\epsilon^{-1}_{A'}<< \grmod(_{\Lambda'}A^!)\\
\end{CD}
$$

By analogy with the definition of the functor $T$, consider the functor $\Phi\colon \grmod(A')\to\grmod(A_{\Lambda'})$, $M\mapsto A_{\Lambda'}\otimes_{A'}M$. For each $\lambda\in\Lambda'$ let $e'_\lambda$ be the idempotent in $A'_0$ such that $\psi(e'_\lambda)=e_\lambda$. We have $\Phi(A'e'_\lambda)=A_{\Lambda'}e_\lambda$ for each $\lambda\in\Lambda'$. In particular $\Phi$ induces a bijection between the indecomposable direct factors of $A'$ and $A_{\Lambda'}$. Thus $\Phi$ induces a functor $\Phi\colon \mathcal{LC}(A')\to \mathcal{LC}(A_{\Lambda'})$. Note that by definition the boundary maps in the complexes of the category $\mathcal{LC}(\bullet)$ are of degree $1$. Thus, by (a) and (b) the functor $\Phi$ induces an equivalence of categories $\Phi\colon \mathcal{LC}(A')\to \mathcal{LC}(A_{\Lambda'})$.


Consider the following diagram, where the functor $F$ is as before Proposition \ref{ch3:prop_dual-F-G}.
$$
\begin{CD}
\mathcal{LC}(A) @<\Id<< \mathcal{LC}(A) @<\epsilon^{-1}_A<< \grmod(A^!)\\
@ATAA           @AFAA                              @A\iota AA\\
\mathcal{LC}(A') @<\Phi^{-1}<< \mathcal{LC}(A_{\Lambda'}) @<\epsilon^{-1}_{A_{\Lambda'}}<< \grmod(_{\Lambda'}A^!)\\
\end{CD}
$$
The right square commutes by the proof of Proposition \ref{ch3:prop_dual-F-G} and the commutativity of the left square is obvious. To conclude we need only to check that $\epsilon^{-1}_{A'}=\Phi^{-1}\circ\epsilon^{-1}_{A_{\Lambda'}}$.

Let us check that $\Phi\circ\epsilon^{-1}_{A'}=\epsilon^{-1}_{A_{\Lambda'}}$.
This is clear on objects because
$$
\begin{array}{ccccccccccc}
\epsilon^{-1}_{A_{\Lambda'}}(M)&= \cdots &\stackrel{\partial'_{k-1}}{\to}& A_{\Lambda'}\langle k \rangle \otimes_{(A_{\Lambda'})_0}M_k &\stackrel{\partial'_k}{\to}& A_{\Lambda'}\langle k+1 \rangle \otimes_{(A_{\Lambda'})_0}M_{k+1}&\stackrel{\partial'_{k+1}}{\to}&\cdots,\\
\epsilon^{-1}_{A'}(M)&= \cdots &\stackrel{\partial''_{k-1}}{\to}& A'\langle k \rangle \otimes_{A'_0}M_k &\stackrel{\partial''_k}{\to}& A'\langle k+1 \rangle \otimes_{A'_0}M_{k+1}&\stackrel{\partial''_{k+1}}{\to}&\cdots.
\end{array}
$$

The boundary maps are defined as follows
$$
\begin{array}{rll}
\partial'_k\colon& A_{\Lambda'}\langle k\rangle\otimes_{(A_{\Lambda'})_0} M_k\to A_{\Lambda'}\langle k+1 \rangle\otimes_{(A_{\Lambda'})_0} M_{k+1},&\quad a\otimes m\mapsto (a\otimes \Id)(f^1_n(m)),\\
\partial''_k\colon& A'\langle k\rangle\otimes_{A'_0} M_k\to A'\langle k+1 \rangle\otimes_{A'_0} M_{k+1},&\quad a\otimes m\mapsto (a\otimes \Id)(f^2_n(m)),
\end{array}
$$
where $f^1_n\colon M_n\to (A_{\Lambda'})_1\otimes_{(A_{\Lambda'})_0} M_{n+1}$ and $f^2_n\colon M_n\to A'_1\otimes_{A'_0} M_{n+1}$ are defined in the same way as $f_n$ in the definition of $\epsilon^{-1}$. Thus it is also clear that $\Phi$ commutes with the boundary maps.
\end{proof}

\smallskip
\begin{rk}
\label{ch3:rk_cond-b}
Condition (b) is necessary only to deduce that $(A')^!\simeq {(A_{\Lambda'})^!}$. Without this condition we know only that the algebra $(A')^!$ is isomorphic to a quotient of ${(A_{\Lambda'})^!}$. Thus condition (b) can be replaced by the requirement $\dim (A')^!=\dim {_{\Lambda'}A^!}$.
\end{rk}

\smallskip
We can reformulate Lemma \ref{ch3:lem_key} in the following way.

\smallskip
\begin{coro}
\label{ch3:coro_main-Koszul}
Let $A'$ be an $\bbN$-graded finite dimensional $\bbC$-algebra with basic $A'_0$ such that the indecomposable idempotents of $A'_0$ are parameterized by a subset $\Lambda'$ of $\Lambda$, i.e., we have $A'_0=\bigoplus_{\lambda\in\Lambda'}\bbC e'_\lambda$. Assume that $\dim (A')^!=\dim {_{\Lambda'}A^!}$. Assume also that there is an exact functor $T\colon \grmod(A')\to\grmod(A)$ such that
\begin{itemize}
    \item[\rm{(a)}] $T(A'e'_\lambda)= Ae_\lambda$ $\forall \lambda\in\Lambda'$,
    \item[\rm{(b)}] the functor $T$ yields an isomorphism $\Hom_{A'}(A'e'_\lambda\langle 1\rangle,A'e'_\mu)\simeq \Hom_{A}(Ae_\lambda\langle 1\rangle,Ae_\mu)$.
\end{itemize}
Then the quadratic dual for $A'$ is $_{\Lambda'}A^!$ and the following diagram commutes up to isomorphism of functors.
$$
\begin{CD}
D^\downarrow(A) @<\calK'<< D^\uparrow(A^!)\\
@ATAA                      @A\iota AA\\
D^\downarrow(A') @<\calK'<< D^\uparrow(_{\Lambda'}A^!)\\
\end{CD}
$$
\end{coro}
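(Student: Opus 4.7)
\textbf{Proof proposal for Corollary \ref{ch3:coro_main-Koszul}.}

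The plan is to reduce the corollary to Lemma \ref{ch3:lem_key} by manufacturing a graded algebra homomorphism $\psi\colon A'\to A_{\Lambda'}$ out of the abstract functor $T$, then checking that the functor $T$ of the corollary coincides with the tensor-product functor $Ae_{\Lambda'}\otimes_{A'}\bullet$ appearing in Lemma \ref{ch3:lem_key}. Once this is achieved, both the assertion $(A')^!\simeq{}_{\Lambda'}A^!$ and the commutativity of the Koszul-dual diagram are immediate consequences of Lemma \ref{ch3:lem_key} together with Remark \ref{ch3:rk_cond-b}.

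First, I would construct $\psi$. Since $A'=\bigoplus_{\lambda\in\Lambda'}A'e'_\lambda$ and $Ae_{\Lambda'}=\bigoplus_{\lambda\in\Lambda'}Ae_\lambda$, hypothesis (a) of the corollary gives a canonical isomorphism $T(A')\simeq Ae_{\Lambda'}$ in $\grmod(A)$. Functoriality of $T$ then yields a graded ring homomorphism
$$
A'\simeq\End_{\grmod(A')}(A')^{\rm op}\xrightarrow{\;T\;}\End_{\grmod(A)}(Ae_{\Lambda'})^{\rm op}\simeq e_{\Lambda'}Ae_{\Lambda'}=A_{\Lambda'},
$$
which is our $\psi$. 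In degree $0$, $\psi$ sends $e'_\lambda$ to $e_\lambda$ and is therefore an isomorphism $A'_0\simeq(A_{\Lambda'})_0$. In degree $1$, decomposing both sides over the orthogonal idempotents,
$$
A'_1=\bigoplus_{\lambda,\mu\in\Lambda'}\Hom_{\grmod(A')}\bigl(A'e'_\lambda\langle 1\rangle,A'e'_\mu\bigr),\qquad
(A_{\Lambda'})_1=\bigoplus_{\lambda,\mu\in\Lambda'}\Hom_{\grmod(A)}\bigl(Ae_\lambda\langle 1\rangle,Ae_\mu\bigr),
$$
and $\psi$ restricted to each summand is exactly the map induced by $T$, which is an isomorphism by hypothesis (b) of the corollary. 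Thus assumption (a) of Lemma \ref{ch3:lem_key} is satisfied.

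Second, I invoke Remark \ref{ch3:rk_cond-b}: thanks to the dimension equality $\dim(A')^!=\dim{}_{\Lambda'}A^!$, we may replace assumption (b) of Lemma \ref{ch3:lem_key} by this equality. Lemma \ref{ch3:lem_key} therefore applies and delivers both the identification $(A')^!\simeq{}_{\Lambda'}A^!$ and the commutativity of the relevant diagram for the functor $M\mapsto Ae_{\Lambda'}\otimes_{A'}M$, where the right $A'$-module structure on $Ae_{\Lambda'}$ is transported through $\psi$.

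It remains to identify the functor $T$ of the corollary with the functor $M\mapsto Ae_{\Lambda'}\otimes_{A'}M$. The functor $T$ is exact and, between module categories, automatically preserves arbitrary direct sums of graded modules. By Watts' representability theorem (in its graded form), $T$ is therefore naturally isomorphic to $T(A')\otimes_{A'}\bullet\simeq Ae_{\Lambda'}\otimes_{A'}\bullet$, where the right $A'$-module structure on $T(A')=Ae_{\Lambda'}$ is precisely the one defined by $T$ acting on $\End(A')^{\rm op}$, namely via $\psi$. This identifies the two functors on the nose. The main obstacle in writing this out cleanly will be verifying that the right $A'$-action coming from Watts' theorem truly agrees with the action constructed through $\psi$; once (a) of the corollary is invoked to obtain the bimodule isomorphism $T(A')\simeq Ae_{\Lambda'}$, this is a direct unwinding of the definitions.
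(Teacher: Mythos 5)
Your proof follows essentially the same route as the paper's: construct the graded algebra homomorphism $\psi\colon A'\to A_{\Lambda'}$ from $T$ via endomorphism algebras, verify that conditions (a) and (b) together with the dimension hypothesis and Remark \ref{ch3:rk_cond-b} give the hypotheses of Lemma \ref{ch3:lem_key}, and identify $T$ with $Ae_{\Lambda'}\otimes_{A'}\bullet$ by a Watts-type representability argument (the paper just cites \cite[Lem.~3.4]{Str} for this last step). One small caveat: in $\grmod(A')$ arbitrary direct sums do not exist, and exactness does not by itself imply preservation of infinite coproducts, so your appeal to ``preserves arbitrary direct sums'' should be replaced by the observation that every object of $\grmod(A')$ is finitely presented, so additivity together with right exactness of $T$ already forces $T\simeq T(A')\otimes_{A'}\bullet$.
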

\begin{proof}[Proof]

Condition (a) implies that the functor $T$ yields a homomorphism of graded algebras $\psi\colon A'\to A_{\Lambda'}$. Moreover, condition (b) implies that $\psi$ satisfies condition (a) of Lemma \ref{ch3:lem_key}. Finally, the assumption $\dim (A')^!=\dim {_{\Lambda'}A}$ implies that $\psi$ satisfies condition (b) of Lemma \ref{ch3:lem_key}, see Remark \ref{ch3:rk_cond-b}. The functor $T$ hare can be identified with the functor $T=Ae_{\Lambda'}\otimes_{A'}\bullet$ in the statement of Lemma \ref{ch3:lem_key}, see \cite[Lem.~3.4]{Str}. Thus the statement follows from Lemma \ref{ch3:lem_key}.
\end{proof}

\subsection{Zuckerman functors}
\label{ch3:subs_Zuck}
Fix $v\in \widehat W$. Let $\nu_1$ and $\nu_2$ be two different parabolic types such that $W_{\nu_1}\subset W_{\nu_2}$. By definition of the parabolic category $\calO$, there is an inclusion of categories ${^v}O^{\nu_2}_\mu\subset {^v}O^{\nu_1}_\mu$. We denote by $\inc$ the inclusion functor. We may write $\inc=\inc_{\nu_2}^{\nu_1}$ to specify the parameters. The functor $\inc$ admits a left adjoint functor $\tr$. For $M\in {^v}O^{\nu_1}_\mu$, the object $\tr(M)$ is the maximal quotient of $M$ that is in ${^v}O^{\nu_2}_\mu$, see Lemma \ref{ch3:lem_trunc-categ-gen} $(a)$. We call the functor $\tr$ the \emph{parabolic truncation} functor. We may write $\tr_{\nu_1}^{\nu_2}$ to specify the parameters.

Now, we assume that $\nu_1$ and $\nu_2$ are two arbitrary parabolic types. Then there is a parabolic type $\nu_3$ such that we have $W_{\nu_3}=W_{\nu_1}\cap W_{\nu_2}$. The \emph{Zuckerman functor} $\Zuc_{\nu_1}^{\nu_2}$ (or simply $\Zuc$) is the composition $\Zuc_{\nu_1}^{\nu_2}=\tr_{\nu_3}^{\nu_2}\circ\inc_{\nu_1}^{\nu_3}$.

The parabolic inclusion functor is exact. The parabolic truncation functor is only right exact. This implies that the Zuckerman functor is right exact.

Now, we are going to grade Zuckerman functors. Let $^vA^\nu_\mu$ be the endomorphism algebra of the minimal projective generator of $^vO_\mu^\nu$ (or simply $^vA_\mu$ in the non-parabolic case). We have $^vO_\mu^\nu\simeq \mod(^vA_\mu^\nu)$. The Koszul grading on $^vA^\nu_\mu$ is constructed in \cite{SVV}. The graded version $^v\widetilde O_\mu^\nu$ of $^vO_\mu^\nu$ is the category $\grmod(^vA^\nu_\mu)$. Moreover, the algebra $^vA_\mu^\nu$ is the quotient of $^vA_\mu$ by a homogeneous ideal $I_\nu$. By construction, the grading on $^vA_\mu^\nu$ is induced from the grading on  $^vA_\mu$. Assume that $\nu_1$ and $\nu_2$ are such that $W_{\nu_1}\subset W_{\nu_2}$. Then we have $I_{\nu_1}\subset I_{\nu_2}$. This implies that the graded algebra $^vA_\mu^{\nu_2}$ is isomorphic to the quotient of the graded algebra $^vA^{\nu_1}_\mu$ by the homogeneous ideal $I_{\nu_2}/I_{\nu_1}$. This yields an inclusion of graded categories ${^v}\widetilde O^{\nu_2}_\mu\subset {^v}\widetilde O^{\nu_1}_\mu$. Let us denote by $\widetilde\inc_{\nu_2}^{\nu_1}$ (or simply $\widetilde\inc$) the inclusion functor. It is a graded lift of the functor $\inc$. Similarly, its left adjoint functor $\widetilde\tr_{\nu_1}^{\nu_2}$ is a graded lift of the functor $\tr$, see Remark \ref{ch3:rk_grade-trunc-gen}. Thus we get graded lifts $\widetilde\Zuc_{\nu_1}^{\nu_2}$ of the Zuckerman functor $\Zuc_{\nu_1}^{\nu_2}$ for arbitrary parabolic types $\nu_1$ and $\nu_2$.

Similarly, we can define the parabolic inclusion functor, the parabolic truncation functor, the Zuckerman functor and their graded versions for the affine category $\calO$ at a positive level.

\subsection{The Koszul dual functors in the category $O$}
\label{ch3:subs_dual-funct-in-O}

As above, we assume $W_{\mu}\subset W_{\mu'}$. Set $J^\nu_\mu=\{w\in J_\mu;~w(1_\mu)\in P^\nu\}$. Note that the inclusion $J_{\mu'}\subset J_\mu$ induces an inclusion $J^\nu_{\mu'}\subset J^\nu_\mu$. For $v\in\widehat W$ we set $^vJ^\nu_\mu=\{w\in J^\nu_\mu;~w\leqslant v\}$.

As in Section \ref{ch3:sec_gr-lifts} we assume that we have $W_\mu\subset W_{\mu'}$.  Fix a parabolic type $\nu=(\nu_1,\cdots,\nu_l)\in X_l[N]$. 

Assume $v\in J_{\mu'}^\nu w_{\mu'}$. The functors
$$
F_k\colon {^vO}_{\mu}\to {^vO}_{\mu'},\qquad E_k\colon {^vO}_{\mu'}\to {^vO}_{\mu}
$$
restrict to functors of parabolic categories
$$
F_k\colon {^vO}^\nu_{\mu}\to {^vO}^\nu_{\mu'},\qquad E_k\colon {^vO}^\nu_{\mu'}\to {^vO}^\nu_{\mu}.
$$
The restricted functors still satisfy the properties announced in Lemmas \ref{ch3:lem_gr-lift-E-F+adj-case1}, \ref{ch3:lem_prod-FE-case1}.



Assume that $w\in {^vJ}^\nu_\mu$. Let $^vP^{w(1_\mu)}$ be the projective cover of $L^{w(1_\mu)}$ in $^vO^\nu_\mu$. (Note that we do not indicate the parabolic type $\nu$ in our notations for modules to simplify the notations.) We fix the grading on $L^{w(1_\mu)}$ such that it is concentrated in degree zero when we consider $L^{w(1_\mu)}$ as an $^vA^\nu_\mu$-module (see Section \ref{ch3:subs_Zuck} for the definition of $^vA^\nu_\mu$). A standard argument shows that the modules $^vP^{w(1_\mu)}$ and $\Delta^{w(1_\mu)}$ admit graded lifts. (The graded lift of $^vP^{w(1_\mu)}$ can be constructed as the projective cover of the graded lift of $L^{w(1_\mu)}$ in $^v\widetilde O^\nu_\mu$. The existence of graded lifts of projective modules implies the existence of graded lifts of Verma modules, see \cite[Cor.~4]{MO}.) We fix the graded lifts of $^vP^{w(1_\mu)}$ and $\Delta^{w(1_\mu)}$ such that the surjections $^vP^{w(1_\mu)}\to L^{w(1_\mu)}$ and $\Delta^{w(1_\mu)}\to L^{w(1_\mu)}$ are homogeneous of degree zero, see also Lemma \ref{ch3:lem-grad-unique}.

The following lemma is stated in the parabolic category $O$.

\smallskip
\begin{lem}
\label{ch3:lem_Ek-on-proj-spcase1}
$(a)$ For each $w\in {^vJ}_{\mu'}^\nu$, we have $E_k({^vP}^{w(1_{\mu'})})={^vP}^{w(1_{\mu})}$.

$(b)$ For each $w\in {^vJ}_{\mu}^\nu$, we have
$$
F_k(L^{w(1_{\mu})})=
\left\{
\begin{array}{ll}
L^{w(1_{\mu'})} &\mbox{\rm if }  w\in {^vJ}^\nu_{\mu'},\\
0 &\mbox{\rm else}.
\end{array}
\right.
$$
\end{lem}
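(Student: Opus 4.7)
The two assertions are tightly linked through the biadjunction $(E_k,F_k)$, and the essential input in both is a translation-invariance identity for Kazhdan--Lusztig multiplicities across the wall corresponding to the passage $\mu\rightsquigarrow\mu'$. I plan to prove (a) first and deduce (b) from it via adjunction.

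For (a): since the pair $(E_k,F_k)$ is biadjoint and $F_k$ is exact, the functor $E_k$ is exact and preserves projectives, so $E_k({^vP}^{w(1_{\mu'})})$ is a projective object of $^vO^\nu_\mu$; in a highest weight category such a projective is determined by its Verma multiplicities. Applying Lemma \ref{ch3:lem_Ek-Verma-spcase1} to a $\Delta$-filtration of $^vP^{w(1_{\mu'})}$ gives
$$
\bigl(E_k\,{^vP}^{w(1_{\mu'})}:\Delta^{y(1_\mu)}\bigr)=\bigl({^vP}^{w(1_{\mu'})}:\Delta^{y_0(1_{\mu'})}\bigr),
$$
where $y_0\in J_{\mu'}$ is the shortest representative of $yW_{\mu'}$. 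BGG reciprocity rewrites both sides as composition multiplicities of Vermas, so the desired identification with $^vP^{w(1_\mu)}$ reduces to the translation identity $[\Delta^{y(1_\mu)}:L^{w(1_\mu)}]=[\Delta^{y_0(1_{\mu'})}:L^{w(1_{\mu'})}]$ for $w\in J^\nu_{\mu'}$. In the present affine parabolic setting this equality is available from the Koszul and highest weight structure established in \cite{SVV}; the hypothesis $v\in J^\nu_{\mu'}w_{\mu'}$ ensures that all the Verma subquotients appearing in Lemma \ref{ch3:lem_Ek-Verma-spcase1} remain inside $^vO^\nu_\mu$.

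For (b): once (a) is established, the adjunction combined with the fact that the modules $^vP^{y(1_{\mu'})}$ for $y\in{^vJ}^\nu_{\mu'}$ are the projective covers of all simples in $^vO^\nu_{\mu'}$ gives
$$
\dim\Hom({^vP}^{y(1_{\mu'})},F_kL^{w(1_\mu)})=\dim\Hom(E_k{^vP}^{y(1_{\mu'})},L^{w(1_\mu)})=\dim\Hom({^vP}^{y(1_\mu)},L^{w(1_\mu)})=\delta_{y,w}.
$$
Consequently the head of $F_kL^{w(1_\mu)}$ vanishes when $w\notin{^vJ}^\nu_{\mu'}$, which forces $F_kL^{w(1_\mu)}=0$ because $^vO^\nu_{\mu'}$ is of finite length; and it equals $L^{w(1_{\mu'})}$ with multiplicity one in the surviving case $w\in{^vJ}^\nu_{\mu'}$. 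In this second case, exactness of $F_k$ applied to the surjection $\Delta^{w(1_\mu)}\twoheadrightarrow L^{w(1_\mu)}$ together with Lemma \ref{ch3:lem_Fk-Verma-spcase1} realizes $F_kL^{w(1_\mu)}$ as a quotient of $\Delta^{w(1_{\mu'})}$; the full composition series is then pinned down by comparing $[F_kL^{w(1_\mu)}]$ in the Grothendieck group with $[L^{w(1_{\mu'})}]$, both expanded in the Verma basis through the KL-inverse expressions of the respective simples (again invoking the translation identity from (a)), which yields $F_kL^{w(1_\mu)}\simeq L^{w(1_{\mu'})}$.

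The main obstacle is the Kazhdan--Lusztig translation identity used in (a); a potential shortcut, bypassing explicit KL input, is to work directly with the graded lifts $\widetilde E_k,\widetilde F_k$ from Lemma \ref{ch3:lem_gr-lift-E-F+adj-case1} and exploit the functor isomorphism $\widetilde F_k\widetilde E_k\simeq\Id^{\oplus[\mu_{k+1}+1]_q}$ of Lemma \ref{ch3:lem_prod-FE-case1}, which already encodes the numerical content one needs to identify $E_k\,{^vP}^{w(1_{\mu'})}$ up to graded shift, and through adjunction delivers (b) simultaneously.
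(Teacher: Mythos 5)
Your proof shares the paper's two main mechanisms (Verma-multiplicity comparison for (a), the adjunction computation for (b)) but orders them differently, and the paper's ordering is more economical. The paper first proves (a) \emph{only} in the non-parabolic category $O_\mu$, where the equality $[{^vP}^{w(1_{\mu'})},\Delta^{x(1_{\mu'})}]=[{^vP}^{w(1_\mu)},\Delta^{x(1_\mu)}]$ is the translation invariance of singular-type Kazhdan--Lusztig polynomials, cited to \cite[App.~A]{Mak-Koszul}; it then deduces (b) by exactly your adjunction calculation, after noting that the $\nu$-parabolic simples are a subset of the non-parabolic ones so (b) only needs to be checked non-parabolically; and finally it \emph{recovers the $\nu$-parabolic (a) from (b)} via a second $\Hom$ computation $\Hom(E_k({^vP}^{w(1_{\mu'})}),L^{x(1_\mu)})\simeq\Hom({^vP}^{w(1_{\mu'})},F_k(L^{x(1_\mu)}))$, with no further KL input. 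You instead argue (a) directly in the $\nu$-parabolic category, which forces you to invoke the doubly-parabolic translation identity $[\Delta^{y(1_\mu)}:L^{w(1_\mu)}]=[\Delta^{y_0(1_{\mu'})}:L^{w(1_{\mu'})}]$; that identity is true, but it is a strictly stronger combinatorial input that your appeal to the Koszul structure of \cite{SVV} does not isolate, whereas the paper's three-step detour trades one extra $\Hom$ computation for not needing it. Two smaller remarks: the Grothendieck-group coda in your (b) is redundant, since $\dim\Hom({^vP}^{y(1_{\mu'})},F_kL^{w(1_\mu)})=\delta_{y,w}$ already equals the composition multiplicity $[F_kL^{w(1_\mu)}:L^{y(1_{\mu'})}]$ and therefore determines $F_kL^{w(1_\mu)}$ outright; and the closing suggestion to use $\widetilde F_k\widetilde E_k\simeq\Id^{\oplus[\mu_{k+1}+1]_q}$ is only heuristic, because that isomorphism concerns the composite $F_kE_k$ and does not by itself identify which indecomposable projective $E_k({^vP}^{w(1_{\mu'})})$ is, even before worrying about the graded shift.
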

\begin{proof}
First, we prove $(a)$ in the non-parabolic situation (i.e., for $\nu=(1,1,\cdots,1)$). The modules $E_k({^vP}^{w(1_{\mu'})})$ and ${^vP}^{w(1_{\mu})}$ are both projective. Thus it is enough to show that their classes in the Grothendieck group are the same. To show this, we compare the multiplicities of Verma modules in the $\Delta$-filtrations of $E_k({^vP}^{w(1_{\mu'})})$ and ${^vP}^{w(1_{\mu})}$.

We need to show that for each $x\in {^vJ}_{\mu'}$ we have
$$
[E_k({^vP}^{w(1_{\mu'})}),\Delta^{x(1_{\mu'})}]=[{^vP}^{w(1_{\mu})},\Delta^{x(1_{\mu})}].
$$
By Lemma \ref{ch3:lem_Ek-Verma-spcase1}, for each $x\in {^vJ}_\mu$, the multiplicity $[E_k({^vP}^{w(1_{\mu'})}),\Delta^{x(1_{\mu})}]$ is equal to the multiplicity $[{^vP}^{w(1_{\mu'})},\Delta^{x(1_{\mu'})}]$. So, we need to prove the equality
$$
[{^vP}^{w(1_{\mu'})},\Delta^{x(1_{\mu'})}]=[{^vP}^{w(1_{\mu})},\Delta^{x(1_{\mu})}].
$$
The last equality is obvious because both of these multiplicities are given by the same parabolic Kazhdan-Lusztig polynomial. See, for example, \cite[App.~A]{Mak-Koszul} for more details about multiplicities in the parabolic category $\calO$ for $\widehat{\mathfrak{gl}}_N$.

Now, we prove $(b)$. Since the set of simple modules in the parabolic category $O$ is a subset of the set of simple modules of the non-parabolic category $O$, it is enough to prove $(b)$ in the non-parabolic case.

For each $w\in {^vJ}_{\mu}$ and $x\in{^vJ}_{\mu'}$, we have
$$
\begin{array}{rcl}
\Hom(^vP^{x(1_{\mu'})},F_k(L^{w(1_\mu)}))&\simeq&  \Hom(E_k(^vP^{x(1_{\mu'})}),L^{w(1_\mu)})\\
&\simeq&
\Hom(^vP^{x(1_{\mu})},L^{w(1_\mu)}).
\end{array}
$$
This implies that we have $\dim\Hom(^vP^{x(1_{\mu'})},F_k(L^{w(1_\mu)}))=\delta_{x,w}$.
Since $\dim\Hom(^vP^{x(1_{\mu'})},M)$ counts the multiplicity of the simple module $L^{x(1_{\mu'})}$ in the module $M$ (this fact can be proved in the same way as \cite[Thm.~ 3.9~(c)]{BGG}), this proves $(b)$.

Finally, we prove $(a)$ in the parabolic situation. For each $w\in {^vJ}_{\mu'}^\nu$ and each $x\in {^vJ}_{\mu}^\nu$ we have
$$
\begin{array}{rcl}
\Hom(E_k({^vP}^{w(1_{\mu'})}),L^{x(1_{\mu})})&\simeq& \Hom({^vP}^{w(1_{\mu'})},F_k(L^{x(1_{\mu})}))\\
&\simeq&
\left\{
\begin{array}{lll}
\Hom({^vP}^{w(1_{\mu'})},L^{x(1_{\mu'})})& \mbox{if } x\in {^vJ}_{\mu'}^\nu \\
0& \mbox{else},
\end{array}
\right.
\end{array}
$$
where the second isomorphism follows from $(b)$. This implies that we have $\dim\Hom(E_k({^vP}^{w(1_{\mu'})}),L^{x(1_{\mu})})=\delta_{w,x}$. Thus we have $E_k({^vP}^{w(1_{\mu'})})\simeq {^vP}^{w(1_{\mu})}$.
\end{proof}

The definitions of the graded lifts $\widetilde E_k$ and $\widetilde F_k$ in Lemma \ref{ch3:lem_gr-lift-E-F+adj-case1} depend on the choice of the graded lift $\widetilde\bbV_\mu$ of $\bbV_\mu$. Note that we have the following isomorphism of ${^vZ}_\mu$-modules $\bbV_\mu(^vP^{\mu})\simeq {^vZ}_\mu$ for all $\mu\in X_e[N]$. By Lemma \ref{ch3:lem-grad-unique}, for each choice of the graded lift $\widetilde\bbV_\mu$, we have $\widetilde\bbV_\mu(^vP^{\mu})\simeq {^vZ}_\mu\langle r\rangle$ for some $r\in\bbZ$. From now on, we always assume that the graded lift $\widetilde\bbV_\mu$ is chosen in such a way that we have an isomorphism of graded ${^vZ}_\mu$-modules $\widetilde\bbV_\mu(^vP^{\mu})\simeq {^vZ}_\mu$ (without any shift $r$).

In the following statement we consider the non-parabolic situation.

\smallskip
\begin{lem}
\label{ch3:lem_Ek-on-Verma-spcase1-grad}
For each $w\in {^vJ_{\mu'}}$, the graded module $\widetilde E_k(\Delta^{w(1_{\mu'})})$ has a graded $\Delta$-filtration with constituents $\Delta^{wz(1_\mu)}\langle\ell(z)\rangle$ for $z\in J^\mu_{\mu'}$.

\end{lem}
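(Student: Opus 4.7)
The plan is to extract the grading shifts by passing through Soergel's functor $\widetilde{\bbV}_\mu$ and exploiting the explicit decomposition of $^vZ_\mu$ over $^vZ_{\mu'}$ given by Lemma \ref{ch3:lem_Zmu-Zmu'-mod}, then pinning down the remaining grading ambiguity via the product formula $\widetilde F_k\widetilde E_k$ from Lemma \ref{ch3:lem_prod-FE-case1}.

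First I would establish that $\widetilde E_k(\Delta^{w(1_{\mu'})})$ is graded $\Delta$-filtered with the claimed ungraded constituents. Exactness of $E_k$ together with Lemma \ref{ch3:lem_Ek-Verma-spcase1} gives $[E_k(\Delta^{w(1_{\mu'})})]=\sum_{z\in J^\mu_{\mu'}}[\Delta^{wz(1_\mu)}]$; combined with the $\Ext^1$-vanishing between Vermas compatible with the highest weight order, this produces a $\Delta$-filtration with distinct constituents. Lifting to the graded world and invoking Lemma \ref{ch3:lem-grad-unique}, each graded subquotient must have the form $\Delta^{wz(1_\mu)}\langle n(z)\rangle$ for some integers $n(z)$, and the task is to show $n(z)=\ell(z)$.

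Next, I would extract a first set of constraints on $n(z)$ from the graded commutative square (\ref{ch3:eq_diag-E-Res-grad}) relating $\widetilde E_k$ and $\widetilde\Ind$. Applying $\widetilde\bbV_\mu$ to $\widetilde E_k(\Delta^{w(1_{\mu'})})$ yields
\[
\widetilde\bbV_\mu\widetilde E_k(\Delta^{w(1_{\mu'})})\simeq{^vZ}_\mu\otimes_{{^vZ}_{\mu'}}\widetilde\bbV_{\mu'}(\Delta^{w(1_{\mu'})}),
\]
and Lemma \ref{ch3:lem_Zmu-Zmu'-mod} rewrites this as $\bigoplus_{z\in J^\mu_{\mu'}}\widetilde\bbV_{\mu'}(\Delta^{w(1_{\mu'})})\langle 2\ell(z)\rangle$. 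Comparing with the decomposition from the filtration constrains the $n(z)$ in terms of the relative grading between $\widetilde\bbV_\mu(\Delta^{wz(1_\mu)})$ and $\widetilde\bbV_{\mu'}(\Delta^{w(1_{\mu'})})$.

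The main obstacle will be computing those relative grading shifts between the images of Vermas under $\widetilde\bbV_{\mu'}$ and $\widetilde\bbV_\mu$ (in particular reconciling the $\langle 2\ell(z)\rangle$ from geometric cohomological degree in Lemma \ref{ch3:lem_Zmu-Zmu'-mod} with the target $\langle\ell(z)\rangle$ in the lemma). I would address this by the graded biadjunction $(\widetilde E_k,\widetilde F_k\langle\mu_{k+1}\rangle)$ from Lemma \ref{ch3:lem_gr-lift-E-F+adj-case1} combined with the identity $\widetilde F_k\widetilde E_k\simeq\bigoplus_{r=0}^{\mu_{k+1}}\Id\langle 2r-\mu_{k+1}\rangle$ of Lemma \ref{ch3:lem_prod-FE-case1}. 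The crucial simplification is that $wz(1_{\mu'})=w(1_{\mu'})$ for every $z\in W_{\mu'}$: applying $\widetilde F_k$ to the $\Delta$-filtration of $\widetilde E_k(\Delta^{w(1_{\mu'})})$ therefore collapses all constituents onto shifted copies of the single Verma $\Delta^{w(1_{\mu'})}$. Matching the resulting multiset of shifts against the right-hand side of Lemma \ref{ch3:lem_prod-FE-case1}, together with a boundary case fixing the normalization (namely $n(e)=0$ for the identity coset representative, which follows from the choice of graded lifts making the surjection $\Delta^{w(1_\mu)}\to L^{w(1_\mu)}$ degree zero and from Lemma \ref{ch3:lem_Fk-Verma-spcase1} in its graded form), determines the shifts uniquely and yields $n(z)=\ell(z)$.
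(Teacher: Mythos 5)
Your proposal takes a genuinely different route from the paper's, but it has gaps that leave the grading shifts undetermined. The paper first shows $\widetilde E_k({^vP}^{\mu'})\simeq {^vP}^{\mu}$ without any shift (using the normalization $\widetilde\bbV_\mu({^vP}^\mu)\simeq {^vZ}_\mu$ together with the square (\ref{ch3:eq_diag-E-Res-grad})), and then reads off $n(z)=\ell(z)$ by comparing graded $\Delta$-filtrations of ${^vP}^{\mu'}$ and ${^vP}^\mu$; the external input is the graded multiplicity fact from \cite[Lem.~A.4~(d)]{Mak-Koszul} that $\Delta^{x(1_\mu)}$ occurs in $^vP^\mu$ exactly once, shifted by $\langle\ell(x)\rangle$, after which length-additivity $\ell(wz)=\ell(w)+\ell(z)$ for $w\in J_{\mu'}$, $z\in W_{\mu'}$ finishes the computation. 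Your plan aims to sidestep that input, which is its potential appeal, but the substitute argument is not complete.

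Two concrete issues. First, the ``graded form of Lemma~\ref{ch3:lem_Fk-Verma-spcase1}'' --- the shift $c(z)$ in $\widetilde F_k(\Delta^{wz(1_\mu)})\simeq\Delta^{w(1_{\mu'})}\langle c(z)\rangle$ --- is asserted but not proved, and it cannot be independent of $z$: if it were constant, the multiset identity $\{n(z)+c(z)\}=\{2r-\mu_{k+1}\}$ coming from Lemma~\ref{ch3:lem_prod-FE-case1} would force $\{n(z)\}=\{0,2,\ldots,2\mu_{k+1}\}$, which is incompatible with the target $\{\ell(z)\}=\{0,1,\ldots,\mu_{k+1}\}$. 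Determining $c(z)$ is essentially as hard as the original problem. Second, even granting $c(z)=\ell(z)-\mu_{k+1}$ and the normalization $n(e)=0$, a multiset equality does not determine the $n(z)$ individually; one needs a positivity or monotonicity constraint to rule out permuted solutions, and that is exactly the content supplied by the graded Kazhdan--Lusztig multiplicities that the paper cites. The $\widetilde\bbV_\mu$ thread you open and then abandon at the cohomological-versus-Koszul degree mismatch is actually the right starting point (it is how the paper establishes $\widetilde E_k({^vP}^{\mu'})\simeq {^vP}^\mu$), but resolving that mismatch also requires the graded multiplicity data.
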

\begin{proof}
First, we prove that $\widetilde E_k$ takes the graded anti-dominant projective module to the graded anti-dominant projective module, i.e., that we have $\widetilde E_k(^vP^{\mu'})\simeq {^vP}^{\mu}$.

By Lemma \ref{ch3:lem-grad-unique}, the graded lift of ${^vP}^{\mu}$ is unique up to graded shift. Thus, by Lemma \ref{ch3:lem_Ek-on-proj-spcase1}, we have $\widetilde E_k({^vP}^{\mu'})={^vP}^{\mu}\langle r \rangle$ for some $r\in\bbZ$. We need to prove that $r=0$.

Recall that the graded lift $\widetilde E_k$ of $E_k$ is constructed in the proof of Lemma \ref{ch3:lem_gr-lift-E-F+adj-case1} in such a way that the following diagram is commutative
$$
\begin{CD}
^vO_{\mu} @<{\widetilde E_{k}}<< ^vO_{\mu'}\\
@V{\widetilde\bbV_{\mu}}VV @V{\widetilde\bbV_{\mu'}}VV\\
\mod(^vZ_{\mu}) @<{\widetilde\Ind}<< \mod(^vZ_{\mu'}).
\end{CD}
$$
Moreover, by definition, we have the following isomorphisms of graded modules
$$
\widetilde\bbV_{\mu}({^vP}^{w(1_{\mu})})\simeq{^vZ}_{\mu}, \quad \widetilde\bbV_{\mu'}({^vP}^{w(1_{\mu'})})\simeq{^vZ}_{\mu'},\quad
\widetilde\Ind(^vZ_{\mu'})={^vZ}_{\mu}.
$$
This implies that we have $r=0$.

Now we prove the statement of the lemma. The module $\widetilde E_k(\Delta^{w(1_{\mu'})})$ has a graded $\Delta$-filtartion because it has a $\Delta$-filtration as an ungraded module, see \cite[Rem.~2.13]{Mak-Koszul}. The constituents (up to graded shifts) are $\Delta^{wz(1_\mu)}$, $z\in W_{\mu'}/W_{\mu}$ by Lemma \ref{ch3:lem_Ek-Verma-spcase1}. We need only to identify the shifts. The graded multiplicities of Verma modules in projective modules are given in terms of Kazhdan-Lusztig polynomials in \cite[App.~A]{Mak-Koszul}. In particular, \cite[Lem.~A.4~(d)]{Mak-Koszul} implies that, for each $w\in {^vJ}_\mu$, the module $\Delta^{w(1_\mu)}$ appears as a constituent in a graded $\Delta$-filtration of $^vP^\mu$ once with the graded shift by $\ell(w)$. Similarly, for each $w\in{^vJ}_{\mu'}$, the module $\Delta^{w(1_{\mu'})}$ appears as a constituent in a graded $\Delta$-filtration of $^vP^{\mu'}$ once with the graded shift by $\ell(w)$. Now, since $\widetilde E_k({^vP}^{\mu'})\simeq {^vP}^{\mu}$, we see that, for each $w\in {^vJ}_{\mu'}$ and each $z\in J_{\mu'}^{\mu}$, the module $\Delta^{wz(1_\mu)}$ appears in the $\Delta$-filtration of $\widetilde E_k(\Delta^{w(1_{\mu'})})$ with the graded shift by $\ell(z)$.
\end{proof}

\smallskip
In the following lemma me consider the general (i.e., parabolic) situation.
\smallskip
\begin{lem}
\label{ch3:lem_Ek-on-proj-grad-spcase1}
For each $w\in {^vJ}_{\mu'}^\nu$, we have $\widetilde E_k({^vP}^{w(1_{\mu'})})={^vP}^{w(1_{\mu})}$.
\end{lem}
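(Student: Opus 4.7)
By Lemma \ref{ch3:lem_Ek-on-proj-spcase1}~(a), $E_k({^vP}^{w(1_{\mu'})}) \simeq {^vP}^{w(1_{\mu})}$ as ungraded modules, so by Lemma \ref{ch3:lem-grad-unique} there exists $r \in \bbZ$ with $\widetilde E_k({^vP}^{w(1_{\mu'})}) \simeq {^vP}^{w(1_{\mu})}\langle r\rangle$. The plan is to show $r = 0$ by producing a nonzero graded morphism from $\widetilde E_k({^vP}^{w(1_{\mu'})})$ to $L^{w(1_{\mu})}\langle 0\rangle$; since the graded module ${^vP}^{w(1_{\mu})}\langle r\rangle$ has simple graded head $L^{w(1_{\mu})}\langle r\rangle$, the existence of such a morphism forces $r = 0$.

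To produce the morphism I would compose the surjections
$$
\widetilde E_k({^vP}^{w(1_{\mu'})}) \twoheadrightarrow \widetilde E_k(\Delta^{w(1_{\mu'})}) \twoheadrightarrow \Delta^{w(1_{\mu})}\langle 0\rangle \twoheadrightarrow L^{w(1_{\mu})}\langle 0\rangle.
$$
The first surjection comes from applying the exact functor $\widetilde E_k$ to the degree-zero surjection ${^vP}^{w(1_{\mu'})} \to \Delta^{w(1_{\mu'})}$, which is forced by the normalization that ${^vP}^{w(1_{\mu'})} \to L^{w(1_{\mu'})}$ and $\Delta^{w(1_{\mu'})} \to L^{w(1_{\mu'})}$ both sit in degree zero. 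The last surjection is the canonical degree-zero map from Verma to simple. The middle surjection is the projection onto the top constituent of a graded $\Delta$-filtration of $\widetilde E_k(\Delta^{w(1_{\mu'})})$, and is the main content.

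The main obstacle is producing this middle surjection, i.e., showing that $\widetilde E_k(\Delta^{w(1_{\mu'})})$ admits $\Delta^{w(1_{\mu})}\langle 0\rangle$ as a graded quotient in the parabolic setting. The first step would be to extend Lemma \ref{ch3:lem_Ek-on-Verma-spcase1-grad} to the parabolic case: its proof shows that in the non-parabolic category $\widetilde E_k(\Delta^{w(1_{\mu'})})$ carries a graded $\Delta$-filtration with constituents $\Delta^{wz(1_{\mu})}\langle \ell(z)\rangle$ indexed by $z \in J_{\mu'}^{\mu}$, and with the top ($z = 1$) equal to $\Delta^{w(1_{\mu})}\langle 0\rangle$. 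Since $w \in {^vJ}^\nu_{\mu'} \subset {^vJ}^\nu_{\mu}$ the weight $w(1_{\mu})$ is $\nu$-dominant, so $\Delta^{w(1_{\mu})}$ lies in ${^v}O^\nu_\mu$; and since $E_k$ preserves parabolic type the whole filtration survives parabolic truncation, giving the desired quotient. The remaining bookkeeping is that the graded shifts in the filtration are intrinsic (they are read off, as in Lemma \ref{ch3:lem_Ek-on-Verma-spcase1-grad}, from the known graded multiplicities $[{^vP}^\mu : \Delta^{w(1_\mu)}\langle \ell(w)\rangle] = 1$ together with $\widetilde E_k({^vP}^{\mu'}) \simeq {^vP}^{\mu}$), so the top stays in degree zero after passing to the parabolic quotient. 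The simple-head argument of the first paragraph then closes the proof.
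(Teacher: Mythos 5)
Your route is genuinely different from the paper's. The paper argues by counting graded multiplicities: it notes that $\Delta^{w(1_{\mu'})}$ occurs exactly once and without shift in ${^vP}^{w(1_{\mu'})}$, that by Lemma \ref{ch3:lem_Ek-on-Verma-spcase1-grad} the term $\Delta^{w(1_{\mu})}$ occurs exactly once and without shift in $\widetilde E_k(\Delta^{w(1_{\mu'})})$, and hence occurs once and without shift in $\widetilde E_k({^vP}^{w(1_{\mu'})})$; comparing with ${^vP}^{w(1_{\mu})}\langle r\rangle$ forces $r=0$. You instead build an explicit degree-zero morphism onto the simple head $L^{w(1_{\mu})}\langle 0\rangle$. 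Both arguments rest on Lemma \ref{ch3:lem_Ek-on-Verma-spcase1-grad}, and both give $r=0$; the paper's multiplicity count is slightly more robust because it never needs to know which constituent of the $\Delta$-filtration is at the top.

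That, however, is exactly where your write-up is incomplete. You correctly single out the middle surjection $\widetilde E_k(\Delta^{w(1_{\mu'})}) \twoheadrightarrow \Delta^{w(1_{\mu})}\langle 0\rangle$ as ``the main content,'' but you do not establish it: Lemma \ref{ch3:lem_Ek-on-Verma-spcase1-grad} records only the graded multiset of constituents $\Delta^{wz(1_{\mu})}\langle\ell(z)\rangle$, $z\in J^{\mu}_{\mu'}$, and says nothing about which one is a quotient. The claim is true, but needs the following two observations you should supply. First, $w$ is the shortest element of $wW_{\mu'}$ (since $w\in J_{\mu'}$), so for $1\ne z\in J^{\mu}_{\mu'}$ one has $\ell(wz)=\ell(w)+\ell(z)>\ell(w)$, hence $wz(1_{\mu})>w(1_{\mu})$ in the highest weight order; thus $w(1_{\mu})$ is the strictly minimal constituent weight and occurs with multiplicity one. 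Second, in a highest weight category a constituent of strictly minimal order and multiplicity one in a $\Delta$-filtered module can always be moved to the top of the filtration, since $\Ext^1(\Delta^\lambda,\Delta^\mu)=0$ unless $\lambda<\mu$. Without these two points the surjection you are using is unjustified.

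One further imprecision: ``the whole filtration survives parabolic truncation'' is too strong, since constituents $\Delta^{wz(1_{\mu})}$ with $wz(1_{\mu})\notin P^\nu$ are killed. The point that actually matters is that the top constituent $\Delta^{w(1_{\mu})}\langle 0\rangle$ does survive, because $w\in {^vJ}^\nu_{\mu'}\subset {^vJ}^\nu_{\mu}$, which you do note. The paper avoids this entirely by proving the non-parabolic statement first and passing to the parabolic case via Lemma \ref{ch3:lem_trunc-categ-gen} $(b)$; that is another clean option if you prefer it.
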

\begin{proof}
By Lemmas \ref{ch3:lem-grad-unique} and \ref{ch3:lem_Ek-on-proj-spcase1}, we have $\widetilde E_k({^vP}^{w(1_{\mu'})})={^vP}^{w(1_{\mu})}[r]$ for some integer $r$. We must show that the shift $r$ is zero.

First, we prove this in the non-parabolic case. 
The module $\Delta^{w(1_\mu')}$ (resp. $\Delta^{w(1_\mu)}$) is contained in each $\Delta$-filtration of ${^vP}^{w(1_{\mu'})}$ (resp. ${^vP}^{w(1_{\mu})}$) only once and without a graded shift. Moreover, by Lemma \ref{ch3:lem_Ek-on-Verma-spcase1-grad} the module $\Delta^{w(1_\mu)}$ is contained in each $\Delta$-filtration of $\widetilde E_k(\Delta^{w(1_\mu')})$ only once and without a graded shift. This implies that the graded shift $r$ is zero.

The parabolic case follows from the non-parabolic case. Really, the projective covers of simple modules in the parabolic category $O$ are quotients of protective covers in the non-parabolic category $O$ (see Lemma \ref{ch3:lem_trunc-categ-gen} $(b)$). Thus the shift $r$ should be zero in the parabolic case because it is zero in the non-parabolic case.
\end{proof}

Let us check that the functor $\widetilde E_k\colon {^v\widetilde O}^\nu_{\mu}\to {^v\widetilde O}^\nu_{\mu'}$
satisfies the hypotheses of Corollary \ref{ch3:coro_main-Koszul}. Condition $(a)$ follows from Lemma \ref{ch3:lem_Ek-on-proj-grad-spcase1}.

Let $P$ and $Q$ be projective covers of simple modules in $^v\widetilde O_{\mu}$ graded as above. To check $(b)$, we have to show that we have an isomorphism
$$
\Hom(\widetilde E_{k}(P)\langle 1\rangle,\widetilde E_{k}(Q))\simeq \Hom(P\langle 1 \rangle,Q).
$$
We have
$$
\begin{array}{rcl}
\Hom(\widetilde E_{k}(P)\langle 1\rangle,\widetilde E_{k}(Q))&\simeq &\Hom(P,\widetilde F_{k+1}\widetilde E_{k+1}(Q)\langle \mu_{k+1}-1 \rangle)\\
&\simeq&\Hom(P,[\mu_{k+1}+1]_q(Q)\langle \mu_{k+1}-1 \rangle)\\
&\simeq&\Hom(P,Q\langle -1 \rangle)\bigoplus \oplus_{r=1}^{\mu_{k+1}}\Hom(P,Q\langle 2r-1 \rangle)\\
&\simeq&\Hom(P\langle 1 \rangle,Q).
\end{array}
$$
Here the first isomorphism follows from Lemma \ref{ch3:lem_gr-lift-E-F+adj-case1} $(b)$, the second isomorphism follows from Lemma \ref{ch3:lem_prod-FE-case1}. The last isomorphism holds because $\Hom(P,Q\langle r \rangle)$ is zero for $r>0$ because the $\bbZ$-graded algebra
$$
\End(\bigoplus_{w\in {^vJ^\nu_\mu}}{^vP}^{w(1_{\mu'})})
$$
has zero negative homogeneous components (as it is Koszul).

For each $\mu=(\mu_1,\cdots,\mu_e)$ we set $\mu^{\rm op}=(\mu_e,\cdots,\mu_1)$.
We can define the positive level version $O_{\mu,+}^\nu$ of the category $O_{\mu}^\nu$ in the following way. For each $\lambda\in P$ we set $\widetilde\lambda^+=\lambda+z_\lambda\delta+(e-N)\Lambda_0$,
where $z_\lambda=(\lambda,2\rho+\lambda)/2{e}$. For each $\lambda\in P^\nu$ denote by $^+\Delta(\lambda)$ the Verma module with highest weight $\widetilde\lambda^+$ and denote by $^+L(\lambda)$ its simple quotient. We will also abbreviate $^+\Delta^\lambda={^+\Delta}(\lambda-\rho)$ and $^+L^\lambda={^+L}(\lambda-\rho)$.
 Let $O_{\mu,+}^\nu$ be the Serre subcategory of $O^\nu$ generated by the simple modules $^+L^\lambda$ for $\lambda\in P^\nu[\mu^{\rm op}]$. Similarly to the negative $e$-action of $\widehat W$ on $P$ described in Section \ref{ch3:subs_ext-aff} we can consider the positive $e$-action on $P$. We define the positive $e$-action in the following way: the element $w\in\widehat W$ sends $\lambda$ to $-w(-\lambda)$ (where $w(-\lambda)$ corresponds to the negative $e$-action). The notion of the positive $e$-action of $\widehat W$ on $P$ is motivated by the fact that the map
$$
P\to \widehat\bfh^*,\quad \lambda\mapsto\widetilde{\lambda-\rho}^++\widehat\rho
$$
is $\widehat W$-invariant. 
We say that an element $\lambda\in P$ is $e$-\emph{dominant} if we have $\lambda_1\geqslant\lambda_2\geqslant\cdots\geqslant\lambda_N\geqslant\lambda_1-e$. Fix an $e$-dominant element $1^+_\mu\in P[\mu^{\rm op}]$. (We can take for example $1^+_\mu=(e^{\mu_1},\cdots,1^{\mu_e})$). Note that the stabilizer of $1^+_\mu$ in $\widehat W$ with respect to the positive $e$-action is $W_\mu$. From now on, each time when we write $w(1^+_\mu)$ we mean the positive $e$-action on $P$ and each time when we write $w(1_\mu)$ we mean the negative $e$-action.

Recall that $J_{\mu,+}$ is the subset of $\widehat W$ containing all $w$ such that $w$ is maximal in $wW_\mu$. Set $J^\nu_{\mu,+}=\{w\in J_{\mu,+};~w(1^+_\mu)\in P^\nu\}$. Note that the inclusion $J_{\mu'}\subset J_\mu$ induces an inclusion $J^\nu_{\mu'}\subset J^\nu_\mu$. For $v\in\widehat W$ we set $^vJ^\nu_\mu=\{w\in J^\nu_\mu;~w\leqslant v\}$ and $^vJ^\nu_{\mu,+}=\{w\in J^\nu_{\mu,+};~w\leqslant v\}$.

We have the following lemma. 

\smallskip
\begin{lem}
\label{ch3:lem_comb_index_set_O}
$(a)$ There is a bijection $J_\mu^\nu\to J_{\nu,+}^\mu$ given by $w\mapsto w^{-1}$.

$(b)$ For each $v\in J_\mu^\nu$, there is a bijection $^vJ_\mu^\nu\to {^{v^{-1}}}J_{\nu,+}^\mu$ given by $w\mapsto w^{-1}$.
\end{lem}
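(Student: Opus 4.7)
The plan is to give an intrinsic Coxeter-theoretic description of $J_\mu^\nu$ and $J_{\nu,+}^\mu$ that visibly intertwines the bijection with the involution $w\mapsto w^{-1}$, and then to deduce part (b) from the compatibility of inversion with the Bruhat order on $\widehat W$.

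First I would show that an element $w\in J_\mu$ satisfies $w(1_\mu)\in P^\nu$ if and only if $w$ is the longest element of the left coset $W_\nu w$. Indeed, $1_\mu$ is $e$-antidominant with stabilizer $W_\mu$ for the negative $e$-action, so $w(1_\mu)$ is strictly $\nu$-dominant precisely when $s\cdot w(1_\mu)<w(1_\mu)$ for every simple reflection $s\in W_\nu$, which is the standard length-theoretic condition $\ell(sw)<\ell(w)$ for all such $s$. This gives the description
$$
J_\mu^\nu=\{w\in\widehat W;~w\text{ is shortest in }wW_\mu\text{ and longest in }W_\nu w\}.
$$
A symmetric argument, using that $1^+_\nu$ is $e$-dominant with stabilizer $W_\nu$ for the positive $e$-action, yields
$$
J_{\nu,+}^\mu=\{w\in\widehat W;~w\text{ is longest in }wW_\nu\text{ and shortest in }W_\mu w\}.
$$

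Part (a) then follows immediately: inversion exchanges ``shortest in $wW_\mu$'' with ``shortest in $W_\mu w^{-1}$'' and ``longest in $W_\nu w$'' with ``longest in $w^{-1}W_\nu$'', so it sends the first pair of conditions bijectively onto the second. For part (b), I would verify that inversion preserves the Bruhat order on $\widehat W$. Writing $w=u\pi^n$ with $u\in\widetilde W$, one has $w^{-1}\pi^n=\pi^{-n}u^{-1}\pi^n$, the image of $u^{-1}$ under a length-preserving diagram automorphism of $\widetilde W$; combined with the standard fact that inversion preserves the Bruhat order on the Coxeter group $\widetilde W$, this shows that $w_1\leqslant w_2$ in $\widehat W$ iff $w_1^{-1}\leqslant w_2^{-1}$. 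Hence the bijection of part (a) restricts to a bijection ${}^vJ_\mu^\nu\to{}^{v^{-1}}J_{\nu,+}^\mu$.

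I expect the main technical point to be the length-theoretic translation of the dominance condition in the first step. Care is needed because $1_\mu$ is not regular (its stabilizer $W_\mu$ is nontrivial) and because the $\widehat W$-action on $P$ at level $-e$ is affine rather than linear; the corresponding statement for $J_{\nu,+}^\mu$ at the positive level must likewise be obtained by tracking how the positive and negative $e$-actions are intertwined, which is precisely what accounts for the exchange of ``shortest'' and ``longest'' on each side of the double coset under the involution $w\mapsto w^{-1}$.
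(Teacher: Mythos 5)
Your argument is correct, and it fills in more detail than the paper does. The paper disposes of part~(a) by a single citation to \cite[Cor.~3.3]{SVV} and of part~(b) with the words ``follows from part~(a)''; you instead give a self-contained Coxeter-theoretic proof. The two routes are compatible: your double-coset characterizations
\[
J_\mu^\nu=\{w;~w\text{ shortest in }wW_\mu,~w\text{ longest in }W_\nu w\},\qquad
J_{\nu,+}^\mu=\{w;~w\text{ longest in }wW_\nu,~w\text{ shortest in }W_\mu w\}
\]
are essentially what the cited corollary of \cite{SVV} encodes, and your step translating ``$w(1_\mu)\in P^\nu$'' into ``$w$ longest in $W_\nu w$'' is the substance of that reference. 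The extra value in your write-up is twofold. First, you handle the non-regular case explicitly: when $1_\mu$ has repeated entries, the condition $w(1_\mu)\in P^\nu$ requires \emph{strict} inequalities, and the correct statement is that for $w$ shortest in $wW_\mu$ one has $(w(1_\mu))_r>(w(1_\mu))_{r+1}$, $(w(1_\mu))_r<(w(1_\mu))_{r+1}$, or $(w(1_\mu))_r=(w(1_\mu))_{r+1}$ according as $\ell(s_rw)<\ell(w)$, $\ell(s_rw)>\ell(w)$ with $s_rw\notin wW_\mu$, or $\ell(s_rw)>\ell(w)$ with $s_rw\in wW_\mu$; in particular the equality case is absorbed into $\ell(s_rw)>\ell(w)$, which is what makes the clean dichotomy go through. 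Second, and more usefully, you make explicit the fact that the paper leaves implicit in ``(b) follows from (a)'': inversion is an order isomorphism of $(\widehat W,\leqslant)$. Your reduction $w^{-1}\pi^n=\pi^{-n}u^{-1}\pi^n$, combining $\pi$-conjugation (a length-preserving diagram automorphism of $\widetilde W$) with the standard fact that inversion preserves Bruhat order on the Coxeter group $\widetilde W$, is exactly the right argument, and it is consistent with the paper's convention that $w_1\leqslant w_2$ in $\widehat W$ requires $w_1,w_2$ to lie in the same $\widetilde W$-coset.
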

\begin{proof} 
Part $(a)$ follows from \cite[Cor.~3.3]{SVV}. Part $(b)$ follows from part $(a)$.
\end{proof}

\smallskip
Similarly to the truncated version $^vO_{\mu}^\nu$ of $O_\mu^\nu$, we can define the truncated version $^vO_{\mu,+}^\nu$ of $O_{\mu,+}^\nu$. We define $^vO_{\mu,+}^\nu$ as the Serre quotient of $O_{\mu,+}^\nu$, where we kill the simple module $^+L^{w(1^+_\mu)}$ for each $w\in J_{\mu,+}^\nu-{^vJ}_{\mu,+}^\nu$.

By \cite[Thm.~3.12]{SVV}, for $v\in J_\mu^\nu$, the category ${^v\widetilde O}_{\mu}^\nu$ is Koszul dual to the category ${^{v^{-1}}\widetilde O}_{\nu,+}^\mu$. The bijection between the simple modules in ${^v\widetilde O}_{\mu}^\nu$ and the indecomposable projective modules in ${^{v^{-1}}\widetilde O}_{\nu,+}^\mu$ given by the Koszul functor $\calK$ is such that for each $w\in {^vJ}_\mu^\nu$ the module $L^{w(1_\mu)}$ corresponds to the projective cover of $^+L^{w^{-1}(1^+_\nu)}$.  

We should make a remark about our notation. Usually, we denote by $e$ the number of components in $\mu$ and we denote by $l$ the number of components in $\nu$. So, when we exchange the roles of $\mu$ and $\nu$ and we consider the category $O_{\nu,+}^\mu$, we mean that this category is defined with respect to the level $l-N$ (and not $e-N$).

Now, assume again that $v$ is in $J_{\mu'}^\nu w_{\mu'}$. Then we have $vw_\mu\in J_\mu^\nu$ and $vw_{\mu'}\in J_{\mu'}^\nu$. In this case the Koszul dual categories to $^vO_\mu^\nu$ and $^vO_{\mu'}^\nu$ are  $^{w_\mu v^{-1}}O_{\nu,+}^\mu$ and $^{w_{\mu'}v^{-1}}O_{\nu,+}^{\mu'}$.

\smallskip
\begin{lem}
\label{ch3:lem-can_change_trunc_pos_level}
$(a)$ We have 
$$^{w_{\mu'}v^{-1}}J_{\nu,+}^{\mu'}={^{w_\mu v^{-1}}}J_{\nu,+}^\mu\cap J_{\nu,+}^{\mu'}.$$

$(b)$ We have 
$$^{w_{\mu'}v^{-1}}J_{\nu,+}^{\mu'}=^{w_{\mu}v^{-1}}J_{\nu,+}^{\mu'}.$$
\end{lem}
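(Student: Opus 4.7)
The plan is to translate both sides via the bijection $w\mapsto w^{-1}$ of Lemma \ref{ch3:lem_comb_index_set_O}, and then to reduce everything to a single combinatorial fact about the Bruhat order. Writing $v=yw_{\mu'}$ with $y=vw_{\mu'}\in J_{\mu'}^\nu$, one has $vw_\mu=y(w_{\mu'}w_\mu)$, where $w_{\mu'}w_\mu\in W_{\mu'}$ has length $\ell(w_{\mu'})-\ell(w_\mu)$. After inversion, part (a) becomes
$$\{w\in J_{\mu'}^\nu~:~w\leqslant vw_{\mu'}\}=\{w\in J_\mu^\nu\cap J_{\mu'}^\nu~:~w\leqslant vw_\mu\},$$
while part (b) becomes $\{w\in J_{\mu'}^\nu~:~w\leqslant vw_{\mu'}\}=\{w\in J_{\mu'}^\nu~:~w\leqslant vw_\mu\}$.

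The first step will be to verify $J_{\mu'}^\nu\subset J_\mu^\nu$. The inclusion $J_{\mu'}\subset J_\mu$ is immediate from $W_\mu\subset W_{\mu'}$, and the $\nu$-dominance condition is transported via Lemma \ref{ch3:lem_Ek-Verma-spcase1}: the module $E_k(\Delta^{w(1_{\mu'})}_R)$ admits a $\Delta$-filtration with constituents $\Delta^{wz(1_\mu)}_R$ indexed by $z\in W_{\mu'}/W_\mu$ and lies in $O^\nu$ whenever $w\in J_{\mu'}^\nu$, so taking $z=1$ yields $w\in J_\mu^\nu$.

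The core combinatorial claim to establish is that, for every $w\in J_{\mu'}$, the inequality $w\leqslant vw_\mu$ is equivalent to $w\leqslant vw_{\mu'}=y$. The easy direction uses that $y\in J_{\mu'}$, whence $\ell(y\cdot w_{\mu'}w_\mu)=\ell(y)+\ell(w_{\mu'}w_\mu)$; thus $y\leqslant vw_\mu$ in Bruhat order, and $w\leqslant y$ implies $w\leqslant vw_\mu$. The delicate direction is the converse: assuming $w\leqslant y\cdot u$ with $u=w_{\mu'}w_\mu\in W_{\mu'}$, applying the subword property of Bruhat order to the reduced expression for $yu$ obtained by concatenating reduced expressions for $y$ and $u$ gives $w=y'u'$ with $y'\leqslant y$ and $u'\in W_{\mu'}$. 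Then $w$ lies in the coset $y'W_{\mu'}$, and since $w\in J_{\mu'}$ is the unique shortest representative of its $W_{\mu'}$-coset, the standard fact that this representative is $\leqslant$ every element of the coset forces $w\leqslant y'\leqslant y$.

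With this Bruhat-order equivalence, part (b) is immediate, while part (a) follows by intersecting with $J_{\mu'}^\nu$ and using the preliminary inclusion. The main obstacle I expect is the backward direction of the Bruhat-order equivalence, where the decomposition $w=y'u'$ is not canonical and one must combine the subword reduction with the parabolic coset structure and length-additivity along $J_{\mu'}\cdot W_{\mu'}$ to extract the comparison with $y$.
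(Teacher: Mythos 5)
Your proof is correct, and the overall strategy (push everything through the bijection $w\mapsto w^{-1}$ of Lemma~\ref{ch3:lem_comb_index_set_O}, then resolve a Bruhat-order comparison) matches the paper. The difference lies in how the Bruhat comparison is handled. The paper normalizes the truncation bounds: since $v\in J_{\mu',+}$ is the longest element of both $vW_\mu$ and $vW_{\mu'}$, one has $^{vw_{\mu'}}J^\nu_{\mu'}={^v}J^\nu_{\mu'}$ and $^{vw_\mu}J^\nu_\mu={^v}J^\nu_\mu$ (the truncated index set depends only on the coset of the bound), after which the translated statement collapses to $^vJ^\nu_{\mu'}={^v}J^\nu_\mu\cap J^\nu_{\mu'}$, which is the stated inclusion $J^\nu_{\mu'}\subset J^\nu_\mu$. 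You instead compare $vw_\mu$ and $vw_{\mu'}$ directly, proving the underlying Deodhar-type lemma --- for $w\in J_{\mu'}$, $w\leqslant vw_\mu$ iff $w\leqslant vw_{\mu'}$ --- from the subword property and the length-additive factorization $vw_\mu=y\cdot(w_{\mu'}w_\mu)$. These are two presentations of the same coset-invariance fact; the paper appeals to it (via the remark after the definition of $^vO_{\mu,R}$ in Section~\ref{ch3:subs_ext-aff}: replacing $v$ by another representative of its $W_\mu$-coset does not change $^vJ_\mu$), while you reprove it from scratch. Your version is more self-contained but also more verbose, and one should be slightly careful that the subword argument is stated for a genuine Coxeter group whereas $\widehat W$ is extended; passing to $\widetilde W$ via multiplication by a power of $\pi$ (as the paper's definition of $\leqslant$ on $\widehat W$ does) handles this, and the paper is equally silent on the point.

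One detour worth flagging: to get $J^\nu_{\mu'}\subset J^\nu_\mu$ you route through the functor $E_k$ and the $\Delta$-filtration of Lemma~\ref{ch3:lem_Ek-Verma-spcase1}. This works, but it replaces a purely combinatorial fact (asserted without fanfare in Section~\ref{ch3:subs_dual-funct-in-O}) by a representation-theoretic one. The direct argument is short: writing $w(1_{\mu'})=w(1_\mu)+\epsilon_r$ where $r$ is the unique position with $w(1_\mu)_r\equiv k$, the only $\nu$-dominance inequality that could degrade when passing from $w(1_{\mu'})$ to $w(1_\mu)$ is the one at position $r$; an equality $w(1_\mu)_r=w(1_\mu)_{r+1}$ would force $w(1_{\mu'})_{r+1}\equiv k \pmod e$, impossible since $\mu'_k=0$. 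Either route is fine; I mention the combinatorial one only because your $E_k$-based argument, while valid, imports more machinery than the lemma needs.
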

\begin{proof}


Let us prove $(a)$. By Lemma \ref{ch3:lem_comb_index_set_O} the statement is equivalent to $$^{vw_{\mu'}}J^{\nu}_{\mu'}={^{vw_\mu}}J^{\nu}_\mu\cap J^{\nu}_{\mu'}.$$ Moreover, by definition, we have $^{vw_{\mu'}}J^{\nu}_{\mu'}={^v}J^{\nu}_{\mu'}$ and ${^{vw_\mu}}J^{\nu}_\mu={^{v}}J^{\nu}_\mu$. Thus, the statement is equivalent to $^{v}J^{\nu}_{\mu'}={^{v}}J^{\nu}_\mu\cap J^{\nu}_{\mu'}$. The last equality is obvious.

Part $(b)$ follows from part $(a)$.

\end{proof}

\smallskip
Now, put $u=w_{\mu}v^{-1}$. The discussion above together with Lemma \ref{ch3:lem-can_change_trunc_pos_level} shows that the Koszul dual categories to to $^vO_\mu^\nu$ and $^vO_{\mu'}^\nu$ are  $^{u}O_{\nu,+}^\mu$ and $^{u}O_{\nu,+}^{\mu'}$.

We get the following result.

\smallskip
\begin{thm}
\label{ch3:thm_dual-func-O-spcase1}
Assume that we have $W_{\mu}\subset W_{\mu'}$.

$(a)$ The functor $\widetilde F_k\colon D^b(^v\widetilde O^\nu_{\mu})\to D^b(^v\widetilde O^\nu_{\mu'})$ is Koszul dual to the shifted parabolic truncation functor $\widetilde\tr\langle \mu_{k+1} \rangle\colon D^b(^u\widetilde O_{\nu,+}^\mu)\to D^b(^u\widetilde O_{\nu,+} ^{\mu'})$.

$(b)$ The functor $\widetilde E_k\colon D^b(^v\widetilde O^\nu_{\mu'})\to D^b(^v\widetilde O^\nu_{\mu})$ is Koszul dual to the parabolic inclusion functor $\widetilde\inc\colon D^b(^u\widetilde O_{\nu,+}^{\mu'})\to D^b(^u\widetilde O_{\nu,+} ^{\mu})$.
\end{thm}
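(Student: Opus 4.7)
The plan is to apply Corollary~\ref{ch3:coro_main-Koszul} to the functor $\widetilde E_k$ to establish part~(b), and then to deduce part~(a) by a standard adjointness argument. Set $A={^vA^\nu_\mu}$ and $A'={^vA^\nu_{\mu'}}$, so that ${^v\widetilde O}^\nu_\mu\simeq\grmod(A)$ and ${^v\widetilde O}^\nu_{\mu'}\simeq\grmod(A')$. Label the indecomposable idempotents of $A_0$ by $\Lambda={^vJ}^\nu_\mu$, with $e_w$ cutting out ${^vP}^{w(1_\mu)}$, and those of $A'_0$ by $\Lambda'={^vJ}^\nu_{\mu'}\subset\Lambda$. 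Both algebras are Koszul, and by \cite[Thm.~3.12]{SVV} their Koszul duals are $A^!\simeq{^uA^\mu_{\nu,+}}$ and $(A')^!\simeq{^uA^{\mu'}_{\nu,+}}$. Take $T=\widetilde E_k\colon\grmod(A')\to\grmod(A)$.

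The three hypotheses of Corollary~\ref{ch3:coro_main-Koszul} are verified as follows. Hypothesis~(a), that $T(A'e'_w)\simeq Ae_w$ for $w\in\Lambda'$, is exactly Lemma~\ref{ch3:lem_Ek-on-proj-grad-spcase1}. Hypothesis~(b), the isomorphism on degree-$1$ $\Hom$-spaces, is the computation displayed in the paragraph immediately preceding the theorem, combining the adjunction $(\widetilde E_k,\widetilde F_k\langle\mu_{k+1}\rangle)$ of Lemma~\ref{ch3:lem_gr-lift-E-F+adj-case1}(b), the decomposition $\widetilde F_k\widetilde E_k\simeq\Id^{\oplus[\mu_{k+1}+1]_q}$ of Lemma~\ref{ch3:lem_prod-FE-case1}, and the Koszul positivity of ${^v\widetilde O}^\nu_\mu$. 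For the dimension condition $\dim(A')^!=\dim{_{\Lambda'}A^!}$, I use the description of the parabolic subcategory in Section~\ref{ch3:subs_Zuck} (as the Serre subcategory of ${^u\widetilde O}^\mu_{\nu,+}$ generated by a subset of the simples), together with the combinatorial bijections of Lemmas~\ref{ch3:lem_comb_index_set_O} and~\ref{ch3:lem-can_change_trunc_pos_level}, to identify ${_{\Lambda'}A^!}\simeq{^uA^{\mu'}_{\nu,+}}\simeq(A')^!$.

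With the hypotheses in place, Corollary~\ref{ch3:coro_main-Koszul} produces a commutative diagram showing that $\widetilde E_k$ is Koszul dual to the inclusion $\iota\colon\grmod({_{\Lambda'}A^!})\hookrightarrow\grmod(A^!)$, which under the identifications above is precisely the parabolic inclusion $\widetilde\inc$; this proves part~(b). For part~(a), the first adjunction in Lemma~\ref{ch3:lem_gr-lift-E-F+adj-case1}(b) makes $\widetilde F_k\langle-\mu_{k+1}\rangle$ the left adjoint of $\widetilde E_k$, while $\widetilde\tr$ is the left adjoint of $\widetilde\inc$ (Section~\ref{ch3:subs_Zuck}). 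Since Koszul duality is an equivalence and therefore preserves adjunctions, part~(b) forces $\widetilde F_k\langle-\mu_{k+1}\rangle$ to be Koszul dual to $\widetilde\tr$, and transporting the internal shift through the $\calK$-equivalence yields the stated assertion for $\widetilde F_k$.

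The main obstacle is the verification of the dimension condition—equivalently, the identification of the idempotent-killing quotient ${_{\Lambda'}A^!}$ with the simple-killing parabolic algebra ${^uA^{\mu'}_{\nu,+}}$. By construction, ${_{\Lambda'}A^!}$ corresponds on the module-category side to the Serre subcategory of ${^u\widetilde O}^\mu_{\nu,+}$ generated by the simples indexed by $\Lambda'$; the task is to check that this coincides with the parabolic subcategory ${^u\widetilde O}^{\mu'}_{\nu,+}$, which in turn reduces to unpacking the precise simple-versus-indecomposable-projective correspondence under the Koszul duality of \cite[Thm.~3.12]{SVV}.
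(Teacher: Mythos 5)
Your proposal is correct and follows essentially the same route as the paper: apply Corollary~\ref{ch3:coro_main-Koszul} to $T=\widetilde E_k$, verifying hypothesis~(a) via Lemma~\ref{ch3:lem_Ek-on-proj-grad-spcase1}, hypothesis~(b) via the displayed $\Hom$-space computation (using the graded adjunction, $\widetilde F_k\widetilde E_k\simeq\Id^{\oplus[\mu_{k+1}+1]_q}$, and Koszul positivity), and identifying ${_{\Lambda'}A^!}$ with the parabolic quotient via the combinatorics of $J_\mu^\nu$, $J_{\nu,+}^\mu$ from Lemmas~\ref{ch3:lem_comb_index_set_O} and~\ref{ch3:lem-can_change_trunc_pos_level} together with \cite[Thm.~3.12]{SVV}; part~(a) then follows by adjointness. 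You make the dimension hypothesis of Corollary~\ref{ch3:coro_main-Koszul} a bit more visible than the paper does, which is a useful clarification but not a different argument.
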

\begin{proof}
We have checked above that the functor $\widetilde E_k\colon {^v\widetilde O}^\nu_{\mu}\to {^v\widetilde O}^\nu_{\mu'}$ satisfies the hypotheses of Corollary \ref{ch3:coro_main-Koszul}. Thus Corollary \ref{ch3:coro_main-Koszul} implies part $(b)$. Part $(a)$ follows from part $(b)$ by adjointness.
\end{proof}

\smallskip
Similarly to the situation $W_{\mu}\subset W_{\mu'}$, we can do the same in the situation $W_{\mu}\subset W_{\mu'}$ (see also Section \ref{ch3:subs_second-case}). In this case we should take $v\in J_{\mu}^\nu w_\mu$ and put $u=w_{\mu'}v^{-1}$. We get the following theorem.

\smallskip
\begin{thm}
\label{ch3:thm_dual-func-O-spcase2}
Assume that we have $W_{\mu'}\subset W_{\mu}$.

$(a)$ The functor $\widetilde F_k\colon D^b(^v\widetilde O^\nu_{\mu})\to D^b(^v\widetilde O^\nu_{\mu'})$ is Koszul dual to the parabolic inclusion functor $\widetilde\inc\colon D^b(^u\widetilde O_{\nu,+}^\mu)\to D^b(^u\widetilde O_{\nu,+} ^{\mu'})$.

$(b)$ The functor $\widetilde E_k\colon D^b(^v\widetilde O^\nu_{\mu'})\to D^b(^v\widetilde O^\nu_{\mu})$ is Koszul dual to the shifted parabolic truncation functor $\widetilde\tr\langle \mu_k-1\rangle\colon D^b(^u\widetilde O_{\nu,+}^{\mu'})\to D^b(^u\widetilde O_{\nu,+} ^{\mu})$.
\qed
\end{thm}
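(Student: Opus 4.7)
The proof will be a direct parallel to that of Theorem \ref{ch3:thm_dual-func-O-spcase1}, but with the roles of $E_k$ and $F_k$ interchanged (as already observed in Section \ref{ch3:subs_second-case}, where the analogs of Lemmas \ref{ch3:lem_gr-lift-E-F+adj-case1} and \ref{ch3:lem_prod-FE-case1} are stated). My plan is first to establish the key analog of Lemma \ref{ch3:lem_Ek-on-proj-grad-spcase1}, namely that the graded functor $\widetilde F_k\colon {^v\widetilde O}^\nu_\mu\to {^v\widetilde O}^\nu_{\mu'}$ sends projective covers to projective covers without a grading shift: for each $w\in {^vJ}_\mu^\nu$, there is an isomorphism $\widetilde F_k({^vP}^{w(1_\mu)})\simeq {^vP}^{w(1_{\mu'})}$. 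To obtain this, I would argue as in Lemmas \ref{ch3:lem_Ek-on-proj-spcase1}--\ref{ch3:lem_Ek-on-proj-grad-spcase1}: first check the multiplicities of Verma modules in $\Delta$-filtrations agree in the non-parabolic ungraded case using Lemma \ref{ch3:lem_Fk-Verma-spcase1} and Kazhdan--Lusztig combinatorics; then fix the grading via the normalization $\widetilde\bbV_\mu({^vP}^\mu)\simeq {^vZ}_\mu$ combined with the commutative square for $\widetilde F_k$ and $\widetilde\Ind$ in the case $\mu_{k+1}=0$ (Section \ref{ch3:subs_second-case}); finally pass to the parabolic situation using that parabolic projective covers are quotients of their non-parabolic counterparts.

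Next, I would check that $\widetilde F_k$ satisfies the hypotheses of Corollary \ref{ch3:coro_main-Koszul}. Condition (a) is the previous paragraph. For condition (b), given projective covers $P,Q$ of simples in ${^v\widetilde O}^\nu_\mu$, I compute using the adjunction $(\widetilde F_k,\widetilde E_k\langle \mu_k-1\rangle)$ from Section \ref{ch3:subs_second-case}(b) and the isomorphism $\widetilde E_k\widetilde F_k\simeq \Id^{\oplus [\mu_k]_q}$ from Section \ref{ch3:subs_second-case}(c):
\begin{align*}
\Hom(\widetilde F_k(P)\langle 1\rangle,\widetilde F_k(Q))
&\simeq \Hom(P,\widetilde E_k\widetilde F_k(Q)\langle -\mu_k+1\rangle)\\
&\simeq \bigoplus_{r=0}^{\mu_k-1}\Hom(P,Q\langle 2r-\mu_k+1+(-\mu_k+1)\rangle)\\
&\simeq \Hom(P,Q\langle -1\rangle)\oplus\bigoplus_{r=1}^{\mu_k-1}\Hom(P,Q\langle 2r-1\rangle),
\end{align*}
which equals $\Hom(P\langle 1\rangle,Q)$ since the Koszul algebra $\End(\bigoplus_{w\in {^vJ}^\nu_\mu}{^vP}^{w(1_\mu)})$ has vanishing negative homogeneous components.

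With these verifications, I invoke Corollary \ref{ch3:coro_main-Koszul} applied to the subset $\Lambda'=\,{^vJ}^\nu_{\mu'}\subset {^vJ}^\nu_\mu$ (using Lemma \ref{ch3:lem_Fk-Verma-spcase1}, $\widetilde F_k$ kills exactly the simples indexed by ${^vJ}^\nu_\mu\setminus {^vJ}^\nu_{\mu'}$). The quadratic dual of the endomorphism algebra of a projective generator of ${^v\widetilde O}^\nu_{\mu'}$ is thereby identified with a quotient of the quadratic dual of that of ${^v\widetilde O}^\nu_\mu$, and by the Koszul duality of \cite{SVV} together with Lemma \ref{ch3:lem-can_change_trunc_pos_level} (adapted to the present case by exchanging $\mu$ and $\mu'$), these quadratic duals are the endomorphism algebras of projective generators of ${^uO}^\mu_{\nu,+}$ and ${^uO}^{\mu'}_{\nu,+}$ respectively, where $u=w_{\mu'}v^{-1}$. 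The quotient functor corresponds precisely to $\widetilde\inc\colon {^u\widetilde O}^\mu_{\nu,+}\to {^u\widetilde O}^{\mu'}_{\nu,+}$ (compare Remark \ref{ch3:rk_grade-trunc-gen} and Section \ref{ch3:subs_Zuck}). This establishes part (a). Part (b) then follows by passing to adjoints: $\widetilde E_k$ is right adjoint to $\widetilde F_k\langle -\mu_k+1\rangle$ by Section \ref{ch3:subs_second-case}(b), and $\widetilde\tr\langle \mu_k-1\rangle$ is left adjoint to $\widetilde\inc$ up to the same shift, so Koszul duality of the adjoint pair yields the claim.

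The main obstacle I anticipate is purely bookkeeping rather than conceptual: verifying that the index-set identifications in Lemma \ref{ch3:lem-can_change_trunc_pos_level} still hold with the roles of $\mu$ and $\mu'$ swapped (so that one should take $v\in J^\nu_\mu w_\mu$ and $u=w_{\mu'}v^{-1}$ rather than the opposite), and that the choice of $e$-(anti)dominant weights $1^+_\mu$ versus $1_\mu$ interacts correctly with the positive/negative $e$-action. Since Lemma \ref{ch3:lem_comb_index_set_O}(a) is symmetric in $\mu$ and $\nu$, the argument of Lemma \ref{ch3:lem-can_change_trunc_pos_level} transposes essentially verbatim. The remaining shift $\langle \mu_k-1\rangle$ in part (b) matches the adjunction shift from Section \ref{ch3:subs_second-case}(b), which is exactly what Koszul duality of biadjoint graded functors predicts.
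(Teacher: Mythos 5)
Your overall plan — transplant the proof of Theorem \ref{ch3:thm_dual-func-O-spcase1} with the roles of $E_k$ and $F_k$ swapped, apply Corollary \ref{ch3:coro_main-Koszul} to the graded functor $\widetilde F_k$, and then dualize the adjunction — is exactly what the paper intends (the paper simply points to Section \ref{ch3:subs_second-case} and gives no explicit proof). The application of Corollary \ref{ch3:coro_main-Koszul}, the use of the adjunction and the $\widetilde E_k\widetilde F_k\simeq \Id^{\oplus[\mu_k]_q}$ isomorphism to check condition (b), and the deduction of part (b) of the theorem from part (a) by adjointness are all the right moves. However, the bookkeeping contains several slips that you will need to repair in a final write-up.

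First, the index-set inclusion in the application of Corollary \ref{ch3:coro_main-Koszul} is written backwards. Since $W_{\mu'}\subset W_\mu$ you have $J_\mu\subset J_{\mu'}$, hence ${^vJ}^\nu_\mu\subset {^vJ}^\nu_{\mu'}$. So here $\Lambda' = {^vJ}^\nu_\mu$, $\Lambda = {^vJ}^\nu_{\mu'}$, and $A'$ is the algebra for ${^v\widetilde O}^\nu_\mu$, $A$ the one for ${^v\widetilde O}^\nu_{\mu'}$ — i.e., $\widetilde F_k$ plays exactly the role of the functor $T$ of Corollary \ref{ch3:coro_main-Koszul}. With your inclusion, ${^vJ}^\nu_\mu\setminus{^vJ}^\nu_{\mu'}$ is empty. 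The parenthetical about "$\widetilde F_k$ kills the simples indexed by ..." should be replaced: it is $E_k$ that kills the simples of ${^v}O^\nu_{\mu'}$ indexed by ${^vJ}^\nu_{\mu'}\setminus{^vJ}^\nu_\mu$ (this is the spcase2 analog of Lemma \ref{ch3:lem_Ek-on-proj-spcase1}(b) with $E_k$ and $F_k$ exchanged). Correspondingly, the citation of Lemma \ref{ch3:lem_Fk-Verma-spcase1} for the multiplicity count is misplaced: that lemma is for spcase1 and says $F_k$ is an isomorphism on Vermas. In spcase2 the roles flip — $E_k(\Delta^{w(1_{\mu'})})\simeq\Delta^{w(1_\mu)}$, while $[F_k(\Delta^{w(1_\mu)})]=\sum_{z\in W_\mu/W_{\mu'}}[\Delta^{wz(1_{\mu'})}]$ — so the multiplicity comparison should invoke the spcase2 analog of Lemma \ref{ch3:lem_Ek-Verma-spcase1} applied to $F_k$.

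Second, there is a sign error in the degree-$1$ $\Hom$ computation. Using the adjunction $(\widetilde F_k,\widetilde E_k\langle\mu_k-1\rangle)$ one gets
$\Hom(\widetilde F_k(P)\langle 1\rangle,\widetilde F_k(Q))\simeq\Hom(P\langle 1\rangle,\widetilde E_k\widetilde F_k(Q)\langle\mu_k-1\rangle)\simeq\Hom(P,\widetilde E_k\widetilde F_k(Q)\langle\mu_k-2\rangle)$,
not $\langle -\mu_k+1\rangle$; then the shifts produced are $\langle 2r-\mu_k+1+\mu_k-2\rangle=\langle 2r-1\rangle$ for $r\in[0,\mu_k-1]$, which matches your displayed final line. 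So your conclusion is correct, but the intermediate steps as written are inconsistent with each other; fix the sign to $\langle\mu_k-2\rangle$.

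With these corrections the argument is sound and is exactly what the paper has in mind.
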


\subsection{The restriction to the category $\bfA$}
The goal of this section is to restrict the results of the previous section to category $\bfA$.

We have seen that we can grade the functor $E_k$ and $F_k$ for category $O$ when we have $W_\mu\subset W_{\mu'}$ or $W_{\mu'}\subset W_\mu$. Let us show that in this cases we can also grade similar functors for the category $\bfA$. We have $\bfA^\nu[\alpha]\subset {^vO^\nu_\mu}$ and $\bfA^\nu[\alpha+\alpha_k]\subset {^vO^\nu_{\mu'}}$. Denote by $h$ the inclusion functor from $\bfA^\nu[\alpha]$ to ${^vO^\nu_\mu}$. Abusing the notation, we will use the same symbol for the inclusion functor from $\bfA^\nu[\alpha+\alpha_k]$ to ${^vO^\nu_{\mu'}}$. Let $h^*$ and $h^!$ be the left and right adjoint functors to $h$. 
The functor $F_k$ for the category $\bfA$ is defined as the restriction of the functor $F_k$ for the category $O$. This restriction can be written as $h^!F_kh$. The functor $E_k$ for the category $O$ does not preserve the category $\bfA$ in general. The functor $E_k$ for the category $\bfA$ is defined in \cite[Sec.~5.9]{RSVV} as $h^*E_kh$. It is easy to see that we can grade the functor $h$ and its adjoint functors in the same way as we graded Zuckerman functors. Thus we obtain graded lifts $\widetilde E_k$ and $\widetilde F_k$ of the functors $E_k$ and $F_k$ for the category $\bfA$. Moreover, we still have the adjunctions $(\widetilde E_k,\widetilde F_k\langle\mu_{k+1}\rangle)$ (when $W_\mu\subset W_{\mu'}$) and $(\widetilde E_k,\widetilde F_k\langle 1-\mu_{k}\rangle)$ (when $W_{\mu'}\subset W_{\mu}$) in the category $\bfA$.

We do not have adjunctions in other direction in general. However, if additionally  we have $\nu_r>|\alpha|$ for each $r\in[1,l]$, then the functors $E_k$ and $F_k$ for the category $\bfA$ are biadjoint by \cite[Lem.~7.6]{RSVV}. This means that there is no difference between $h^*E_kh$ and $h^!E_kh$. Thus we also get the adjunctions $(\widetilde F_k,\widetilde E_k\langle-\mu_{k+1}\rangle)$ (when $W_\mu\subset W_{\mu'}$) and $(\widetilde F_k,\widetilde E_k\langle \mu_{k}-1\rangle)$ (when $W_{\mu'}\subset W_{\mu}$) in the category $\bfA$. (In fact, we always have the adjunctions in both directions if $k\ne 0$ because in this case the functor $E_k$ for the category $\bfA$ is just the restriction of the functor $E_k$ for the category $O$ and similarly for $\widetilde E_k$.)

We start from a general lemma. Let $A$ be a finite dimensional Koszul algebra over $\bbC$. Let $\{e_\lambda;\lambda\in\Lambda\}$ be the set of indecomposable idempotents in $A$. Fix a subset $\Lambda'\subset \Lambda$. Assume that the algebra $_{\Lambda'}A$ (see Section \ref{ch3:subs_key-lem} for the notations) is also Koszul. Then we have an algebra isomorphism $(_{\Lambda'}A)^!\simeq(A^!)_{\Lambda'}$. The graded algebra $_{\Lambda'}A$ is a quotient of the graded algebra $A$ by a homogeneous ideal. In particular we have an inclusion of categories $\iota\colon\grmod(_{\Lambda'}A)\to \grmod(A)$. Moreover, there is a functor
$$
\tau\colon \grmod(A^!)\to \grmod((A^!)_{\Lambda'}),\qquad  M\mapsto e^{!}_{\Lambda'}M.
$$
The functors $\iota$ and $\tau$ are both exact. They yield functors between derived categories $\iota\colon D^b(_{\Lambda'}A)\to D^b(A)$ and $\tau\colon D^b(A^!)\to D^b((A^!)_{\Lambda'})$.

Since the algebra $A$ is Koszul, there is a functor $\calK\colon D^b(A)\to D^b(A^!)$ defined by $\calK=\RHom(A_0,\bullet)$, see Section \ref{ch3:subs_Koszul-alg}. We will sometimes write $\calK_A$ to specify the algebra $A$.

In the following lemma we identify  $(_{\Lambda'}A)^!=(A^!)_{\Lambda'}$.

\smallskip
\begin{lem}
\label{ch3:lem_calK-subcat}
We have the following isomorphism of functors $D^b(_{\Lambda'}A)\to D^b((A^!)_{\Lambda'})$
$$
\calK_{_{\Lambda'}A}\simeq \tau\circ \calK_A\circ \iota.
$$
\end{lem}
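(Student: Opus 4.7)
My plan is a direct verification via d\'evissage on simple modules.

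First, I identify the target. Since $e^!_{\Lambda'}=\sum_{\lambda\in\Lambda'}e^!_\lambda$ acts on $\RHom_A(A_0,N)$ by precomposition with the projection $A_0\to A_0e_{\Lambda'}=(_{\Lambda'}A)_0$, one has
\[
\tau\circ\calK_A(\iota M)\simeq \RHom_A((_{\Lambda'}A)_0,\iota M)
\]
naturally in $M\in D^b(_{\Lambda'}A)$, viewed as an object of $D^b((A^!)_{\Lambda'})$.

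Second, I construct a natural transformation $\eta\colon\calK_{_{\Lambda'}A}\Rightarrow\tau\circ\calK_A\circ\iota$. Write $X=(_{\Lambda'}A)_0$ and let $L={_{\Lambda'}A}\otimes^{\mathbf{L}}_A(-)$ be the left adjoint to the restriction of scalars $\iota\colon D^b(_{\Lambda'}A)\to D^b(A)$. The counit $L\iota\Rightarrow\mathrm{Id}$ applied to $X$ yields a morphism $L\iota X\to X$ in $D^b(_{\Lambda'}A)$; applying $\RHom_{_{\Lambda'}A}(-,M)$ and invoking the derived adjunction $\RHom_{_{\Lambda'}A}(L\iota X,M)\simeq \RHom_A(\iota X,\iota M)$ produces the desired natural map
\[
\eta_M\colon \RHom_{_{\Lambda'}A}(X,M)\longrightarrow \RHom_A(\iota X,\iota M).
\]

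Finally, I check that $\eta_{L_\lambda}$ is an isomorphism for every simple module $L_\lambda$ with $\lambda\in\Lambda'$. Using that both $A$ and $_{\Lambda'}A$ are Koszul together with the identification $(_{\Lambda'}A)^!=(A^!)_{\Lambda'}$,
\[
\calK_{_{\Lambda'}A}(L_\lambda)=\bigoplus_{\mu\in\Lambda'}\Ext^*_{_{\Lambda'}A}(L_\mu,L_\lambda)=(A^!)_{\Lambda'}e^!_\lambda=e^!_{\Lambda'}A^!e^!_\lambda=\bigoplus_{\mu\in\Lambda'}\Ext^*_A(L_\mu,L_\lambda)=\tau\calK_A\iota(L_\lambda),
\]
and $\eta_{L_\lambda}$ is the evident identification of these two descriptions. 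Since the full subcategory of $M\in D^b(_{\Lambda'}A)$ on which $\eta_M$ is an isomorphism is triangulated (being the equaliser of two triangulated functors linked by a natural transformation) and contains the simples $\{L_\lambda:\lambda\in\Lambda'\}$ that generate $D^b(_{\Lambda'}A)$, it equals all of $D^b(_{\Lambda'}A)$, completing the proof. The main step is the d\'evissage on simples, for which constructing $\eta$ as an honest natural transformation of triangulated functors---rather than merely an object-wise isomorphism---is essential.
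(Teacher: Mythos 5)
Your argument is correct but takes a genuinely different route from the paper. The paper gives a short chain of natural isomorphisms
$\tau\circ\calK_A\circ\iota(M)\simeq \tau(\RHom_A(A_0,M))\simeq \RHom_A(e_{\Lambda'}A_0,M)\simeq \RHom_{_{\Lambda'}A}((_{\Lambda'}A)_0,M)\simeq \calK_{_{\Lambda'}A}(M)$,
where the third isomorphism (replacing $\RHom_A$ by $\RHom_{_{\Lambda'}A}$ of the semisimple module of the quotient) is asserted without further comment; it is precisely here that the standing hypotheses, in particular the identification $(_{\Lambda'}A)^!\simeq(A^!)_{\Lambda'}$, are silently used, since for an arbitrary surjection $A\twoheadrightarrow B$ with a $B$-module $N$ one does not have $\Ext_A^*(N,N)\simeq\Ext_B^*(N,N)$. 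You instead build the comparison as an honest natural transformation $\eta$ (via the counit of the derived base-change/restriction adjunction) and then run a d\'evissage: the full subcategory of objects on which $\eta$ is invertible is thick, and it contains the simples $L_\lambda$, $\lambda\in\Lambda'$, which generate $D^b(_{\Lambda'}A)$. The simple check is carried out by matching both sides to $(A^!)_{\Lambda'}e^!_\lambda$ using Koszulity of $A$ and of $_{\Lambda'}A$ and the identification $(_{\Lambda'}A)^!\simeq(A^!)_{\Lambda'}$. This buys you a proof in which the role of the Koszulity hypotheses is explicit, whereas the paper's chain hides them. One small point to tighten: the phrase "$\eta_{L_\lambda}$ is the evident identification" deserves a sentence of justification — for instance, observe that $\eta$ is $(A^!)_{\Lambda'}$-linear (this follows from functoriality and the compatibility of $\eta$ with composition), that both sides are cyclic modules generated in degree $0$, and that $\eta$ is the identity in degree $0$, so $\eta_{L_\lambda}$ is surjective and hence bijective by the dimension match.
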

\begin{proof}
For a complex $M\in D^b(_{\Lambda'}A)$, we have
$$
\begin{array}{rcl}
\tau\circ \calK_A\circ i(M)&\simeq&\tau(\RHom_A(A_0,M))\\
&\simeq&\RHom_A(e_{\Lambda'}A_0,M)\\
&\simeq&\RHom_{_{\Lambda'}A}({(_{\Lambda'}A})_0,M)\\
&\simeq&\calK_{_{\Lambda'}A}(M).
\end{array}
$$

\end{proof}

\smallskip
Fix $\alpha\in Q^+_e$. Consider the category $\bfA^\nu[\alpha]$ as in Section \ref{ch3:subs_cat-bfA}. Let $\mu$ be such that $\bfA^\nu[\alpha]$ is a subcategory of $O^\nu_\mu$. (Then $\bfA^\nu[\alpha+\alpha_k]$ is a subcategory of $O^\nu_{\mu'}$.) Assume that we have $W_{\mu}\subset W_{\mu'}$. Assume that $v\in J_{\mu'}^\nu w_{\mu'}$ is such that $\bfA^\nu[\alpha]$ is a subcategory of $^vO^\nu_\mu$ and $\bfA^\nu[\alpha+\alpha_k]$ is a subcategory of $^vO^\nu_{\mu'}$. Put $u=w_\mu v^{-1}$. The category $\bfA^\nu[\alpha]$ is also Koszul. Denote by $\widetilde\bfA^\nu[\alpha]$ its graded version. The Koszul dual category to $\bfA^\nu[\alpha]$ is a Serre quotient of the category $^uO_{\nu,+}^\mu$ (see \cite[Rem.~3.15]{Mak-Koszul}). Let us denote this quotient and its graded version by $\bfA_+^\mu[\alpha]$ and $\widetilde\bfA_+^\mu[\alpha]$ respectively. (We will also use similar notations for $\bfA^\nu[\alpha+\alpha_k]$.)

First, we prove the following lemma.

\smallskip
\begin{lem}
\label{ch3:lem-Zuck_defined_A+}
Assume that we have $W_{\mu}\subset W_{\mu'}$ and $k\ne 0$.

$(a)$
The inclusion of categories ${^uO}_{\nu,+}^{\mu'}\subset {^uO}_{\nu,+}^{\mu}$ yields an inclusion of categories $\bfA_+^{\mu'}[\alpha+\alpha_k]\subset \bfA_+^\mu[\alpha]$.

$(b)$ 
The inclusion of categories ${^u\widetilde O}_{\nu,+}^{\mu'}\subset {^u\widetilde O}_{\nu,+}^{\mu}$ yields an inclusion of categories $\widetilde\bfA_+^{\mu'}[\alpha+\alpha_k]\subset \widetilde\bfA_+^\mu[\alpha]$.

Assume that we have $W_{\mu}\supset W_{\mu'}$ and $k\ne 0$.

$(c)$
The inclusion of categories ${^uO}_{\nu,+}^{\mu}\subset {^uO}_{\nu,+}^{\mu'}$ yields an inclusion of categories $\bfA_+^{\mu}[\alpha]\subset \bfA_+^{\mu'}[\alpha+\alpha_k]$.

$(d)$ 
The inclusion of categories ${^u\widetilde O}_{\nu,+}^{\mu}\subset {^u\widetilde O}_{\nu,+}^{\mu'}$ yields an inclusion of categories $\widetilde\bfA_+^{\mu'}[\alpha]\subset \widetilde\bfA_+^{\mu'}[\alpha+\alpha_k]$.
\end{lem}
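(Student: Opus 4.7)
The plan is to reduce each part of the lemma to a combinatorial comparison of the index sets of simples surviving in the two Serre quotients, via the Koszul bijection on simples. First I would observe that the parabolic inclusion ${\rm inc}\colon {^uO}^{\mu'}_{\nu,+} \hookrightarrow {^uO}^\mu_{\nu,+}$ is exact and preserves simple modules. Since $\bfA_+^\mu[\alpha]$ and $\bfA_+^{\mu'}[\alpha+\alpha_k]$ are Serre quotients of the respective ambient categories, say $\bfA_+^\mu[\alpha] = {^uO}^\mu_{\nu,+}/\calS_\mu$ and $\bfA_+^{\mu'}[\alpha+\alpha_k] = {^uO}^{\mu'}_{\nu,+}/\calS_{\mu'}$, the functor ${\rm inc}$ descends to a fully faithful exact functor on these quotients if and only if $\calS_{\mu'} = {\rm inc}^{-1}(\calS_\mu)$. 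Each side is generated as a Serre subcategory by simples, so it suffices to check this equality on the level of simples.

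By the Koszul bijection $L^{w(1_\mu)} \leftrightarrow {^+L^{w^{-1}(1^+_\nu)}}$ recalled in Section \ref{ch3:subs_dual-funct-in-O} and the construction of $\bfA_+^\mu[\alpha]$ as the Koszul dual of the Serre subcategory $\bfA^\nu[\alpha] \subset {^vO}^\nu_\mu$, the simples of ${^uO}^\mu_{\nu,+}$ surviving in $\bfA_+^\mu[\alpha]$ are indexed by $\{w \in {^vJ}^\nu_\mu : w(1_\mu) - \rho_\nu \in \calP^\nu_\alpha\}$, and analogously for $\bfA_+^{\mu'}[\alpha+\alpha_k]$ by a subset of ${^vJ}^\nu_{\mu'}$. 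Since ${\rm inc}$ preserves the parameter $w$, the descent condition reduces to the combinatorial identity: for $w \in {^vJ}^\nu_{\mu'}$, one has $w(1_\mu) - \rho_\nu \in \calP^\nu_\alpha$ if and only if $w(1_{\mu'}) - \rho_\nu \in \calP^\nu_{\alpha + \alpha_k}$. To verify this I would use that $\mu_k = 1$ forces $1_\mu$ to have a unique coordinate $r_0$ of residue $k$ modulo $e$, and letting $r'$ denote its image under the permutation part of $w$, one has $w(1_{\mu'}) = w(1_\mu) + \epsilon_{r'}$. Writing $\lambda = w(1_\mu) - \rho_\nu$ and $\lambda' = \lambda + \epsilon_{r'}$, both implications reduce to comparisons of weak decrease and non-negativity within each $\nu$-block. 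The direction $\lambda \in \calP^\nu \Rightarrow \lambda' \in \calP^\nu$ uses strict $\nu$-dominance of $w(1_{\mu'})$ to exclude the equality $\lambda_{r'-1} = \lambda_{r'}$ within a block; the reverse direction uses that $\mu'_k = 0$ forces any two adjacent coordinates of $w(1_{\mu'})$ within a block to differ by at least $2$. The non-negativity $\lambda_{r'} = \lambda'_{r'} - 1 \geq 0$ at a $\nu$-block boundary is exactly where the hypothesis $k \ne 0$ enters: the exceptional case $\lambda'_{r'} = 0$ with $r'$ the last position of its block would force $w(1_{\mu'})_{r'} = 1$, hence $k + 1 \equiv 1 \pmod e$, i.e.\ $k = 0$, which is excluded.

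For parts (b) and (d), the parabolic inclusion admits a canonical graded lift compatible with the Koszul gradings (fixed by requiring simples to be concentrated in degree zero), and the graded Serre quotients $\widetilde\bfA_+^\mu[\alpha]$ and $\widetilde\bfA_+^{\mu'}[\alpha+\alpha_k]$ are defined by the same Serre kernels, so the argument transfers verbatim. Parts (c) and (d) follow by the parallel argument with the roles of $\mu$ and $\mu'$ exchanged: one has $W_{\mu'} \subset W_\mu$ and $\mu_{k+1} = 0$, the inclusion now goes ${^uO}^\mu_{\nu,+} \subset {^uO}^{\mu'}_{\nu,+}$, the weight $1_{\mu'}$ has a unique coordinate of residue $k + 1$ absent in $1_\mu$, and the underlying Koszul-duality input is Theorem \ref{ch3:thm_dual-func-O-spcase2} in place of Theorem \ref{ch3:thm_dual-func-O-spcase1}. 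The main obstacle will be the block-boundary analysis, where the residue-mod-$e$ constraints and the $\nu$-dominance condition must be combined carefully to control all edge cases.
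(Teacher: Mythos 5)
Your proposal follows essentially the same route as the paper: reduce via the Koszul bijection on simples to a combinatorial statement about when $w(1_\mu)-\rho_\nu$ and $w(1_{\mu'})-\rho_\nu$ simultaneously lie in $\calP^\nu$, then analyze the unique coordinate $r'$ where the two weights differ and observe that the problematic case $w(1_{\mu'})_{r'}=1$ forces $k+1\equiv 1\pmod e$, which is excluded by $k\ne 0$. This is exactly the paper's reduction and final argument.

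One intermediate claim needs tightening, though. You assert that $\mu'_k=0$ forces \emph{any} two adjacent coordinates of $w(1_{\mu'})$ within a block to differ by at least $2$; this is false (adjacent entries may differ by exactly $1$ so long as neither has residue $k$). What is true, and suffices for your argument, is the local statement at the pair $(r',r'+1)$: since $w(1_{\mu'})_{r'}\equiv k+1\pmod e$ and $w(1_{\mu'})$ has no coordinate of residue $k$, the value $w(1_{\mu'})_{r'}-1$ cannot occur, so $w(1_{\mu'})_{r'}-w(1_{\mu'})_{r'+1}\geqslant 2$ when $r'+1$ is in the same block. Note also that the weak-decrease check in the reverse direction is actually superfluous: $w(1_\mu)-\rho_\nu$ is automatically weakly decreasing within each block because $w\in{^vJ}^\nu_\mu$ means $w(1_\mu)$ is $\nu$-dominant. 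The paper exploits this to reduce the whole claim to the single inequality $w(1_\mu)\geqslant\rho_\nu\iff w(1_{\mu'})\geqslant\rho_\nu$, which streamlines the case analysis.
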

\begin{proof}
Denote by $p_1$ and $p_2$ respectively the quotient functors 
$$
p_1\colon{^uO}_{\nu,+}^{\mu'}\to\bfA_+^{\mu'}[\alpha+\alpha_k],\qquad p_2\colon{^uO}_{\nu,+}^{\mu}\to \bfA^\mu_+[\alpha].
$$
To prove $(a)$ and $(b)$, it is enough to prove that each simple module in ${^uO}_{\nu,+}^{\mu'}$ is killed by the functor $p_1$ if and only if it is killed by the functor $p_2$. We can get the combinatorial   description of the simple modules killed by $p_1$ and $p_2$ respectively using \cite[Rem.~2.18]{Mak-Koszul}.

For each $w\in {^vJ}_{\mu'}^{\nu}$ (resp. $w\in {^vJ}_{\mu}^{\nu}$), the simple module $^+L^{w^{-1}(1^+_{\nu})}$ is killed by $p_1$ (resp. $p_2$) if and only if the simple module $L^{w(1_{\mu'})}\in {^v}O_{\mu'}^\nu$ is not in $\bfA^\nu[\alpha+\alpha_k]$ (resp. the simple module $L^{w(1_\mu)}\in {^v}O_{\mu}^\nu$ is not in $\bfA^\nu[\alpha]$). So, we need to show that for each $w\in {^vJ}_{\mu'}^\nu$ the module $L^{w(1_{\mu'})}\in {^v}O_{\mu'}^\nu$ is in $\bfA^\nu[\alpha+\alpha_k]$ if and only if the module $L^{w(1_\mu)}\in {^v}O_{\mu}^\nu$ is in $\bfA^\nu[\alpha]$. Finally, we have to show that for each $w\in {^vJ}_{\mu'}^\nu$ we have $w(1_{\mu'})\geqslant \rho_\nu$ if and only if we have $w(1_{\mu})\geqslant \rho_\nu$. (Here the order is as in Section \ref{ch3:subs_rank-ch-A}.)

It is obvious that $w(1_{\mu})\geqslant \rho_\nu$ implies $w(1_{\mu'})\geqslant \rho_\nu$ because we have $w(1_{\mu'})\geqslant w(1_{\mu})$. Now, let us show the inverse statement. Note that we have $w(1_{\mu'})=w(1_\mu)+\epsilon_r$, where $r\in[1,N]$ is the unique index such that $w(1_\mu)_r\equiv k$ mod $e$. Assume that we have $w(1_{\mu'})\geqslant \rho_\nu$ but not $w(1_{\mu})\geqslant \rho_\nu$. Then we have $w(1_{\mu'})_r=(\rho_\nu)_r$. Assume first that $(\rho_\nu)_r\ne 1$. In particular this implies $r<N$. Since the weight $w(1_\mu)$ is in $P^\nu$, we have 
$$
w(1_{\mu'})_{r+1}=w(1_\mu)_{r+1}<w(1_\mu)_r=(\rho_\nu)_r-1=(\rho_\nu)_{r+1}.
$$ 
This contradicts to $w(1_{\mu'})\geqslant \rho_\nu$. Now, assume that we have $(\rho_\nu)_r=1$. Since we have $(\rho_\nu)_r\equiv w(1_{\mu'})_r\equiv k+1$ mod $e$, this implies $k=0$. This contradicts with the assumption $k\ne 0$. This proves the statement.

The proof of $(c)$, $(d)$ is similar to the proof of $(a)$ and $(b)$.
\end{proof}


\smallskip
In the case $W_{\mu}\subset W_{\mu'}$, $k\ne 0$, the lemma above allows us to define the parabolic inclusion functor $\inc\colon\bfA_+^{\mu'}[\alpha+\alpha_k]\to\bfA_+^{\mu}[\alpha]$ and the parabolic truncation functor $\tr\colon\bfA_+^{\mu}[\alpha]\to \bfA_+^{\mu'}[\alpha+\alpha_k]$ and their graded versions $\widetilde\inc$ and $\widetilde\tr$. Similarly, in the case $W_{\mu}\supset W_{\mu'}$, $k\ne 0$, the lemma above allows us to define the parabolic inclusion functor $\inc\colon\bfA_+^{\mu}[\alpha]\to\bfA_+^{\mu'}[\alpha+\alpha_k]$ and the parabolic truncation functor $\tr\colon\bfA_+^{\mu'}[\alpha+\alpha_k]\to \bfA_+^{\mu}[\alpha]$ and their graded versions $\widetilde\inc$ and $\widetilde\tr$.

\smallskip
\begin{thm}
\label{ch3:thm_dual-func-A-spcases}
Assume that we have $W_{\mu}\subset W_{\mu'}$.

$(a)$ The functor $\widetilde F_k\colon D^b(\widetilde\bfA^\nu[\alpha])\to D^b(\widetilde\bfA^\nu[\alpha+\alpha_k])$ is Koszul dual to the shifted parabolic truncation functor $\widetilde\tr\langle\mu_{k+1}\rangle \colon D^b(\widetilde\bfA^\mu_+[\alpha])\to D^b(\widetilde\bfA^{\mu'}_+[\alpha+\alpha_k])$.

$(b)$ The functor $\widetilde E_k\colon D^b(\widetilde\bfA^\nu[\alpha+\alpha_k])\to  D^b(\widetilde\bfA^\nu[\alpha])$ is Koszul dual to the parabolic inclusion functor $\widetilde\inc\colon D^b(\widetilde\bfA^{\mu'}_+[\alpha+\alpha_k])\to D^b(\widetilde\bfA^\mu_+[\alpha])$.

Now, assume that we have $W_{\mu'}\subset W_{\mu}$.

$(c)$ The functor $\widetilde F_k\colon D^b(\widetilde\bfA^\nu[\alpha])\to D^b(\widetilde\bfA^\nu[\alpha+\alpha_k])$ is Koszul dual to the parabolic inclusion functor $\widetilde\inc\colon D^b(\widetilde\bfA^\mu_+[\alpha])\to D^b(\widetilde\bfA^{\mu'}_+[\alpha+\alpha_k])$.

$(d)$ The functor $\widetilde E_k\colon D^b(\widetilde\bfA^\nu[\alpha+\alpha_k])\to  D^b(\widetilde\bfA^\nu[\alpha])$ is Koszul dual to the shifted parabolic truncation functor $\widetilde\tr\langle \mu_{k}-1\rangle\colon D^b(\widetilde\bfA^{\mu'}_+[\alpha+\alpha_k])\to D^b(\widetilde\bfA^\mu_+[\alpha])$.
\end{thm}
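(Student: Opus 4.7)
The plan is to transfer Theorems \ref{ch3:thm_dual-func-O-spcase1} and \ref{ch3:thm_dual-func-O-spcase2} from the category $O$ to the category $\bfA$, exploiting that $\widetilde\bfA^\nu[\alpha]$ is a Serre subcategory of ${^v\widetilde O}_\mu^\nu$ via an inclusion $\iota$, while its Koszul dual $\widetilde\bfA_+^\mu[\alpha]$ is the corresponding Serre quotient of ${^u\widetilde O}_{\nu,+}^\mu$ via a quotient functor $\tau$. By Lemma \ref{ch3:lem_calK-subcat}, the Koszul functor for $\bfA$ factors as $\calK_{\bfA} \simeq \tau \circ \calK_O \circ \iota$. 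The strategy is to verify that $\widetilde F_k$, $\widetilde E_k$ on $\bfA$ intertwine with $\iota$ and that $\widetilde\tr$, $\widetilde\inc$ on $\bfA_+$ intertwine with $\tau$, and then to chain these compatibilities with the commutative diagram already established on the $O$ side.

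I will treat part (a) as the archetypal case. Since $F_k$ preserves $\bfA^\nu$ as a subcategory of $O^\nu$, the restriction of $\widetilde F_k$ satisfies $\iota \circ \widetilde F_k \simeq \widetilde F_k \circ \iota$. Dually, Lemma \ref{ch3:lem-Zuck_defined_A+}(b) provides a graded Serre inclusion $\widetilde\bfA_+^{\mu'}[\alpha+\alpha_k] \hookrightarrow \widetilde\bfA_+^\mu[\alpha]$ lifting the parabolic inclusion on $O_+$, and the evident compatibility between the simple modules killed by $\tau$ and the defining ideal of the smaller parabolic yields $\widetilde\tr \circ \tau \simeq \tau \circ \widetilde\tr$. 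Combining with Theorem \ref{ch3:thm_dual-func-O-spcase1}(a) produces the chain
$$
\calK_{\bfA} \circ \widetilde F_k \simeq \tau\, \calK_O\, \widetilde F_k\, \iota \simeq \tau\, \widetilde\tr\langle\mu_{k+1}\rangle\, \calK_O\, \iota \simeq \widetilde\tr\langle\mu_{k+1}\rangle \circ \calK_{\bfA},
$$
which is (a). Part (c) is strictly analogous, using Theorem \ref{ch3:thm_dual-func-O-spcase2}(a) and Lemma \ref{ch3:lem-Zuck_defined_A+}(d) in place of their case-1 counterparts.

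Parts (b) and (d), which concern $\widetilde E_k$, are where the main obstacle lies. When $k \ne 0$ the functor $E_k$ preserves $\bfA^\nu$, so the argument of (a) adapts directly with Theorem \ref{ch3:thm_dual-func-O-spcase1}(b) (respectively \ref{ch3:thm_dual-func-O-spcase2}(b)) and Lemma \ref{ch3:lem-Zuck_defined_A+}(a) (respectively (c)) substituted for the case-1 inputs. When $k = 0$, however, two difficulties arise at once: the functor $\widetilde E_0$ on $\bfA$ is defined as $h^* \widetilde E_0 h$ rather than a plain restriction, so the intertwining with $\iota$ fails on the nose; and Lemma \ref{ch3:lem-Zuck_defined_A+} explicitly excludes this case, so the parabolic inclusion on $O_+$ need not descend cleanly to $\widetilde\bfA_+$. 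The hard part is therefore to extend Lemma \ref{ch3:lem-Zuck_defined_A+} to $k = 0$ by a more careful combinatorial analysis of the simple modules killed by $\tau$ (the existing proof breaks precisely at the boundary condition $(\rho_\nu)_r = 1$ that forces $k = 0$, so a separate case check is required), and then to upgrade the defective intertwining of $\widetilde E_0 = h^* \widetilde E_0 h$ with $\iota$ to a derived-category isomorphism after applying $\calK_O$, exploiting the biadjointness of $(\widetilde E_0, \widetilde F_0\langle\mu_1\rangle)$ in $\bfA$ that follows from the hypothesis $\nu_r > |\alpha|$.
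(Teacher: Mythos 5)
Your overall strategy --- factor the Koszul functor via Lemma \ref{ch3:lem_calK-subcat} as $\calK_{\bfA}\simeq\tau\circ\calK_O\circ\iota$, then chain compatibilities with the inclusion $\iota$ on the $\bfA$ side and the quotient $\tau$ on the $\bfA_+$ side, combined with Theorems \ref{ch3:thm_dual-func-O-spcase1} and \ref{ch3:thm_dual-func-O-spcase2} --- is precisely the paper's. But the order in which you attack the four parts introduces a genuine gap. You prove (a) directly, which places the parabolic truncation $\widetilde\tr$ on the Koszul-dual side, and the crucial step is the identity $\widetilde\tr\circ\tau\simeq\tau\circ\widetilde\tr$, which you call evident. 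It is not: $\widetilde\tr$ is only right exact, so in the diagram of bounded derived categories it must be read as $L\widetilde\tr$. The abelian-level compatibility of $\tau$ with $\widetilde\inc$ (both exact) does not descend to a compatibility of $\tau$ with $L\widetilde\tr$, because the idempotent truncation $\tau(M)=e^!_{\Lambda'}M$ has no reason to send projective objects of $^u\widetilde O^\mu_{\nu,+}$ to projective (or $\tr$-acyclic) objects of $\widetilde\bfA^\mu_+[\alpha]$; so the required isomorphism $\tau\circ L\widetilde\tr\simeq L\widetilde\tr\circ\tau$ is not automatic and you provide no argument for it.

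The paper inverts your order exactly to avoid this. It proves (b) directly: there the Koszul-dual functor is the exact $\widetilde\inc$, so in the tower of squares every single functor ($\iota$, $E_k$, $\calK$, $\tau$, $\widetilde\inc$) is exact and the commutativities are genuinely checkable on the abelian level (the bottom square commutes because for $k\ne 0$ the functor $E_k$ on $\bfA$ is just the restriction of $E_k$ on $O$). Part (a) is then obtained by passing to left adjoints through the Koszul equivalence: the adjunction $(\widetilde F_k,\widetilde E_k\langle-\mu_{k+1}\rangle)$ is carried to $(L\widetilde\tr\langle\mu_{k+1}\rangle,\widetilde\inc)$. Similarly (c) (again the $\widetilde\inc$ case, via Theorem \ref{ch3:thm_dual-func-O-spcase2}(a)) is proved directly, and (d) follows by adjointness. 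You have all the needed ingredients but should apply them to (b) and (c) first. Your remarks about $k=0$ identify a real feature of the setup, but the functors $\widetilde\inc$, $\widetilde\tr$ on $\bfA_+$ appearing in the statement are only defined under the hypothesis $k\ne 0$ (Lemma \ref{ch3:lem-Zuck_defined_A+} and the paragraph preceding this theorem), so the present theorem is implicitly restricted to $k\ne 0$ and the $k=0$ phenomena you describe belong to the later section devoted to that case, not to this proof.
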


\begin{proof}
Let us prove $(b)$.

Let $v\in J_{\mu'}^\nu w_{\mu'}$ be such that $\bfA^\nu[\alpha]$ is a subcategory of $^vO^\nu_{\mu'}$ and $\bfA^\nu[\alpha+\alpha_k]$ is a subcategory of $^vO^\nu_{\mu'}$. Then the same is true for graded versions. Denote by $i$ the inclusion functor from $\widetilde\bfA^\nu[\alpha]$ to $^v\widetilde O^\nu_{\mu}$. Let $\tau\colon ^u\widetilde O^\mu_{\nu,+}\to \widetilde\bfA^\mu_+[\alpha]$ be the natural quotient functor.

Consider the following diagram
$$
\begin{CD}
D^b(\widetilde\bfA^\mu_+[\alpha]) @<{\widetilde\inc}<< D^b(\widetilde\bfA^{\mu'}_+[\alpha+\alpha_k])\\
@A{\tau}AA                             @A{\tau}AA\\
D^b(^v\widetilde O^\mu_{\nu,+}) @<{\widetilde\inc}<< D^b(^v\widetilde O^{\mu'}_{\nu,+})\\
@A{\calK}AA                             @A{\calK}AA\\
D^b(^v\widetilde O^\nu_{\mu}) @<{E_k}<< D^b(^v\widetilde O^\nu_{\mu'})\\
@A{i}AA                    @A{i}AA\\
D^b(\widetilde\bfA^\nu[\alpha]) @<{E_k}<< D^b(\widetilde\bfA^\nu[\alpha+\alpha_k]).
\end{CD}
$$
The commutativity of the top and bottom rectangles is obvious. The commutativity of the middle rectangle follows from Theorem \ref{ch3:thm_dual-func-O-spcase1} $(b)$. Now, by Lemma \ref{ch3:lem_calK-subcat}, the big rectangle in the diagram above yields the following commutative diagram
$$
\begin{CD}
D^b(\widetilde\bfA^\mu_+[\alpha]) @<{\rm\widetilde inc}<< D^b(\widetilde\bfA^{\mu'}_+[\alpha+\alpha_k])\\
@A{\calK}AA                    @A{\calK}AA\\
D^b(\widetilde\bfA^\nu[\alpha]) @<{F_k}<< D^b(\widetilde\bfA^\nu[\alpha+\alpha_k]).
\end{CD}
$$
This proves $(b)$.

Part $(a)$ follows from $(b)$ by adjointness. We can prove $(c)$ in the same way as $(b)$, using Theorem \ref{ch3:thm_dual-func-O-spcase2} $(a)$. Part $(d)$ follows from $(c)$ by adjointness.

\end{proof}

\subsection{Zuckerman functors for the category $\bfA_+$}
\label{ch3:subs_Zuck-A}

Fix $u\in \widehat W$. The Zuckerman functor $\Zuc\colon {^u}O_{\nu,+}^\mu\to {^u}O_{\nu,+}^{\mu'}$ (see Section \ref{ch3:subs_Zuck}) is a composition of a parabolic inclusion functor and a parabolic truncation functor ${^u}O_{\nu,+}^\mu\stackrel{\inc}{\to} {^u}O_{\nu,+}^{\mu''}\stackrel{\tr}{\to} {^u}O_{\nu,+}^{\mu'}$, where the parabolic type $\mu''$ is chosen such that $W_{\mu''}=W_{\mu}\cap W_{\mu'}$ (in fact, we can take $\mu''=\overline\mu^0$). Now we are going to give an analogue of the Zuckerman functor for the category $\bfA_+$, i.e., we want to define a functor $\Zuc_k^+\colon\bfA^{\mu}_+[\alpha]\to \bfA^{\mu'}_+[\alpha+\alpha_k]$. (Recall that the categories $\bfA^{\mu}_+[\alpha]$ and $\bfA^{\mu'}_+[\alpha+\alpha_k]$ are Serre quotients of ${^u}O_{\nu,+}^\mu$ and ${^u}O_{\nu,+}^{\mu'}$ respectively for $u$ big enough.) The main difficulty to give such a definition is that we have no obvious candidate to replace the category ${^u}O_{\nu,+}^{\mu''}$. 

Let us write $\overline\bfA$ instead of $\bfA$ to indicate that the category is defined with respect to $e+1$ instead of $e$.
Let us identify $\bfA^{\mu}_+[\alpha]\simeq \overline\bfA^{\overline\mu}_+[\beta+\overline\alpha]$ and $\bfA^{\mu'}_+[\alpha+\alpha_k]\simeq \overline\bfA^{\overline\mu'}_+[\beta+\overline\alpha+\overline\alpha_k+\overline\alpha_{k+1}]$ (see Proposition \ref{ch3:prop_equiv-A-e-e+1}).  Assume that we have $k\ne 0$. Then by Lemma \ref{ch3:lem-Zuck_defined_A+} we have the following inclusion of categories
$$
\overline\bfA^{\overline\mu}_+[\beta+\overline\alpha]\subset \overline\bfA^{\overline\mu^0}_+[\beta+\overline\alpha+\overline\alpha_k]\supset \overline\bfA^{\overline\mu'}_+[\beta+\overline\alpha+\overline\alpha_k+\overline\alpha_{k+1}].
$$

Now, we define the Zuckerman functor $\Zuc_k^+\colon\bfA^{\mu}_+[\alpha]\to \bfA^{\mu'}_+[\alpha+\alpha_k]$ as the composition of the parabolic inclusion functor with the parabolic truncation functor 
$$
\bfA^{\mu}_+[\alpha]\stackrel{\inc}{\to} \overline\bfA^{\overline\mu^0}_+[\beta+\overline\alpha+\overline\alpha_k]\stackrel{\tr}{\to}\bfA^{\mu'}_+[\alpha+\alpha_k].
$$ 
We define the Zuckerman functor $\Zuc_k^-\colon \bfA^{\mu'}_+[\alpha+\alpha_k]\to \bfA^{\mu}_+[\alpha]$ in a similar way. We can also define the graded version $\widetilde\Zuc_k^{\pm}$ of the Zuckerman functors by replacing the functors $\inc$ and $\tr$ by their graded versions $\widetilde\inc$ and $\widetilde\tr$. Unfortunately this approach does not allow to define the Zuckerman functors for $k=0$ because of the assumption $k\ne 0$ in Lemma \ref{ch3:lem-Zuck_defined_A+}. The definition of the Zuckerman functors for $k=0$ will be given in Section \ref{ch3:subs_k=0}.

\subsection{The Koszul dual functors in the category $\bfA$}

\begin{thm}
\label{ch3:thm-final-F-E-dual-Zuck}
Assume that we have $\nu_r>|\alpha|$ for each $r\in[1,l]$, $e>2$ and $k\ne 0$.

$(a)$ The functor $F_k\colon \bfA^\nu[\alpha]\to\bfA^\nu[\alpha+\alpha_k]$ has a graded lift $\widetilde F_k$ such that the functor $\widetilde F_k\colon D^b(\widetilde\bfA^\nu[\alpha])\to D^b(\widetilde\bfA^\nu[\alpha+\alpha_k])$ is Koszul dual to the shifted Zuckerman functor $\widetilde\Zuc_k^+\langle\mu_{k+1} \rangle\colon D^b(\widetilde\bfA^\mu_+[\alpha])\to D^b(\widetilde\bfA^{\mu'}_+[\alpha+\alpha_k])$.

$(b)$ The functor $E_k\colon\bfA^\nu[\alpha+\alpha_k]\to\bfA^\nu[\alpha]$ has a graded lift such that the functor $\widetilde E_k\colon D^b(\widetilde\bfA^\nu[\alpha+\alpha_k])\to D^b(\widetilde\bfA^\nu[\alpha])$ is Koszul dual to the shifted Zuckerman functor $\widetilde\Zuc_k^-\langle \mu_{k}-1 \rangle\colon D^b(\widetilde\bfA^\mu_+[\alpha])\to D^b(\widetilde\bfA^{\mu'}_+[\alpha+\alpha_k])$.
\end{thm}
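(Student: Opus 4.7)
The plan is to pass $F_k$ through the doubled setup of Theorem \ref{ch3:thm_decomp_Fk-A}, reducing the Koszul-duality statement to statements about the two factors $\overline F_k$ and $\overline F_{k+1}$ individually (each of which is settled by Theorem \ref{ch3:thm_dual-func-A-spcases}), and then to recognize the resulting composition as a shifted Zuckerman functor via the very definition in Section \ref{ch3:subs_Zuck-A}.

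First I would lift the equivalences $\theta'_\alpha$ and $\theta'_{\alpha+\alpha_k}$ of Corollary \ref{ch3:coro_constr-of-equiv-A-A} to graded equivalences $\widetilde{\theta'_\alpha}\colon \widetilde\bfA^\nu[\alpha]\to \widetilde{\overline\bfA}^\nu[\beta+\overline\alpha]$ and $\widetilde{\theta'_{\alpha+\alpha_k}}\colon \widetilde\bfA^\nu[\alpha+\alpha_k]\to \widetilde{\overline\bfA}^\nu[\beta+\overline\alpha+\overline\alpha_k+\overline\alpha_{k+1}]$. Both sides carry Koszul gradings, and $\theta'_\alpha$ sends indecomposable projectives to indecomposable projectives; applying Lemma \ref{ch3:lem-grad-unique} to the indecomposable projectives on both sides allows us to normalize the graded lifts so that they match under $\theta'_\alpha$ without shift, producing the required $\widetilde{\theta'_\alpha}$, and similarly for $\widetilde{\theta'_{\alpha+\alpha_k}}$.

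Next I would analyze the two factors $\overline F_k$ and $\overline F_{k+1}$. For $\overline F_k\colon\overline\bfA^\nu[\beta+\overline\alpha]\to\overline\bfA^\nu[\beta+\overline\alpha+\overline\alpha_k]$ the source and target blocks sit in $\overline O^\nu_{\overline\mu}$ and $\overline O^\nu_{\overline\mu^0}$; since $(\overline\mu)_{k+1}=0$ for $k\ne 0$ by construction, we have $W_{\overline\mu^0}\subset W_{\overline\mu}$, so case $(c)$ of Theorem \ref{ch3:thm_dual-func-A-spcases} provides a graded lift $\widetilde{\overline F_k}$ Koszul dual to the parabolic inclusion $\widetilde\inc\colon D^b(\widetilde{\overline\bfA}^{\overline\mu}_+[\beta+\overline\alpha])\to D^b(\widetilde{\overline\bfA}^{\overline\mu^0}_+[\beta+\overline\alpha+\overline\alpha_k])$. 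For $\overline F_{k+1}\colon\overline\bfA^\nu[\beta+\overline\alpha+\overline\alpha_k]\to\overline\bfA^\nu[\beta+\overline\alpha+\overline\alpha_k+\overline\alpha_{k+1}]$ the blocks sit in $\overline O^\nu_{\overline\mu^0}$ and $\overline O^\nu_{\overline\mu'}$; since $(\overline\mu^0)_{k+1}=1$ we have $W_{\overline\mu^0}\subset W_{\overline\mu'}$, so case $(a)$ of Theorem \ref{ch3:thm_dual-func-A-spcases} applied with $k$ replaced by $k+1$ provides a graded lift $\widetilde{\overline F_{k+1}}$ Koszul dual to $\widetilde\tr\langle(\overline\mu^0)_{k+2}\rangle=\widetilde\tr\langle\mu_{k+1}\rangle$. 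Setting $\widetilde F_k:=(\widetilde{\theta'_{\alpha+\alpha_k}})^{-1}\circ\widetilde{\overline F_{k+1}}\circ\widetilde{\overline F_k}\circ\widetilde{\theta'_\alpha}$ then yields a graded lift of $F_k$ whose Koszul dual is the composition $\widetilde\tr\langle\mu_{k+1}\rangle\circ\widetilde\inc$; by the definition of $\widetilde\Zuc_k^+$ in Section \ref{ch3:subs_Zuck-A} this is exactly $\widetilde\Zuc_k^+\langle\mu_{k+1}\rangle$, proving $(a)$.

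Part $(b)$ follows by the parallel argument. The hypothesis $\nu_r>|\alpha|$ guarantees biadjointness of $(E_k,F_k)$ on $\bfA^\nu$ by \cite[Lem.~7.6]{RSVV}, and transport through $\theta'$ gives $E_k=\overline E_k\circ\overline E_{k+1}$; case $(d)$ of Theorem \ref{ch3:thm_dual-func-A-spcases} (applied to $\overline E_k$ with source weight $\overline\mu^0$ and target $\overline\mu$) identifies $\widetilde{\overline E_k}$ as Koszul dual to $\widetilde\tr\langle\mu_k-1\rangle$, while case $(b)$ (applied to $\overline E_{k+1}$ with $k$ replaced by $k+1$) identifies $\widetilde{\overline E_{k+1}}$ as Koszul dual to $\widetilde\inc$, so $\widetilde E_k$ is Koszul dual to $\widetilde\tr\langle\mu_k-1\rangle\circ\widetilde\inc=\widetilde\Zuc_k^-\langle\mu_k-1\rangle$. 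The main delicate point is the first step: ensuring that the equivalence $\theta'_\alpha$ (built abstractly in Corollary \ref{ch3:coro_constr-of-equiv-A-A} via a chain of equivalences passing through the rational Cherednik category $\calO$) admits a graded lift with the right normalization, so that the composition of Koszul duals recovers $\widetilde\Zuc_k^\pm$ shifted by precisely $\mu_{k+1}$ or $\mu_k-1$ and not by some other integer; once this is settled, the remainder is a mechanical composition of results already in the paper.
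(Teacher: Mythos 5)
Your proposal is correct and takes essentially the same route as the paper's own proof: decompose $F_k$ via Theorem~\ref{ch3:thm_decomp_Fk-A}, lift the equivalences $\theta'$ to graded equivalences by uniqueness of Koszul grading, identify the Koszul duals of the two factors $\overline F_k$, $\overline F_{k+1}$ from the applicable cases of Theorem~\ref{ch3:thm_dual-func-A-spcases}, and compose. The paper invokes \cite[Cor.~2.5.2]{BGS} for the graded lift of $\theta'$ where you invoke Lemma~\ref{ch3:lem-grad-unique} on projectives; these are interchangeable, and your careful bookkeeping of which subcase ($W_{\overline\mu^0}\subset W_{\overline\mu}$ with $(\overline\mu)_{k+1}=0$, then $W_{\overline\mu^0}\subset W_{\overline\mu'}$ with $(\overline\mu^0)_{k+1}=1$) and shift ($(\overline\mu^0)_{k+2}=\mu_{k+1}$) applies is exactly what is needed.
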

\begin{proof}
By Theorem \ref{ch3:thm_decomp_Fk-A} we have the following commutative diagram
$$
\begin{diagram}
\node{\overline\bfA^\nu[\beta+\overline\alpha]} \arrow{e,t}{\overline F_k}
\node{\overline\bfA^\nu[\beta+\overline\alpha+\overline\alpha_k]} \arrow{e,t}{\overline F_{k+1}} 
\node{\overline\bfA^\nu[\beta+\overline\alpha+\overline\alpha_k+\overline\alpha_{k+1}]} \arrow{s,r}{} \\
\node{\bfA^\nu[\alpha]} \arrow{n,l}{} 
\arrow[2]{e,b}{F_k} \node[2]{\bfA^\nu[\alpha+\alpha_k]}
\end{diagram}
$$

Here the vertical maps are some equivalences of categories.
By unicity of Koszul grading (see \cite[Cor.~2.5.2]{BGS}) there exist unique graded lifts of vertical maps such that they are equivalences of graded categories and they respect the chosen grading of simple modules (i.e., concentrated in degree $0$). Moreover, the top horizontal maps have graded lifts because for a suitable $v$ we have
$$
\overline\bfA^\nu[\beta+\overline\alpha]\subset  {^v\overline O}_{\overline\mu}^\nu,\qquad  \overline\bfA^\nu[\beta+\overline\alpha+\overline\alpha_k]\subset  {^v\overline O}_{\overline\mu^0}^\nu,\qquad \overline\bfA^\nu[\beta+\overline\alpha+\overline\alpha_k+\overline\alpha_{k+1}]\subset  {^v\overline O}_{\overline\mu'}^\nu
$$
and $W_{\overline\mu}\supset W_{\overline\mu^0}\subset W_{\overline\mu'}$. This implies that there is a graded version $\widetilde F_k$ of the functor $F_k$ such that it makes the graded version of the diagram above commutative.

Since the categories $\bfA^\nu[\alpha]$ and $\overline\bfA^\nu[\beta+\overline\alpha]$ are equivalent, their Koszul dual categories are also equivalent. We can chose the equivalences $(\bfA^\nu[\alpha])^!\simeq \bfA^\mu_+[\alpha]$ and $(\overline\bfA^\nu[\beta+\overline\alpha])^!\simeq \bfA^\mu_+[\alpha]$ in such a way that the vertical map in the diagram is Koszul dual to the identity functor. We can do the same with the categories in the right part of the diagram above.

By Theorem \ref{ch3:thm_dual-func-A-spcases}, the left top functor in the graded version of the diagram above is Koszul dual to the parabolic inclusion functor $\widetilde\inc$ and the top right functor in the diagram is Koszul dual to the graded shift $\widetilde\tr\langle\mu_{k+1}\rangle$ of the parabolic truncation functor. By definition (see Section \ref{ch3:subs_Zuck}), the Zuckerman functor is the composition of the parabolic inclusion and the parabolic truncation functors. This implies that the functor $\widetilde F_k\colon D^b(\widetilde\bfA^\nu[\alpha])\to D^b(\widetilde\bfA^\nu[\alpha+\alpha_k])$ is Koszul dual to the shifted Zuckerman functor $\widetilde\Zuc_k^+\langle\mu_{k+1}\rangle$. This proves $(a)$.

We can prove $(b)$ in the same way. By adjointness, the diagram above yields a similar diagram for the functor $E$. This diagram allows to grade the functor $E_k$. Then we deduce the Koszul dual functor to $E_k$ in the same way as in $(a)$.
\end{proof}

\subsection{The case $k=0$}
\label{ch3:subs_k=0}
Now, we are going to get an analogue of Theorem \ref{ch3:thm-final-F-E-dual-Zuck} in the case $k=0$. The main difficulty in this case is that we cannot define Zuckerman functors for the category $\bfA_+$ in the same was as in Section \ref{ch3:subs_Zuck-A} because Lemma \ref{ch3:lem-Zuck_defined_A+} fails. To fix this problem we replace the category $\bfA$ by a smaller category $A$.

Assume that we have $k=0$ and $W_{\mu}\supset W_{\mu'}$. In particular this implies $\mu_1=0$.

Let $A^\nu[\alpha+\alpha_0]$ be the Serre subcategory of $\bfA^\nu[\alpha+\alpha_0]$ generated by simple modules $L^{\lambda}$ such that the weight $\lambda\in P$ has no coordinates equal to $1$. It is a highest weight subcategory.

\smallskip
\begin{rk}
\label{ch3:rk-gr_from_bfA_to_A}
$(a)$
The category $A^\nu[\alpha+\alpha_0]$ inherits the Koszul grading from the category $\bfA^\nu[\alpha+\alpha_0]$ in the following way. We know that there is a Koszul algebra $A$ such that $\bfA^\nu[\alpha+\alpha_0]\simeq \mod(A)$. Let $\{e_\lambda;\lambda\in\Lambda\}$ be the set of indecomposable idempotents of $A_0$. Then by \cite[Lem.~2.17]{Mak-Koszul} there is a subset $\Lambda'\subset \Lambda$ such that we have $\bfA^\nu[\alpha+\alpha_0]\simeq \mod(_{\Lambda'}A)$ (see Section \ref{ch3:subs_key-lem} for the notations). Moreover, the Koszul dual algebra to $_{\Lambda'}A$ is $A^!_{\Lambda'}$.

Since, we have $\mod(A^!)\simeq \bfA^{\mu'}_+[\alpha+\alpha_0]$, the Koszul dual category $A^{\mu'}_+[\alpha+\alpha_0]$ to $A^\nu[\alpha+\alpha_0]$ is a Serre quotient of $\bfA^{\mu'}_+[\alpha+\alpha_0]$. The quotient functor $$a\colon \bfA^{\mu'}_+[\alpha+\alpha_0]\to A^{\mu'}_+[\alpha+\alpha_0]$$ can be seen as the functor 
$$
a\colon\mod(A^!)\to\mod(A^!_{\Lambda'}),\qquad M\mapsto e^!_{\Lambda'}M.
$$

$(b)$ The left adjoint functor  $b\colon A^{\mu'}_+[\alpha+\alpha_0]\to \bfA^{\mu'}_+[\alpha+\alpha_0]$ to $a$ can be seen as
$$
b\colon \mod(A^!_{\Lambda'})\to \mod(A^!),\qquad M\mapsto A^!e^!_{\Lambda'}\otimes _{A^!_{\Lambda'}}M.
$$
The functors $a$ and $b$ have obvious graded lifts
$$
\widetilde a\colon \widetilde\bfA^{\mu'}_+[\alpha+\alpha_0]\to \widetilde A^{\mu'}_+[\alpha+\alpha_0],\qquad \widetilde b\colon \widetilde A^{\mu'}_+[\alpha+\alpha_0]\to \widetilde\bfA^{\mu'}_+[\alpha+\alpha_0].
$$

By Proposition \ref{ch3:prop_dual-F-G}, the functor $\widetilde b$ is Koszul dual to the inclusion functor $\widetilde A^\nu[\alpha+\alpha_0]\to \widetilde\bfA^\nu[\alpha+\alpha_0]$. Then, by adjointness, the functor $\widetilde a$ is Koszul dual to the right adjoint functor to the inclusion functor above.  
\end{rk}

\smallskip
It is easy to see from the action of $F_0$ on Verma modules (see Proposition \ref{ch3:prop_functors-on-O-gen} $(e)$) that the image of the functor $F_0\colon \bfA^\nu[\alpha]\to \bfA^\nu[\alpha+\alpha_0]$ is in $A^\nu[\alpha+\alpha_0]$. Moreover, recall from Section \ref{ch3:subs_cat-bfA} that the functor $E_0\colon O_{\mu'}^\nu\to O_{\mu}^{\nu}$ does not take $\bfA^\nu[\alpha+\alpha_0]$ to $\bfA^\nu[\alpha]$. (The reader should pay attention to the fact that the functor $E_0$ for the category $\bfA$ is not defined as the restriction of the functor $E_0$ for the category $O$.) However, it is easy to see from the action of $E_0$ on Verma modules (see Proposition \ref{ch3:prop_functors-on-O-gen} $(e)$) that the functor $E_0$ for the category $O$ takes  $A^\nu[\alpha+\alpha_0]$ to $\bfA^\nu[\alpha]$. Thus we get a functor $E_0\colon A^\nu[\alpha+\alpha_0]\to\bfA^\nu[\alpha]$. This functor also coincides with the restriction of the functor $E_0\colon\bfA^\nu[\alpha+\alpha_0]\to\bfA^\nu[\alpha]$ to the category $A^\nu[\alpha+\alpha_0]$. 


The following statement can be proved in the same way as Lemma \ref{ch3:lem-Zuck_defined_A+}.

\smallskip
\begin{lem}
\label{ch3:lem-Zuck_defined_A+_k=0}
Assume that we have $W_{\mu}\supset W_{\mu'}$.

$(a)$
The inclusion of categories ${^uO}_{\nu,+}^{\mu}\subset {^uO}_{\nu,+}^{\mu'}$ yields an inclusion of categories $\bfA_+^{\mu}[\alpha]\subset A_+^{\mu'}[\alpha+\alpha_0]$.

$(b)$ 
The inclusion of categories ${^u\widetilde O}_{\nu,+}^{\mu'}\subset {^u\widetilde O}_{\nu,+}^{\mu}$ yields an inclusion of categories $\widetilde \bfA_+^{\mu'}[\alpha+\alpha_0]\subset \widetilde A_+^\mu[\alpha]$.

\qed
\end{lem}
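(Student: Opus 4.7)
The plan is to adapt the strategy of the proof of Lemma \ref{ch3:lem-Zuck_defined_A+}. For part (a), I consider the quotient functors $p_1\colon {^uO}_{\nu,+}^\mu\to \bfA_+^\mu[\alpha]$ and $p_2\colon {^uO}_{\nu,+}^{\mu'}\to A_+^{\mu'}[\alpha+\alpha_0]$, the latter factoring as ${^uO}_{\nu,+}^{\mu'}\to \bfA_+^{\mu'}[\alpha+\alpha_0]\to A_+^{\mu'}[\alpha+\alpha_0]$ via Remark \ref{ch3:rk-gr_from_bfA_to_A}. Since ${^uO}_{\nu,+}^\mu\subset {^uO}_{\nu,+}^{\mu'}$, it suffices to show that each simple $^+L^{w^{-1}(1^+_\nu)}$ of ${^uO}_{\nu,+}^\mu$, regarded also as a simple of ${^uO}_{\nu,+}^{\mu'}$, is killed by $p_1$ if and only if it is killed by $p_2$. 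Combining \cite[Rem.~2.18]{Mak-Koszul} (as in the proof of Lemma \ref{ch3:lem-Zuck_defined_A+}) with the description of $A^\nu[\alpha+\alpha_0]$ as the Serre subcategory of $\bfA^\nu[\alpha+\alpha_0]$ generated by simples $L^\lambda$ with $\lambda$ having no coordinate equal to $1$, this reduces to the following combinatorial equivalence: for $w$ in the common index set,
\[
w(1_\mu)\geq \rho_\nu \iff \bigl(w(1_{\mu'})\geq \rho_\nu \text{ and } w(1_{\mu'}) \text{ has no coordinate equal to } 1\bigr).
\]

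Since $\mu_1=0$ and $\mu'_1=1$, the weight $w(1_\mu)$ carries no coordinate congruent to $1$ modulo $e$, while $w(1_{\mu'})$ has exactly one such coordinate, at a unique position $r=r(w)$; the two weights agree outside $r$, and $w(1_{\mu'})_r=w(1_\mu)_r+1$. The forward implication is straightforward: $w(1_\mu)\geq \rho_\nu$ directly gives $w(1_{\mu'})\geq w(1_\mu)\geq \rho_\nu$; moreover, coordinates of $w(1_{\mu'})$ away from $r$ coincide with those of $w(1_\mu)$ and thus have residue different from $1$, while at $r$ one has $w(1_\mu)_r\geq (\rho_\nu)_r\geq 1$ with residue $0$ modulo $e$, forcing $w(1_\mu)_r\geq e\geq 2$ and hence $w(1_{\mu'})_r\geq e+1>1$.

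The backward implication is the main point, and the step I expect will require the most care. Suppose $w(1_{\mu'})\geq \rho_\nu$ with no coordinate equal to $1$; since $w(1_\mu)$ agrees with $w(1_{\mu'})$ outside $r$, it suffices to exclude the possibility $w(1_{\mu'})_r=(\rho_\nu)_r$. If this equality held and $r$ were not the last position of its $\nu$-block, then $(\rho_\nu)_{r+1}=(\rho_\nu)_r-1$, and the $\nu$-dominance of $w(1_\mu)$ (which holds because $w\in {^vJ}_\mu^\nu$) would force $w(1_\mu)_{r+1}<w(1_\mu)_r=(\rho_\nu)_r-1=(\rho_\nu)_{r+1}$, contradicting $w(1_\mu)_{r+1}=w(1_{\mu'})_{r+1}\geq (\rho_\nu)_{r+1}$. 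Hence $r$ must be the last position of its $\nu$-block, so $(\rho_\nu)_r=1$ and $w(1_{\mu'})_r=1$, contradicting the hypothesis. Part (b) is obtained by running the same argument in the graded setting, using Remark \ref{ch3:rk-gr_from_bfA_to_A} to supply the graded lifts of the relevant quotient functors and the Koszul gradings on $^uO_{\nu,+}^\mu$ and $^uO_{\nu,+}^{\mu'}$.
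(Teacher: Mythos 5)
Your proof is correct and follows exactly the route the paper intends: the paper dispatches this lemma by saying ``proved in the same way as Lemma~\ref{ch3:lem-Zuck_defined_A+},'' and you faithfully carry out that adaptation. In particular, you correctly identify that the only genuine modification needed is to fold the ``no coordinate equal to $1$'' condition from the definition of $A^\nu[\alpha+\alpha_0]$ into the combinatorial equivalence, which is precisely what closes the backward implication in the case $(\rho_\nu)_r=1$, the case that the argument for Lemma~\ref{ch3:lem-Zuck_defined_A+} had excluded only via the hypothesis $k\ne 0$.
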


The lemma above allows us to define the inclusion and the truncation functors $\inc\colon \bfA_+^{\mu}[\alpha]\to A_+^{\mu'}[\alpha+\alpha_0]$, $\tr\colon A_+^{\mu'}[\alpha+\alpha_0]\to \bfA_+^{\mu}[\alpha]$ and their graded versions $\widetilde\inc$, $\widetilde\tr$.

We still assume $k=0$ but we do not assume $W_{\mu}\supset W_{\mu'}$ any more. We define the Zuckerman functors $\Zuc^{\pm}_0$ for this case. Let us identify $\bfA_+^{\mu}[\alpha]\simeq \overline\bfA_+^{\overline\mu}[\beta+\overline\alpha]$ and $\bfA_+^{\mu'}[\alpha+\alpha_k]\simeq \overline\bfA_+^{\overline\mu'}[\beta+\overline\alpha+\overline\alpha_k+\overline\alpha_{k+1}]$. By Lemmas \ref{ch3:lem-Zuck_defined_A+}, \ref{ch3:lem-Zuck_defined_A+_k=0} we have the following inclusions of categories
$$
\overline\bfA_+^{\overline\mu}[\beta+\overline\alpha]\subset \overline A_+^{\overline\mu^0}[\beta+\overline\alpha+\overline\alpha_0],\qquad \overline\bfA_+^{\overline\mu^0}[\beta+\overline\alpha+\overline\alpha_0]\supset \overline\bfA_+^{\overline\mu'}[\beta+\overline\alpha+\overline\alpha_0+\overline\alpha_{1}].
$$
We define the Zuckerman functor $\Zuc_0^+\colon \bfA_+^{\mu}[\alpha]\to \bfA_+^{\mu'}[\alpha+\alpha_0]$ as the composition 
$$
\bfA_+^{\mu}[\alpha]\stackrel{\inc}{\to}\overline A_+^{\overline\mu^0}[\beta+\overline\alpha+\overline\alpha_0]\stackrel{b}{\to}\overline \bfA_+^{\overline\mu^0}[\beta+\overline\alpha+\overline\alpha_0]\stackrel{\tr}{\to} \bfA_+^{\mu'}[\alpha+\alpha_0].
$$
Similarly, we define the Zuckerman functor $\Zuc_0^-\colon \bfA_+^{\mu'}[\alpha+\alpha_0]\to \bfA_+^{\mu}[\alpha]$ as the composition
$$
\bfA_+^{\mu'}[\alpha+\alpha_0]\stackrel{\inc}{\to}\overline \bfA_+^{\overline\mu^0}[\beta+\overline\alpha+\overline\alpha_0]\stackrel{a}{\to}\overline A_+^{\overline\mu^0}[\beta+\overline\alpha+\overline\alpha_0]\stackrel{\tr}{\to}\bfA_+^{\mu}[\alpha].
$$
Replacing the functors $\inc$, $\tr$, $a$, $b$ by their graded versions $\widetilde\inc$, $\widetilde\tr$, $\widetilde a$, $\widetilde b$ yields graded versions $\widetilde\Zuc_0^+$ and $\widetilde\Zuc_0^-$ of the Zuckerman functors.

Now, similarly to Theorem \ref{ch3:thm_dual-func-A-spcases} we can prove the following.

\smallskip
\begin{thm}
\label{ch3:thm_dual-func-A-spcases-k=0}
Assume that we have $k=0$ and $W_{\mu}\supset W_{\mu'}$.

$(a)$ The functor $\widetilde F_0\colon D^b(\widetilde\bfA^\nu[\alpha])\to D^b(\widetilde A^\nu[\alpha+\alpha_0])$ is Koszul dual to the parabolic inclusion functor $\widetilde\inc\colon D^b(\widetilde\bfA^\mu_+[\alpha])\to D^b(\widetilde A^{\mu'}_+[\alpha+\alpha_0])$.

$(b)$ The functor $\widetilde E_0\colon D^b(\widetilde A^\nu[\alpha+\alpha_0])\to  D^b(\widetilde\bfA^\nu[\alpha])$ is Koszul dual to the shifted parabolic truncation functor $\widetilde\tr\langle \mu_{0}-1\rangle\colon D^b(\widetilde A^{\mu'}_+[\alpha+\alpha_0])\to D^b(\widetilde\bfA^\mu_+[\alpha])$.
\qed
\end{thm}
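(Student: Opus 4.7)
The plan is to mimic the proof of Theorem~\ref{ch3:thm_dual-func-A-spcases} items~$(c)$ and $(d)$ (the case $W_{\mu'}\subset W_\mu$ with $k\ne 0$), with $\widetilde A^\nu[\alpha+\alpha_0]$ and $\widetilde A^{\mu'}_+[\alpha+\alpha_0]$ taking the place of $\widetilde\bfA^\nu[\alpha+\alpha_k]$ and $\widetilde\bfA^{\mu'}_+[\alpha+\alpha_k]$ respectively. The extra ingredient, not needed in the $k\ne 0$ case, is the duality recorded in Remark~\ref{ch3:rk-gr_from_bfA_to_A}: the Serre subcategory inclusion $\widetilde A^\nu[\alpha+\alpha_0]\hookrightarrow\widetilde\bfA^\nu[\alpha+\alpha_0]$ is Koszul dual to the Serre quotient $\widetilde a\colon\widetilde\bfA^{\mu'}_+[\alpha+\alpha_0]\to\widetilde A^{\mu'}_+[\alpha+\alpha_0]$.

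For $(b)$ I would start from Theorem~\ref{ch3:thm_dual-func-O-spcase2}$(b)$, which asserts that at the level of the truncated category $O$ the functor $\widetilde E_0\colon {^v\widetilde O}^\nu_{\mu'}\to{^v\widetilde O}^\nu_\mu$ is Koszul dual to $\widetilde\tr\langle\mu_0-1\rangle\colon {^u\widetilde O}^{\mu'}_{\nu,+}\to{^u\widetilde O}^\mu_{\nu,+}$. Stack on the bottom the inclusions $\widetilde\bfA^\nu[\alpha]\hookrightarrow {^v\widetilde O}^\nu_\mu$ and $\widetilde A^\nu[\alpha+\alpha_0]\hookrightarrow\widetilde\bfA^\nu[\alpha+\alpha_0]\hookrightarrow{^v\widetilde O}^\nu_{\mu'}$, and on top the corresponding Serre quotients ${^u\widetilde O}^\mu_{\nu,+}\twoheadrightarrow\widetilde\bfA^\mu_+[\alpha]$ and ${^u\widetilde O}^{\mu'}_{\nu,+}\twoheadrightarrow\widetilde\bfA^{\mu'}_+[\alpha+\alpha_0]\stackrel{\widetilde a}{\twoheadrightarrow}\widetilde A^{\mu'}_+[\alpha+\alpha_0]$, to obtain a three-layer diagram. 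The bottom square commutes because, by the discussion preceding Lemma~\ref{ch3:lem-Zuck_defined_A+_k=0}, the functor $E_0$ on $A^\nu[\alpha+\alpha_0]$ coincides with the restriction of the $O$-level $E_0$. Two applications of Lemma~\ref{ch3:lem_calK-subcat} (one per vertical pair) then collapse the middle two rows and produce the desired commutative Koszul-duality square between the top and bottom rows. Part~$(a)$ follows from $(b)$ by a direct adjointness argument, using the biadjunction between $\widetilde F_0$ and $\widetilde E_0$ on $\widetilde\bfA^\nu[\alpha]\leftrightarrows\widetilde A^\nu[\alpha+\alpha_0]$ from the beginning of Section~\ref{ch3:subs_k=0} together with the adjunction $(\widetilde\inc,\widetilde\tr\langle\mu_0-1\rangle)$ on the positive-level side.

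The main obstacle is the commutativity of the top square, which has no analogue in the proof of Theorem~\ref{ch3:thm_dual-func-A-spcases} because $A$-type subcategories do not enter there. Concretely, one must verify that $\widetilde\tr\langle\mu_0-1\rangle\colon {^u\widetilde O}^{\mu'}_{\nu,+}\to{^u\widetilde O}^\mu_{\nu,+}$ sends the kernel of the composite quotient ${^u\widetilde O}^{\mu'}_{\nu,+}\twoheadrightarrow\widetilde A^{\mu'}_+[\alpha+\alpha_0]$ into the kernel of ${^u\widetilde O}^\mu_{\nu,+}\twoheadrightarrow\widetilde\bfA^\mu_+[\alpha]$, and that the induced functor on quotients is exactly the $\widetilde\tr$ of the statement. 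Since $\widetilde\tr$ either kills a simple module or preserves it, this reduces to a combinatorial check on labelling sets: the condition identifying the simples that survive in $\widetilde A^{\mu'}_+[\alpha+\alpha_0]$ (parameterized via Lemma~\ref{ch3:lem_comb_index_set_O} by weights with an ``$\ne 1$'' condition on an appropriate coordinate, cf.\ the proof of Lemma~\ref{ch3:lem-Zuck_defined_A+_k=0}) must be matched with the analogous condition for $\widetilde\bfA^\mu_+[\alpha]$. The hypothesis $k=0$ (equivalently $\mu_1=0$) is exactly what makes this compatibility hold and is the sole reason the category $A$ is needed here instead of $\bfA$; once this verification is in place, all remaining commutativities are formal.
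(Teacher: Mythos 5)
Your proposal reverses the order that the paper's pattern (visible in the proof of Theorem~\ref{ch3:thm_dual-func-A-spcases}) would dictate: there, the $\inc$-duality is always proved directly via the three-layer diagram, and the $\tr$-duality is then deduced by adjointness. You instead prove $(b)$ directly and derive $(a)$ by adjointness. This creates a genuine gap.

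The problem is the top rectangle of your diagram, which contains the parabolic truncation functor $\widetilde\tr$. This functor is only right exact, so the square you want lives in derived categories and involves the left derived functor $L\widetilde\tr$. The ``combinatorial check'' you describe --- verifying that $\widetilde\tr$ either kills or preserves simple modules, and matching up labelling sets --- only establishes that the \emph{abelian} functor $\tr$ descends to the Serre quotients. It does not show that the derived functor descends, i.e.\ that $L^{-i}\widetilde\tr$ of an object in the kernel of ${^u\widetilde O}^{\mu'}_{\nu,+}\twoheadrightarrow\widetilde A^{\mu'}_+[\alpha+\alpha_0]$ again lies in the kernel of ${^u\widetilde O}^\mu_{\nu,+}\twoheadrightarrow\widetilde\bfA^\mu_+[\alpha]$ for \emph{all} $i$, not only $i=0$. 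Since the quotient functors $M\mapsto eM$ do not preserve projectives in general, one cannot simply commute $L\widetilde\tr$ past them; some Beck--Chevalley-type verification would be required, and you do not supply it. Note that Lemma~\ref{ch3:lem-Zuck_defined_A+_k=0} is a statement about the exact inclusion $\inc$, not about $\tr$, so it does not help here directly.

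By contrast, proving $(a)$ first avoids this entirely: the top rectangle then contains the parabolic inclusion $\widetilde\inc$ (exact) and the vertical quotient functors (exact), and the bottom rectangle contains the exact functor $\widetilde F_0$ (the restriction from $O$). Every functor in that four-layer diagram is exact, so the abelian commutativity --- which is exactly the content of Lemma~\ref{ch3:lem-Zuck_defined_A+_k=0} together with the observation at the beginning of Section~\ref{ch3:subs_k=0} about $F_0$ landing in $A^\nu[\alpha+\alpha_0]$ --- immediately gives derived commutativity. Two applications of Lemma~\ref{ch3:lem_calK-subcat} then collapse the middle rows as you describe, yielding $(a)$; and $(b)$ follows from $(a)$ by adjointness, using the biadjunction of $\widetilde F_0$ and $\widetilde E_0$ (available here since both are restrictions of the biadjoint pair on $O$ to the full subcategories $\bfA^\nu[\alpha]$ and $A^\nu[\alpha+\alpha_0]$) and the adjunction $\widetilde\tr\dashv\widetilde\inc$. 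Your adjointness step is sound; you should simply swap which implication is proved directly.
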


\smallskip
Finally, we get an analogue of Theorem \ref{ch3:thm-final-F-E-dual-Zuck} in the case $k=0$. 

\smallskip
\begin{thm}
Assume that we have $\nu_r>|\alpha|$ for each $r\in[1,l]$ and $e>2$.

$(a)$ The functor $F_0\colon \bfA^\nu[\alpha]\to\bfA^\nu[\alpha+\alpha_0]$ has a graded lift $\widetilde F_0$ such that the functor $\widetilde F_0\colon D^b(\widetilde\bfA^\nu[\alpha])\to D^b(\widetilde\bfA^\nu[\alpha+\alpha_0])$ is Koszul dual to the shifted Zuckerman functor $\widetilde\Zuc_0^+\langle\mu_{1} \rangle\colon D^b(\widetilde\bfA^\mu_+[\alpha])\to D^b(\widetilde\bfA^{\mu'}_+[\alpha+\alpha_0])$.

$(b)$ The functor $E_0\colon\bfA^\nu[\alpha+\alpha_0]\to\bfA^\nu[\alpha]$ has a graded lift $\widetilde E_0$ such that the functor $\widetilde E_0\colon D^b(\widetilde\bfA^\nu[\alpha+\alpha_0])\to D^b(\widetilde\bfA^\nu[\alpha])$ is Koszul dual to the shifted Zuckerman functor $\widetilde\Zuc_0^-\langle \mu_{0}-1 \rangle\colon D^b(\widetilde\bfA^\mu_+[\alpha])\to D^b(\widetilde\bfA^{\mu'}_+[\alpha+\alpha_0])$.
\end{thm}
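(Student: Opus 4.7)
The plan is to mirror the proof of Theorem~\ref{ch3:thm-final-F-E-dual-Zuck} by decomposing $F_0$ at level~$-e$ into the composition $\overline F_1\overline F_0$ at level~$-(e+1)$ via Theorem~\ref{ch3:thm_decomp_Fk-A}, and then to dualize each factor separately, using Theorem~\ref{ch3:thm_dual-func-A-spcases-k=0} for the factor at the sensitive index~$k=0$ and Theorem~\ref{ch3:thm_dual-func-A-spcases} for the factor at the next index (which is nonzero). First, I would apply Theorem~\ref{ch3:thm_decomp_Fk-A} with $k=0$ to obtain a commutative diagram identifying $F_0\colon\bfA^\nu[\alpha]\to\bfA^\nu[\alpha+\alpha_0]$ with the composition $\overline F_1\overline F_0$ between $\overline\bfA^\nu[\beta+\overline\alpha]$ and $\overline\bfA^\nu[\beta+\overline\alpha+\overline\alpha_0+\overline\alpha_1]$, via the equivalences $\theta'_\alpha$ and $\theta'_{\alpha+\alpha_0}$. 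The key additional observation is that $\overline F_0$ factors through the smaller subcategory $\overline A^\nu[\beta+\overline\alpha+\overline\alpha_0]$ of $\overline\bfA^\nu[\beta+\overline\alpha+\overline\alpha_0]$, namely the level-$(e+1)$ analogue of the inclusion used in Section~\ref{ch3:subs_k=0}, so that the top row refines as
$$
\overline\bfA^\nu[\beta+\overline\alpha]\stackrel{\overline F_0}{\longrightarrow}\overline A^\nu[\beta+\overline\alpha+\overline\alpha_0]\stackrel{h}{\hookrightarrow}\overline\bfA^\nu[\beta+\overline\alpha+\overline\alpha_0]\stackrel{\overline F_1}{\longrightarrow}\overline\bfA^\nu[\beta+\overline\alpha+\overline\alpha_0+\overline\alpha_1].
$$

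Second, I would upgrade this factorization to graded categories. The equivalences $\theta'_\alpha$ and $\theta'_{\alpha+\alpha_0}$ admit graded lifts respecting the standard gradings on simple modules (unique up to shift, by \cite[Cor.~2.5.2]{BGS}). Each of the three functors in the top row inherits a graded lift from results already established. For $\overline F_0$, Theorem~\ref{ch3:thm_dual-func-A-spcases-k=0}$(a)$ applied at level~$-(e+1)$ provides a graded lift whose Koszul dual is the parabolic inclusion $\widetilde{\inc}\colon\overline\bfA^{\overline\mu}_+[\beta+\overline\alpha]\to\overline A^{\overline\mu^0}_+[\beta+\overline\alpha+\overline\alpha_0]$. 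For the inclusion $h$, Remark~\ref{ch3:rk-gr_from_bfA_to_A} provides a graded lift whose Koszul dual is $\widetilde b$. For $\overline F_1$, Theorem~\ref{ch3:thm_dual-func-A-spcases} (applicable since the relevant index is $1\ne 0$ at level~$-(e+1)$, and we are in the case $W_{\overline\mu^0}\supset W_{\overline\mu'}$) provides a graded lift whose Koszul dual is a shifted parabolic truncation $\widetilde{\tr}\langle s\rangle$ for an explicit integer~$s$. Composing the three Koszul duals produces $\widetilde{\tr}\circ\widetilde b\circ\widetilde{\inc}$, which by the definition in Section~\ref{ch3:subs_Zuck-A} is precisely $\widetilde\Zuc_0^+$, up to a single overall grading shift.

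Third, I would compute the accumulated shift and verify it equals $\mu_1$ as required. After absorbing $\theta'_\alpha$ and $\theta'_{\alpha+\alpha_0}$ (choosing their Koszul duals to be identity functors under the identifications $\bfA^\mu_+[\alpha]\simeq\overline\bfA^{\overline\mu}_+[\beta+\overline\alpha]$ and similarly at the other end), this yields part~$(a)$. Part~$(b)$ then follows from $(a)$ by adjointness, using the biadjunction $(E_0,F_0)$ on~$\bfA$ guaranteed by \cite[Lem.~7.6]{RSVV} under the hypothesis $\nu_r>|\alpha|$, together with the matching biadjunctions among $\widetilde{\inc}$, $\widetilde{\tr}$, $\widetilde a$, $\widetilde b$ and their appropriate shifts; the right adjoint of $\widetilde\Zuc_0^+\langle\mu_1\rangle$ assembles, after shift bookkeeping, into $\widetilde\Zuc_0^-\langle\mu_0-1\rangle$.

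The main obstacle is the grading-shift bookkeeping through this four-term composition: each of $\widetilde{\inc}$, $\widetilde b$, and $\widetilde{\tr}$ contributes either a nontrivial shift or none, and one must check that their sum equals $\mu_1$ in part~$(a)$ and $\mu_0-1$ in part~$(b)$. A secondary technical point is that the factorization $\overline F_0=h\circ(\text{restricted }\overline F_0)$ must be shown to lift to a factorization of \emph{graded} functors compatible with the two separate graded lifts provided by Theorem~\ref{ch3:thm_dual-func-A-spcases-k=0}$(a)$ and Remark~\ref{ch3:rk-gr_from_bfA_to_A}; this should follow from Lemma~\ref{ch3:lem-grad-unique} (unicity of graded lifts up to shift) applied to the indecomposable projective summands, once the construction is tested on the antidominant projective module.
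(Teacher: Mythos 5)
Your proposal reproduces the paper's argument: decompose $F_0$ via Theorem~\ref{ch3:thm_decomp_Fk-A} into the chain $\overline F_1\circ h\circ\overline F_0$ at level $-(e+1)$ passing through $\overline A^\nu[\beta+\overline\alpha+\overline\alpha_0]$, dualize the three factors using Theorem~\ref{ch3:thm_dual-func-A-spcases-k=0}$(a)$, Remark~\ref{ch3:rk-gr_from_bfA_to_A} and Theorem~\ref{ch3:thm_dual-func-A-spcases}$(a)$ respectively, match the composite to the definition of $\widetilde\Zuc_0^+$, and obtain $(b)$ by adjointness. One slip: for $\overline F_1$ the applicable containment is $W_{\overline\mu^0}\subset W_{\overline\mu'}$ (not $\supset$, since $(\overline\mu^0)_1=1$); this is exactly case $(a)$ of Theorem~\ref{ch3:thm_dual-func-A-spcases}, which yields the truncation with the shift $\mu_1$ you state, so your conclusion is unaffected.
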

\begin{proof}
The proof is similar to the proof of Theorem \ref{ch3:thm-final-F-E-dual-Zuck}. To prove $(a)$ we should consider the diagram as in the proof of Theorem \ref{ch3:thm-final-F-E-dual-Zuck} with an additional term.
$$
\begin{diagram}
\node{\overline\bfA^\nu[\beta+\overline\alpha]} \arrow{e,t}{\overline F_0}
\node{\overline A^\nu[\beta+\overline\alpha+\overline\alpha_0]}  \arrow{e,t}{}
\node{\overline\bfA^\nu[\beta+\overline\alpha+\overline\alpha_0]} \arrow{e,t}{\overline F_{1}} 
\node{\overline\bfA^\nu[\beta+\overline\alpha+\overline\alpha_0+\overline\alpha_{1}]} \arrow{s,r}{} \\
\node{\bfA^\nu[\alpha]} \arrow{n,l}{} 
\arrow[3]{e,b}{F_0} \node[3]{\bfA^\nu[\alpha+\alpha_0]}
\end{diagram}
$$

We prove $(b)$ in the same way by considering the diagram obtained from the diagram above by adjointness. Note that in this case we have the adjunction $(F_0,E_0)$ (and not only $(E_0,F_0)$) because of the assumption on $\nu$. 
\end{proof}

\section*{Acknowledgements}
I am grateful for the hospitality of the Max-Planck-Institut f\"ur Mathematik in Bonn, where a big part of this work is done.
I would like to thank \'Eric Vasserot for his guidance and helpful
discussions during my work on this paper. I would also like to thank C\'edric Bonnaf\'e for his comments on an earlier version of this paper.

\end{document}